\definecolor{dullmagenta}{rgb}{0.4,0,0.4}
\definecolor{darkblue}{rgb}{0,0,0.4}
\newtheorem{theorem}{Theorem}[section]
\newtheorem{lemma}[theorem]{Lemma}
\newtheorem{proposition}[theorem]{Proposition}
\newtheorem{corollary}[theorem]{Corollary}
\theoremstyle{definition}
\newtheorem{definition}[theorem]{Definition}
\newtheorem{example}[theorem]{Example}
\newtheorem{remark}[theorem]{Remark}
\begin{document}

\title[distributive  extension ]{ Distributive FCP extensions}

\author[G. Picavet and M. Picavet]{Gabriel Picavet and Martine Picavet-L'Hermitte}
\address{Math\'ematiques \\
8 Rue du Forez, 63670 - Le Cendre\\
 France}
\email{picavet.mathu (at) orange.fr}

\begin{abstract} We are dealing with extensions of commutative rings $R\subseteq S$ whose chains of the poset $[R,S]$ of their subextensions are finite ({\em i.e.} $R\subseteq S$ has the FCP property) and such that $[R,S]$ is a distributive lattice, that we call distributive FCP extensions. Note that the lattice $[R,S]$ of a distributive FCP extension is finite. This paper is the continuation of our earlier papers where we studied catenarian and Boolean extensions. Actually, for an FCP extension, the following implications hold: Boolean $\Rightarrow$ distributive $\Rightarrow$ catenarian. A comprehensive  characterization of distributive FCP extensions actually remains a challenge, essentially because the same problem for field extensions is not completely solved. Nevertheless, we are able to exhibit a lot of positive results for some classes of extensions. A main result is that an FCP extension $R\subseteq S$ is distributive if and only if $R\subseteq\overline R$ is distributive, where $\overline R$ is the integral closure of $R$ in $S$. A special attention is paid to distributive field extensions. 
\end{abstract} 

\subjclass[2010]{Primary:13B02,13B21, 13B22, 06D05;  Secondary: 12F10,  06E05}

\keywords {FIP, FCP extension, minimal extension, integral extension, u-closure, subintegral closure, support of a module, length of an extension, distributive lattice, Boolean lattice, algebraic field extension, }

\maketitle

\section{Introduction and Notation}

We consider the category of commutative and unital rings, whose epimorphisms will be involved. If $R\subseteq S$ is a (ring) extension, we denote by $[R,S]$ the set of all $R$-subalgebras of $S$ and call it the lattice of the extension. We set $]R,S[: =[R,S]\setminus \{R,S\}$ (with a similar definition for $[R,S[$ or $]R,S]$). 
 A {\it subextension} $T\subseteq U$ of $R\subseteq S$ is a (ring) extension $T\subseteq U$ such that  $T,U\in[R,S]$.
Any unexplained terminology can be found in the next sections and as a general rule, an extension $R\subseteq S$ is said to have some property of lattices if $[R,S]$ has this property. We use the lattice definitions and properties described in \cite{NO}. 
 
Actually for an extension $R\subseteq S$, the poset $([R,S],\subseteq)$ is a {\it complete} lattice, where the supremum of any non-void subset is the compositum of its elements, which we call {\it product} from now on and denote by $\Pi$ when necessary, and the infimum of any non-void subset is the intersection of its elements. We are aiming to study the distributivity of the lattice $[R,S]$ in case it is both Artinian and Noetherian. A nice result states that a lattice is distributive if and only it contains neither a diamond nor a pentagon \cite[Theorem 1, page 59]{G}. This characterization is difficult to use because of its negative formulation. We will give in the context of lattices of some ring extensions more concrete characterizations, adapted to the lattices we consider. 

This paper is  the continuation of our earlier papers \cite{Pic 10}, \cite{Pic 11} and \cite{Pic 12}, on lattice properties of ring extensions. In \cite{Pic 10}, we already gave some properties of distributive ring extensions. Our aim is to give some characterizations and to give more properties of distributive extensions for FCP extensions described below. The reader is warned  the subject is very complex and although  we have substantial results, it is clear that our study is not complete. It is enough to consider the case of finite field extensions.
    
Actually for an FCP extension, the following implications hold: Boo-lean $\Rightarrow$ distributive $\Rightarrow$ catenarian, and we have already examined the first and third properties. Our process may seem weird but all these properties are interwoven.

A {\it chain} of $[R,S]$ is a set of elements of $[R,S]$ that are pairwise comparable with respect to inclusion and $R\subseteq S$ is called {\it chained} (also called $\lambda$-extension by some authors) if $[R,S]$ is a chain. We also say that the extension $R\subseteq S$ has FCP (or is an FCP extension) if each chain in $[R,S]$ is finite, or equivalently, its lattice is both Noetherian and Artinian. Dobbs and the authors characterized FCP  extensions \cite{DPP2}. 
  
Our main tool are the minimal (ring) extensions, a concept that was introduced by Ferrand-Olivier \cite{FO}. Recall that an extension $R\subset S$ is called {\it minimal} if $[R, S]=\{R,S\}$. An extension $R\subseteq S$ is called {\it simple} if $S=R[t]$ for some $t\in S$. A minimal extension is simple. The key connection between the above ideas is that if $R\subseteq S$ has FCP, then any maximal (necessarily finite) chain $\mathcal C$ of $R$-subalgebras of $S$, $R=R_0\subset R_1\subset\cdots\subset R_{n-1}\subset R_n=S$, with {\it length} $\ell(\mathcal C):=n <\infty$, results from juxtaposing $n$ minimal extensions $R_i\subset R_{i+1},\ 0\leq i\leq n-1$. An FCP extension is finitely generated, and (module) finite if integral. For any extension $R\subseteq S$, the {\it length} $\ell[R,S]$ of $[R,S]$ is the supremum of the lengths of chains of $R$-subalgebras of $S$. Notice  that if $R\subseteq S$ has FCP, then there {\it does} exist some maximal chain of $R$-subalgebras of $S$ with length $\ell[R,S]$ \cite[Theorem 4.11]
{DPP3}.

Lattices that are catenarian and have a finite length are finite. It follows that the lattice of a distributive FCP extension is such that $[R,S]$ is finite; that is, the extension has FIP (for the ``finitely many intermediate algebras property"), a property also characterized in \cite{DPP2}. Clearly, each extension that satisfies FIP must also satisfy FCP. 
 We give a characterization of the distributivity of catenarian extensions.

 \subsection{ A summary of the main results} Any undefined  material
 is explained at the end of the section or in the next sections. 

Section 2 is devoted to recalls and results on lattices, mainly in the context of FCP and FIP extensions, and to give first properties of distributive extensions. In this respect, we give rules concerning the usual algebraic constructions in commutative ring theory. We prove that an arbitrary Pr\"ufer extension is distributive (Theorem \ref{5.41}). Extensions of Nagata idealizations of modules are examined.

In Section 3, we look at the general properties of FCP distributive ring extensions related to various algebraic situations. A key result is Proposition \ref{6.1}, translated from \cite[Theorem, p.16]{LS}, which states that an FCP extension $R\subset S$ is distributive if and only if, for any $T,U\in[R,S]$, the following conditions hold: If $T\cap U\subset U$ and $T\cap U\subset T$ are minimal, then $|[T\cap U,TU]|=4$ and if $T\subset TU$ and $ U\subset TU$ are minimal, then $|[T\cap U,TU]|=4$. We consider pinched FCP extensions, showing that their distributivity is equivalent to the distributivity of each of its paths. We give an application to the Loewy series of some special extensions by means of Boolean extensions. A length $2$ extension is distributive if and only if its lattice has less than $4$ elements. We also consider $ B_2$-extensions (Boolean of length $2$), giving a characterization involving minimal extensions and other materials of ring theory recalled in Section 1.

As a consequence of the distributivity of arbitrary Pr\"ufer extensions, we get the first main result: the non trivial Theorem \ref{6.5}. It asserts that an FCP extension $R\subset S$ is distributive if and only if $R\subseteq\overline R$ is distributive, where $\overline R$ is the integral closure of $R$ in $S$. This result allows us to reduce our work to the integral FCP extensions. 

The complexity of this study needs three sections to get  general results. It is known that there are three types of minimal extensions: inert, decomposed and ramified. 

Let $R\subset S$ be an FCP extension and $\mathcal C$ be a maximal (necessarily finite) chain of $R$-subalgebras of $S$, $R=R_0\subset R_1\subset\cdots\subset R_{n-1}\subset R_n=S$. We say that $\mathcal C$ is {\it isotopic} if all minimal subextensions $R_i\subset R_{i+1}$ are of the same type for all $i\in\{0,\ldots,n-1\}$. The extension $R\subset S$ is said an {\it isotopic FCP} (IFCP) extension if all minimal subextensions of $R\subset S$ are of the same type. It turns out that the subextensions of an FCP extension, determined by its seminormalization and $t$-closure, are IFCP extensions. Their distributivity admits  simple characterizations explained in Section 4. If $R\subset S$ is a subintegral (resp. seminormal and infra-integral) extension, then Proposition \ref{6.7} (resp.; \ref{decomp}) shows that $R\subset S$ is distributive if and only if $R\subset S$ is arithmetic (resp.; locally minimal). If the extension is $t$-closed, then it is distributive if and only if its residual field extensions are distributive. This phenomenon appears each time we consider lattice properties; for example, when considering catenarity.

We then  consider the case of a local base ring $R$ of an integral FCP extension $R\subseteq S$. There are two cases determined by the finite maximal spectrum of $S$: the unbranched case where $|\mathrm{Max}(S)|=1$ and the branched case where $|\mathrm{Max}(S)| > 1$.

Locally unbranched extensions are characterized in Section 5 through the $t$-closure $T$ of the extension: the extension is distributive if and only if it is pinched at $T$, $T\subseteq S$ is distributive and $R\subseteq T$ is arithmetic (Theorem \ref{6.11}).

Branched extensions over a local ring are examined in Section 6. Note that when such an extension is distributive, then necessarily $|\mathrm{Max}(S)| = 2$. In \cite{Pic 14}, we studied different types of closures of a ring extension, in particular, the u-closure and the co-subintegral closure. These closures play a main role in the characterization of distributive integral extensions as in Theorem \ref{6.15}, which gives a necessary and sufficient condition for an integral  branched FCP extension to be distributive. The formulation of this result is too long to be developed in this section. It is illustrated by an example.
 
When an FCP extension $R\subseteq S$ is integral and distributive, we prove in Section 7 that the fiber of each prime ideal of $R$ has at most two elements (Proposition \ref{7.0}). If this last condition is satisfied and in addition the extension is seminormal and infra-integral with a locally maximal conductor, the converse holds and the extension is locally minimal decomposed. Let an extension $R\subseteq R[u]$, with $u^2-u,u^3-u^2\in R$ (an elementary $u$-integral extension), then the fibers of the extension have at most $2$ elements and $S/I\simeq R/I\times R//I$. This last statement looks like the characterization of a minimal decomposed extension. But in general the situation is much more complicated. The rest of Section 7 examines the relationship between the above mentioned properties. 

In Section 8, we consider numerical properties of a distributive integral FCP (whence FIP) extension $R\subset S$, with the calculation of $\ell[R,S]$ and $|[R,S]|$. 

The most complicated part, namely the t-closed part of an extension, can be reduced to the study of field extensions. This is the object of Section 9. We get simple results for two types of field extensions. Let $k\subset L$ be an FCP field extension. If $k\subset L$ is radicial, then $k\subset L$ is distributive if and only if $k\subset L$ is chained (Proposition \ref{6.21}). If $k\subset L$ is Galois, then $k\subset L$ is distributive if and only if $k\subset L$ is a cyclic extension (Proposition \ref{6.23}). Now a complete characterization of distributive finite field extensions is certainly a challenge as for catenarian field extensions.

The paper ends with Section 10 with an application of $\Delta$-extensions (extensions whose lattices are stable under addition)  to distributive extensions.
 
\subsection{Conventions and notation} 
A {\it local} ring is here what is called elsewhere a quasi-local ring. As usual, Spec$(R)$ and Max$(R)$ are the set of prime and maximal ideals of a ring $R$. The support of an $R$-module $E$ is $\mathrm{Supp}_R(E):=\{P\in\mathrm{Spec }(R)\mid E_P\neq 0\}$, and $\mathrm{MSupp}_R(E):=\mathrm{Supp}_R(E)\cap\mathrm{Max}(R)$. When $R\subseteq S$ is an extension, we will set $\mathrm{Supp}_R(T/R):=\mathrm{Supp}(T/R)$ and $\mathrm{Supp}_R(S/T):=\mathrm{Supp}(S/T)$ for each $T\in [R,S]$, unless otherwise specified. A ring extension $R\subseteq S$ is called an {\it $i$-extension} if the natural map $\mathrm{Spec}(S)\to\mathrm{Spec}(R)$ is injective.
 
If $R\subseteq S$ is a ring extension and $P\in\mathrm{Spec}(R)$, then $S_P$ is both the localization $S_{R\setminus P}$ as a ring and the localization at $P$ of the $R$-module $S$. We denote by $\kappa_R(P)$ the residual field $R_P/PR_P$ at $P$. If $R\subseteq S$ is a ring extension and $Q\in\mathrm{Spec}(S)$, there exists a residual field extension $\kappa_R(Q\cap R)\to\kappa_S(Q)$. An extension $R\subset S$ is called {\it locally minimal} if $R_P\subset S_P$ is minimal for each $P\in\mathrm{Supp}(S/R)$  or equivalently for each $P\in\mathrm{MSupp}(S/R)$.  
   
We denote by $(R:S)$ the conductor of $R\subseteq S$. The integral closure of $R$ in $S$ is denoted by $\overline R^S$ (or by $\overline R$ if no confusion can occur). 
  
The characteristic of an integral domain $k$ is denoted by $\mathrm{c}(k)$. In this paper, a  purely inseparable field extension is called {\it radicial}. A field extension $k\subset L$ is said to be {\it exceptional} if $k$ is the radicial closure of $k$ in $L$ and $L$ is not the separable closure of $k\subset L$ \cite{GHe} . 

Finally, $|X|$  is the cardinality of a set $X$, $\subset$ denotes proper inclusion and and for a positive integer $n$, we set $\mathbb{N}_n:=\{1,\ldots,n\}$.  

\subsection{Minimal extensions}
The following notions and results are deeply involved in the sequel. 
\begin{definition}\label{crucial 1}\cite[Definition 2.10]{Pic 7} An extension $R\subset S$ is called {\it crucial} if $|\mathrm{Supp}(S/R)|= 1$, in which case the only element $M$ of this support is called the {\it crucial (maximal) ideal} $\mathcal{C}(R,S)$ of $R\subset S$ and the extension is called $M$-crucial. 
\end{definition}

\begin{theorem}\label{crucial}\cite[Th\'eor\`eme 2.2]{FO} A  minimal extension  is crucial  and is either integral (finite) or a flat epimorphism.
\end{theorem}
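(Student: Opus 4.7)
The plan is to prove the two assertions---that $R\subset S$ is crucial, and that it is either integral (and finite) or a flat epimorphism---in sequence, sharing the initial observation that minimality implies simplicity: for any $s\in S\setminus R$ one has $R\subsetneq R[s]\subseteq S$, whence $R[s]=S$, so every element of $S\setminus R$ generates $S$ over $R$.

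For the crucial part I would analyze $\mathrm{Supp}(S/R)$ by a pullback argument. For $P\in\mathrm{Spec}(R)$, any intermediate ring $R_P\subseteq T\subseteq S_P$ descends to $T':=\{x\in S\mid x/1\in T\}\in[R,S]$; one checks $T=T'R_P$, so $T'\in\{R,S\}$ (by minimality of $R\subset S$) forces $T\in\{R_P,S_P\}$. Consequently, for $P\in\mathrm{Supp}(S/R)$ the localization $R_P\subset S_P$ is itself minimal, and the descent of $R_P$ must be $R$, i.e.\ $\{x\in S\mid x/1\in R_P\}=R$. Fixing any $s\in S\setminus R$ and setting $\mathfrak{a}_s:=(R:_R s)=\{r\in R\mid rs\in R\}$, this gives $\mathfrak{a}_s\subseteq P$ for every $P\in\mathrm{Supp}(S/R)$, so $\mathrm{Supp}(S/R)=V(\mathfrak{a}_s)$ is closed. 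To collapse this closed set to a single maximal point, I would argue that if two distinct maximal ideals $M_1\neq M_2$ both lay in $V(\mathfrak{a}_s)$, a Chinese-remainder-style combination of the two non-descents would produce a strictly intermediate subring between $R$ and $S$, contradicting minimality.

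For the dichotomy, fix $s\in S\setminus R$ with $S=R[s]$. If $s$ is integral over $R$, then $S$ is finitely generated as an $R$-module and $R\subset S$ is integral and finite. Otherwise, no power $s^n$ lies in $R$ (else an integrality relation would result), so $R[s^n]=S$ for every $n\geq 1$; in particular $s\in R[s^2]$, yielding a relation $s=\sum_{i\geq 0}a_i s^{2i}$ with $a_i\in R$. Following Ferrand--Olivier, one manipulates this relation to produce simultaneous certificates that $R\to S$ is an epimorphism and that $S$ is $R$-flat. The main obstacle is precisely this non-integral case: while the support computation and the integral side are essentially formal, the algebraic distillation of the flat-epimorphism structure from the polynomial relation in $s$ is the technical heart of the proof and requires the full Ferrand--Olivier analysis of minimal flat-epimorphism extensions.
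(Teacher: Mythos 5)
This statement is quoted in the paper from \cite[Th\'eor\`eme 2.2]{FO} with no proof given, so the only meaningful benchmark is Ferrand--Olivier's original argument; measured against the statement itself, your proposal has genuine gaps. The preliminary reductions you actually carry out are fine: minimality gives $S=R[s]$ for every $s\in S\setminus R$; the pullback argument correctly shows that each $R_P\subseteq S_P$ is an equality or minimal and that the preimage of $R_P$ in $S$ equals $R$ whenever $P\in\mathrm{Supp}(S/R)$; and $\mathrm{Supp}(S/R)=V\bigl((R:_R s)\bigr)$ (you only argue one inclusion, but the other is easy). The first core claim, however --- that this closed set is a \emph{single maximal ideal} --- is only announced: the ``Chinese-remainder-style combination of the two non-descents'' is never constructed, and it is not a routine step. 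For instance, the natural move of replacing a generator $s$ by $ms$ with $m\in M_1\setminus M_2$ only yields $S=R+M_iS$ for $i=1,2$, which is not by itself contradictory (it actually holds for Pr\"ufer minimal extensions such as a rank-one valuation domain inside its quotient field), and arguments about units modulo $M_iS$ break down because $S_{M_i}$ need not be local. In Ferrand--Olivier the uniqueness of the crucial ideal is obtained together with the integral versus flat-epimorphism analysis, not as a formal consequence of the support being closed.

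The second core claim --- the dichotomy in the non-integral case --- is not proved at all: after the correct observations that no power of $s$ lies in $R$ and that $s\in R[s^2]$, you write that one should ``follow Ferrand--Olivier'' to extract flatness and the epimorphism property, i.e., you delegate exactly the technical heart of the theorem to the very source being proved, which is circular. As it stands, the proposal establishes the easy reductions and the integral (finite) half of the dichotomy, but neither the crucial-ideal statement nor the flat-epimorphism alternative; to make it a proof you would have to reproduce the Ferrand--Olivier analysis (their treatment of monogenic algebras and of minimal flat epimorphisms), or simply cite the theorem as the paper does.
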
 

Three types of minimal integral extensions exist, characterized in the next theorem, (a consequence of  the fundamental lemma of Ferrand-Olivier), so that there are four types of minimal extensions, mutually exclusive.

\begin{theorem}\label{minimal} \cite [Theorem 2.2]{DPP2} Let $R\subset T$ be an extension and $M:=(R: T)$. Then $R\subset T$ is minimal and finite if and only if $M\in\mathrm{Max}(R)$ and one of the following three conditions holds:

\noindent (a) {\bf inert case}: $M\in\mathrm{Max}(T)$ and $R/M\to T/M$is a minimal field extension.

\noindent (b) {\bf decomposed case}: There exist $M_1,M_2\in\mathrm{Max}(T)$ such that $M= M _1\cap M_2$ and the natural maps $R/M\to T/M_1$ and $R/M\to T/M_2$ are both isomorphisms.

\noindent (c) {\bf ramified case}: There exists $M'\in\mathrm{Max}(T)$ such that ${M'}^2 \subseteq M\subset M',\  [T/M:R/M]=2$, and the natural map $R/M\to T/M'$ is an isomorphism.

In each of the above cases, $M=\mathcal{C}(R,T)$.
\end{theorem}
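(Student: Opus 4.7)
The plan is to reduce the analysis to the quotient $T/M$, which carries the structure of a finite algebra over the field $k := R/M$. First I would establish that if $R \subset T$ is minimal and finite, then $M \in \mathrm{Max}(R)$. By Theorem \ref{crucial} minimality gives $|\mathrm{Supp}(T/R)| = 1$, and because $T/R$ is a finite $R$-module, its annihilator $\mathrm{ann}_R(T/R)$ coincides with $(R:T) = M$ and is primary to the unique prime of $\mathrm{Supp}(T/R)$. Minimality of the extension then forces this prime, equal to the radical of $M$, to be maximal, and a short argument with simple generation $T = R[t]$ identifies $M$ itself (not merely its radical) as that maximal ideal.

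Next, since $M$ is the conductor, every $U \in [R, T]$ satisfies $M \subseteq U$, so the assignment $U \mapsto U/M$ gives an order isomorphism between $[R, T]$ and the lattice of $k$-subalgebras of $T/M$. Minimality of $R \subset T$ is thus equivalent to $T/M$ having no proper intermediate $k$-subalgebra. Since a minimal extension is simple, $T = R[t]$ and hence $T/M = k[\bar{t}] \cong k[X]/(p(X))$ for some monic $p$. The core of the proof is to classify those $p$ for which $k[X]/(p(X))$ admits no proper intermediate $k$-subalgebra. Decomposing via the Chinese Remainder Theorem into a product of local Artinian $k$-algebras, one sees that exactly three possibilities survive: either (i) $p$ is irreducible and $k \subset k[X]/(p)$ is a minimal field extension, forcing $M \in \mathrm{Max}(T)$ (the inert case); or (ii) $p = (X-a)(X-b)$ with $a, b \in k$ distinct, giving $T/M \cong k \times k$ and two maximal ideals $M_1, M_2$ of $T$ lying over $M$ with $M = M_1 \cap M_2$ and $R/M \xrightarrow{\sim} T/M_i$ (the decomposed case); or (iii) $p = (X-a)^2$, giving $T/M \cong k[\varepsilon]/(\varepsilon^2)$, a local $k$-algebra of dimension $2$ with nilpotent maximal ideal $M'/M$, satisfying $(M')^2 \subseteq M$ and $R/M \xrightarrow{\sim} T/M'$ (the ramified case).

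For the converse, if $M \in \mathrm{Max}(R)$ and one of (a), (b), (c) holds, a direct inspection of $k$, $k \times k$, and $k[\varepsilon]/(\varepsilon^2)$ shows that $T/M$ has no proper intermediate $k$-subalgebra and that $M = (R:T)$ in each case; transporting back through the lattice isomorphism above yields minimality, while finiteness is immediate from $[T/M : k] \in \{2, [T/M_1:k]\}$ combined with $M$-primariness. The main obstacle is the classification step in paragraph two: showing that a finite commutative $k$-algebra with no proper $k$-subalgebra other than $k$ must be one of the three listed types. This is the substance of the Ferrand--Olivier fundamental lemma, and the delicate point is ruling out intermediate subalgebras when $p$ has either two distinct irreducible factors of degree greater than one or a repeated irreducible factor of degree greater than one; in both situations one exhibits a proper intermediate subalgebra via a suitable idempotent or nilpotent, contradicting minimality and closing the classification.
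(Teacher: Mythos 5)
This statement is quoted in the paper from \cite[Theorem 2.2]{DPP2} and is not proved there; the paper only remarks that it is a consequence of the fundamental lemma of Ferrand--Olivier. Your sketch reconstructs exactly that standard argument: pass to the conductor square, use that every $U\in[R,T]$ contains $M=(R:T)$ to identify $[R,T]$ with the lattice of $R/M$-subalgebras of $T/M$, and then classify the finite $k$-algebras with no proper intermediate subalgebra as a minimal field extension, $k\times k$, or $k[\varepsilon]/(\varepsilon^2)$, which yields the inert, decomposed and ramified cases respectively; the converse by direct inspection of these three algebras is also right. The one step you should firm up is the claim that the conductor itself (not merely its radical) is the crucial maximal ideal: the clean argument is not via simple generation but via the intermediate ring $R+PT$, where $\{P\}=\mathrm{Supp}(T/R)$. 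Minimality forces $R+PT\in\{R,T\}$, and $R+PT=T$ is excluded by Nakayama applied to the finitely generated module $(T/R)_P$, so $PT\subseteq R$, i.e.\ $P\subseteq M\subseteq P$ and $M=P\in\mathrm{Max}(R)$. (Also note that maximality of the support prime needs no appeal to minimality: $V((R:T))$ is a nonempty closed set, hence contains a maximal ideal, and being a singleton it equals it.) With that adjustment your outline matches the known proof.
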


\begin{lemma}\label{1.9} \cite[Corollary 3.2]{DPP2} If there exists a maximal chain $R=R_0\subset\cdots\subset R_i \subset\cdots\subset R_n=S$ of extensions, such that each $R_i\subset R_{i+1}$ is minimal, then $\mathrm{Supp}(S/R)=\{ \mathcal C (R_i, R_{i+1})\cap R\mid i=0,\ldots,n-1\}$.
\end{lemma}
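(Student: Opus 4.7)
The plan is to argue by induction on $n$. The base case $n=1$ is immediate: since $R\subset R_1$ is minimal, it is crucial by Theorem \ref{crucial}, so $\mathrm{Supp}(R_1/R)=\{\mathcal{C}(R,R_1)\}=\{\mathcal{C}(R_0,R_1)\cap R\}$, because the crucial ideal already lies in $R_0=R$.

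For the inductive step, assume the claim for chains of length $n-1$ and consider the short exact sequence of $R$-modules
\[
0\to R_1/R\to S/R\to S/R_1\to 0,
\]
which upon localizing at each $P\in\mathrm{Spec}(R)$ gives $\mathrm{Supp}_R(S/R)=\mathrm{Supp}_R(R_1/R)\cup\mathrm{Supp}_R(S/R_1)$. The first summand equals $\{\mathcal{C}(R_0,R_1)\}$ by the base case. Applying the inductive hypothesis to the chain $R_1\subset R_2\subset\cdots\subset R_n=S$ gives
\[
\mathrm{Supp}_{R_1}(S/R_1)=\{\mathcal{C}(R_i,R_{i+1})\cap R_1 : i=1,\ldots,n-1\}.
\]
The remaining task is to transfer this to $\mathrm{Supp}_R(S/R_1)$ via the general fact that, for an $R_1$-module $M$, one has $P\in\mathrm{Supp}_R(M)$ iff there exists $Q\in\mathrm{Supp}_{R_1}(M)$ with $Q\cap R\subseteq P$. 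Chaining this with the displayed equation and intersecting with $R$ would yield $\mathrm{Supp}_R(S/R_1)=\{\mathcal{C}(R_i,R_{i+1})\cap R : i=1,\ldots,n-1\}$, and the union with $\{\mathcal{C}(R_0,R_1)\}$ gives exactly the desired description.

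The main obstacle is upgrading the ``containment along contraction'' statement to an equality $\mathrm{Supp}_R(S/R_1)=\{Q\cap R : Q\in\mathrm{Supp}_{R_1}(S/R_1)\}$, which hinges on the type of the minimal extension $R\subset R_1$. If $R\subset R_1$ is integral (hence finite by Theorem \ref{crucial}), then $S$ is a finitely generated $R_1$-module, so $\mathrm{Supp}_{R_1}(S/R_1)=V(\mathrm{Ann}_{R_1}(S/R_1))$, and lying-over applied to the integral inclusion $R/\bigl(\mathrm{Ann}_{R_1}(S/R_1)\cap R\bigr)\hookrightarrow R_1/\mathrm{Ann}_{R_1}(S/R_1)$ guarantees that every $P\in\mathrm{Supp}_R(S/R_1)$ is the contraction of some $Q\in\mathrm{Supp}_{R_1}(S/R_1)$. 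If instead $R\subset R_1$ is a flat epimorphism, the induced map $\mathrm{Spec}(R_1)\to\mathrm{Spec}(R)$ is injective (ring epimorphisms induce Spec-monomorphisms) with image avoiding only the crucial ideal $\mathcal{C}(R_0,R_1)$; since any prime of $R$ in $\mathrm{Supp}_R(S/R_1)$ already meets the contracted support, equality again follows. With this equality in hand, the induction closes and yields $\mathrm{Supp}(S/R)=\{\mathcal{C}(R_i,R_{i+1})\cap R : i=0,\ldots,n-1\}$.
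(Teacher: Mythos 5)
Your reduction breaks down at the transfer step, and the case analysis you offer to repair it does not work. For an $R_1$-module $M$ the correct general statement is the one you quote: $P\in\mathrm{Supp}_R(M)$ if and only if there exists $Q\in\mathrm{Supp}_{R_1}(M)$ with $Q\cap R\subseteq P$. This says that $\mathrm{Supp}_R(S/R_1)$ is the set of \emph{all} primes of $R$ containing some contraction $Q\cap R$, i.e. the specialization closure of the contraction set, not the contraction set itself; the equality $\mathrm{Supp}_R(S/R_1)=\{Q\cap R : Q\in\mathrm{Supp}_{R_1}(S/R_1)\}$ that your induction needs is simply false in general. Concretely, let $V$ be a valuation domain of rank $2$ with primes $0\subset P_1\subset M$ and quotient field $K$, and take the maximal chain $V\subset V_{P_1}\subset K$ of minimal (Pr\"ufer) extensions. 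Then $\mathrm{Supp}_{V_{P_1}}(K/V_{P_1})=\{P_1V_{P_1}\}$, whose contraction to $V$ is $\{P_1\}$, while $\mathrm{Supp}_V(K/V_{P_1})=\{P_1,M\}$ because $V\setminus M$ consists of units of $V$. This is exactly your ``flat epimorphism'' case: the map $\mathrm{Spec}(V_{P_1})\to\mathrm{Spec}(V)$ is indeed injective with image avoiding only $M$, yet $M$ lies in $\mathrm{Supp}_V(K/V_{P_1})$ without being a contraction of anything, so the sentence ``equality again follows'' has no justification and its conclusion is wrong. The lemma survives in this example only because the extra prime $M$ happens to coincide with $\mathcal{C}(R_0,R_1)$; showing that every such extra specialization is absorbed by the crucial ideals of the earlier steps is precisely the nontrivial content of the statement, and it is missing from your argument. (The paper itself does not reprove this; it quotes \cite[Corollary 3.2]{DPP2}.)

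The integral case also contains a non sequitur: from the minimality (hence finiteness) of $R\subset R_1$ you conclude that ``$S$ is a finitely generated $R_1$-module''. The finiteness of the first step says nothing about $R_1\subseteq S$, which is governed by the remaining links of the chain and may well contain Pr\"ufer minimal steps (as in the example above), so $S/R_1$ need not be finitely generated over $R_1$ and $\mathrm{Supp}_{R_1}(S/R_1)$ need not be of the form $V(\mathrm{Ann}_{R_1}(S/R_1))$. Your base case, the exact sequence $0\to R_1/R\to S/R\to S/R_1\to 0$, and the application of the inductive hypothesis to $R_1\subset\cdots\subset S$ are all fine; what is needed to close the induction is an argument that a prime $P$ of $R$ strictly containing some $\mathcal{C}(R_i,R_{i+1})\cap R$ and supporting $S/R_1$ must itself be of the form $\mathcal{C}(R_j,R_{j+1})\cap R$ for some $j$, and that is not supplied.
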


In particular, if $R\subset S$ is an FCP extension, then $\mathrm{Supp}(S/R)$ has finitely many elements.

\subsection{Pr\"ufer and quasi-Pr\"ufer extensions}

Recall that an extension $R\subseteq S$ is called {\it Pr\"ufer} if $R\subseteq T$ is a flat epimorphism for each $T\in[R,S]$ (or equivalently, if $R\subseteq S$ is a normal pair) \cite[Theorem 5.2, p. 47]{KZ}. It follows that a Pr\"ufer integral extension is an isomorphism. The {\it Pr\"ufer hull} of an extension $R\subseteq S$ is the greatest {\it Pr\"ufer} subextension $\widetilde R$ of $[R,S]$ \cite{Pic 3}. An extension $R\subseteq S$ is called {\it almost-Pr\"ufer} if $\widetilde R\subseteq S$ is integral, or equivalently, if $S=\widetilde R\overline R$ \cite[Theorem 4.6]{Pic 5}. An extension $R\subseteq S$ is called {\it quasi-Pr\"ufer} if $\overline R\subseteq S$ is Pr\"ufer \cite[Definition 2.1]{Pic 5}. According to \cite[Corollary 3.4]{Pic 5}, an FCP extension is quasi-Pr\"ufer.
  
In \cite{Pic 5}, we called an extension which is a minimal flat epimorphism, a {\it Pr\"ufer minimal} extension. 

\subsection{On the seminormalization, t-closure and u-closure}
  
\begin{definition}\label{1.3} An integral extension $R\subseteq S$ is called {\it infra-integral} \cite{Pic 2} (resp.; {\it subintegral} \cite{S}) if all its residual extensions $\kappa_R(P)\to \kappa_S(Q)$, (with $Q\in\mathrm {Spec}(S)$ and $P:=Q\cap R$) are isomorphisms (resp$.$; and the natural map $\mathrm {Spec}(S)\to\mathrm{Spec}(R)$ is bijective). An extension $R\subseteq S$ is called {\it t-closed} (cf. \cite{Pic 2}) if the relations $b\in S,\ r\in R,\ b^2-rb\in R,\ b^3-rb^2\in R$ imply $b\in R$. The $t$-{\it closure} ${}_S^tR$ of $R$ in $S$ is the smallest element $B\in [R,S]$  such that $B\subseteq S$ is t-closed and the greatest element  $B'\in [R,S]$ such that $R\subseteq B'$ is infra-integral. An extension $R\subseteq S$ is called {\it seminormal} (cf. \cite{S}) or {\it s-closed} \cite[Definition 1.5]{Pic 0} if the relations $b\in S,\ b^2\in R,\ b^3\in R$ imply $b\in R$. The {\it seminormalization} ${}_S^+R$ of $R$ in $S$ is the smallest element $B\in [R,S]$ such that $B\subseteq S$ is seminormal and the greatest  element $ B'\in[R,S]$ such that $R\subseteq B'$ is subintegral. 
  The {\it canonical decomposition} of an arbitrary ring extension $R\subset S$ is $R \subseteq {}_S^+R\subseteq {}_S^tR \subseteq \overline R \subseteq S$. 
  
An extension $R\subseteq S$ is called {\it u-closed or anodal} (cf.  \cite[Definition 1.5]{Pic 0}) if the relations $b\in S,\ b^2-b\in R,\ b^3-b^2\in R$ imply $b\in R$. If $R\subseteq S$ is an integral FCP extension, then \cite[Proposition 5.2]{Pic 14} says that $R\subseteq S$ is u-closed if and only if $R\subseteq S$ is an $i$-extension. The {\it u-closure} ${}_S^uR$ of $R$ in $S$ is the smallest element $B\in [R,S]$ such that $B\subseteq S$ is u-closed. If $R\subseteq S$ is an integral FCP extension, then ${}_S^uR$  is the smallest element $B\in[R,S]$ such that $B\subseteq S$ is an $i$-extension. It follows that when $R\subseteq S$ is an infra-integral FCP extension, ${}_S^uR$ is the smallest element $B\in[R,S]$ such that $B\subseteq S$ is subintegral.
    
\begin{remark}\label{1.300} Let $R\subseteq S$ be an integral FCP extension. In \cite[Theorem 5.8]{Pic 14}, we proved that ${}_S^uR{}_S^+R={}_S^tR$. This leads to the following commutative diagram:
 $$\begin{matrix}
{} & {} &              {}                &      {}       & {}_S^uR &       {}     & {}      & {}  & {} \\
{} & {} &              {}                &\nearrow &      & \searrow & {} &{}  & {} \\
R&\to&{}_S^uR\cap{}_S^+R&{} & {} & {} & {}_S^uR{}_S^+R={}_S^tR & \to & S \\
{} & {} &              {}               &\searrow &        {}      & \nearrow & {}     & {}  & {} \\
{} & {} &              {}               &      {}      &  {}_S^+R &       {}       & {}     & {}  & {}
\end{matrix}$$
We get a decomposition which is dual to the canonical decomposition. While $R\subseteq {}_S^+R$ is an IFCP extension (all its minimal subextension are ramified), it is not the case for $R\subseteq{}_S^uR$. In fact, $R\subseteq{}_S^uR\cap{}_S^+R$ and ${}_S^uR\subseteq{}_S^tR$ are subintegral, ${}_S^uR\cap{}_S^+R\subseteq{}_S^uR$ is seminormal infra-integral and ${}_S^tR\subseteq  S$ is t-closed.
\end{remark}
    
We gave the following dual definition of seminormalization in \cite{Pic 14}. If there exists a least $T\in[R,S]$ such that $T\subseteq S$ is subintegral, we say that $S_R^+:=T$ is the {\it co-subintegral closure} of $R\subseteq S$  of $S$ in $R$. 
 We proved that the u-closure and the t-closure are linked in the following way:
\begin{proposition}\label{1.301} \cite[Corollary 6.7 and Theorem 6.6]{Pic 14} Let $R\subseteq S$ be an integral FCP extension. Then, ${}_S^uR=({}_S^tR)^+_R$, and ${}_S^uR=S^+_R$ when $R\subseteq S$ is an infra-integral FCP extension.
 \end{proposition}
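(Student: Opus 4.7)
The plan is to prove the first identity ${}_S^uR = ({}_S^tR)^+_R$ directly, and to deduce the infra-integral case as an immediate specialization. Throughout I rely on the characterization recalled at the end of Section 1: for an integral FCP extension, ${}_S^uR$ is the smallest $B \in [R,S]$ such that $B \subseteq S$ is an $i$-extension.

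First, Remark \ref{1.300} says that ${}_S^uR \subseteq {}_S^tR$ is subintegral, so ${}_S^uR$ already lies in the set $\mathcal{T} := \{T \in [R, {}_S^tR] : T \subseteq {}_S^tR \text{ is subintegral}\}$. It then suffices to show that ${}_S^uR$ is the minimum of $\mathcal T$; this simultaneously establishes the existence of $({}_S^tR)^+_R$ and the desired equality.

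So let $T \in \mathcal T$. Subintegrality of $T \subseteq {}_S^tR$ implies bijectivity on spectra (Definition \ref{1.3}), so in particular $T \subseteq {}_S^tR$ is an $i$-extension. I claim moreover that ${}_S^tR \subseteq S$ is an $i$-extension: by the characterization above, ${}_S^uR \subseteq S$ is an $i$-extension, and since the map $\mathrm{Spec}(S) \to \mathrm{Spec}({}_S^uR)$ factors through $\mathrm{Spec}({}_S^tR)$, injectivity of the composition forces injectivity of the intermediate map $\mathrm{Spec}(S) \to \mathrm{Spec}({}_S^tR)$. Composing the two $i$-extensions, $T \subseteq S$ is itself an $i$-extension, and the minimality characterization of ${}_S^uR$ yields ${}_S^uR \subseteq T$. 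Hence ${}_S^uR = \min \mathcal T = ({}_S^tR)^+_R$.

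For the infra-integral case, Definition \ref{1.3} says that ${}_S^tR$ is the greatest $B' \in [R,S]$ for which $R \subseteq B'$ is infra-integral; if $R \subseteq S$ is itself infra-integral, then ${}_S^tR = S$, and the first assertion specializes to ${}_S^uR = S^+_R$. The only nontrivial step in the argument is the $i$-extension status of ${}_S^tR \subseteq S$, which I handle by the factorization argument above rather than by dissecting the minimal steps of the t-closed extension ${}_S^tR \subseteq S$ into inert pieces via Theorem \ref{minimal}; either route works, but the factorization approach is cleaner and avoids an induction along a maximal chain.
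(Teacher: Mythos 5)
Your argument is correct, but note that the paper itself gives no proof of this proposition: it is quoted verbatim from \cite[Corollary 6.7 and Theorem 6.6]{Pic 14}, so there is no internal proof to compare against. What you have written is a legitimate derivation of the cited statement from the other material recalled in Section 1: the characterization of ${}_S^uR$ as the least $B\in[R,S]$ with $B\subseteq S$ an $i$-extension (Definition \ref{1.3}, via \cite[Proposition 5.2]{Pic 14}), and the fact that ${}_S^uR\subseteq{}_S^tR$ is subintegral (Remark \ref{1.300}, via \cite[Theorem 5.8]{Pic 14}). Each step checks out: subintegral $\Rightarrow$ bijective on spectra $\Rightarrow$ $i$-extension; injectivity of $\mathrm{Spec}(S)\to\mathrm{Spec}({}_S^uR)$ forces injectivity of $\mathrm{Spec}(S)\to\mathrm{Spec}({}_S^tR)$; composition of $i$-extensions is an $i$-extension; minimality then gives ${}_S^uR\subseteq T$ for every $T$ with $T\subseteq{}_S^tR$ subintegral, so the co-subintegral closure of $R\subseteq{}_S^tR$ exists and equals ${}_S^uR$; and ${}_S^tR=S$ in the infra-integral case is immediate from the definition of the $t$-closure. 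Two small remarks. First, the $i$-extension status of ${}_S^tR\subseteq S$ is also available at no cost from Proposition \ref{1.31}(4) ($t$-closed FCP extensions are $i$-extensions), which would make your factorization trick unnecessary, though your route is equally valid and avoids invoking chains of inert minimal extensions. Second, be aware that the one nontrivial input, the subintegrality of ${}_S^uR\subseteq{}_S^tR$, is itself imported from \cite{Pic 14}; within the present paper's presentation this is fair game (it is recalled independently, from Theorem 5.8 of that paper), but your proof is a deduction of one cited result from another rather than a proof from first principles — if you wanted to avoid Remark \ref{1.300} entirely, you could check directly that ${}_S^uR\subseteq{}_S^tR$ is infra-integral (residual extensions of $R\subseteq{}_S^tR$ are isomorphisms) with bijective spectral map (surjectivity by integrality, injectivity by lifting primes to $S$ and using injectivity of $\mathrm{Spec}(S)\to\mathrm{Spec}({}_S^uR)$).
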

 
When $R\subseteq S$ is an FCP infra-integral extension, the co-subintegral closure always exists \cite[Theorem 6.6]{Pic 14}. But a proper co-subintegral closure may not exist. See, for example \cite[Remark 6.3]{Pic 14}.
    \end{definition}  
 
The next proposition describes the link between the elements of the canonical decomposition and minimal extensions.

\begin{proposition}\label{1.31} \cite[Proposition 2.10]{Pic 13} Let there be an integral extension $R\subset S$ and a maximal chain $\mathcal C$ of $R$-subextensions of $S$, defined by $R=R_0\subset\cdots\subset R_i\subset\cdots\subset R_n= S$, where each $R_i\subset R_{i+1}$ is  minimal,  for some $n\in\mathbb N$.
The following statements hold: 

\begin{enumerate}
\item $R\subset S$ is subintegral if and only if each $R_i\subset R_{i+1}$ is  ramified. 

\item $R\subset S$ is seminormal and infra-integral if and only if each  $R_i\subset R_{i+1}$ is decomposed. 

\item  $R\subset S$ is  infra-integral if and only if each  $R_i\subset R_{i+1}$ is either decomposed or ramified. 

\item $R \subset S$ is t-closed if and only if  each $R_i\subset R_{i+1}$ is inert. 
\end{enumerate}
If either (1) or (4) holds, then $R\subset S$ is an $i$-extension.

In particular, if $R$ is a local ring, then $|\mathrm{Max}(S)|<\infty$.
\end{proposition}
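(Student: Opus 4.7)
The plan is to prove each of the four equivalences by combining two principles: first, each type of minimal integral extension in Theorem \ref{minimal} is itself an instance of the corresponding property on the right (a ramified minimal extension is subintegral since $R/M\cong T/M'$ and the unique maximal $M'$ lies over $M$; a decomposed minimal extension is seminormal and infra-integral; an inert minimal extension is t-closed); second, each of the four properties is closed under composition of extensions, and appropriately inherited by subextensions of an integral ambient.

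For the forward direction ($\Rightarrow$) I would induct on $n$. The base $n=1$ is the first principle. The inductive step uses composition-closure of each property: infra-integrality from the factorization of iso residue field maps; t-closedness from a direct chase on witnesses (if $b\in S$ and $r\in R$ satisfy $b^2-rb,\,b^3-rb^2\in R\subseteq R_{n-1}$, then t-closedness of $R_{n-1}\subset S$ gives $b\in R_{n-1}$, and t-closedness of $R\subset R_{n-1}$ then gives $b\in R$); seminormality analogously with $r=0$; and subintegrality by combining the infra-integral and $i$-extension cases.

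For the reverse direction ($\Leftarrow$) I would first show that the relevant property passes to each $R_i\subset R_{i+1}$. Infra-integrality is inherited in both directions: if $\kappa_R(Q\cap R)\to\kappa_S(Q)$ is an iso and factors through $\kappa_{R_i}(Q\cap R_i)\to\kappa_{R_{i+1}}(Q\cap R_{i+1})$, both intermediate maps are injective, so iso. The $i$-extension property is inherited because for integral extensions each $\mathrm{Spec}$-map is surjective, so bijectivity of the composition forces bijectivity of each factor. With these in hand, each $R_i\subset R_{i+1}$ is a minimal integral extension (ramified, decomposed, or inert by Theorem \ref{minimal}) satisfying the ambient property, and the wrong types are eliminated: infra-integrality rules out inert (nontrivial residue); the $i$-extension property rules out decomposed (two primes over one); t-closedness rules out ramified via the witness $b\in M'$ with $b^2,b^3\in M'^2\subseteq M\subset R$, and rules out decomposed via an element $b$ with $b^2-b\in M\subset R$ and consequently $b^3-b^2=b(b^2-b)\in R$ but $b\notin R$.

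I expect the main obstacle to be the reverse of (2), since seminormality does not transparently pass to an interior subextension $R_i\subset R_{i+1}$: a witness $b\in R_{i+1}$ with $b^2,b^3\in R_i$ need not produce $b^2,b^3\in R$. My plan to circumvent this is to use the invariant $|\mathrm{Spec}(S)|-|\mathrm{Spec}(R)|$, which in an infra-integral chain equals the number of decomposed steps (ramified steps preserve $\mathrm{Spec}$-cardinality; decomposed add exactly one prime over the crucial maximal). For $R\subset S$ infra-integral, subintegrality of a subextension is equivalent to bijectivity of its $\mathrm{Spec}$-map, so $R\subset S$ seminormal means every $T\supsetneq R$ in $[R,S]$ strictly increases $|\mathrm{Spec}|$. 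If some step were ramified, then by the Jordan--Hölder-type invariance of the multiset of step-types across maximal chains (already used implicitly in Lemma \ref{1.9}), some maximal chain would start with a ramified step, giving $T:=R_1$ with $\mathrm{Spec}(T)=\mathrm{Spec}(R)$ and $T\neq R$, contradicting ${}_S^+R=R$. Finally, the ``in particular'' assertions are immediate: if every step is ramified or inert, every $\mathrm{Spec}(R_{i+1})\to\mathrm{Spec}(R_i)$ is bijective, so $R\subset S$ is an $i$-extension; and for $R$ local, each step adds at most one new maximal (one in the decomposed case, zero otherwise), yielding $|\mathrm{Max}(S)|\leq n+1<\infty$.
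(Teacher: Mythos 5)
Your two easy ingredients are sound: each type of minimal extension enjoys the corresponding property, each of the four properties is stable under juxtaposition of towers, and infra-integrality and the $i$-extension property do pass to arbitrary subextensions (your factorization arguments are correct), so the ``each step of type X $\Rightarrow$ global property'' directions and the eliminations in (1) and (3) go through. (Note the paper does not reprove this statement; it quotes it from \cite[Proposition 2.10]{Pic 13}.) The genuine gaps are in the converse directions of (2) and (4). For (4), your elimination of ramified and decomposed interior steps is broken as written: the crucial ideal $M$ of a step $R_i\subset R_{i+1}$ is a maximal ideal of $R_i$, and for $i\geq 1$ it is \emph{not} contained in $R$, so the inclusions ``$M'^2\subseteq M\subset R$'' and ``$b^2-b\in M\subset R$'' fail; the witnesses you construct only contradict t-closedness of the step $R_i\subset R_{i+1}$ itself, not of $R\subset S$. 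To run this argument you would need t-closedness of $R\subset S$ to be inherited by the interior steps (equivalently, that a non-inert step anywhere in the chain forces ${}_S^tR\neq R$), and this is exactly the nontrivial content of (4); it is the same obstacle you correctly identified for seminormality in (2), but here you did not address it at all.

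For (2), the proposed repair does not close the analogous gap. The ``Jordan--H\"older-type invariance of the multiset of step-types across maximal chains'' is not contained in Lemma \ref{1.9} (which only identifies $\mathrm{Supp}(S/R)$ with the contractions of the crucial ideals), and it is not available at this stage: your fiber-counting argument does show the number of decomposed steps is chain-independent, but chain-independence of the number of \emph{ramified} steps would already require that all maximal chains have the same length, i.e.\ catenarity, which in these papers is derived downstream of the present proposition. More importantly, even granting full invariance of the multiset of types, it does not produce a maximal chain whose \emph{first} step is ramified; the existence of a minimal ramified extension at the bottom is equivalent to ${}_S^+R\neq R$, which is precisely what you are trying to prove, so the reordering step is circular. (A smaller point: $|\mathrm{Spec}(S)|-|\mathrm{Spec}(R)|$ should be replaced by the finite count $\sum_{M\in\mathrm{Supp}(S/R)}(|\mathrm{fib}(M)|-1)$, since the spectra may be infinite.) What is missing, for both (2) and (4), is an inheritance statement for seminormality and t-closedness along the upper parts $R_i\subseteq S$ of an integral extension (or an argument that a ramified, resp.\ non-inert, step forces a minimal ramified, resp.\ non-inert, subextension at the level of $R$); this is the ingredient the cited proof supplies and your sketch does not.
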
 
     
\begin{corollary}\label{1.312} Let $R\subset S$ be an FCP extension. Then $R\subset S$ is an IFCP extension if and only if there exists a maximal chain of $R$-subalgebras of $S$, $R=R_0\subset R_1\subset\cdots\subset R_{n-1}\subset R_n=S$ which  is  isotopic. 
\end{corollary}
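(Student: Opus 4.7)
The forward direction is immediate: the minimal steps of any maximal chain of $R$-subalgebras of $S$ are themselves minimal subextensions of $R\subset S$, and hence share the common type.

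For the converse, fix an isotopic maximal chain $\mathcal{C}\colon R=R_0\subset R_1\subset\cdots\subset R_n=S$ of common type $X$, and let $T\subset U$ be an arbitrary minimal subextension (with $T,U\in[R,S]$); the goal is to show that $T\subset U$ also has type $X$. The strategy is to promote $X$ to a global property of $R\subset S$ and then transfer it down to $T\subset U$ by extending the latter to a maximal chain of the whole extension.

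If $X\in\{\mathrm{ramified},\mathrm{decomposed},\mathrm{inert}\}$, every step of $\mathcal{C}$ is integral, so $R\subset S$ is integral, and clause (1), (2), or (4) of Proposition \ref{1.31} applied to $\mathcal{C}$ shows that $R\subset S$ is subintegral, seminormal-and-infra-integral, or t-closed, respectively. Now extend $T\subset U$ to a maximal chain $\mathcal{C}'$ of $R\subset S$; applying the same clause of Proposition \ref{1.31} to $\mathcal{C}'$ in the reverse direction forces every step of $\mathcal{C}'$, and in particular $T\subset U$, to be of type $X$.

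If $X$ is Pr\"ufer minimal, each step of $\mathcal{C}$ is a flat epimorphism whose source is integrally closed in its target (an integral element would generate an intermediate ring, and by minimality that ring would collapse, while a proper integral flat epimorphism is an isomorphism, contradicting Theorem \ref{crucial}). A descending induction along $\mathcal{C}$ then yields $\overline{R}^{S}=R$; combined with the quasi-Pr\"uferness of FCP extensions (\cite[Corollary 3.4]{Pic 5}), this shows $R\subset S$ is Pr\"ufer, i.e.\ a normal pair. Pr\"uferness transfers to $T\subseteq U$ (every intermediate ring lies in $[R,S]$, hence is integrally closed in $S$ and a fortiori in $U$), so the minimal extension $T\subset U$ is a flat epimorphism, i.e.\ Pr\"ufer minimal, by Theorem \ref{crucial}.

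The main obstacle is the Pr\"ufer minimal case, which needs the flat-epi/integral dichotomy together with the inductive descent $\overline{R}^{S}=R$ and the hereditary behavior of the normal-pair property; the three integral cases reduce directly to Proposition \ref{1.31} via the extend-and-restrict technique.
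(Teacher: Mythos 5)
Your argument is correct and takes essentially the same route as the paper: the paper's (very terse) proof likewise upgrades the type of the given isotopic chain to a global property of $R\subset S$ via the canonical decomposition and Proposition \ref{1.31}, and then reads the type off a maximal chain through any given minimal subextension. The only divergence is your self-contained treatment of the Pr\"ufer-minimal case, where you deduce $\overline R=R$ step by step from the flat-epimorphism/integral dichotomy and invoke quasi-Pr\"uferness of FCP extensions, whereas the paper simply cites \cite[Proposition 1.3]{Pic 5}; both routes are valid.
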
 

\begin{proof} Use the canonical decomposition, Proposition \ref{1.31} and \cite[Proposition 1.3]{Pic 5}. 
\end{proof}

An IFCP integral extension is either subintegral, or both seminormal and infra-integral, or t-closed. A Pr\"ufer FCP extension is an IFCP extension.
 
\begin{corollary}\label{1.311} Let $R\subset S$ be an integral FCP extension. Then $R\subset S$ is an $i$-extension if and only if ${}_S^+R= {}_S^tR$. 
\end{corollary}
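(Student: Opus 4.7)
The plan is to use the canonical decomposition $R \subseteq {}_S^+R \subseteq {}_S^tR \subseteq \overline R = S$ (the last equality because $R\subseteq S$ is integral) together with Proposition \ref{1.31}. The three successive factors are subintegral, seminormal infra-integral, and t-closed respectively; since factors of type (1) or (4) of Proposition \ref{1.31} are $i$-extensions while factors of type (2) are emphatically not, the $i$-property of $R\subseteq S$ should amount exactly to the collapse of the middle factor.

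For the direction ``${}_S^+R = {}_S^tR \Rightarrow R\subseteq S$ is an $i$-extension'', the canonical decomposition reduces to $R \subseteq {}_S^+R = {}_S^tR \subseteq S$ in which the first piece is subintegral and the second is t-closed. Both are $i$-extensions by the concluding sentence of Proposition \ref{1.31}, so the induced spec maps are injective, and their composition $\mathrm{Spec}(S) \to \mathrm{Spec}(R)$ is injective as well.

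For the converse, assume $R\subseteq S$ is an $i$-extension. I first need the auxiliary fact that any subextension of an integral $i$-extension is itself an $i$-extension: given $R\subseteq B \subseteq C \subseteq S$, two primes of $C$ lying over the same prime of $B$ lie over the same prime of $R$, contradicting injectivity; and two distinct primes of $B$ lying over the same prime of $R$ can be lifted by lying-over to distinct primes of $C$ over the same prime of $R$, again a contradiction. Applied to ${}_S^+R \subseteq {}_S^tR$, this yields that ${}_S^+R \subseteq {}_S^tR$ is an $i$-extension. But by construction it is seminormal and infra-integral, so by Proposition \ref{1.31}(2) any minimal step inside it is decomposed. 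A decomposed minimal extension has two maximal ideals over its crucial ideal (Theorem \ref{minimal}(b)), so it is not an $i$-extension, which, in view of the auxiliary fact, precludes the existence of any such minimal step. FCP then forces ${}_S^+R = {}_S^tR$.

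The only subtle point is the auxiliary stability of the $i$-property under taking subextensions in the integral setting; once this is in place the result is a direct reading of the canonical decomposition through Proposition \ref{1.31}.
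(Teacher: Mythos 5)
Your proof is correct and follows essentially the same route as the paper: the forward direction composes the two injective (indeed bijective) spectral maps coming from the subintegral and t-closed pieces of the canonical decomposition, and the converse detects a minimal decomposed step inside ${}_S^+R\subset{}_S^tR$ via Proposition \ref{1.31}(2) and Theorem \ref{minimal}(b). Your auxiliary lemma on stability of the $i$-property under subextensions (a lying-over argument) merely makes explicit the step the paper dismisses as obvious, namely that non-injectivity in the middle layer propagates to $\mathrm{Spec}(S)\to\mathrm{Spec}(R)$.
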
 

\begin{proof} If ${}_S^+R={}_S^tR$, then $\mathrm{Spec}(S)\to\mathrm{Spec}(R)$ is bijective because so are $\mathrm{Spec}(S)\to\mathrm{Spec}({}_S^tR)$ and $\mathrm{Spec}({}_S^+R)\to\mathrm{Spec}(R)$ by Proposition \ref{1.31}.

The converse is obvious because ${}_S^+R\neq{}_S^tR$ implies that $\mathrm{Spec}({}_S^tR)\to\mathrm {Spec}({}_S^+R)$ is not bijective according to Theorem \ref{minimal} since there is a minimal decomposed extension as a subextension of ${}_S^+R\subset {}_S^tR$.
\end{proof}
 
\section {First properties of the lattice $[R,S]$}

The lattice of  an FCP extension $R\subseteq S$  is  complete and both Noetherian  and Artinian,  with $R$ as the least element and $S$ as the largest element. 

In the context of a lattice $[R,S]$, some  definitions and properties of lattices have the following formulations. 

An element $T$ of $[R,S]$ is an {\it atom} if and only if $R\subset T$ is a minimal extension. We denote by $\mathcal{A}$ the set of atoms of $[R,S]$. 
  
An extension $R\subset S$ is called: 

(a) {\it catenarian}, or graded by some authors, if $R\subset S$ has FCP and all maximal chains between two comparable elements have the same length (the Jordan-H\"older chain condition) \cite{Pic 12}. This concept was studied for field extensions by Dobbs and Shapiro \cite{DS2}.

(b) {\it distributive} if intersection and product are each distributive with respect to the other. Actually, each distributivity implies the other \cite[Exercise 5, page 33]{NO}. 

(c) {\it Boolean} if $[R,S]$ is a distributive lattice such that each $T\in[R,S]$ has a (necessarily unique) complement. We say that $R\subset S$ is a {\it $B_2$-extension} if $R\subset S$ is a Boolean extension of length 2. 
  
(d) a {\it diamond} if $\ell[R,S]=2$ and $|[R,S]|=5 $. Moreover, \cite[Theorem 1, page 59]{G} says that an extension $R\subset S$ is distributive if and only if it   contains neither a pentagon, that is an extension $V\subset W$ of length 3 such that $|[V,W]|=5$, nor a diamond.  

(e) Let $\mathcal{C}:=\{T_i\}_{i\in\mathbb N_n}\subseteq]R,S[,\ n\geq 1$ be a finite chain. Then, $R\subset S$ is called {\it pinched} at $\mathcal{C}$ if $[R,S]=\cup_{i=0}^n[T_i,T_{i+1}]$, where $T_0:=R$ and $T_{n+1}:=S$, which means that any element of $[R,S]$ is comparable to the $T_i$'s. 
 
 \begin{proposition}\label{1.001} \cite[Figure 1]{LSi} An  extension $R\subset S$ is a $B_2$-extension if and only if $\ell[R,S]=2$ and $|[R,S]|=4$.
 \end{proposition}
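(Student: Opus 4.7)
The plan is to prove both directions directly from the definitions of $B_2$-extension, diamond, and the characterization of distributivity via the forbidden sublattices (diamond and pentagon). Since $\ell[R,S]=2$ is assumed on both sides, the ambient lattice has only three possible levels ($R$ at the bottom, atoms in the middle, $S$ at the top), which makes the enumeration of cases very short.

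For the forward direction, suppose $R\subset S$ is a $B_2$-extension, so $[R,S]$ is distributive with $\ell[R,S]=2$ and every element has a complement. Since $\ell[R,S]=2$, there is at least one intermediate element, whence $|[R,S]|\geq 3$. I would first rule out $|[R,S]|=3$: if $[R,S]=\{R,T,S\}$, then a complement $T'$ of $T$ must lie in $[R,S]$ and satisfy $T\cap T'=R$ and $TT'=S$, but none of $R,T,S$ works (respectively $T,T,T$ on the product side). Next I would rule out $|[R,S]|\geq 5$ by invoking the diamond/pentagon characterization from item (d): with $\ell[R,S]=2$ there cannot be a chain of length $3$, so a pentagon is impossible; and having $\ell[R,S]=2$ together with $|[R,S]|\geq 5$ produces a diamond (three or more atoms together with $R$ and $S$), contradicting distributivity. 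Hence $|[R,S]|=4$.

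For the converse, assume $\ell[R,S]=2$ and $|[R,S]|=4$. Write $[R,S]=\{R,T_1,T_2,S\}$ where $T_1,T_2$ are necessarily atoms (since $\ell[R,S]=2$ forces every proper intermediate subalgebra to cover $R$). Distributivity follows from (d): a pentagon requires length $3$, which is excluded by $\ell[R,S]=2$, and a diamond requires $|[R,S]|=5$, which is excluded by $|[R,S]|=4$. It remains to show that every element has a complement. The elements $R$ and $S$ are complements of each other. For $T_1$: since $T_1\cap T_2$ is strictly contained in both atoms $T_1$ and $T_2$, the only possibility in the four element lattice is $T_1\cap T_2=R$; dually, $T_1T_2$ strictly contains both $T_1$ and $T_2$, so $T_1T_2=S$. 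Thus $T_2$ is the complement of $T_1$, and symmetrically. Therefore $R\subset S$ is Boolean of length $2$.

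The proof is essentially a small finite case analysis, so no step is truly hard. The only subtle point, which I would state carefully, is the elimination of the $|[R,S]|=3$ case on the forward direction: one must remember that complements are required to exist \emph{within} $[R,S]$, not in some ambient structure. All other steps reduce to applying the diamond/pentagon criterion (d) and the definition of atoms in an extension of length $2$.
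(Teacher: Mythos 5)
Your argument is correct, but it is worth noting that the paper does not actually prove this proposition: it simply cites \cite[Figure 1]{LSi}, where the possible lattices of a length-two extension are displayed, so the statement is taken as a quotation of {\L}azarz--Sieme{\'n}czuk rather than established internally. Your proposal replaces that citation by a short self-contained case analysis, and every step checks out: the elimination of $|[R,S]|=3$ by the non-existence of a complement for the unique intermediate element is the right observation (and, as you say, the complement must be sought inside $[R,S]$); the elimination of $|[R,S]|\geq 5$ works because length $2$ forces every proper intermediate algebra to be simultaneously an atom and a coatom, so three of them together with $R$ and $S$ violate distributivity (one can even verify it directly: $T_1\cap(T_2T_3)=T_1$ while $(T_1\cap T_2)(T_1\cap T_3)=R$), which matches the paper's criterion (d); and in the converse your determination that $[R,S]=\{R,T_1,T_2,S\}$ with $T_1\cap T_2=R$ and $T_1T_2=S$ exhibits the lattice as the four-element Boolean lattice, so distributivity and complementation follow at once, even independently of the appeal to the pentagon/diamond criterion. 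What the paper's route buys is brevity and consistency with its source \cite{LSi}; what yours buys is a proof that can be read without consulting that reference, at the cost of a few lines of enumeration.
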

 
\begin{proposition} \label{1.0} \cite[Proposition 3.2]{Pic 12}, \cite[Lemma 2.10]{Pic 10} A distributive lattice of finite length is catenarian and has FIP.  
 \end{proposition}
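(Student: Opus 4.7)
The plan is to prove the two conclusions (catenarity and FIP) separately, starting from the observation that distributivity implies modularity.

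For the catenarian conclusion, I would invoke the Jordan--Hölder/Jordan--Dedekind chain condition for modular lattices: in any modular lattice of finite length, two saturated chains with the same endpoints have the same length. Since every distributive lattice is modular, this condition holds in $[R,S]$, which is exactly the definition of catenarity. This is classical lattice theory (as treated in \cite{NO}), so the work here is essentially a citation.

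For FIP I would argue by induction on $n := \ell[R,S]$. The cases $n = 0$ and $n = 1$ are immediate. For the inductive step, the first key step is to show that the set $\mathcal A$ of atoms of $[R,S]$ is finite. Suppose there were a countable family $T_1,T_2,\ldots$ of distinct atoms; using distributivity and the fact that $T_i \cap T_j = R$ whenever $i\neq j$, one computes
\[
T_i \cap \bigl(T_1 \cdots T_{i-1}\bigr) \;=\; \bigl(T_i\cap T_1\bigr)\cdots\bigl(T_i\cap T_{i-1}\bigr) \;=\; R,
\]
so $T_i \not\subseteq T_1\cdots T_{i-1}$; the products $T_1\cdots T_i$ then form a strictly ascending chain, contradicting $\ell[R,S] = n < \infty$. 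Hence $\mathcal A = \{T_1,\ldots,T_k\}$ is finite. The second key step is to observe that every $U \in [R,S]\setminus\{R\}$ contains some atom: since a distributive lattice of finite length is Artinian, the nonempty set $\{V \in [R,S] : R \subsetneq V \subseteq U\}$ has a minimal element, which is an atom $\leq U$. Therefore
\[
[R,S] \;=\; \{R\} \,\cup\, \bigcup_{i=1}^{k}[T_i,S].
\]
Each $[T_i,S]$ is again distributive and has length at most $n-1$, so by induction each is finite; the union of finitely many finite sets is finite, giving FIP.

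The main obstacle is the atom-finiteness step: one must use distributivity in an essential way (mere modularity does not suffice, as the lattice of lines in a plane over an infinite field shows). Once atom-finiteness is in hand and we combine it with the Artinian property (which gives existence of atoms below every nontrivial element) the induction closes easily. I expect the remainder of the argument to be bookkeeping, and the catenarian part to reduce to a reference.
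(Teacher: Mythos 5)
Your argument is correct. Note that the paper itself gives no proof of this statement: it is quoted from \cite[Proposition 3.2]{Pic 12} and \cite[Lemma 2.10]{Pic 10}, so there is no internal argument for you to diverge from. What you supply is a sound, self-contained proof along classical lines. The catenarian half is indeed just the Jordan--Dedekind chain condition for modular lattices of finite length, and finite length gives FCP for free, so the paper's definition of catenarian is met. For FIP, your atom-finiteness step is the essential use of distributivity: for pairwise distinct atoms $T_1,\dots,T_m$ one has $T_i\cap T_j=R$, hence $T_i\cap(T_1\cdots T_{i-1})=(T_i\cap T_1)\cdots(T_i\cap T_{i-1})=R$, so the products $T_1\cdots T_i$ form a strictly increasing chain and $m\leq\ell[R,S]$; combined with the Artinian property (every $U\neq R$ lies above some atom) and induction on length via the intervals $[T_i,S]$, finiteness follows. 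Your counterexample remark (the subspace lattice of a plane over an infinite field) correctly pinpoints why modularity alone would not do. An alternative standard route, closer to what the cited sources presumably use, is to note that in a distributive lattice of finite length $n$ the join-irreducible elements number exactly $n$ and every element is a join of join-irreducibles, giving $|[R,S]|\leq 2^n$ at once; your induction is an equivalent, slightly more hands-on version of the same counting, and both are equally serviceable here.
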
 
 Proposition \ref{1.0} states that a distributive FCP extension is catenarian. The following Corollary gives a converse. 
  
\begin{corollary} \label{9.02} A catenarian extension $R\subset S$ is distributive if and only if, for any subextension $V\subset W$ of length 2, then $|[V,W]|\leq 4$, or, equivalently, for any subextension $V\subset W$ of length 2, either  $V\subset W$ is chained, or  $V\subset W$  is  a $B_2$-extension.
\end{corollary}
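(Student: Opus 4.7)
The plan is to invoke the Birkhoff-style characterization recalled in item (d): an extension is distributive if and only if its lattice contains neither a pentagon nor a diamond as a sublattice. The argument then splits into three easy pieces.

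For the forward implication, note that distributivity passes to every subextension $V\subset W$, since a sublattice of a distributive lattice is distributive. A lattice of length $2$ is determined by its atoms: if $A_1,\ldots,A_n$ are the pairwise incomparable atoms of $[V,W]$, then $[V,W]=\{V,A_1,\ldots,A_n,W\}$ has $n+2$ elements. When $|[V,W]|\geq 5$ we have $n\geq 3$, and $\{V,A_1,A_2,A_3,W\}$ is a diamond sublattice, contradicting the distributivity of $[V,W]$; hence $|[V,W]|\leq 4$.

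For the reverse implication, assume $R\subset S$ is catenarian and that every length $2$ subextension has at most four elements. A diamond sublattice would be a length $2$ subextension of cardinality $5$, which is excluded by hypothesis. A pentagon sublattice would be a subextension of length $3$ carrying one maximal chain of length $2$ and another of length $3$ between the same endpoints, violating the Jordan--H\"older chain condition built into catenarity. Since neither forbidden configuration appears, item (d) yields the distributivity of $R\subset S$.

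The reformulation ``chained or $B_2$'' then follows from Proposition \ref{1.001}: a length $2$ subextension has cardinality at least $3$, it equals $3$ exactly when $V\subset W$ is chained, and it equals $4$ exactly when $V\subset W$ is a $B_2$-extension. The only mild subtlety in the whole argument is noticing that catenarity by itself eliminates the pentagon obstruction, so that only the diamond obstruction remains to be tested, and this test is purely a length $2$ property --- which is precisely what makes the criterion usable in practice.
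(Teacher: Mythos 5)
Your forward implication and the closing reformulation via Proposition \ref{1.001} are fine, and your overall strategy (forbidden configurations, catenarity against pentagons, the length-$2$ hypothesis against diamonds) is the paper's. The genuine gap is in the reverse implication: you state the criterion of item (d) in its sublattice form and then silently treat the forbidden sublattices as intervals. A pentagon sublattice $\{x,a,b,c,y\}$ of $[R,S]$ does not produce two maximal chains of different lengths between $x$ and $y$, because the chains $x\subset a\subset b\subset y$ and $x\subset c\subset y$ need not be saturated in $[R,S]$ and may refine to maximal chains of equal length; graded lattices can perfectly well contain pentagon sublattices. Likewise a diamond sublattice $\{x,a,b,c,y\}$ need not satisfy $\ell[x,y]=2$, so it is not excluded by your hypothesis on length-$2$ subextensions. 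At the level of abstract lattices these two reductions cannot be repaired: glue three chains $x<a_1<a_2<y$, $x<b_1<b_2<y$, $x<c_1<c_2<y$ along their common endpoints. The resulting lattice is catenarian, every interval of length $2$ in it is a three-element chain, yet $\{x,a_1,b_1,c_1,y\}$ is a diamond sublattice and $\{x,a_1,a_2,b_1,y\}$ is a pentagon sublattice, so it is not even modular. Hence ``catenarian plus every length-$2$ interval has at most $4$ elements'' does not, by pure lattice theory, exclude $M_3$ or $N_5$ sublattices.

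The paper avoids this because its item (d) recalls the criterion in interval form: a diamond is an interval $[V,W]$ with $\ell[V,W]=2$ and $|[V,W]|=5$, and a pentagon is an interval of length $3$ with $5$ elements. With that reading both checks are immediate, exactly as in the paper's proof: a pentagon interval carries maximal chains of lengths $2$ and $3$ between the same endpoints, contradicting catenarity, and a diamond interval is excluded by hypothesis; Proposition \ref{1.001} then gives the reformulation. So either invoke (d) as the paper states it (forbidden intervals, which is the tool the paper takes as given for these lattices), or supply the missing reduction from forbidden sublattices to forbidden intervals in $[R,S]$; that reduction is precisely what cannot be extracted from catenarity and your length-$2$ hypothesis alone, as the example above shows.
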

\begin{proof} One implication is Proposition \ref{1.0}. Conversely, assume that $R\subset S$ is a catenarian extension such that for any subextension $V\subset W$ of length 2, then $|[V,W]|\leq 4$. Since $R\subset S$ is catenarian, it does not contain a pentagon. Assume that there exist $V, W\in[R,S]$ such that $V\subset W$ is a diamond. Then, $|[V,W]|=5$ and $\ell[V,W]=2$, a contradiction with the assumption. It follows that $R\subset S$ is distributive. The last equivalence comes from  Proposition \ref{1.001}. 
\end{proof}

The following Proposition summarize \cite[Propositions 7.1, 7.4 and 7.6] {DPPS}, whereas (1) was proved in \cite[Proposition 2.15]{Pic 13}.
 
\begin{proposition}\label{3.6} Let $R\subset T$ and $R\subset U$ be two distinct minimal integral extensions, whose compositum $S:=TU$ exists. Setting $M:=\mathcal{C}(R,T)$ and $N:=\mathcal{C}(R,U)$, the following statements hold: 
 \begin{enumerate}
 \item If $M\neq N$, then $[R,S]=\{R,T,U,S\}$.
 
\item If $M=N,\ R\subset T$ is inert and $R\subset U$ is not inert, then $R\subset S$ is not catenarian.
 
\item If $M=N$ with $R\subset T$ and $R\subset U$ are  both non-inert, then $R\subset S$ is catenarian and infra-integral. Moreover, we have the following:
 \begin{enumerate}
\item If $PQ\subseteq M$ for some $P\in\mathrm{Max}(T)$ and some $Q\in\mathrm{Max}(U)$ lying above $M$, then $\ell[R,S]=2$.

\item If $PQ\not\subseteq M$ for any $P\in\mathrm{Max}(T)$ and any $Q\in\mathrm{Max}(U)$ lying above $M$, then $\ell[R,S]=3$.  
 \end{enumerate}
  \end{enumerate}
  \end{proposition}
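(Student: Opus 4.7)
The plan is to treat the three statements separately, reducing (1) to a local computation at the crucial ideals $M$ and $N$, and (2)--(3) to an analysis of the primes of $S$ above the common crucial ideal, using the trichotomy of Theorem \ref{minimal}.

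For (1), I would first establish $\mathrm{Supp}(S/R) \subseteq \{M, N\}$: the chain $R \subset T \subset S$ is maximal, with the step $T \subset S$ obtained by base-changing the minimal extension $R \subset U$ along $R \to T$ (explicitly $S = T[u]$ for any generator $u$ of $U/R$), whose crucial ideal then lies above $N$; Lemma \ref{1.9} concludes. Then $U_M = R_M$ gives $S_M = T_M U_M = T_M$, and symmetrically $S_N = U_N$. Any $V \in [R,S]$ is therefore determined by the pair $(V_M, V_N)$, with $V_M \in \{R_M, T_M\}$ and $V_N \in \{R_N, U_N\}$ by minimality. This bounds $|[R,S]|$ by $4$, and the four candidates $R, T, U, S$ are distinct because $M \ne N$.

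For (2), with $M = N$, I would produce two maximal chains between $R$ and $S$ of different lengths. The chain $R \subset T \subset S$ is maximal of length $2$, since $T \subset S$ is minimal as in (1) and $T \ne S$ (because $U \not\subseteq T$: inert versus non-inert forces distinct residue structures at $M$). The mismatch of residue-field data (Theorem \ref{minimal}(a)) for $R \subset T$ against the extra maximal ideals (case (b)) or the nilpotent element (case (c)) for $R \subset U$ then provides a third algebra $V \in \,]R,S[$ incomparable with $T$, built as the compositum of $T$ with a proper ``piece'' of $U$ attached to one of its maximal ideals above $M$. This $V$ sits in a maximal chain of length $\geq 3$, contradicting catenarity.

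For (3), since both $R \subset T$ and $R \subset U$ are non-inert, Proposition \ref{1.31} makes them infra-integral, and I would transfer this to $R \subset S$ by checking that the chain $R \subset T \subset S$ refines into minimal non-inert steps only. Catenarity then follows from the Jordan--H\"older property for infra-integral FCP extensions. The value of $\ell[R,S]$ is then an ideal-theoretic computation driven by whether some product $PQ$ lies in $M$: in (3a), $PQ \subseteq M$ collapses the compositum so that $T \subset S$ remains minimal and every $V \in [R,S]$ lies in $[R,T] \cup [R,U]$, yielding $\ell[R,S] = 2$; in (3b), the failure $PQ \not\subseteq M$ produces a proper intermediate algebra strictly between $T$ (resp.\ $U$) and $S$, giving $\ell[R,S] = 3$. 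The main obstacle is constructing and analyzing these extra intermediate algebras in (2) and (3b): one must exhibit an element of $\,]R,S[$ not comparable to $T$ or $U$, verify its position in the lattice, and track the Ferrand--Olivier type of each newly created minimal step using the residue-field, conductor, and nilpotent data of Theorem \ref{minimal}, with a careful case split across the subcases decomposed-decomposed, decomposed-ramified, and ramified-ramified.
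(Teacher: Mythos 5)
First, note that the paper does not actually prove Proposition \ref{3.6}: it is quoted from \cite[Propositions 7.1, 7.4 and 7.6]{DPPS} and \cite[Proposition 2.15]{Pic 13}, so your sketch has to stand on its own. Part (1) is essentially correct (in fact you do not even need minimality of $T\subset S$: $S_P=T_PU_P=R_P$ for $P\neq M,N$ gives $\mathrm{Supp}(S/R)\subseteq\{M,N\}$ directly, and your local determination argument then closes the case). The problems begin in (2). Your justification that $T\subset S$ is minimal ``as in (1)'' does not apply: the argument of (1) is precisely localization at the second crucial ideal, where $T$ collapses to $R$, and this is unavailable when $M=N$. What is needed instead is that $M$ is an ideal of $T$ and of $U$, hence of $S$, and that the natural surjection $(T/M)\otimes_{R/M}(U/M)\to S/M$ is injective in case (2), because every nonzero ideal of $K\times K$ or of $K[x]$ (with $k:=R/M$, $K:=T/M$, $x^2=0$) meets the embedded copy of $U/M$; only then is $T\subset S$ minimal. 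Moreover, your ``third algebra $V$ built as the compositum of $T$ with a proper piece of $U$'' which is ``incomparable with $T$'' is self-contradictory: any compositum with $T$ contains $T$. The actual witnesses are, e.g., $K\times k$ inside $S/M\cong K\times K$ (decomposed case) and $k+Kx$ inside $S/M\cong K[x]$ (ramified case); they contain the image of $U$, not of $T$, and they produce the maximal chain of length at least $3$.

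In (3) there is a concretely false statement and, more importantly, the substantive step is missing. The claim in (3a) that every $V\in[R,S]$ lies in $[R,T]\cup[R,U]$ fails: if $R\subset T$ and $R\subset U$ are both decomposed and $PQ\subseteq M$, then $S/M\cong(R/M)^3$ and $[R,S]$ is a five-element diamond, containing an algebra incomparable to both $T$ and $U$ (this is exactly why Lemma \ref{6.51} forbids two minimal subextensions of the same type in the distributive setting). The conclusion $\ell[R,S]=2$ survives, but only via catenarity together with minimality of $T\subset S$, not via that inclusion. And the heart of (3a)/(3b) --- that $PQ\subseteq M$ for some $P,Q$ above $M$ is equivalent to a one-dimensional collapse $\dim_{R/M}(S/M)=3$ (hence $\ell=2$), while $PQ\not\subseteq M$ for all such $P,Q$ forces $S/M\cong(T/M)\otimes_{R/M}(U/M)$ of dimension $4$ (hence $\ell=3$) --- is nowhere argued: you defer it as ``the main obstacle'', but this case-by-case analysis (decomposed--decomposed, decomposed--ramified, ramified--ramified) \emph{is} the proof, and it is precisely the content of the cited results of \cite{DPPS}. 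Finally, the appeal to a Jordan--H\"older property of infra-integral FCP extensions should either carry a reference to \cite{Pic 12} or be replaced by the elementary count available here: modulo the common ideal $M$, every minimal ramified or decomposed step raises $\dim_{R/M}$ by exactly one, which yields catenarity and the length values simultaneously. In sum, part (1) is sound, but (2) and (3) as written have genuine gaps.
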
 
  
\begin{proposition} \label{1.4} An FCP extension is distributive if and only if  each subextension $T\subseteq U$ of $R\subseteq S$ is distributive.
\end{proposition}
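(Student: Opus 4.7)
The proof I have in mind is essentially a one-line observation together with the verification that the lattice operations behave correctly under passage to intervals, so the plan will be short.

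First, I would dispatch the $(\Leftarrow)$ direction immediately by specializing $T=R$ and $U=S$: the hypothesis then says $R\subseteq S$ itself is distributive.

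For the $(\Rightarrow)$ direction, my plan is to show that for any $T,U\in[R,S]$ with $T\subseteq U$, the interval $[T,U]$ is a sublattice of $[R,S]$ on which both lattice operations agree with those of $[R,S]$. Concretely, for $V,W\in[T,U]$ the meet $V\cap W$ lies in $[T,U]$ since $T\subseteq V\cap W\subseteq U$, and the compositum $VW$ (the join in $[R,S]$) also lies in $[T,U]$ since $T\subseteq V\subseteq VW\subseteq U$ (the ambient ring $U$ is closed under addition and multiplication, so it contains any compositum of its subrings). Hence the join of $V,W$ computed inside $[T,U]$ coincides with $VW$. Consequently, the distributive law $V\cap(WX)=(V\cap W)(V\cap X)$ which holds for all $V,W,X\in[R,S]$ specializes, in particular, to all $V,W,X\in[T,U]$, so $[T,U]$ is distributive.

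Finally, I would note that FCP is obviously inherited by every subextension (any chain in $[T,U]$ is a chain in $[R,S]$), so the subextension $T\subseteq U$ is indeed an FCP distributive extension in the sense of the paper. The main (and only) obstacle is the trivial bookkeeping to confirm that meets and joins in the interval $[T,U]$ match those inherited from $[R,S]$; there is no substantive difficulty, which is why the statement is recorded as a proposition and not a theorem.
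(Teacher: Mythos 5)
Your proposal is correct and fills in exactly the details behind the paper's one-word proof (``Obvious''): the interval $[T,U]$ is a sublattice of $[R,S]$ with the same meet (intersection) and join (compositum), so distributivity passes to subextensions, and the converse is the trivial specialization $T=R$, $U=S$. This is the same argument the paper has in mind, just written out.
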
 
 \begin{proof} Obvious.
 \end{proof}  
  
Before giving new results about the distributivity of ring extensions, we sum up some results of \cite{Pic 10}. 
  
\begin{proposition}\label{1.014} \cite[Proposition 2.4]{Pic 10} Let $R\subseteq S$ be a ring  extension. The following statements are equivalent: 
\begin{enumerate}
\item $R\subseteq  S$ is distributive;

\item $R_M\subseteq S_M$ is distributive for each $M\in\mathrm{MSupp}(S/R)$;

\item $R_P\subseteq S_P$ is distributive for each $P\in\mathrm{Supp}(S/R)$;

\item $R/I\subseteq S/I$ is distributive for each ideal $I$ shared by $R$ and $S$;

\item $R/I\subseteq S/I$ is distributive for some ideal $I$ shared by $R$ and $S$.
\end{enumerate}
\end{proposition}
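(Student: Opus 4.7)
The plan is to exploit two transfer principles for the lattice $[R,S]$: compatibility with localization at primes of $R$ and with passage to quotients modulo ideals common to $R$ and $S$. Concretely, for any $T, U \in [R,S]$ and $P \in \mathrm{Spec}(R)$, localization commutes with both lattice operations, $(T \cap U)_P = T_P \cap U_P$ and $(TU)_P = T_P U_P$; and for any ideal $I$ shared by $R$ and $S$, the assignment $T \mapsto T/I$ is a lattice isomorphism $[R,S] \to [R/I, S/I]$ respecting intersection and product. Since distributivity is a purely lattice-theoretic property, it transfers along isomorphisms in both directions.

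These principles yield $(1) \Leftrightarrow (4) \Leftrightarrow (5)$ directly via the isomorphism $[R,S] \cong [R/I, S/I]$, which sends distributive identities to distributive identities (and with $I = 0$ one recovers $(1)$). The implication $(3) \Rightarrow (2)$ is immediate from $\mathrm{MSupp}(S/R) \subseteq \mathrm{Supp}(S/R)$. For $(2) \Rightarrow (3)$, any $P \in \mathrm{Supp}(S/R)$ sits below some $M \in \mathrm{MSupp}(S/R)$, and $R_P \subseteq S_P$ is obtained from $R_M \subseteq S_M$ by further localization at $PR_M$; the transfer principle applied inside $[R_M, S_M]$ then delivers distributivity at $P$.

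The essential content is $(1) \Leftrightarrow (3)$. For $(3) \Rightarrow (1)$: given $T, U, V \in [R,S]$, the inclusion $(T \cap U)(T \cap V) \subseteq T \cap (UV)$ is automatic, and equality may be tested after localizing at each $P \in \mathrm{Spec}(R)$. If $P \notin \mathrm{Supp}(S/R)$, both sides collapse to $R_P = S_P$; if $P \in \mathrm{Supp}(S/R)$, the transfer principle combined with distributivity of $R_P \subseteq S_P$ equates the two localizations, and the local-global principle for $R$-submodules of $S$ forces global equality. For $(1) \Rightarrow (3)$, every distributive identity in $[R,S]$ localizes to the corresponding identity for triples of the form $(T_P, U_P, V_P)$ inside $[R_P, S_P]$.

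The main obstacle lies in closing this last step: a priori, arbitrary subalgebras of $S_P$ over $R_P$ need not descend from $[R,S]$, so identities on the image of the localization map do not automatically yield full distributivity of $[R_P, S_P]$. In the FCP framework underlying the paper, this difficulty evaporates, since every $T' \in [R_P, S_P]$ equals $T_P$ for some $T \in [R,S]$ (take $T := T' \cap S$ and use finiteness). Alternatively, one may invoke the pentagon/diamond characterization and check that any such forbidden sublattice in $[R_P, S_P]$ lifts to one in $[R,S]$ via contraction, which is the point where some care is genuinely needed.
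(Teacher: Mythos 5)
The paper offers no proof of this proposition (it is quoted from the earlier paper cited as [Pic 10, Proposition 2.4]), so there is nothing to compare line by line; your route is the natural one and is essentially correct: the quotient map $T\mapsto T/I$ is a lattice isomorphism $[R,S]\to[R/I,S/I]$, giving $(1)\Leftrightarrow(4)\Leftrightarrow(5)$, while $T\mapsto T_P$ respects $\cap$ and the product, and equality of the two sides of the distributive law is a local condition on $R$-submodules of $S$, giving the local-global equivalences.

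The one point you must repair is your closing paragraph. The statement carries no FCP hypothesis, so you cannot lean on finiteness to get surjectivity of $[R,S]\to[R_P,S_P]$; if that step genuinely required FCP, the direction $(1)\Rightarrow(3)$ (and with it your proof of $(2)\Rightarrow(3)$, which invokes the same principle for $R_M\subseteq S_M$) would be unproved in the stated generality. In fact no finiteness is needed: for $T'\in[R_P,S_P]$ take $T:=\varphi^{-1}(T')$, where $\varphi:S\to S_P$ is the canonical map (this, rather than ``$T'\cap S$'', since $S$ need not embed in $S_P$). Then $T\in[R,S]$ and $T_P=T'$: if $x=s/u\in T'$ with $s\in S$, $u\in R\setminus P$, then $s/1=(u/1)x\in T'$ because $u/1\in R_P\subseteq T'$, so $s\in T$ and $x\in T_P$; the inclusion $T_P\subseteq T'$ is clear since $1/u\in R_P\subseteq T'$. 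With surjectivity in hand, $[R_P,S_P]$ is a homomorphic image of the distributive lattice $[R,S]$ under a map you have already checked to preserve both lattice operations, hence is distributive; the proposed detour through pentagons and diamonds is unnecessary (and delicate, since contraction preserves intersections but not products, so forbidden sublattices do not obviously lift).
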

 
\begin{proposition} \label{desc} \cite[Proposition 3.7]{Pic 10} Let $R\subset S$  be a ring extension, $f:R\to R'$ a faithfully flat ring morphism and $S':=  R'\otimes_RS$. If $R'\subset S'$ is distributive, then, $R\subset S$ is distributive.
   \end{proposition}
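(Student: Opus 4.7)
The plan is to push the distributivity identity from $[R',S']$ down to $[R,S]$ via faithful flat descent. First I set up a base-change map $\phi : [R,S] \to [R',S']$ by sending $T \mapsto T' := R' \otimes_R T$. Flatness of $R \to R'$ ensures that the induced map $T \otimes_R R' \to S \otimes_R R' = S'$ is injective, so $T'$ is canonically identified with an $R'$-subalgebra of $S'$, and $\phi$ is order preserving.

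The key technical step is to check that $\phi$ commutes with the two lattice operations on $[R,S]$. For intersection, the exact sequence $0 \to T \cap U \to T \oplus U \to S$ with last arrow $(t,u) \mapsto t-u$ remains exact after tensoring with the flat $R$-module $R'$, giving $(T \cap U)' = T' \cap U'$ inside $S'$. For the compositum, $TU$ is the image of the multiplication map $\mu : T \otimes_R U \to S$; applying the right-exact functor $-\otimes_R R'$ and using the canonical isomorphism $(T \otimes_R U) \otimes_R R' \cong T' \otimes_{R'} U'$ identifies $(TU)'$ with the image of $T' \otimes_{R'} U' \to S'$, which is exactly $T'U'$.

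With both laws preserved, the descent is immediate. Given $T,U,V \in [R,S]$, distributivity in $[R',S']$ yields
\[ (T \cap UV)' \;=\; T' \cap (U'V') \;=\; (T' \cap U')(T' \cap V') \;=\; \bigl((T\cap U)(T\cap V)\bigr)'. \]
The lattice inclusion $(T\cap U)(T\cap V) \subseteq T \cap UV$ always holds, and if $A \subseteq B$ are $R$-submodules of $S$ with $A' = B'$ inside $S'$, then applying $-\otimes_R R'$ to the exact sequence $0 \to A \to B \to B/A \to 0$ gives $(B/A)\otimes_R R' = 0$, so faithful flatness forces $B/A = 0$. Hence $T \cap UV = (T \cap U)(T \cap V)$, and one of the two equivalent distributivity laws for $[R,S]$ is established.

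The main obstacle is the verification in the second paragraph that the lattice operations, particularly the compositum, which involves the ring structure and not just the $R$-module structure, are compatible with the tensor base change. Once those compatibilities are in hand, the argument is a purely formal use of faithful flatness.
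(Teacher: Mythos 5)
Your argument is correct: by flatness the base-change map $T\mapsto R'\otimes_RT$ lands in $[R',S']$, it commutes with intersections and composita by the exact-sequence and image arguments you give, and faithful flatness converts the resulting equality $(T\cap UV)'=\bigl((T\cap U)(T\cap V)\bigr)'$ into $T\cap UV=(T\cap U)(T\cap V)$, which is one of the two equivalent distributive laws. The paper gives no proof of this statement, citing \cite[Proposition 3.7]{Pic 10} instead, and your faithfully flat descent argument is the standard one behind that reference, so there is nothing to flag.
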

   
\begin{remark} We deduce from the above statement, that the distributive property is local on the spectrum. This means that for a ring extension $R \subset S$ and any finite set $\{r_1,\ldots,r_n\}$ of elements of $R$, such that $R =Rr_1+\cdots+Rr_n$, then $R\subset S$ is a distributive extension if and only if all the extensions $R_{r_i}\subset S_{r_i}$ have the distributive property. 
\end{remark}
   
Given a ring $R$, recall that its {\it Nagata ring} $R(X)$ is the localization $R(X)=T^{-1}R[X]$ of the ring of polynomials $R[X]$ with respect to the multiplicatively closed subset $T$ of all polynomials with contents $R$. In \cite[Theorem 32]{DPP4}, Dobbs and the authors proved that when $R\subset S$ is an extension, whose Nagata extension $R(X)\subset S(X)$  has FIP, the map $\varphi:[R,S]\to[R(X),S(X)]$ defined by $\varphi(T)= T(X)$ is an order-isomorphism. 
 
 \begin{proposition}\label{4.1999} An FCP extension $R\subset S$, whose Nagata extension $R(X)\subset S(X)$ is distributive, has FIP and is distributive.
 \end{proposition}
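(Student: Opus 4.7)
The plan is to promote the distributivity hypothesis on the Nagata extension to an FIP statement, at which point \cite[Theorem 32]{DPP4} provides an order-isomorphism $\varphi:[R,S]\to[R(X),S(X)]$ through which both FIP and distributivity descend to $R\subset S$.

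The first step is to verify that $R(X)\subset S(X)$ has finite length. Starting from a maximal chain $R=R_0\subset\cdots\subset R_n=S$ of minimal extensions in $[R,S]$, which exists by the FCP hypothesis, I would produce the associated tower $R(X)=R_0(X)\subset\cdots\subset R_n(X)=S(X)$ and argue that each of its rungs is again a minimal extension, since the Nagata construction $T\mapsto T(X)$ is known to preserve minimal ring extensions. This yields $\ell[R(X),S(X)]\le n<\infty$, so that $R(X)\subset S(X)$ is a distributive lattice of finite length and Proposition \ref{1.0} forces it to have FIP.

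With FIP now in hand for the Nagata extension, \cite[Theorem 32]{DPP4} applies and $\varphi$ is an order-isomorphism. The finiteness of $[R(X),S(X)]$ transports through $\varphi$ to the finiteness of $[R,S]$, establishing FIP for $R\subset S$. Moreover, any order-isomorphism between lattices automatically preserves meets and joins, so $\varphi$ is a lattice isomorphism; the distributive law on $[R(X),S(X)]$ therefore pulls back through $\varphi^{-1}$ to a distributive law on $[R,S]$, which finishes the proof.

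The only delicate point is the preservation of minimality under the Nagata construction, which is what guarantees that $\ell[R(X),S(X)]$ is finite (indeed bounded by $\ell[R,S]$). Once this routine lemma is granted, the remainder of the argument is purely formal: Proposition \ref{1.0} converts finite length plus distributivity into FIP at the Nagata level, and \cite[Theorem 32]{DPP4} then descends both properties to $R\subset S$.
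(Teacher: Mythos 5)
Your proof is correct in outline but differs from the paper's at both ends, and one inference needs justification. For the finiteness step, the paper simply quotes the transfer theorem \cite[Theorem 3.9]{DPP3} to get FCP for $R(X)\subset S(X)$, whereas you lift a maximal chain of minimal extensions and appeal to preservation of minimality under $T\mapsto T(X)$ --- a true but non-trivial fact (it is essentially what underlies the cited transfer result), so quoting \cite{DPP3} is the cleaner move. More seriously, the inference ``there is a maximal chain of length $n$ made of covers, hence $\ell[R(X),S(X)]\le n$'' is false in an arbitrary lattice: a lattice can contain a finite maximal chain and still contain arbitrarily long (even infinite) chains elsewhere. It is rescued here only because $[R(X),S(X)]$ is assumed distributive, hence modular, so the Jordan--H\"older chain condition forces all maximal chains to have the same finite length; you should say this explicitly, or sidestep the issue entirely by deducing FCP (and then finite length via \cite[Theorem 4.11]{DPP3}) from the transfer theorem, as the paper does, before applying Proposition \ref{1.0}.

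From FIP of the Nagata extension onward, your route coincides with the paper's for the FIP claim (both use \cite[Theorem 32]{DPP4}), but diverges for distributivity: you pull the distributive law back through $\varphi$, observing that an order-isomorphism between lattices is automatically a lattice isomorphism, which is correct and arguably more economical. The paper instead identifies $S(X)\simeq R(X)\otimes_RS$ via \cite[Corollary 3.5]{DPP3} (this needs FCP) and then applies faithfully flat descent of distributivity (Proposition \ref{desc}). Both arguments are valid; yours avoids the tensor-product identification, while the paper's avoids any lattice-theoretic transport.
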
 

\begin{proof} Since $R\subset S$ has FCP, so has $R(X)\subset S(X)$ by \cite[Theorem 3.9]{DPP3}. Then, $R(X)\subset S(X)$ has FIP according to Proposition \ref{1.0}. This implies that the map $\psi:[R,S]\to[R(X),S(X)]$ defined by $\psi(T)=T(X)$ is an order-isomorphism by \cite[Theorem 32]{DPP4}, so that $R\subset S$ has FIP. Moreover, using \cite[Corollary 3.5]{DPP3}, the FCP property of $R\subset S$ implies that $S(X)\simeq R(X)\otimes_RS$. It follows that we can use Proposition \ref{desc}, leading that   $R\subset S$ is distributive  because so is $R(X)\subset S(X)$.
\end{proof}

\begin{proposition} \label{desc 1} \cite[Proposition 3.9]{Pic 10} Let $R\subset S$ be a ring extension, $f:R\to R'$ a flat ring epimorphism and $S':=R'\otimes_RS$. If $R\subset S$ is a distributive extension, then so is   $R'\subset S'$.
  \end{proposition}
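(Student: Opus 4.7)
The plan is to construct a surjective lattice homomorphism $\Phi:[R,S]\to[R',S']$ and then invoke the general fact that the image of a distributive lattice under a surjective lattice homomorphism is again distributive. Throughout, we identify $S'$ with $R'\otimes_R S$.

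First I would define $\Phi(T):=R'\otimes_R T$ for each $T\in[R,S]$. Flatness of $R\to R'$ applied to the inclusion $T\hookrightarrow S$ guarantees that $R'\otimes_R T\hookrightarrow R'\otimes_R S=S'$ is an inclusion of $R'$-algebras, so $\Phi(T)\in[R',S']$. Preservation of products is immediate from the universal property of tensor products: $\Phi(TU)=R'\otimes_R(TU)=(R'\otimes_R T)(R'\otimes_R U)=\Phi(T)\Phi(U)$. Preservation of intersections, $\Phi(T\cap U)=\Phi(T)\cap\Phi(U)$, follows by tensoring the exact sequence $0\to T\cap U\to T\oplus U\to T+U\to 0$ with the flat $R$-module $R'$.

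The main obstacle is to verify that $\Phi$ is surjective; this is where the epimorphism hypothesis (not merely flatness) is needed. Given $T'\in[R',S']$, the fact that $R\to R'$ is an epimorphism implies that the multiplication map $R'\otimes_R M\to M$ is an isomorphism for every $R'$-module $M$, and in particular for $T'$. Consequently, every element of $T'$ can be represented using $R'$ together with the image of $S\to S'$. One then sets $T$ to be the $R$-subalgebra of $S$ consisting of preimages (under $S\to S'$) of elements of $T'$, combined with $R$, and checks that $R'\otimes_R T\cong T'$ as $R'$-subalgebras of $S'$. The verification uses the standard characterization of flat epimorphisms via $R'\otimes_R R'\cong R'$, which controls how $T$ is ``rebuilt'' from $T'$ after tensoring.

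Finally, the distributivity of $R'\subset S'$ follows formally: given $T'_1,T'_2,T'_3\in[R',S']$, choose preimages $T_i\in[R,S]$ with $\Phi(T_i)=T'_i$. Since $\Phi$ preserves both intersections and products, and $[R,S]$ is distributive by hypothesis,
$$T'_1\cap(T'_2T'_3)=\Phi\bigl(T_1\cap T_2T_3\bigr)=\Phi\bigl((T_1\cap T_2)(T_1\cap T_3)\bigr)=(T'_1\cap T'_2)(T'_1\cap T'_3),$$
so $[R',S']$ is distributive, as desired.
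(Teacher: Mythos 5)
Your overall strategy is the right one, and (so far as a comparison is possible, since the paper gives no proof here but simply quotes \cite[Proposition 3.9]{Pic 10}) it is the expected argument: the map $\Phi\colon[R,S]\to[R',S']$, $T\mapsto R'\otimes_RT$ (identified with its image in $S'$ via flatness), preserves intersections and composita, and a surjective map preserving both operations transports distributivity. Your verifications that $\Phi$ lands in $[R',S']$, that it preserves products, and that it preserves intersections (tensoring $0\to T\cap U\to T\oplus U\to T+U\to 0$ with the flat module $R'$) are all correct, as is the final formal computation with preimages.

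The gap is the surjectivity of $\Phi$, which is precisely where both hypotheses on $f$ enter and which your text does not actually establish. The sentence ``every element of $T'$ can be represented using $R'$ together with the image of $S\to S'$'' only records that $S'$ is generated as an $R'$-module by $u(S)$, where $u\colon S\to S'$ is the canonical map; it does not show that an element of $T'$ is an $R'$-combination of elements of $u(T)$ for $T:=u^{-1}(T')$ --- a priori $t'=\sum r_i'\,u(s_i)$ with no $u(s_i)\in T'$. The missing verification can be done as follows. Put $T:=u^{-1}(T')\in[R,S]$ and tensor the exact sequence $0\to T\to S\to S/T$ with $R'$, obtaining $0\to R'\otimes_RT\to S'\to R'\otimes_R(S/T)$. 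By the definition of $T$, the induced map $S/T\to S'/T'$ is injective, so flatness gives an injection $R'\otimes_R(S/T)\hookrightarrow R'\otimes_R(S'/T')$; and since $S'/T'$ is an $R'$-module and $f$ is an epimorphism, the multiplication map $R'\otimes_R(S'/T')\to S'/T'$ is an isomorphism. Chasing an element $r'\otimes s$ shows that the composite $S'\to R'\otimes_R(S/T)\to S'/T'$ is the canonical surjection, whence the image of $R'\otimes_RT$ in $S'$ equals $\ker(S'\to S'/T')=T'$, that is, $\Phi(T)=T'$. With this lemma inserted (note that it uses flatness and the epimorphism property exactly where you predicted they would be needed), your proof is complete.
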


\begin{definition}\label{4.2} A ring extension $R\subseteq S$ is called 
{\it arithmetic} if $[R_P, S_P]$ is a chain for each $P\in\mathrm{Spec}(R)$.
\end{definition}

 In \cite{Pic 13}, an extension $R\subseteq S$ is called a $\delta$-{\it extension} if $R[x]+R[y]=R[x+y]$ for any $x,y\in S$ such that $R[x]\neq R[y]$.

\begin{proposition} \label{1.004} A chained FCP extension $R\subset S$  is simple.
\end{proposition}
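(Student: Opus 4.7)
The plan is to use the FCP hypothesis to obtain a maximal chain with minimal successive steps and then to exhibit a generator by choosing any element outside the penultimate term of that chain. Specifically, since $R\subset S$ is FCP, as recalled in the introduction it admits a maximal chain $R=R_0\subset R_1\subset\cdots\subset R_n=S$ each of whose steps $R_i\subset R_{i+1}$ is a minimal extension. Because $[R,S]$ is itself a chain, every $T\in[R,S]$ is comparable to each $R_i$ and therefore, by the maximality of the chain, coincides with some $R_i$; hence $[R,S]=\{R_0,R_1,\ldots,R_n\}$.

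Next I pick any element $v\in S\setminus R_{n-1}$, which exists because $R_{n-1}\subsetneq S$. The subring $R[v]$ lies in $[R,S]$, so by the previous paragraph $R[v]=R_i$ for some $i$. Since $v\in R[v]$ while $v\notin R_{n-1}$, the subring $R[v]$ is not contained in $R_{n-1}$, and within the chain the only element failing to be contained in $R_{n-1}$ is $R_n=S$ itself. Consequently $R[v]=S$, proving that $R\subset S$ is simple.

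I do not anticipate a substantive obstacle. The whole argument rests on the observation that any element outside the penultimate member of a chained $[R,S]$ must generate $S$ over $R$; the role of FCP is only to guarantee that this penultimate member exists, i.e.\ that the chain $[R,S]$ is finite and so admits a top step $R_{n-1}\subset R_n=S$.
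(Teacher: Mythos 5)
Your proof is correct, and it takes a genuinely different route from the paper. The paper disposes of the statement by citation: a chained extension is a $\delta$-extension by \cite[Proposition 5.18]{Pic 13}, and an FCP $\delta$-extension is simple by \cite[Proposition 5.17(3)]{Pic 13}; so its argument leans entirely on the $\delta$-extension machinery developed there. You instead give a short self-contained argument: since $R\subset S$ has FCP and $[R,S]$ is a chain, $[R,S]$ is a finite chain $R=R_0\subset R_1\subset\cdots\subset R_n=S$ (your identification of $[R,S]$ with the maximal chain via comparability is fine, though one could note even more directly that $[R,S]$, being a chain, is finite by FCP), and then any $v\in S\setminus R_{n-1}$ forces $R[v]\not\subseteq R_{n-1}$, hence $R[v]=S$. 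What your approach buys is elementarity and independence from \cite{Pic 13}; what the paper's approach buys is that it places the proposition inside the $\Delta$/$\delta$-extension framework it reuses later, at the cost of importing those external results. Both proofs use FCP only to guarantee that the chain $[R,S]$ has a largest proper element, which is exactly the point you make at the end.
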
 

\begin{proof}  According to \cite[Proposition 5.18]{Pic 13}, a chained extension is a $\delta$-extension, and then simple by \cite[Proposition 5.17(3)]{Pic 13}. 
\end{proof}  

In \cite{Pic 10}, we proved the following results:

\begin{proposition}\label{1.41} \cite[Proposition 2.8]{Pic 10} Let $R\subset S$ be a distributive extension. Let $T\in[R,S]$ be a product of finitely many atoms. Then, $R\subset T$ is simple. More precisely, if $T=\prod_{i=1}^nR[x_i]$, where the $R\subset R[x_i]$ are minimal distinct extensions, then, $T=R[\sum _{i=1}^nx_i]$.  
\end{proposition}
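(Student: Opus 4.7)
The plan is to proceed by induction on $n$, with $n=1$ trivial. For the inductive step, set $y := \sum_{i=1}^n x_i$. The inclusion $R[y] \subseteq T$ is immediate, and the goal is to force equality.

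The key tool is the finite distributive law $a\cap\bigvee_i b_i=\bigvee_i(a\cap b_i)$ in the distributive lattice $[R,T]$ (note $[R,T]$ is distributive as a subextension of a distributive extension, by Proposition~\ref{1.4}). Applied to $R[y]=R[y]\cap T=R[y]\cap\prod_{i=1}^n R[x_i]$, it yields
\[
R[y]=\prod_{i=1}^n\bigl(R[y]\cap R[x_i]\bigr).
\]
Since each $R\subset R[x_i]$ is minimal, $R[y]\cap R[x_i]\in\{R,R[x_i]\}$. Partition $\{1,\ldots,n\}=I\sqcup J$ with $I:=\{i: R[x_i]\subseteq R[y]\}$. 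Then $R[y]=\prod_{i\in I}R[x_i]$, and the proposition reduces to showing $J=\emptyset$.

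The main preliminary I would establish is an \emph{independence lemma}: in any distributive lattice, a finite family of pairwise distinct atoms $\{R[x_i]\}$ is lattice-independent, i.e.\ for every $k$, $R[x_k]\not\subseteq\prod_{i\neq k}R[x_i]$. The proof is the same distributive law: if $R[x_k]\subseteq\prod_{i\neq k}R[x_i]$ then $R[x_k]=\prod_{i\neq k}(R[x_k]\cap R[x_i])$, but each factor lies in $\{R,R[x_k]\}\cap\{R,R[x_i]\}=\{R\}$ by distinctness and atomicity, forcing $R[x_k]=R$, a contradiction.

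With this lemma the rest is a case analysis on $|J|$. If $|J|=0$ we are done. If $I=\emptyset$, then $y\in R$ and $x_n=y-\sum_{i<n}x_i\in\prod_{i<n}R[x_i]$, contradicting the lemma. If $|J|=1$, say $J=\{j_0\}$, then $x_{j_0}=y-\sum_{i\in I}x_i\in R[y]=\prod_{i\in I}R[x_i]=\prod_{i\neq j_0}R[x_i]$, again contradicting the lemma. If $2\leq|J|<n$, the inductive hypothesis applied to $\{x_j\}_{j\in J}$ gives $\prod_{j\in J}R[x_j]=R[z]$ with $z:=\sum_{j\in J}x_j$; since $z=y-\sum_{i\in I}x_i\in R[y]$, we obtain $\prod_{j\in J}R[x_j]\subseteq R[y]=\prod_{i\in I}R[x_i]$, so for any $j\in J$, $R[x_j]\subseteq\prod_{i\in I}R[x_i]\subseteq\prod_{i\neq j}R[x_i]$, contradicting the lemma. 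Hence $J=\emptyset$ and $R[y]=T$.

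The principal obstacle is the case $2\leq|J|<n$: the direct distributive argument alone is insufficient to expel $J$, and one genuinely needs the induction on fewer atoms to rewrite $\sum_{j\in J}x_j$ as a primitive generator of $\prod_{j\in J}R[x_j]$ before the independence lemma produces the contradiction. The case $I=\emptyset$ is also subtle in that it requires recognizing that even a trivial sum $y\in R$ reveals a hidden dependence among the atoms.
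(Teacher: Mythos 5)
Your proof is correct: the identity $R[y]=\prod_{i=1}^n\bigl(R[y]\cap R[x_i]\bigr)$ from the finite distributive law, the independence lemma for pairwise distinct atoms, and the exhaustive case analysis on $|J|$ (induction used only when $2\le |J|<n$, with $|J|=n$ and $|J|=1$ handled directly) leave no gaps, and the first assertion of the statement follows since atoms, being minimal extensions, are simple. Note that the paper gives no proof of this proposition itself --- it is recalled from \cite[Proposition 2.8]{Pic 10} --- so there is no internal argument to compare against; your distributivity-plus-induction argument is the natural lattice-theoretic one and is in the spirit of the cited source.
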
 

\begin{proposition}\label{5.4} \cite[Proposition 5.18]{Pic 4}  An arithmetic extension   is distributive. 
\end{proposition}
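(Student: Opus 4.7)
The plan is to combine two observations: distributivity of a ring extension is a local property (Proposition \ref{1.014}), and any chain is trivially a distributive lattice. Together these yield the result in essentially two lines.

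First, I would invoke Proposition \ref{1.014}, specifically the equivalence (1)$\Leftrightarrow$(2), to reduce the problem to the local case: $R \subseteq S$ is distributive if and only if $R_M \subseteq S_M$ is distributive for every $M \in \mathrm{MSupp}(S/R)$. Next, the arithmetic hypothesis (Definition \ref{4.2}) guarantees that $[R_P, S_P]$ is a chain for every $P \in \mathrm{Spec}(R)$, in particular for each such maximal $M$. So it only remains to observe that any chain is a distributive lattice: given $T, U, V$ in a totally ordered sublattice, meet is intersection (the smaller element) and join is product (the larger element), and a short case analysis on the ordering of $T, U, V$ shows that both $T \cap (UV)$ and $(T \cap U)(T \cap V)$ equal $\min(T, \max(U,V))$, and similarly for the dual identity. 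Hence each $R_M \subseteq S_M$ is distributive, and Proposition \ref{1.014} transports this back to $R \subseteq S$.

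There is no genuine obstacle here: the argument is a direct combination of Proposition \ref{1.014} with an elementary fact about chains. The only minor point worth flagging is that the meet and join in the localized lattice $[R_M, S_M]$ are indeed intersection and product of localized subrings, but this is automatic since localization commutes with intersection of $R$-submodules and with compositum of subalgebras.
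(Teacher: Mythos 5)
Your argument is correct. Note, however, that the paper does not prove this statement at all: it simply cites \cite[Proposition 5.18]{Pic 4}, so your proposal supplies a self-contained proof where the paper only gives a reference. Your two ingredients are exactly the right ones: Proposition \ref{1.014} (equivalence of (1) and (2)) reduces distributivity of $R\subseteq S$ to distributivity of $R_M\subseteq S_M$ for $M\in\mathrm{MSupp}(S/R)$, and the arithmetic hypothesis of Definition \ref{4.2} makes each $[R_M,S_M]$ a chain, which is trivially a distributive lattice (your $\min$/$\max$ computation is the standard verification). Even the ``minor point'' you flag is a non-issue: distributivity of the extension $R_M\subseteq S_M$ is by definition a property of the lattice $[R_M,S_M]$ itself, whose meet and join are intersection and compositum of $R_M$-subalgebras, so no compatibility of localization with the lattice operations of $[R,S]$ needs to be checked. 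In short, your route is at least as transparent as relying on the external citation, and it is consistent with how the paper uses locality of distributivity elsewhere (for instance in the proofs of Theorem \ref{5.41} and Corollary \ref{5.04}).
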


 We deduce the following obvious Corollary from Proposition \ref{1.014}. See also \cite[Corollary 3.6]{Pic 10}.

\begin{corollary}\label{5.04} \cite[Proposition 5.18]{Pic 4}  An locally minimal extension   is distributive. 
\end{corollary}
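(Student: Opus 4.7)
The plan is to invoke Proposition \ref{1.014} to reduce distributivity to a local question, and then observe that a minimal extension has a two--element lattice, which is trivially distributive.

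More precisely, let $R\subseteq S$ be locally minimal. By the definition recalled just before the statement, $R_M\subset S_M$ is minimal for each $M\in\mathrm{MSupp}(S/R)$. For such an $M$, the lattice $[R_M,S_M]=\{R_M,S_M\}$ is a two--element chain, and any chain is distributive (there is no room for a pentagon or a diamond). Hence $R_M\subseteq S_M$ is distributive for every $M\in\mathrm{MSupp}(S/R)$. Applying the equivalence $(1)\Leftrightarrow(2)$ of Proposition \ref{1.014} then yields that $R\subseteq S$ is distributive.

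There is no genuine obstacle here; the only thing to check is that the local--global criterion in Proposition \ref{1.014} matches the definition of ``locally minimal'' given earlier, which it does (both use the maximal support). This is why the result is stated as an ``obvious'' corollary of Proposition \ref{1.014}, and why it can also be viewed as a weakened form of Proposition \ref{5.4} (arithmetic $\Rightarrow$ distributive), since locally minimal extensions are in particular arithmetic at every prime in the support.
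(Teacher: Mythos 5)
Your proof is correct and matches the paper's intent: the paper presents this as an ``obvious'' consequence of Proposition \ref{1.014}, namely localizing at each $M\in\mathrm{MSupp}(S/R)$, where $[R_M,S_M]=\{R_M,S_M\}$ is a (two-element) chain and hence distributive. The closing remark that one may alternatively view it through Proposition \ref{5.4} is also consistent with the paper, which cites the same source for both statements.
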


\begin{theorem}\label{5.41} A Pr\"ufer extension is distributive. 
\end{theorem}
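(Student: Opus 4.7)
The plan is to reduce the problem to the local case and then exhibit the lattice as a chain, so that distributivity is free. The two main ingredients already available in the paper are Proposition~\ref{1.014}, which says that distributivity is a local-on-$\mathrm{Spec}(R)$ property, and Proposition~\ref{5.4}, which says that arithmetic extensions are distributive. So it is enough to prove that every Pr\"ufer extension is arithmetic.

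First I would verify that the Pr\"ufer property localizes: if $R\subseteq S$ is Pr\"ufer and $P\in\mathrm{Spec}(R)$, then $R_P\subseteq S_P$ is Pr\"ufer. This uses the standard facts that flat epimorphisms are preserved under base change and that every element $T'\in[R_P,S_P]$ arises as $T_P$ for some $T\in[R,S]$, which is part of the normal-pair formalism in \cite[Chapter~I]{KZ}. Thus the issue reduces to: if $R$ is local and $R\subseteq S$ is Pr\"ufer, then $[R,S]$ is a chain.

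For the local case, I would appeal to the Knebusch--Zhang theory: if $R$ is local and $R\subseteq S$ is a normal pair, then $[R,S]$ is totally ordered by inclusion. The idea behind this is that for a Pr\"ufer extension $R\subseteq S$, each $T\in[R,S]$ is determined by the set of primes of $R$ that ``survive'' in $T$ (because $R\to T$ is a flat epimorphism and hence $\mathrm{Spec}(T)\hookrightarrow\mathrm{Spec}(R)$ is injective with a specific image); when $R$ is local, the combinatorics of these subsets force comparability. Once $[R_P,S_P]$ is a chain for every $P\in\mathrm{Spec}(R)$, the definition of an arithmetic extension (Definition~\ref{4.2}) is satisfied by $R\subseteq S$, and Proposition~\ref{5.4} finishes the proof.

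The main obstacle is the local structural statement. The conclusion ``$[R,S]$ is a chain when $R$ is local and $R\subseteq S$ is Pr\"ufer'' is not at all formal from the mere definition; it rests on the classification of flat epimorphic subextensions in terms of specialization-closed subsets of $\mathrm{Spec}(R)$ and on the fact that, over a local ring, these subsets are comparable. A cleaner alternative, should the reference-based approach feel unsatisfying, would be to prove distributivity directly from the normal-pair characterization: for $T,U,V\in[R,S]$, use that $T,U,V,T\cap U,TU,\ldots$ are all flat epimorphisms of $R$ (hence all determined by their spectra as subsets of $\mathrm{Spec}(R)$), and deduce the identity $T\cap(UV)=(T\cap U)(T\cap V)$ from the analogous set-theoretic identity. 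However, invoking Proposition~\ref{5.4} via the arithmetic property is the shortest route and is what I would write.
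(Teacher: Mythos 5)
Your proposal is correct and follows essentially the same route as the paper: the paper localizes at each $M\in\mathrm{Max}(R)$ (the Pr\"ufer property localizes by \cite[Proposition 1.1]{Pic 5}), shows that $[R_M,S_M]$ is a chain, concludes that $R\subseteq S$ is arithmetic, and invokes Proposition \ref{5.4}. One caution about your explanatory gloss for the local step: the comparability does not come from ``flat epimorphic subextensions classified by specialization-closed subsets of $\mathrm{Spec}(R)$, which are comparable over a local ring'' --- images of flat epimorphisms are generization-closed, and generization-closed subsets of the spectrum of a local ring need not be pairwise comparable --- but rather from the structure theorem the paper cites (\cite[Proposition 1.2]{Pic 5}, from the Knebusch--Zhang theory): there is $P\in\mathrm{Spec}(R_M)$ with $S_M=(R_M)_P$, $P=S_MP$ a common ideal, and $R_M/P$ a valuation domain with quotient field $S_M/P$, so that $[R_M,S_M]\simeq[R_M/P,S_M/P]$ is the chain of overrings of a valuation domain. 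Since you ultimately cite the correct Knebusch--Zhang fact (over a local ring a Pr\"ufer extension has totally ordered $[R,S]$), your argument stands, but the heuristic should be replaced by this valuation-theoretic reason.
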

\begin{proof} Let $R\subseteq S $ be a Pr\"ufer extension. Then so is $R_M\subseteq S_M$ for each $M\in \mathrm{Max}(R)$ by \cite[Proposition 1.1]{Pic 5}. Let $M\in \mathrm{Max}(R)$. We infer from \cite[Proposition 1.2]{Pic 5} that   there exists $P\in \mathrm{Spec}(R_M)$ such that $S_M=(R_M)_P,\ P=S_MP$ and $R_M/P$ is a valuation domain with quotient field $S_M/P$. It follows that $P$ is a common ideal of  $R_M$ and $S_M$, with $R_M/P\subseteq S_M/P$  chained, and so is $R_M\subseteq S_M$ because of the order isomorphism $[R_M, S_M]\to[R_M/P, S_M/P]$ defined by $T\mapsto T/P$. To conclude, $R\subseteq S$ is arithmetic, and then distributive.
\end{proof}

 Let $R\subseteq S$ be a ring extension.  We are going to look at the transfer of the distributivity property  to some extensions deduced from $R\subseteq S$. 

\begin{proposition}\label{5.12} Let $R \subseteq S$ be an  FCP extension. For an ideal $J$ of $S$, set $I:=J\cap R$. If $R \subseteq S$  is distributive, so is $R/I\subseteq S/J$. 
\end{proposition}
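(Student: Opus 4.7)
The plan is to reduce the problem to an interval of the lattice $[R,S]$ and then apply the reduction modulo a common ideal provided by Proposition \ref{1.014}.

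First I set $T := R + J$. Since $J$ is an ideal of $S$ containing products like $r \cdot j$ for $r \in R, j \in J$, a direct check shows $T$ is a subring of $S$: for $r_1+j_1, r_2+j_2 \in T$ one has $(r_1+j_1)(r_2+j_2) = r_1r_2 + (r_1j_2+j_1r_2+j_1j_2) \in R+J$. Hence $T \in [R,S]$, and $J$ is simultaneously an ideal of $T$ and of $S$, so it is a shared ideal in the sense of Proposition \ref{1.014}. Moreover, the standard ring isomorphism $(R+J)/J \xrightarrow{\sim} R/(R\cap J) = R/I$ identifies $T/J$ with $R/I$.

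Next, because $[R,S]$ is distributive and $[T,S]$ is a subextension of $R \subseteq S$, Proposition \ref{1.4} yields that $T \subseteq S$ is distributive. Applying the equivalence $(1)\Leftrightarrow(4)$ of Proposition \ref{1.014} to the extension $T \subseteq S$ with the shared ideal $J$, we obtain that $T/J \subseteq S/J$ is distributive. Transporting along the isomorphism $T/J \simeq R/I$ gives the distributivity of $R/I \subseteq S/J$, as desired.

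The argument is short, and the only point that needs verification is that $R+J$ is a subring of $S$ and that $J$ is shared by $T$ and $S$; both are immediate from $J$ being an $S$-ideal. I do not foresee a serious obstacle: the FCP assumption is not actively used (it is in force because it is the overall setting of the section), and both Proposition \ref{1.4} and Proposition \ref{1.014} are quoted directly as black boxes.
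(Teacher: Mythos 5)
Your proof is correct and is essentially the paper's own argument: the paper likewise passes to the subextension $R+J\subseteq S$ (distributive by restriction), applies Proposition \ref{1.014} with the shared ideal $J$, and identifies $(R+J)/J$ with $R/I$. You have merely spelled out the verifications the paper leaves implicit.
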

\begin{proof} Since $R/I\cong (R+J)/J$, and because the  distributive   property still holds for any subextension, it also holds for $R/I\subseteq S/J$ by Proposition  \ref{1.014}.
\end{proof}

We may remark that the converse does not hold in general. Indeed, in view of the bijection $[R+J,S]\to [R/I,S/J]$, if $R\subseteq R+J$ does not verify the required property, so does not  $R\subseteq S$. 

\begin{proposition}\label{5.13} Let $R \subseteq S$ be an  FCP extension. Assume that $R=\prod_{i=1}^n R_i$ is a product of rings. For each $i\in\mathbb N_n$, there exists ring extensions $R_i\subseteq S_i$ such that $S\cong \prod_{i=1}^n S_i$. Moreover $R \subseteq S$ is distributive  if and only if so are $R_i\subseteq S_i$ for each $i\in\mathbb N_n$. 
\end{proposition}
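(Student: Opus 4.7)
The plan is to treat the two assertions separately. For the structural claim, the decomposition $R=\prod_{i=1}^n R_i$ corresponds to a complete system of pairwise orthogonal idempotents $e_1,\ldots,e_n\in R$ with $\sum_i e_i=1$. Viewed inside $S$ these remain pairwise orthogonal, so $S=\bigoplus_{i=1}^n Se_i$ as an internal direct sum of ideals. Setting $S_i:=Se_i$, we get $S\cong\prod_{i=1}^n S_i$ as rings, with each $R_i=Re_i\subseteq Se_i=S_i$ a ring extension.

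For the forward direction of the equivalence, I would invoke Proposition \ref{5.12} directly. For each $i$, let $J_i:=S(1-e_i)$, an ideal of $S$; then $J_i\cap R=R(1-e_i)$, and the quotients identify as $S/J_i\cong S_i$ and $R/(J_i\cap R)\cong R_i$. Hence if $R\subseteq S$ is distributive, so is each $R_i\subseteq S_i$.

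For the converse, I would set up the map $\Phi\colon[R,S]\to\prod_{i=1}^n[R_i,S_i]$ defined by $\Phi(T):=(Te_1,\ldots,Te_n)$. Since every $T\in[R,S]$ contains $R$ and in particular each $e_i$, we have $T=\bigoplus_{i=1}^n Te_i$, which shows that $\Phi$ is a bijection with inverse $(T_1,\ldots,T_n)\mapsto\bigoplus_i T_i$. The crucial verifications are the lattice identities $(T\cap U)e_i=Te_i\cap Ue_i$ (the nontrivial inclusion follows because any $x=te_i=ue_i$ lies in $T$ and in $U$ and equals $xe_i$) and $(TU)e_i=(Te_i)(Ue_i)$ (immediate from $(tu)e_i=(te_i)(ue_i)$ via $e_i^2=e_i$). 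Thus $\Phi$ is a lattice isomorphism onto a finite direct product of lattices, and since a finite direct product of distributive lattices is distributive, we conclude that $R\subseteq S$ is distributive whenever every $R_i\subseteq S_i$ is.

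The only step with any real content is the compositum identity, but it reduces to a one-line idempotent calculation and is not an obstacle. Note that the FCP hypothesis is not needed to produce the lattice decomposition; it is simply inherited by the $R_i\subseteq S_i$ and ensures the statement is meaningful within the framework of the paper.
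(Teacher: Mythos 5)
Your proof is correct, but it takes a genuinely different route from the paper. The paper quotes \cite[Lemma III.3]{DMPP} for the structural decomposition and then proves the equivalence by localization: a maximal ideal of $R=\prod_{i=1}^n R_i$ has the form $R_1\times\cdots\times M_i\times\cdots\times R_n$, localizing gives $R_M\cong (R_i)_{M_i}$ and $S_M\cong (S_i)_{M_i}$, the maximal supports match up, and Proposition \ref{1.014} (distributivity is checked locally on $\mathrm{MSupp}(S/R)$) finishes the argument. You instead work globally with the idempotents $e_1,\ldots,e_n$: you rederive the structural claim directly, get the forward implication from Proposition \ref{5.12} applied to $J_i=S(1-e_i)$, and for the converse you exhibit an explicit lattice isomorphism $[R,S]\to\prod_{i=1}^n[R_i,S_i]$, $T\mapsto (Te_i)_i$, whose compatibility with $\cap$ and with the compositum you verify by the idempotent computations you indicate (both check out, including $(TU)e_i=(Te_i)(Ue_i)$ since elements of a compositum are finite sums $\sum t_ju_j$). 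Your route is more self-contained and, as you observe, does not use FCP at all, and it in fact yields both implications at once since distributivity passes to and from finite products of lattices; the paper's route is shorter given its existing machinery (the local criterion of Proposition \ref{1.014}) and fits the pattern used throughout the paper of reducing properties to the local case. Both are valid proofs of Proposition \ref{5.13}.
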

\begin{proof} The first part of the statement is \cite[Lemma III.3]{DMPP}. 

We recall the following statement of \cite[just before Section 2]{Pic 9}:

 If $R_1,\ldots,R_n$ are finitely many rings,  the ring $R_1\times \cdots \times R_n$ localized at the prime ideal $P_1\times R_2\times\cdots \times R_n$ is isomorphic to $ (R_1)_{P_1}$ for $P_1 \in \mathrm{Spec}(R_1)$. This rule works for any prime ideal of the product. Since a maximal ideal $M\in\mathrm{Max}(R)$ is of the form $R_1\times \cdots \times M_i\times \cdots \times R_n$ for some $i\in\mathbb N_n$ and $M_i\in\mathrm{Max}(R_i)$, we get that $R_M\cong  (R_i)_{M_i}$ and $S_M\cong  (S_i)_{M_i}$. It follows that $M\in\mathrm{MSupp}(S/R)\Leftrightarrow R_M\neq S_M \Leftrightarrow (R_i)_{M_i}\neq  (S_i)_{M_i}\Leftrightarrow M_i\in\mathrm{MSupp}(S_i/R_i)$.

Since  distributivity holds for $R\subseteq S$ if and only if it holds for $R_M\subseteq S_M$ for each $M\in\mathrm{MSupp}(S/R)$ because of Proposition \ref{1.014}, the previous isomorphisms give the last result.
\end{proof}

Let $R$ be a  ring and $M$ an $R$-module. We consider the ring extension $R\subseteq R(+)M$, where $R(+)M$ is the idealization of $M$ in $R$.  Recall that $R(+)M:=\{(r,m)\mid (r,m)\in R\times M\}$ is a commutative ring whose operations are defined as follows: 

$(r,m)+(s,n)=(r+s,m+n)$ \ \   and  \ \ \ $(r,m)(s,n)=(rs,rn+sm)$

Then  $(1,0)$ is the unit of $R(+)M$, and $R\subseteq R(+)M$ is an injective  ring morphism defining $R(+)M$ as an $R$-module, so that we can identify any $r\in R$ with $(r,0)$. 

 We are now looking at the transfer of distributivity to idealizations.
 
\begin{proposition}\label{5.14} Let $R\subseteq S$ be an FCP extension and $M$ an $S$-module. Then $R \subseteq S$  is distributive if and only if so is $R(+)M\subseteq S(+)M$. 
\end{proposition}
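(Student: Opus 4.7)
The plan is to show that the map $\varphi:[R,S]\to[R(+)M,S(+)M]$ defined by $\varphi(T):=T(+)M$ is a lattice isomorphism; since distributivity is a purely lattice-theoretic property, the equivalence will follow at once.

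First I would check that $\varphi$ is well defined: for $T\in[R,S]$, the set $T(+)M$ is clearly closed under addition and multiplication in $S(+)M$, contains $(1,0)$, and contains $R(+)M$ since $T\supseteq R$ and $M$ is the ``$M$-slot'' in both. Next I would prove surjectivity, which is the only mildly delicate step. Given $A\in[R(+)M,S(+)M]$, observe that $R(+)M\subseteq A$ implies $(0,m)\in A$ for every $m\in M$; hence for any $(t,m)\in A$ we have $(t,0)=(t,m)-(0,m)\in A$. Set
\[
T_A:=\{t\in S\mid (t,0)\in A\}.
\]
The map $S\to S(+)M$, $t\mapsto(t,0)$, is a ring homomorphism onto the subring $S(+)0$, so $T_A$ is a subring of $S$ containing $R$, i.e.\ $T_A\in[R,S]$. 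The identity $(t,m)=(t,0)+(0,m)$ then yields $A=T_A(+)M=\varphi(T_A)$. Injectivity is immediate from the first-coordinate projection: $T(+)M=T'(+)M$ forces $T=T'$.

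Now I would verify that $\varphi$ preserves the two lattice operations. For intersections, one has $(T\cap T')(+)M=(T(+)M)\cap(T'(+)M)$ directly from the definition. For the compositum, note that $T(+)M\cdot T'(+)M$ is the smallest $R(+)M$-subalgebra of $S(+)M$ containing both; since $(t,0)(t',0)=(tt',0)$ and $(0,m)$ lies in every element of $[R(+)M,S(+)M]$, this smallest subalgebra is precisely $(TT')(+)M=\varphi(TT')$. Together with the obvious fact that $\varphi$ is order-preserving (and order-reflecting), this makes $\varphi$ an isomorphism of (complete) lattices.

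Since a lattice isomorphism preserves the property of being distributive (no diamond, no pentagon), $[R,S]$ is distributive if and only if $[R(+)M,S(+)M]$ is. The FCP hypothesis on $R\subseteq S$ is not needed for the equivalence itself; it only guarantees (via the same isomorphism, combined with Proposition~\ref{1.0}) that both lattices are finite, so the statement is consistent with the FCP framework of the paper. The only real obstacle is the surjectivity argument, and it is handled by the observation that $(0,m)\in R(+)M$ for every $m\in M$, which forces every intermediate ring to split as $T(+)M$ for some $T\in[R,S]$.
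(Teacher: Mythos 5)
Your proof is correct, but it takes a different route from the paper. The paper never exhibits the map $T\mapsto T(+)M$; instead it observes that $C(+)M=(R(+)M:S(+)M)$ for $C:=(R:S)$, that $(R(+)M)/(C(+)M)\cong R/C$ and $(S(+)M)/(C(+)M)\cong S/C$, and then invokes Proposition \ref{1.014}(4),(5) (distributivity transfers through quotients by a shared ideal, in both directions) to conclude. Your argument instead establishes directly that $\varphi:[R,S]\to[R(+)M,S(+)M]$, $\varphi(T)=T(+)M$, is an order (hence lattice) isomorphism; the key point, that every $A\in[R(+)M,S(+)M]$ contains $0(+)M$ (because $0(+)M\subseteq R(+)M$) and therefore splits as $T_A(+)M$ with $T_A=\{t\in S\mid (t,0)\in A\}\in[R,S]$, is sound, and injectivity plus order preservation are immediate. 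What your approach buys is a stronger and more self-contained statement: the two lattices are literally isomorphic, so every lattice-theoretic property (Boolean, catenarian, chained, length, cardinality) transfers, not only distributivity, and no external transfer result is needed. What the paper's approach buys is brevity given its existing toolbox: it reuses the conductor/shared-ideal mechanism of Proposition \ref{1.014} rather than re-proving a splitting of intermediate rings. You are also right that FCP plays no role in either argument; it is only part of the ambient framework.
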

\begin{proof} Since $M$ is an $S$-module, it is also an  $R$-module, and we have the ring extension $R(+)M\subseteq S(+)M$. Set $C:=(R:S)$. Obviously, we get $C(+)M=(R(+)M:S(+)M)$. Since $(R(+)M)/(C(+)M)\cong R/C$ and $(S(+)M)/(C(+)M)\cong S/C$, the following holds: $R(+)M\subseteq S(+)M$ is distributive $\Leftrightarrow (R(+)M)/(C(+)M)\subseteq (S(+)M)/(C(+)M)$ is distributive $\Leftrightarrow R/C\subseteq S/C$ is distributive $\Leftrightarrow R\subseteq S$ is distributive   by  Proposition \ref{1.014}.
 \end{proof}

\section{General properties of FCP distributive extensions}

Since an FCP distributive extension is catenarian by Proposition \ref{1.0}, the results of our paper \cite{Pic 12} are useful when characterizing FCP distributive extensions. To this aim, we give here a  characterization of  distributive lattices of finite length given by {\L}azarz and  Sieme{\' n}czuk adapted to the context of ring extensions. 

\begin{proposition} \label{6.1}  An FCP extension $R\subset S$ is distributive if and only if, for any $T,U\in[R,S]$, the following conditions hold:  
 \begin{enumerate}

\item If $T\cap U\subset U$ and $T\cap U\subset T$ are minimal, then $|[T\cap U,TU]|=4$.

\item If $T\subset TU$ and $ U\subset TU$ are minimal, then $|[T\cap U,TU]|=4$.
\end{enumerate}
 \end{proposition}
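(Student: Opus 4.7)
The plan is to recognize that this proposition is the ring-theoretic transcription of the lattice-theoretic result of {\L}azarz and Sieme{\'n}czuk for distributive lattices of finite length. Since the FCP hypothesis makes $[R,S]$ a lattice of finite length, that characterization applies verbatim, with $T \cap U$ playing the role of the meet and $TU$ the role of the join; moreover, minimal subextensions in $[R,S]$ correspond precisely to covering pairs in the lattice.

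For the forward implication, I would argue directly. Assume $R \subset S$ is distributive. If $T \cap U \subset T$ and $T \cap U \subset U$ are both minimal, then $T$ and $U$ are two distinct atoms of the interval $[T \cap U, TU]$. In a distributive lattice any interval spanned by two distinct atoms is a Boolean algebra of rank $2$, so $[T \cap U, TU] = \{T \cap U,\, T,\, U,\, TU\}$ and $|[T \cap U, TU]| = 4$. Condition (2) follows by the dual argument, exchanging meets with joins and atoms with coatoms.

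For the converse, I would invoke the classical criterion that a finite-length lattice is distributive iff it contains neither a diamond $M_3$ nor a pentagon $N_5$ as a sublattice, and argue by contradiction. If $[R,S]$ contained a diamond sublattice with bottom $V$ and top $W$, then $\ell[V,W]=2$ and $[V,W]$ would contain at least three atoms $T_1, T_2, T_3$. Picking $T := T_1$ and $U := T_2$, we would have $T \cap U = V$, $TU = W$, with both $V \subset T$ and $V \subset U$ minimal in $[R,S]$, yet $|[V,W]| \geq 5$, contradicting (1). If $[R,S]$ contained a pentagon, I would choose one $v < a < b < w$, $v < c < w$ (with $a \wedge c = b \wedge c = v$ and $a \vee c = b \vee c = w$) whose top-to-bottom length $\ell[v,w]$ is minimal among all pentagons in $[R,S]$. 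Minimality of this choice forces $b \subset w$ and $c \subset w$ to be covering relations in $[R,S]$, since otherwise one could replace $b$ or $c$ by an element strictly between them and $w$ to obtain a pentagon of smaller length. Applying condition (2) with $T := b$ and $U := c$ then yields $|[v,w]| = 4$, contradicting that the pentagon already contributes the five distinct elements $v, a, b, c, w$.

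The principal obstacle is the pentagon case: concretely, the reduction step that upgrades an abstract pentagon sublattice (whose internal order relations need not correspond to covers in $[R,S]$) to one in which the edges adjacent to the top are genuine minimal extensions. This is the substantive content of the {\L}azarz-Sieme{\'n}czuk argument, and it is handled by the minimal-length selection above, which prevents any further refinement of the pentagon inside $[R,S]$.
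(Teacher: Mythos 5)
Your forward implication is fine, and your overall strategy (prove the {\L}azarz--Siemie{\'n}czuk criterion yourself via Birkhoff's $M_3$/$N_5$ test) differs from the paper, which simply quotes the result of \cite{LS} in the ring-extension setting and then uses Proposition \ref{1.001} to convert ``$B_2$-extension'' into ``$|[T\cap U,TU]|=4$'' (noting $T\neq U$, so the interval is $\{T\cap U,T,U,TU\}$). The problem is that your converse has genuine gaps precisely at the step you identify as the substantive content. For the diamond case you assert that a diamond sublattice with bottom $V$ and top $W$ forces $\ell[V,W]=2$ and that its middle elements are atoms of $[V,W]$; neither follows from having an $M_3$ \emph{sublattice}, whose middle elements need not cover $V$ (think of three cyclic subgroups of $\mathbb{Z}/p^2\times\mathbb{Z}/p^2$ pairwise meeting in $0$ and joining to the whole group). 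Without minimality of $V\subset T_1$ and $V\subset T_2$ you cannot invoke condition (1) at all; extracting a cover-preserving diamond from an arbitrary one requires modularity (hence the pentagon case first) plus a nontrivial shrinking argument, none of which appears.

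The pentagon reduction is also not correct as stated. If $b<x<w$, replacing $b$ by $x$ keeps the same bottom $v$ and top $w$, so $\ell[v,w]$ does not decrease and your minimality hypothesis gives no contradiction; moreover $\{v,a,x,c,w\}$ need not be a pentagon, since $x\wedge c\geq b\wedge c=v$ may be strictly larger than $v$ (and similarly $c'\wedge b$ may exceed $v$ when you push $c$ up). So the claim that a length-minimal pentagon has both upper edges $b\subset w$ and $c\subset w$ minimal is unproved, and with it the application of condition (2) collapses. To repair the argument you would need the actual mechanism of \cite{LS} (or an equivalent route: conditions (1) and (2) give upper and lower semimodularity, hence modularity in finite length, and then a cover-preserving $M_3$ in a modular non-distributive lattice of finite length), or else do what the paper does and cite \cite[Theorem, p.~16]{LS} outright, keeping only the easy translation between ``$B_2$-extension'' and the four-element condition.
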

 
 \begin{proof}   We recall the result of \cite[Theorem, p.  16]{LS}, given in our context: An FCP extension $R\subset S$ is distributive if and only if, for any  $T,U\in[R,S]$, the following conditions hold:  
 \begin{enumerate}

\item If $T\cap U\subset U$ and $T\cap U\subset T$ are minimal, then $T\cap U\subset TU$ is a $B_2$-extension.

\item If $T\subset TU$ and $ U\subset TU$ are minimal, then $T\cap U\subset TU$ is a $B_2$-extension.
\end{enumerate}
But, Proposition \ref{1.001} says that an extension $R'\subset S'$ is a $B_2$-extension if and only if $\ell[R',S']=2$ and $|[R',S']|=4$. Moreover, in both cases (1) and (2), $|[T\cap U,TU]|=4$ implies that $\ell[T\cap U,T]=2$ since $[T\cap U,TU]=\{T\cap U,T,U,TU\}$ because $T\neq U$. Otherwise, $T=U$ implies in both cases $T\cap U=T=U=TU$, a contradiction.
  \end{proof}

\begin{corollary} \label{9.03} An FCP  extension $R\subset S$ pinched at a finite chain $\mathcal{C}:=\{T_i\}_{i\in\mathbb N_n}\subset]R,S[,\ n\geq 1$ where $T_0:=R$ and $T_{n+1}:=S$, is distributive if and only if $T_i\subset T_{i+1}$ is distributive for each $i\in\{0,\ldots,n\}$.
\end{corollary}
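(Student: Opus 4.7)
The plan is to verify both directions using results already established in the excerpt. For the forward direction, I would invoke Proposition \ref{1.4}: each $T_i \subset T_{i+1}$ is a subextension of $R \subset S$, so distributivity of $R \subset S$ transfers to it automatically.

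For the converse, assume each $T_i \subset T_{i+1}$ is distributive; I plan to check the criterion of Proposition \ref{6.1} for $R \subset S$. Take $T,U \in [R,S]$ satisfying either hypothesis (1) or (2) of that proposition. First I would observe that $T$ and $U$ must be incomparable: if $T \subseteq U$, then $T \cap U = T$ and $TU = U$, which destroys the strict inclusions demanded in both (1) and (2); the symmetric case is analogous. The key step is then to localize $T$ and $U$ inside a common piece $[T_a, T_{a+1}]$ of the pinching. Since $R \subset S$ is pinched at $\mathcal{C}$, the element $T$ is comparable with every $T_j$; let $a$ be the largest index with $T_a \subseteq T$ (which exists because $T_0 = R \subseteq T$). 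By comparability with $T_{a+1}$ and maximality of $a$, we get $T \in [T_a, T_{a+1}]$. Choosing $b$ analogously for $U$ and assuming without loss of generality $a \leq b$, a strict inequality $a < b$ would yield the chain $T \subseteq T_{a+1} \subseteq T_b \subseteq U$, contradicting incomparability; hence $a = b$.

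With both $T$ and $U$ inside $[T_a, T_{a+1}]$, the bounds $T_a \subseteq T \cap U$ and $TU \subseteq T_{a+1}$ confine the whole configuration $\{T \cap U, T, U, TU\}$ to the distributive extension $T_a \subset T_{a+1}$. Applying Proposition \ref{6.1} inside $T_a \subset T_{a+1}$ then gives $|[T \cap U, TU]| = 4$ under either hypothesis (1) or (2), which is precisely what Proposition \ref{6.1} requires in order to conclude that $R \subset S$ itself is distributive.

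The main obstacle I expect is purely organizational: making sure the case analysis of Proposition \ref{6.1} truly forces $T$ and $U$ to be incomparable, and handling the boundary cases (such as when $T$ coincides with some $T_j$, or when $a = n$) in the maximum-index argument. Both concerns should dissolve once the indexing is set up carefully, since the pinching hypothesis provides exactly the comparability that drives the argument.
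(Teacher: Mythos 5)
Your proposal is correct and follows essentially the same route as the paper: both directions reduce to Proposition \ref{6.1}, and the converse is proved by confining the configuration $\{T\cap U, T, U, TU\}$ to a single interval $[T_a,T_{a+1}]$ of the pinching, where the distributivity of $T_a\subset T_{a+1}$ applies. The only (cosmetic) difference is bookkeeping: you locate $T$ and $U$ separately via a maximal index and incomparability (which also rules out the boundary case $T=S$ or $U=S$), whereas the paper places $T\cap U$ (resp.\ $TU$) in some $[T_i,T_{i+1}[$ (resp.\ $]T_i,T_{i+1}]$) and uses minimality to pull $T$ and $U$ into the same interval.
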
 

\begin{proof} We have $[R,S]=\cup_{i=0}^{n}[T_i,T_{i+1}]$. One implication is obvious. Conversely, assume that $T_i\subset T_{i+1}$ is distributive for each $i\in\{0,\ldots,n-1\}$.  
 We use Proposition \ref{6.1}. Let $U,V\in[R,S]$. 
 
Assume first that $U\cap V\subset U,V$ are minimal. If $U\cap V\in[T_i, T_{i+1}[$ for some  $i\in\{0,\ldots,n\}$, then $U,V\in[T_i,T_{i+1}]$, so that $|[U\cap V,UV]|=4$ because $T_i\subset T_{i+1}$ is distributive.  The case $U\cap V=S$ is  impossible. 

Assume now that $U,V\subset UV$ are minimal. If $U V\in]T_i,T_{i+1}]$, then $U,V\in[T_i,T_{i+1}]$, so that $|[U\cap V,UV]|=4$ because $[T_i, T_{i+1}]$ is distributive. The case $U V=R$ is impossible. 

To conclude, Proposition \ref{6.1} shows that $R\subset S$ is distributive.
\end{proof}

 \begin{corollary} \label{6.103} A ring extension $R\subset S$ of length 2  is distributive if and only if $|[ R,S]|\leq 4$.
\end{corollary}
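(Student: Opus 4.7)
The plan is to recognize this corollary as a direct specialization of Corollary \ref{9.02} to the case $\ell[R,S]=2$, reducing the proof to two elementary observations about length-$2$ extensions and then invoking that corollary verbatim.

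First, I would verify that $R\subset S$ is catenarian. The hypothesis $\ell[R,S]=2$ bounds every chain in $[R,S]$ by length $2$, so all chains are finite and $R\subset S$ has FCP. Moreover, since $\ell[R,S]=2$, some chain attains length $2$, so $R\subset S$ is not minimal; hence every maximal chain has length exactly $2$ (it cannot exceed the bound, and it cannot be shorter without $R\subset S$ being minimal). All maximal chains thus share the common length $2$, which is the definition of catenarian.

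Second, I would observe that the only subextension $V\subset W$ of $[R,S]$ of length $2$ is $R\subset S$ itself. If $V\subset W$ satisfies $\ell[V,W]=2$, a witnessing chain $V\subset X\subset W$ is already a chain in $[R,S]$; any proper inclusion $R\subsetneq V$ or $W\subsetneq S$ would extend it to a chain in $[R,S]$ of length $\geq 3$, contradicting $\ell[R,S]=2$. Thus $V=R$ and $W=S$.

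Combining these observations with Corollary \ref{9.02} finishes the argument: the corollary says a catenarian extension is distributive if and only if every length-$2$ subextension $V\subset W$ satisfies $|[V,W]|\leq 4$, and by the second observation this universal condition collapses to the single inequality $|[R,S]|\leq 4$. I do not expect any genuine obstacle here — the substantive content (identifying length-$2$ lattices of size $4$ with $B_2$-extensions via Proposition \ref{1.001}, and excluding diamonds from distributive lattices) has been absorbed into Corollary \ref{9.02} and needs only to be unpacked. If one preferred a standalone argument, the backward direction is transparent anyway: a length-$2$ extension with $|[R,S]|\leq 3$ is a chain, hence distributive, while one with $|[R,S]|=4$ is a $B_2$-extension by Proposition \ref{1.001}, hence Boolean and thus distributive; and the forward direction could be reproduced by applying Proposition \ref{6.1}(1) to any two distinct atoms $T_1,T_2$, whose intersection is $R$ and whose product must equal $S$ because $\ell[R,S]=2$, forcing $|[R,S]|=4$.
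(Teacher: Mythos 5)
Your proposal is correct. Its primary route differs slightly from the paper's: you reduce the statement to Corollary \ref{9.02}, which requires the two small verifications you supply (a length-$2$ extension is automatically catenarian, since every maximal chain in $[R,S]$ must have length exactly $2$ and all other comparable pairs admit only chains of length at most $1$; and the only length-$2$ subextension is $R\subset S$ itself), after which the universal condition of Corollary \ref{9.02} collapses to $|[R,S]|\leq 4$. The paper instead argues directly: when $|[R,S]|\geq 4$ it invokes Proposition \ref{6.1} (so that $|[R,S]|>4$ rules out distributivity, while $|[R,S]|=4$ gives it), and when $|[R,S]|<4$ the lattice is a chain, hence distributive by Proposition \ref{5.4}. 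Your closing standalone sketch --- chain or $B_2$-extension via Proposition \ref{1.001} for the backward direction, Proposition \ref{6.1}(1) applied to two distinct atoms for the forward direction --- is essentially the paper's argument, so the two approaches rest on the same underlying facts; the route through Corollary \ref{9.02} merely packages the diamond exclusion and the $B_2$ identification into an already-proved statement at the cost of the catenarity check.
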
 
 
 \begin{proof} Since $\ell[R,S]=2$, it follows that $R\subset S$ is an FCP extension. Then the result is obvious: use Proposition \ref{6.1} when $|[R,S]|\geq 4$ because $|[R,S]|>4$ shows that $R\subset S$ is not distributive.
 
If $|[R,S]|<4$, then $[R,S]$ is a chain, and so is distributive by Proposition \ref{5.4}.
 \end{proof}
 
In the following, Proposition \ref{6.1} will often be used. As extensions of length 2 play a significant role in this Proposition, we recall the characterization of length 2 extensions $R\subset S$ with for each case, the value of $|[R,S]|$ we get in our paper  \cite{Pic 6}.

 More precisely, \cite[Theorem 6.1]{Pic 6} gives a complete characterization of length 2 extensions $R\subset S$ with for each case, the value of $|[R,S]|$. 
 
According to Proposition \ref{6.1}, the existence of subextensions $T\cap U\subset TU$ of $R\subset S$ such that $\ell[T\cap U,TU]=2$ and $|[T\cap U,TU]|\neq 4$ implies that $R\subset S$ is not distributive. 
 
Next result of \cite[Theorem 6.1]{Pic 6} allows us to characterize length 2 extensions $R\subset S$ such that $|[R,S]|=4$, that is, $ B_2$-extensions by Proposition \ref{1.001}.

\begin{theorem} \label{3.19} \cite[Theorem 6.1]{Pic 6} A ring extension $R\subset S$ is a $ B_2$-extension if and only if one of the following conditions holds: 
\begin{enumerate}

\item $|\mathrm{Supp}(S/R)|=2,\ \mathrm{Supp}(S/R)\subseteq \mathrm{Max}(R)$. 

\item $R\subset S$ is an infra-integral $M$-crucial extension such that ${}_S^+R\neq R,S$ and  $(R:S)= M$. 

\item $R\subset S$ is a t-closed integral $M$-crucial extension, so that $M=(R:S)$, and the field extension $R/M\subset S/M$ satisfies one of the following conditions:

\noindent  (a) $R/M\subset S/M$ is neither radicial nor separable, nor exceptional.

\noindent (b) $R/M\subset S/M$ is a finite separable field extension such that $|[R/M, S/M]| = 4$.
\end{enumerate} 
\end{theorem}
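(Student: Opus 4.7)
The plan is to invoke Proposition \ref{1.001} at the outset, reducing the task to the characterization of extensions $R\subset S$ with $\ell[R,S]=2$ and $|[R,S]|=4$. Then $[R,S]=\{R,T_1,T_2,S\}$ with $T_1,T_2$ incomparable atoms and coatoms, every covering being a minimal extension. Applying Lemma \ref{1.9} to a maximal chain through $T_i$ yields $|\mathrm{Supp}(S/R)|\le 2$, so the argument splits on the size of $\mathrm{Supp}(S/R)$ together with the types of the minimal subextensions supplied by Theorem \ref{minimal}.

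If $|\mathrm{Supp}(S/R)|=2$, the two distinct crucial ideals $\mathcal{C}(R,T_i)$ are maximal in $R$ by Theorem \ref{minimal}, which is exactly condition~(1). Conversely, starting from~(1), localizing at each of the two orthogonal maximals produces two independent minimal extensions, and Proposition \ref{3.6}(1) reassembles the four-element diamond $\{R,T_1,T_2,S\}$.

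If instead $R\subset S$ is $M$-crucial, note that a $B_2$-extension is catenarian, so Proposition \ref{3.6}(2) forbids the mixed case in which exactly one of $R\subset T_i$ is inert. Two subcases remain. In the non-inert subcase every minimal covering of the lattice is ramified or decomposed, whence $R\subset S$ is infra-integral; the presence of both a ramified and a decomposed atom (otherwise the lattice either collapses to a chain or grows larger than four elements) forces ${}_S^+R\ne R,S$, Proposition \ref{3.6}(3)(a) yields $\ell[R,S]=2$, and the conductor $(R:S)=M$ is read off from the local conductor structure of ramified/decomposed minimal extensions, giving condition~(2). The converse reassembles the lattice from the canonical decomposition via Propositions \ref{1.31} and \ref{3.6}(3)(a). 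In the inert subcase, Proposition \ref{1.31}(4) delivers that $R\subset S$ is t-closed integral $M$-crucial with $M\in\mathrm{Max}(S)$ and $M=(R:S)$, and the order-isomorphism $[R,S]\simeq[R/M,S/M]$ turns the problem into a classification of field extensions $k\subset L$ of length $2$ with exactly four intermediate subfields.

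The main obstacle is precisely this residual field-theoretic classification underlying condition~(3): one must exclude the radicial case (always chained, hence $|[k,L]|\le 3$), isolate the separable subcase where Galois theory pins down exactly when the Galois group of the normal closure has exactly two proper nontrivial subgroups (condition~(b)), and collect all remaining non-radicial, non-separable, non-exceptional length-$2$ field extensions into condition~(a). The delicate interplay of separability, pure inseparability, and the exceptional-extension phenomenon is what makes the formulation of~(3) look technical, and it is the reason the authors defer to the detailed argument of \cite[Theorem 6.1]{Pic 6}.
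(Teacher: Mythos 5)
The first thing to note is that the paper contains no proof of this statement: Theorem \ref{3.19} is recalled verbatim from \cite[Theorem 6.1]{Pic 6}, so there is no internal argument to compare yours against; the only question is whether your sketch would stand on its own, and it does not. The skeleton is the expected one (reduce via Proposition \ref{1.001} to $\ell[R,S]=2$ and $|[R,S]|=4$, bound the support by Lemma \ref{1.9}, and split according to the types of the atoms given by Theorems \ref{crucial} and \ref{minimal}), but at every decisive point the exact lattice count is asserted rather than proved. In case (2) you claim that two atoms of the same non-inert type force either a chain or strictly more than four subrings, that the coexistence of a ramified and a decomposed atom forces ${}_S^+R\neq R,S$, and that $(R:S)=M$ can be ``read off from the local conductor structure''; none of this is argued (a ramified atom only gives ${M'}^2\subseteq(R:T)\subseteq M'$, so excluding $(R:S)\subsetneq M$ genuinely requires the computations carried out in \cite{Pic 6}). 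Worse, in case (3) --- the heart of the theorem --- you prove nothing: the equivalence between $|[R,S]|=4$ and the residual extension $R/M\subset S/M$ being neither radicial, nor separable, nor exceptional (or separable with exactly four intermediate fields) demands the analysis of the separable and radicial closures inside $S/M$ and of the exceptional phenomenon, and you explicitly delegate it to \cite[Theorem 6.1]{Pic 6}. Since that reference \emph{is} the statement being proved, the proposal is circular precisely where the mathematical content lies.

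Two further gaps. The equivalence involving condition (1) only makes sense under the ambient hypothesis, implicit in the surrounding text, that the extensions being classified have length $2$: your converse for (1) silently assumes that both localizations $R_M\subseteq S_M$ are minimal, which condition (1) by itself does not give. Moreover, your appeals to Theorem \ref{minimal} and Proposition \ref{3.6}(1) cover only \emph{integral} minimal subextensions, whereas by Theorem \ref{crucial} an atom (or the upper step $T_i\subset S$) may be a Pr\"ufer minimal extension, i.e.\ a flat epimorphism; in that situation the maximality in $R$ of the traces of the crucial ideals in the forward direction of (1), and the reconstruction of the four-element lattice in its converse, are not covered by the results you cite and would need a separate argument, which is missing.
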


In \cite{Pic 11}, we studied the Loewy series $\{S_i\}_{i=0}^n$  associated to an FCP ring extension $R\subseteq S$ defined as follows (\cite[Definition 3.1]{Pic 11}): the {\it socle} of the extension $R\subset S$ is $\mathcal S[R,S]:=\prod_{A\in\mathcal A}A$ and the {\it Loewy series} of the extension $R\subset S$ is the chain $\{S_i\}_{i=0}^n$ defined by induction: $S_0:=R,\ S_1:=\mathcal S[R,S]$ and for each $i\geq 0$ such that $S_i\neq S$, we set $S_{i+1}:=\mathcal S[S_i,S]$. Of course, since $R\subset S$ has FCP, there is some integer $n$ such that $S_n=S_{n+1}=S$. 

We introduced a property often involved in  \cite{Pic 11}: An FCP extension $R\subset S$ with Loewy series $\{S_i\}_{i=0}^n$ is said to satisfy the property $(\mathcal P)$ (or is a $\mathcal P$-extension) if $R\subset S$ is  pinched at $\{S_i\}_{i=0}^n$.
   Moreover, we have the following result:

\begin{theorem} \label{8.5} \cite[Theorem 3.22]{Pic 11} An FCP $\mathcal P$-extension  $R\subset S$, with Loewy series $\{S_i\}_{i=0}^n$ is distributive if and only if $S_i\subset S_{i+1}$ is Boolean for each $0\leq i\leq n-1$. If these conditions hold, then $R\subset S$ has FIP.
\end{theorem}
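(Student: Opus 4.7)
The plan is to use the pinching hypothesis to reduce the distributivity of $R \subset S$ to the distributivity of each step $S_i \subset S_{i+1}$ of the Loewy series, and then to exploit the fact that each such step is generated by atoms.

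For the easy direction ($\Leftarrow$), I would observe that a Boolean extension is by definition distributive (Proposition \ref{1.0} gives FIP for free from distributivity). Since $R\subset S$ is pinched at $\{S_i\}_{i=0}^n$, Corollary \ref{9.03} immediately yields that $R \subset S$ is distributive as soon as every $S_i\subset S_{i+1}$ is. The FIP conclusion then follows from Proposition \ref{1.0}.

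For the hard direction ($\Rightarrow$), assume $R\subset S$ is distributive. By Proposition \ref{1.4}, each subextension $S_i \subset S_{i+1}$ is distributive, so it remains to show that each such subextension is Boolean, i.e., that every $T \in [S_i, S_{i+1}]$ admits a complement. The key observation is that, by the definition of the socle, $S_{i+1} = \mathcal{S}[S_i, S] = \prod_{A \in \mathcal{A}_i} A$, where $\mathcal{A}_i$ is the set of atoms of $[S_i, S]$. Since any atom $A$ of $[S_i, S]$ contained in $S_{i+1}$ is also an atom of $[S_i, S_{i+1}]$, and conversely any atom of the smaller lattice is an atom of the larger one, the set $\mathcal{A}_i$ is precisely the set of atoms of $[S_i, S_{i+1}]$, and their product is $S_{i+1}$.

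Now for any $T \in [S_i, S_{i+1}]$, distributivity applied to the decomposition of $S_{i+1}$ gives
\[
T = T \cap S_{i+1} = T \cap \Bigl(\prod_{A \in \mathcal{A}_i} A\Bigr) = \prod_{A \in \mathcal{A}_i}(T \cap A).
\]
Since each $A$ is an atom over $S_i$, the intersection $T \cap A$ equals either $S_i$ or $A$, so $T$ is the product of the atoms it contains. Setting $T' := \prod_{A \in \mathcal{A}_i,\, A \not\subseteq T} A$, the same distributivity argument gives $T \cap T' = S_i$ and $TT' = S_{i+1}$, so $T'$ is a complement of $T$ in $[S_i, S_{i+1}]$. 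Hence $S_i \subset S_{i+1}$ is Boolean. The main obstacle, as I see it, is simply justifying cleanly that the atoms of $[S_i, S]$ contained in $S_{i+1}$ are precisely the atoms of $[S_i, S_{i+1}]$, which is however immediate from the intrinsic (lattice-theoretic) nature of minimality. The FIP conclusion again follows from Proposition \ref{1.0}.
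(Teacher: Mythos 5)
Your argument is correct, but note that this paper does not actually prove Theorem \ref{8.5}: it is quoted from \cite[Theorem 3.22]{Pic 11}, so there is no in-paper proof to compare against. On its own merits, your proof is sound and self-contained. The ($\Leftarrow$) direction is exactly the natural route: Boolean is by definition distributive, the $\mathcal P$-hypothesis says the extension is pinched at the Loewy chain, and Corollary \ref{9.03} assembles the pieces (the case $n=1$ is trivial since then $[R,S]=[S_0,S_1]$); FIP then comes from Proposition \ref{1.0}. The ($\Rightarrow$) direction is the standard lattice fact that in a distributive lattice of finite length the interval from an element to the join of the atoms above it is Boolean (isomorphic to the power set of those atoms), and your verification is correct: atoms of $[S_i,S]$ lie in $S_{i+1}$ and minimality is intrinsic, so they are exactly the atoms of $[S_i,S_{i+1}]$; distributivity of intersection over finite products gives $T=\prod_{A\subseteq T}A$ and shows $T':=\prod_{A\not\subseteq T}A$ is a complement. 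Two small points worth making explicit: the iterated distributivity needs the atom set to be finite, which follows because a distributive FCP extension has FIP (Proposition \ref{1.0}); and your forward direction never uses the pinching hypothesis, so you have in fact proved the slightly stronger statement that for any distributive FCP extension all Loewy steps $S_i\subset S_{i+1}$ are Boolean, with pinching needed only for the converse. Neither point is a gap.
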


The following Lemma will be useful in the next proofs. The co-integral closure $\underline R$ of an FCP extension $R\subset S$ is involved but does not appear in the statements. It is the least $T\in[R,S]$ such that $T\subseteq S$ is integral. In fact, in \cite[Theorem 3.7]{Pic 14}, we prove that $\underline R=\widetilde R$ when $R\subset S$ is an FCP almost-Pr\"ufer  extension.

\begin{lemma} \label{6.2} Let $R\subset S$ be an FCP almost-Pr\"ufer  extension. The map $\varphi:[\widetilde R,S]\to[R,\overline R]$ defined by $\varphi(T):=T\cap\overline R$ for any $T\in[\widetilde R,S]$ is an order isomorphism and  $\mathrm{Supp}_R(S/\widetilde R)=\mathrm{Supp}_R(\overline R/R)\subseteq\mathrm{Max}( R)$.
\end{lemma}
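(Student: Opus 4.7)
The fundamental structural fact I would invoke is that $R\subseteq\widetilde R$ is a flat epimorphism, so $S=\widetilde R\cdot\overline R\cong\widetilde R\otimes_R\overline R$, and moreover $\widetilde R\cap\overline R=R$ because this intersection is simultaneously Prüfer and integral over $R$, hence an isomorphism. The candidate inverse of $\varphi$ is $\psi:[R,\overline R]\to[\widetilde R,S]$ defined by $\psi(U):=U\widetilde R$; this is well-defined since $\widetilde R\subseteq U\widetilde R\subseteq\overline R\widetilde R=S$, and both maps are plainly order-preserving. Note also that $\varphi$ is well-defined because for $T\in[\widetilde R,S]$ we have $R\subseteq T\cap\overline R\subseteq\overline R$. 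To show that $\varphi$ and $\psi$ are mutually inverse, it suffices to establish the two identities $U\widetilde R\cap\overline R=U$ for $U\in[R,\overline R]$ and $(T\cap\overline R)\widetilde R=T$ for $T\in[\widetilde R,S]$; one inclusion in each is trivial.

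For the non-trivial inclusions I would localize at each maximal ideal $M$ of $R$ and invoke the structural fact (to be quoted from \cite{Pic 5}) that an FCP almost-Prüfer extension of a local ring is either Prüfer or integral. Since the formation of $\widetilde R$ and of $\overline R$ commutes with localization for FCP extensions, this yields the local dichotomy: for each $M\in\mathrm{Max}(R)$, either $\widetilde R_M=R_M$ and $\overline R_M=S_M$, or $\overline R_M=R_M$ and $\widetilde R_M=S_M$. In the first case the two identities reduce to trivialities inside $[R_M,S_M]=[R_M,\overline R_M]$; in the second case they reduce to trivialities inside $[R_M,S_M]=[R_M,\widetilde R_M]$, where $[\widetilde R_M,S_M]$ and $[R_M,\overline R_M]$ are each singletons. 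Since the desired ring equalities can be checked locally on $\mathrm{Max}(R)$, this completes the proof that $\varphi$ is an order isomorphism.

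For the support equality, first $\mathrm{Supp}_R(\overline R/R)\subseteq\mathrm{Max}(R)$ follows from Lemma \ref{1.9} together with Theorem \ref{minimal}, since along any maximal chain in $R\subseteq\overline R$ each minimal step is integral and hence has crucial ideal maximal in $R$; the analogous statement for $\widetilde R\subseteq S$ (whose minimal subextensions are integral too, as $\widetilde R\subseteq S$ is integral by the definition of almost-Prüfer) gives $\mathrm{Supp}_R(S/\widetilde R)\subseteq\mathrm{Max}(R)$ after identifying maximal ideals of $\widetilde R$ with their contractions. The equality $\mathrm{Supp}_R(S/\widetilde R)=\mathrm{Supp}_R(\overline R/R)$ is then immediate from the same local dichotomy: $M$ lies in the former iff $\widetilde R_M\neq S_M$, which by the dichotomy occurs iff we are in the integral local case, iff $R_M\neq\overline R_M$, iff $M$ lies in the latter.

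The principal obstacle, as I see it, is citing (or establishing locally) the dichotomy that over a local ring an FCP almost-Prüfer extension must be either Prüfer or integral; this is the structural content that allows both halves of the lemma to drop out by purely formal localization arguments. Everything else is bookkeeping with intersections, compositums, and the flat-epimorphism tensor decomposition.
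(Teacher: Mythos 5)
Your route is genuinely different from the paper's: for the order isomorphism the paper simply cites \cite[Proposition 3.8]{Pic 14}, and for the supports it combines \cite[Proposition 4.16, Corollary 4.20]{Pic 5} with \cite[Theorem 6.13]{Pic 15}, whereas you deduce everything from the local dichotomy ``at each $M\in\mathrm{Max}(R)$, either $\widetilde R_M=R_M$ and $\overline R_M=S_M$, or $\widetilde R_M=S_M$ and $\overline R_M=R_M$''. That dichotomy is available: it follows from the disjointness $\mathrm{Supp}_R(\widetilde R/R)\cap\mathrm{Supp}_R(S/\widetilde R)=\emptyset$ and the compatibility of $\widetilde R$ with localization, i.e.\ exactly the results of \cite{Pic 5} the paper itself invokes (together with $S=\widetilde R\,\overline R$ and the fact that a Pr\"ufer extension is integrally closed). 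Granting it, your verification that $T\mapsto T\cap\overline R$ and $U\mapsto U\widetilde R$ are mutually inverse is correct (equality of $R$-submodules of $S$ is local, and localization commutes with finite intersections and compositum), so your argument even supplies a self-contained proof of the part the paper leaves to a citation.

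There is, however, one flawed step: the claim $\mathrm{Supp}_R(S/\widetilde R)\subseteq\mathrm{Max}(R)$ ``after identifying maximal ideals of $\widetilde R$ with their contractions''. Lemma \ref{1.9} applied to $\widetilde R\subset S$ controls $\mathrm{Supp}_{\widetilde R}(S/\widetilde R)\subseteq\mathrm{Max}(\widetilde R)$, but the lemma you are proving is about the support over $R$, and $R\subseteq\widetilde R$ is only a flat epimorphism, not integral, so a maximal ideal of $\widetilde R$ (or of an intermediate ring of $[\widetilde R,S]$) may contract to a non-maximal prime of $R$; the identification you invoke is exactly what fails in the Pr\"ufer direction. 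This matters because the statement asserts equality of the full supports over $\mathrm{Spec}(R)$: your dichotomy, as you formulated it only at maximal ideals, gives $\mathrm{MSupp}_R(S/\widetilde R)=\mathrm{MSupp}_R(\overline R/R)$, and without the inclusion $\mathrm{Supp}_R(S/\widetilde R)\subseteq\mathrm{Max}(R)$ you cannot upgrade this to the claimed equality (you do get $\mathrm{Supp}_R(\overline R/R)\subseteq\mathrm{Supp}_R(S/\widetilde R)$ from the integral side, but not the converse). The repair stays inside your framework and is what the paper does: take an arbitrary $P\in\mathrm{Supp}_R(S/\widetilde R)$; by the support disjointness $P\notin\mathrm{Supp}_R(\widetilde R/R)$, so $\widetilde R_P=R_P$, hence $\overline R_P=\widetilde R_P\overline R_P=S_P\neq\widetilde R_P=R_P$, giving $P\in\mathrm{Supp}_R(\overline R/R)$; since $R\subseteq\overline R$ is integral with FCP, the latter support lies in $\mathrm{Max}(R)$ (your Lemma \ref{1.9} argument is correct on that side), and the two supports coincide.
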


 \begin{proof} 
The first part of the statement is \cite[Proposition 3.8]{Pic 14}.
 
Let $M\in\mathrm{Supp}_R(S/\widetilde R)$, so that $M\not\in\mathrm{Supp}_R(\widetilde R/R)$ by \cite[Proposition 4.16]{Pic 5}. It follows that $(\widetilde R)_M=\widetilde {R_M}=R_M$ by \cite[Corollary 4.20]{Pic 5}, which leads to $R_M\subset S_M$ is an integral extension because $R_M\subset S_M$ is also almost-Pr\"ufer, and $M\in\mathrm{Supp}_R(\overline R/R)$ because $\overline R_M=S_M$. Then, 
 $\mathrm{Supp}_R(S/\widetilde R)\subseteq\mathrm{Supp}_R(\overline R/R)
 \subseteq\mathrm{Max}(R)$ because $R\subseteq\overline R$ is integral. It follows that $\mathrm{Supp}_R(S/\widetilde R)=\mathrm{MSupp}_R(S/\widetilde R)=\mathrm{MSupp}_R(\overline R/ R)=\mathrm{Supp}_R(\overline R/ R)$ by \cite[Theorem 6.13]{Pic 15}.
 \end{proof}
 
 We are going to characterize  distributive extensions by using the two conditions of  Proposition  \ref{6.1}. Since the proof is quite long, we split it in two propositions. 

\begin{proposition} \label{6.3} Let $R\subset S$ be an FCP extension such that $R\subseteq\overline R$ is distributive. For any $T,U\in[R,S]$ such that $T\cap U\subset U$ and $T\cap U\subset T$ are minimal, then $|[T\cap U,TU]|=4$.
 \end{proposition}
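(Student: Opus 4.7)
Set $V:=T\cap U$; we may assume $T\neq U$. Since $R\subseteq S$ is FCP, it is quasi-Pr\"ufer by \cite[Corollary 3.4]{Pic 5}, so $\overline R\subseteq S$ is Pr\"ufer and hence distributive by Theorem \ref{5.41}. A useful identity, valid for any $V\in[R,S]$ in a quasi-Pr\"ufer extension, is $\overline V^{\,S}=V\overline R$: the inclusion $V\overline R\subseteq\overline V^{\,S}$ is immediate since every element of $\overline R$ is integral over $R\subseteq V$, while the reverse inclusion follows because $V\overline R\subseteq S$ is Pr\"ufer (as a subextension of $\overline R\subseteq S$), so $V\overline R$ is integrally closed in $S$. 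My plan is to classify the two minimal extensions $V\subset T$ and $V\subset U$ using Theorem \ref{crucial}: each is either integral or a flat epimorphism (Pr\"ufer minimal), which yields three cases up to symmetry.

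If both $V\subset T$ and $V\subset U$ are Pr\"ufer minimal, then $T\overline R$ and $U\overline R$ lie in $[\overline R,S]$, which is distributive, and I would apply condition~(1) of Proposition \ref{6.1} to $\overline R\subseteq S$ at the pair $(T\overline R,U\overline R)$, then pull the four-element interval back to $[V,TU]$ using that $T\cap V\overline R=V$ and $U\cap V\overline R=V$ (the flat-epi/integral split). If both $V\subset T$ and $V\subset U$ are integral, then $V\subseteq TU\subseteq V\overline R$; the plan is to establish that the integral extension $V\subseteq V\overline R$ is distributive---by transferring distributivity from $R\subseteq\overline R$ via the surjection $V\otimes_R\overline R\twoheadrightarrow V\overline R$ together with Propositions \ref{desc 1} and \ref{1.014}---and then to apply condition~(1) of Proposition \ref{6.1} inside $[V,V\overline R]$ to conclude $|[V,TU]|=4$.

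The mixed case (say $V\subset T$ integral and $V\subset U$ Pr\"ufer minimal) is what I expect to be the main obstacle. Here $V\overline R=T\overline R$, because $V\overline R\subseteq T\overline R$ is an integral subextension of the Pr\"ufer extension $\overline R\subseteq S$ and must therefore be an isomorphism. One is then led to show that any $X\in[V,TU]$ splits through its integral part $X\cap V\overline R\in[V,T]$ and a Pr\"ufer component sitting over $V\overline R$, each of which the minimality of $V\subset T$ and $V\subset U$ forces into one of only two positions; this leaves exactly the four candidates $V,T,U,TU$. The delicate point throughout is that $V$ need not lie in $\overline R$, so distributivity of $R\subseteq\overline R$ cannot be invoked as a mere subextension property (Proposition \ref{1.4}); it must be transferred to the extension $V\subseteq V\overline R$ via flat base change and, when necessary, localization at a maximal ideal of $\mathrm{MSupp}(TU/V)$.
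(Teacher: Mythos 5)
There is a genuine gap, and it sits at the two places where your plan replaces the paper's key tool by tools that do not apply. In your both-integral case, the assertion that $V\subseteq V\overline R$ is distributive is correct, but the justification you propose cannot work: Proposition \ref{desc 1} transfers distributivity only along a flat ring epimorphism $R\to R'$ followed by base change, and here $R\to V$ is in general neither flat nor an epimorphism (in the only interesting situation $V\notin[R,\overline R]$, it is not even integral); moreover none of the cited results transfers distributivity along the algebra surjection $V\otimes_R\overline R\twoheadrightarrow V\overline R$, and Proposition \ref{1.014} only concerns localizations and quotients by ideals shared by the two rings of the extension. What is actually needed --- and what the paper uses --- is Lemma \ref{6.2}: letting $W$ be the integral closure of $R$ in $V$, quasi-Pr\"uferness of FCP extensions makes $W\subseteq V$ Pr\"ufer, so $W\subset TU$ is almost-Pr\"ufer with Pr\"ufer hull $V$, and $X\mapsto X\cap W'$ (with $W'$ the integral closure of $W$ in $TU$) is an order isomorphism $[V,TU]\to[W,W']$ with $[W,W']\subseteq[R,\overline R]$; distributivity of $R\subseteq\overline R$ and Proposition \ref{6.1} then give $|[V,TU]|=|[W,W']|=4$. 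Your closing remark correctly identifies the difficulty (that $V$ need not lie in $\overline R$) but proposes ``flat base change and localization'' where the almost-Pr\"ufer splitting is the tool that actually does the job.

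The second gap is the case division itself. Splitting by the type (integral versus flat epimorphism) of $V\subset T$ and $V\subset U$ leaves you with a mixed case that you only sketch (``one is then led to show \dots'') and yourself flag as the main obstacle; your both-Pr\"ufer case also rests on unverified steps, since you never check that $(T\overline R)\cap(U\overline R)\subset T\overline R,\,U\overline R$ are minimal, nor do you produce an order isomorphism that pulls the four-element interval upstairs back to $[V,TU]$. The paper's dichotomy is by crucial ideals and makes both problems disappear: if $\mathcal C(V,T)\neq\mathcal C(V,U)$, then $[V,TU]=\{V,T,U,TU\}$ at once (Proposition \ref{3.6}); if the crucial ideals coincide, then \cite[Lemma 1.5]{Pic 6} forces the two minimal extensions to be of the same kind, the both-Pr\"ufer possibility is excluded by \cite[Theorem 6.10]{DPP2}, and only the both-integral case remains, handled as above. (Your both-Pr\"ufer case could alternatively be closed by noting that $T$ and $U$ lie under the Pr\"ufer hull of $V$ in $S$, so $V\subseteq TU$ is Pr\"ufer, hence distributive by Theorem \ref{5.41}, and Proposition \ref{6.1} applies; but without the crucial-ideal dichotomy the mixed case remains unproved, so the argument as proposed is incomplete.)
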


 \begin{proof}  Of course, $T\neq U$.
 Set  $V:=T\cap U,\ M:=\mathcal C(V,T)$ and $ N:= \mathcal C(V,U)$, so that $M,N\in\mathrm{Max}( V)$. The proof  differs  when the ideals $M$ and $N$ are different  or not.  
 
 If $M\neq N$, Proposition \ref{3.6} shows that $|[T\cap U,TU]|=4$.

 If $M= N$, \cite[Lemma 1.5]{Pic 6} shows that either $V\subset U$ and $V\subset T$ are both minimal Pr\"ufer, or $V\subset U$ and $V\subset T$ are both minimal integral. But, $V\subset U$ and $V\subset T$ cannot be both  minimal Pr\"ufer by \cite[Theorem 6.10]{DPP2}, since they have the same crucial maximal ideal. So, the only possibility is that $V\subset U$ and $V\subset T$ are both minimal integral. In particular, $V\subset TU$ is integral. If $V\in[R, \overline R]$, then $T,U\in[R, \overline R]$ and $|[T\cap U,TU]|=4$ according to Proposition  \ref{6.1} because $R\subseteq \overline R$ is distributive.
 Assume that $V\not\in[R, \overline R]$, and let $W$ be the integral closure of $R\subset V$, so that $ W\subset TU$ is almost-Pr\"ufer and $V$ is the Pr\"ufer hull of this extension. Let $W'$ be the integral closure of $W\subset TU$. We have the following commutative diagram:
 $$\begin{matrix}
  {}  & {}   & {} &       {}       & W' & {}             & {}  \\
  {}  & {}   & {} & \nearrow & {}   & \searrow & {}   \\
 R   & \to & W &      {}       & {}   & {}            & TU \\
 {}   & {}   & {} & \searrow & {}   & \nearrow & {}   \\
 {}   & {}   & {} &      {}       & V  & {}             & {}
\end{matrix}$$
Then, there is an order isomorphism $\varphi:[V,TU]\to[W,W']$ defined by $\varphi(X):=X\cap W'$ for any $X\in[V,TU]$, by Lemma \ref{6.2}. Set $T':=\varphi(T)$ and $U':=\varphi (U)$. Since $W=\varphi(V)$, we get that $W\subset T'$ and $W\subset U'$ are minimal, so that $W=T'\cap U'$ and $T'U'=\varphi(TU)=W'$. But $[W,W']\subseteq [R,\overline R]$ because $W\subseteq W'$ is integral as $R\subseteq W$. Hence,  $W\subseteq W'$ is distributive. It follows that $|[ W,W']|=4$ because of Proposition  \ref{6.1}, whence  $|[T\cap U,TU]|=4$.  
 \end{proof}
 
 \begin{proposition} \label{6.4}  Let $R\subset S$ be an FCP extension such that $R\subseteq \overline R$ is distributive. For any distinct $T,U\in[R,S]$ such that  $T\subset TU$ and $ U\subset TU$ are minimal, then $|[T\cap U,TU]|=4$.
 \end{proposition}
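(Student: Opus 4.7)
Set $V := T \cap U$ and $W := TU$; note $V \subsetneq W$ since $T \neq U$. The plan is to mirror the case analysis of Proposition~\ref{6.3}, splitting according to the types (Pr\"ufer vs.\ integral) of the two minimal covers $T \subset W$ and $U \subset W$. Recall that in an FCP (hence quasi-Pr\"ufer) extension, a minimal sub-extension $X \subset W$ is Pr\"ufer exactly when $X$ is integrally closed in $W$, and integral otherwise.

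If both covers are Pr\"ufer minimal, then $T = \overline{T}^W$ and $U = \overline{U}^W$, so any element of $W$ integral over $V$ is integral over both $T$ and $U$ and hence lies in $T \cap U = V$. Thus $V \subseteq W$ is FCP with trivial integral closure, hence Pr\"ufer, and so distributive by Theorem~\ref{5.41}; applying Proposition~\ref{6.1}~(2) inside the distributive sub-lattice $[V, W]$ delivers $|[V, W]| = 4$.

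The remaining cases I would handle by reducing to the integral setting in $[R, \overline R]$, as in the proof of~\ref{6.3}. When $W \subseteq \overline R$, we have $V, T, U \in [R, \overline R]$ and Proposition~\ref{6.1}~(2) applied to the distributive $R \subseteq \overline R$ gives the result immediately. When $W \not\subseteq \overline R$ but $V \subseteq W$ is integral, set $V_0 := V \cap \overline R$; then $V_0 \subseteq V$ is Pr\"ufer while $V \subseteq W$ is integral, so $V_0 \subseteq W$ is almost-Pr\"ufer with Pr\"ufer hull $V$. Putting $W_0 := \overline{V_0}^W$, Lemma~\ref{6.2} supplies an order-isomorphism $\varphi : [V, W] \to [V_0, W_0]$, $\varphi(X) := X \cap W_0$. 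The images $T_0 := \varphi(T)$ and $U_0 := \varphi(U)$ satisfy $V_0 = T_0 \cap U_0$, $W_0 = T_0 U_0$, with $T_0 \subset W_0$ and $U_0 \subset W_0$ minimal; since $V_0, W_0 \in [R, \overline R]$, Proposition~\ref{6.1}~(2) forces $|[V_0, W_0]| = 4$, whence $|[V, W]| = 4$.

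The main obstacle is the genuinely mixed configuration: $W \not\subseteq \overline R$, $V \subseteq W$ is not integral (so $V^+ := \overline{V}^W \subsetneq W$), and at least one cover is integral. A useful observation in the ``one cover integral, the other Pr\"ufer'' sub-case is that the integrality constraints force $V \subseteq W$ itself to be almost-Pr\"ufer: if $T \subset W$ is integral and $U \subset W$ is Pr\"ufer, then $U$ being integrally closed in $W$ gives $\overline{V}^W \subseteq U$, which forces $V$ to be integrally closed in $T$, so that $V \subseteq T$ is Pr\"ufer and hence $V \subseteq W$ is almost-Pr\"ufer with Pr\"ufer hull $T$. Applying Lemma~\ref{6.2} to $V \subseteq W$ then yields $[T, W] \cong [V, V^+]$, and since $[T, W] = \{T, W\}$ we obtain $V \subset V^+$ minimal; combining this with Theorem~\ref{5.41} (distributivity of the Pr\"ufer piece $V^+ \subseteq W$) and a parallel analysis of the position of $U$ relative to $V^+$ should conclude $|[V, W]| = 4$. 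Treating every sub-configuration of this mixed case uniformly, and ensuring that the compatibility between the Pr\"ufer part $[V^+, W]$ and the integral part $[V, V^+]$ pins the count down to exactly four, is where I expect the bulk of the technical effort to reside.
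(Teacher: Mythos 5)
Your first three cases are sound and match the paper's strategy: the both-Pr\"ufer case (showing $V:=T\cap U$ is integrally closed in $W:=TU$, hence $V\subseteq W$ is Pr\"ufer and Theorem \ref{5.41} applies), the case $W\subseteq\overline R$, and the reduction via Lemma \ref{6.2} when $V\subseteq W$ is integral are exactly the paper's case (1) and its final reduction step. But there is a genuine gap, which you acknowledge yourself: the ``mixed'' configurations are not proved, and they are precisely where the real work of the paper's proof lies. Concretely, two things are missing. First, in the case where one cover is Pr\"ufer minimal and the other minimal integral, your observation that $V\subseteq T$ (the ring with the integral cover) is Pr\"ufer and that Lemma \ref{6.2} gives $[T,W]\cong[V,\overline V^W]$, hence $V\subset\overline V^W$ minimal, does not yet bound $|[V,W]|$: you still need to rule out further intermediate rings. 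The paper does this by first proving, via \cite[Proposition 4.26]{Pic 5}, that $V\subset T$ (in its notation, the ring under the Pr\"ufer cover) is minimal \emph{integral}, so that it is the integral closure of $V$ in $TU$, and then invoking the splitting order-isomorphism $[V,TU]\to[V,T]\times[T,TU]$ of \cite[Corollary 6.2]{Pic 15} for almost-Pr\"ufer extensions, which gives $|[V,TU]|=2\cdot 2=4$ on the nose.

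Second, when both covers are minimal integral, it is not automatic that $V\subseteq W$ is integral, so this case does not simply fall under your third paragraph. The paper first shows (via Lemma \ref{6.2} applied to the integral closure of $V$ in $U$) that the contractions $M\cap V$ and $N\cap V$ of the two crucial ideals are maximal in $V$, and then splits: if they differ, \cite[Proposition 5.4]{DPPS} and \cite[Lemma 2.7]{DPP2} give $[V,TU]=\{V,T,U,TU\}$ directly; if they coincide, integrality of $V\subset TU$ has to be established through a further discussion of the comparability of $M$ and $N$ using \cite[Propositions 5.7 and 6.6]{DPPS}, and only then does the Lemma \ref{6.2} reduction to the distributive extension $R\subseteq\overline R$ apply. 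These ingredients (the almost-Pr\"ufer splitting isomorphism and the crucial-ideal dichotomy) are not routine consequences of what you have set up, so the proposal as it stands proves the proposition only in the special cases of your first three paragraphs.
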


 \begin{proof} 
  Of course, $T\neq U$.
 Set $V:=T\cap U,\ M:=\mathcal C(T,TU)$ and $N:=\mathcal C(U,TU)$. We are going to consider the different possibilities for the extensions $T\subset TU$ and $U\subset TU$. Let $V'$ be the integral closure of $V$ in $TU$.
 
(1)  $T\subset TU$ and $ U\subset TU$ are Pr\"ufer minimal. 

We claim that $V\subset TU$ is Pr\"ufer:  $V'\subset TU$ is Pr\"ufer implies that  $T,U\in[V',TU]$ and we get that $V\subseteq V'\subseteq T\cap U=V$, giving $V=V'$. So, $V\subset TU$ is Pr\"ufer, and then distributive by  Theorem  \ref{5.41}, which leads to $|[T\cap U,TU]|=4$ by Proposition  \ref{6.1}.
 
(2)  $T\subset TU$ is Pr\"ufer minimal and $ U\subset TU$ is minimal integral. 

As in (1), we have $V'\subseteq T$. Moreover, $V\subset U$ is not integral, because in this case, $V\subset TU$ would be integral, a contradiction with $T\subset TU$ is Pr\"ufer. Let $V''$ be the integral closure of $V$ in $U$, so that $V''\subseteq V'\subseteq T$. But, we also have $V''\subseteq U$. To conclude, $V\subseteq V''\subseteq T\cap U=V$ leads to $V''=V$. In particular, $V\subseteq U$ is Pr\"ufer and $V\subset TU$ is almost-Pr\"ufer, with $U$ as Pr\"ufer hull. An application of Lemma \ref{6.2} yields that $N\cap V\in\mathrm{Supp}_V(V'/V)=\mathrm{Supp}_V(TU/U)\subseteq\mathrm{Max}(V)$. Using \cite[Proposition 4.26]{Pic 5}, we show that $V\subset T$ is minimal integral. Indeed, $V\subset T$ is minimal integral if and only if $V\cap U=T\cap U\ (*)$ and $VU\subset TU$ is minimal integral $(**)$. But $(*)$ is nothing but $V=V$ while $(**)$ is  equivalent to $U\subset TU$ is minimal integral, which both hold. Then, $V\subset T$ is minimal integral, 
 so that $T$ is the integral closure of $V\subset TU$. Because $V\subset TU$ is almost-Pr\"ufer and according to \cite[Corollary 6.2]{Pic 15}, we have an order-isomorphism $\psi:[V,TU]\to[V,T]\times[T,TU]$ defined by $W\mapsto(W\cap T,WT)$. To conclude, we get $|[T\cap U,TU]|=4$.  
 
 (3) $T\subset TU$ is minimal integral and $ U\subset TU$ is Pr\"ufer minimal. The proof is the same as in (2) exchanging $T$ and $U$. 
 
 (4) $T\subset TU$ and $ U\subset TU$ are both minimal integral.
 
 As in (2), let $V''$ be the integral closure of $V$ in $U$. Then, $V''\subseteq U$ is Pr\"ufer. As  $U\subset TU$ is integral, it follows that $V''\subset TU$ is almost Pr\"ufer, with $U$ as Pr\"ufer hull.  By Lemma \ref{6.2}, we get that $N\cap V''\in\mathrm{Supp}_{V''}(TU/U)\subseteq \mathrm{Max}( V'')$, so that  $N\cap V\in \mathrm{Max}( V)$ since $V\subseteq V''$ is integral. The same reasoning gives that $M\cap V\in \mathrm{Max}( V)$. A new discussion compares $M\cap V$ and $N\cap V$.
 
 (a) $M\cap V\neq N\cap V$.
 
According to \cite[Proposition 5.4]{DPPS}, we get that $V\subset T$ is minimal with crucial maximal ideal $N\cap V$ and $V\subset U$ is minimal with crucial maximal ideal $M\cap V$. Then, we infer from \cite[Lemma 2.7]{DPP2} that  $|[T\cap U,TU]|=4$.

 (b) $M\cap V= N\cap V$.
 
To begin with, we are going to prove that $V\subset TU$ is integral and then,  use a   reasoning similar to the one used  in Proposition \ref{6.3}.
 
 (i) $M$ and $N$ are incomparable.
 
 \cite[Proposition 6.6(a)]{DPPS} tells us that $V\subset U$ is a minimal extension of the same type as $T\subset TU$, and then an integral extension, and so is $V\subset TU$.
  
(ii) $M\subset N$.

From \cite[Proposition 6.6(b)]{DPPS} we infer that $V\subset T$ is a minimal extension of the same type as $U\subset TU$, and then an integral extension, and so is $V\subset TU$.

 (iii) $M= N$.

 \cite[Proposition 5.7]{DPPS} implies  that $M\in\mathrm{Max}( V)$. In particular, $M=(V:TU)$ and $V/M$ is a field, as $T/M$, so that $V/M\subset T/M$ is an FCP field extension, and then an integral extension by \cite[Theorem 4.2]{DPP2}, as $V\subset T $, and then as $V\subset TU$.  
 
 (iv) $N\subset M$: see (ii).

 Then, in each case, $V\subset TU$ is integral. We have the following commutative diagram, where $W$ is the integral closure of $R$ in $V$, and $W'$ is the integral closure of $W$ in $TU$, so that $W\subseteq V$ is Pr\"ufer and $V\subset TU$ is integral. Hence $W\subset TU$ is almost-Pr\"ufer. 
 
\centerline{ $\begin{matrix}
  {}  & {}   & {} &       {}       & W' & {}             & {}  \\
  {}  & {}   & {} & \nearrow & {}   & \searrow & {}   \\
 R   & \to & W &      {}       & {}   & {}            & TU \\
 {}   & {}   & {} & \searrow & {}   & \nearrow & {}   \\
 {}   & {}   & {} &      {}       & V  & {}             & {}
\end{matrix}$}
 
Then, there is an order isomorphism $\varphi:[V,TU]\to[W,W']$ defined by $\varphi(X):=X\cap W'$ for $X\in[V,TU]$, by Lemma \ref{6.2}. Set $T':=\varphi(T)$ and $U':=\varphi(U)$. Since $W'=\varphi(TU)$, we get that $T'\subset W'$ and $U'\subset W'$ are minimal, so that $W'=T' U'$ and $T'\cap U'=\varphi(T\cap U)=\varphi(V)=W$. But $[W,W']\subseteq[R,\overline R]$ because $W\subseteq W'$ is integral as $R\subseteq W$. Hence, $W\subseteq W'$ is distributive. It follows that $|[ W,W']|=4$ thanks to Proposition \ref{6.1}, giving $|[T\cap U,TU]|=4$.  
\end{proof}

\begin{theorem} \label{6.5}  An FCP extension $R\subset S$ is distributive if and only if $R\subset \overline R$  is distributive.
\end{theorem}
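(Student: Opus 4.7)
The plan is to obtain both directions by combining earlier results. The forward implication is the easier half: assuming $R\subset S$ is distributive, Proposition \ref{1.4} tells us that every subextension of $R\subset S$ inherits distributivity, and since $R\subseteq \overline R$ is a subextension (with $\overline R\in[R,S]$), it is automatically distributive. No further work is needed for this direction.

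For the converse, assume $R\subseteq \overline R$ is distributive. The natural strategy is to use the Łazarz--Siemeńczuk-type characterization recast as Proposition \ref{6.1}: to show $R\subset S$ is distributive it suffices to verify, for all $T,U\in[R,S]$, the two conditions
\begin{enumerate}
\item if $T\cap U\subset T$ and $T\cap U\subset U$ are both minimal, then $|[T\cap U,TU]|=4$;
\item if $T\subset TU$ and $U\subset TU$ are both minimal, then $|[T\cap U,TU]|=4$.
\end{enumerate}
But these are exactly the conclusions of Propositions \ref{6.3} and \ref{6.4} respectively, both proved under the sole hypothesis that $R\subseteq\overline R$ is distributive. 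Thus the converse reduces to quoting these two propositions and applying Proposition \ref{6.1}.

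The real difficulty was already absorbed into Propositions \ref{6.3} and \ref{6.4}, where the Prüfer/integral dichotomy for minimal extensions, the almost-Prüfer decomposition via the integral closure $W$ of $R$ in $V:=T\cap U$ (or of $V$ in $TU$), and the order-isomorphism of Lemma \ref{6.2} together let one transport the situation into $[W,W']\subseteq[R,\overline R]$ and invoke the hypothesis. So at the level of Theorem \ref{6.5} itself, the proof is a short assembly: forward by Proposition \ref{1.4}, backward by Propositions \ref{6.3}, \ref{6.4}, and \ref{6.1}. I would write it as a brief three-line argument with explicit citations, rather than duplicating any case analysis.
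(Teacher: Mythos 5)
Your proposal is correct and is essentially the paper's own proof: the forward direction is immediate since $\overline R\in[R,S]$ (Proposition \ref{1.4}), and the converse follows by combining Propositions \ref{6.3} and \ref{6.4} with the criterion of Proposition \ref{6.1}, exactly as the paper does.
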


\begin{proof} One implication is obvious. For the converse, use Propositions \ref{6.1}, \ref{6.3} and \ref{6.4}.
\end{proof}

\begin{remark} \label{9.71} An extension $R\subset S$ is said {\it almost unbranched} if each $T\in[R,\overline R[$ is a local ring. When  $R\subset S$ is an almost unbranched FCP extension, the result of Theorem  \ref{6.5}  immediately follows.  Indeed, \cite[Corollary 2.6]{Pic 13} shows that $R\subset S$ is pinched at $\overline R$. 
 We may also remark that $\overline R\subseteq S$ is Pr\"ufer since FCP, and then distributive by Theorem \ref{5.41}. It follows that Proposition \ref{9.03} gives the result of Theorem  \ref{6.5}. 

More generally, if $R\subset S$ is a quasi-Pr\"ufer extension pinched at $\overline R$, then $R\subset S$ is distributive if and only if $R\subseteq \overline R$ is distributive. Indeed, $[R,S]=[R,\overline R]\cup[\overline R,S]$, with $\overline R\subseteq S$ Pr\"ufer, and then distributive by Theorem  \ref{5.41}.
\end{remark} 

According to Theorem \ref{6.5}, we can limit our investigation to integral FCP distributive extensions. This is the subject of the following sections. Let $R\subset S$ be an integral FCP extension. According to Proposition \ref{1.014}, $R\subset S$ is distributive if and only if $R_M\subset S_M$ is distributive for any $M\in\mathrm{MSupp}(S/R)$. 
Most of time, we only give a characterization when $R$ is a local ring, because of the complexity of the conditions. 

\section{Distributive integral  IFCP extensions}

We recall that an integral FCP extension is IFCP if all minimal subextensions are minimal of the same type. Let $R\subset S$ be an integral FCP extension where $R$ is a local ring. A characterization of distributivity is given along the paths of the canonical decomposition of $R\subset S$. In this section, we study the distributivity for these following paths, that is for IFCP integral FCP extensions: subintegral extensions (Proposition \ref{6.7}), seminormal infra-integral extensions (Proposition \ref{decomp}) and t-closed extensions (Proposition \ref{tclos}). The reader is warned that deeper results will be given in the next sections, where the unbranched and branched contexts are separated.

To begin with, the following lemma is needed.

\begin{lemma} \label{6.51} Let $R\subset S$ be an infra-integral distributive FCP extension over the local ring $(R,M)$. Then two different minimal subextensions $R\subset T$ and $R\subset U$ of $R\subset S$ cannot have the same type.
\end{lemma}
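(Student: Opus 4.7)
The plan is to argue by contradiction and reduce to the classification of $B_2$-extensions given in Theorem \ref{3.19}. Suppose that $R\subset T$ and $R\subset U$ are two distinct minimal subextensions of the same type. Since both are minimal and $T\neq U$, the subalgebra $T\cap U$ lies in $[R,T]\cap[R,U]=\{R,T\}\cap\{R,U\}=\{R\}$, so $T\cap U=R$. Applying condition~(1) of Proposition \ref{6.1} to the distributive FCP extension $R\subset S$ yields $|[R,TU]|=4$, whence $[R,TU]=\{R,T,U,TU\}$ and $\ell[R,TU]=2$. By Proposition \ref{1.001}, $R\subset TU$ is a $B_2$-extension.

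The next step is to rule out each of the three alternatives of Theorem \ref{3.19} for $R\subset TU$. Alternative~(1) is excluded because $R$ is local, so $|\mathrm{Supp}(TU/R)|\leq 1<2$. Alternative~(3) is excluded because $R\subset TU$, as a subextension of the infra-integral extension $R\subset S$, is itself infra-integral; being simultaneously t-closed and infra-integral would, by Proposition \ref{1.31}, force every minimal step of a maximal chain in $[R,TU]$ to be at once inert and either decomposed or ramified, impossible by the mutual exclusion of the four types in Theorem \ref{minimal} (and incompatible with $\ell[R,TU]=2$).

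It remains to eliminate alternative~(2), which is where the ``same type'' hypothesis becomes decisive. If both $R\subset T$ and $R\subset U$ are ramified, then Proposition \ref{1.31}(1) makes them subintegral, so their compositum $R\subset TU$ is subintegral as well (both $T$ and $U$ lie in the seminormalization ${}_{TU}^+R$), forcing ${}_{TU}^+R=TU$ and contradicting ${}_{TU}^+R\neq TU$. If both are decomposed, then neither $R\subset T$ nor $R\subset U$ is subintegral, and since subintegrality is inherited by subextensions, none of $T$, $U$, $TU$ can equal ${}_{TU}^+R$, forcing ${}_{TU}^+R=R$ and contradicting ${}_{TU}^+R\neq R$. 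In either sub-case alternative~(2) fails, completing the contradiction.

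The main obstacle is observing that $B_2$-extensions over a local infra-integral base are so rigidly classified by Theorem \ref{3.19} that the common-type hypothesis pins the seminormalization $ {}_{TU}^+R$ down to one of the two endpoints $R$ or $TU$, which are precisely the values forbidden by alternative~(2); once this is seen, the remainder is a mechanical application of Propositions \ref{6.1}, \ref{1.001} and \ref{1.31}.
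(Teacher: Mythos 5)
Your proof is correct and follows essentially the same route as the paper: reduce to a $B_2$-extension via Proposition \ref{6.1} and Proposition \ref{1.001}, then invoke the classification of Theorem \ref{3.19}, where locality and infra-integrality leave only case (2), whose requirement ${}_{TU}^+R\neq R,TU$ is incompatible with the two minimal subextensions having the same type. The paper states this last step more briefly, but your explicit sub-case analysis (both ramified forces ${}_{TU}^+R=TU$, both decomposed forces ${}_{TU}^+R=R$) is exactly the intended argument.
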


\begin{proof} Observe that $T\cap U=R$ because $R\subseteq T\cap U\subset T,U$. Since $(R,M)$ is a local ring and $R\subset S$ is integral, then $R\subset S$ is $M$-crucial because $\mathrm{Supp}(S/R)\subseteq\mathrm {Max}(R)$ by \cite[Theorem 3.6]{DPP2}. Using Proposition \ref{6.1}, we get that $|[R,TU]|=4$, so that $\ell[R,TU]=2$. As $R\subset T$ (resp. $R\subset U$) is either ramified or decomposed, Theorem \ref{3.19} shows that the only possible case is that ${}_{TU}^+R\neq R,TU$, so that $R\subset T$ and $R\subset U$ are minimal extensions of different types.
\end{proof}

\begin{proposition} \label{6.7} A subintegral FCP extension $R\subset S$ is distributive if and only if it is arithmetic.
\end{proposition}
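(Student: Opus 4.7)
The implication arithmetic $\Rightarrow$ distributive is already Proposition \ref{5.4}, so all the content lies in the converse. My plan is to show that a subintegral, FCP, distributive extension $R\subset S$ is arithmetic. Since $R\subseteq S$ is integral, $\mathrm{Supp}(S/R)\subseteq\mathrm{Max}(R)$, so this reduces to proving that $[R_M,S_M]$ is a chain for every $M\in\mathrm{MSupp}(S/R)$.

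The first step is to localize: by Proposition \ref{1.014} distributivity descends to $R_M\subset S_M$, and both subintegrality and FCP are preserved under localization, so we may assume that $(R,M)$ is local. Under this hypothesis every $T\in[R,S]$ is itself local, because subintegrality forces the natural map $\mathrm{Spec}(S)\to\mathrm{Spec}(R)$ to be a bijection. The task becomes showing that $[R,S]$ is a chain.

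The second step is induction on $n:=\ell[R,S]$. The cases $n\le 1$ are immediate. For $n\ge 2$, the heart of the argument is to establish that $[R,S]$ has a unique atom. If $R\subset T$ and $R\subset U$ were two distinct atoms, then extending each to a maximal chain of $R\subset S$ and invoking Proposition \ref{1.31}(1) shows that both $R\subset T$ and $R\subset U$ are ramified, hence minimal extensions of the same type. But Lemma \ref{6.51} (applicable because subintegral implies infra-integral, and by hypothesis the extension is local, distributive, and FCP) rules this out. So there is exactly one atom $T_1$. Since $[R,S]$ is Artinian, every $X\in[R,S]\setminus\{R\}$ contains an atom, which must be $T_1$. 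Consequently $[R,S]=\{R\}\cup[T_1,S]$.

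Finally, I would apply the induction hypothesis to $T_1\subset S$: it is subintegral (a maximal chain through $T_1$ has all-ramified links, and Proposition \ref{1.31}(1) then applies to $T_1\subset S$), FCP, distributive by Proposition \ref{1.4}, and of length $n-1$. Thus $[T_1,S]$ is a chain, and $[R,S]=\{R\}\cup[T_1,S]$ is a chain as well. Undoing the reduction gives arithmeticity of $R\subset S$. The main obstacle is the uniqueness-of-atom step, which is precisely what Lemma \ref{6.51} is designed to furnish; once it is in place, the inductive descent is mechanical.
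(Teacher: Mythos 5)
Your proof is correct and takes essentially the same route as the paper: reduce to a local base ring via Proposition \ref{1.014}, note that all minimal subextensions are ramified by Proposition \ref{1.31}(1), and use Lemma \ref{6.51} to forbid two distinct ramified minimal extensions over the same local ring. The only difference is organizational — the paper fixes a maximal chain and gets the contradiction at the largest $R_k$ contained in a putative off-chain element, whereas you extract a unique atom and induct on the length — but both arguments hinge on exactly the same lemma.
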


\begin{proof} One implication is Proposition \ref{5.4}. Conversely, suppose that $R\subset S$ is distributive. Assuming that $(R,M)$ is a local ring by Proposition \ref{1.014}, it is enough to show that $[R,S]$ is a chain. Let $R=
R_0\subset\cdots\subset R_i\subset\cdots\subset R_n=S$ be a maximal chain, where $R_{i-1}\subset R_i$ is minimal for each $i\in\mathbb{N}_n$. We claim that $[R,S]=\{R_i\}_{ i=0}^n$. Otherwise, there exists $T\in[R,S]\setminus\{R_i\}_{i=0}^n$. In particular, there exists a greatest $k<n$ such that $R_k\subset T$, and then some $T_k\in[R_k,T]\setminus\{R_i\}_{i=k}^n$ such that $R_k\subset T_k$ is minimal ramified, as $R_k\subset R_{k+1}$, a contradiction by Lemma \ref{6.51}. To conclude, $[R,S]$ is a chain.
\end{proof}

We get as a corollary a characterization of distributive modules, by using our results gotten for ring extensions. An $R$-module $M$ is said {\it distributive} if the lattice $\llbracket M \rrbracket$ of its submodules is distributive. In \cite[Definition 1, p.119]{KZ}, a submodule $N$ of an $R$-module $M$ is called {\it distributive} if $N\cap(N'+N'')=(N\cap N')+(N\cap N'')$ for any two submodules $N',N''$ of $M$. Of course, $M$ is distributive if and only if any submodule $N$ is distributive, which is equivalent, by \cite[Proposition 5.2, page 119]{KZ} to $\llbracket M_P\rrbracket$ is a chain, for any $P\in\mathrm{Max}(R)$. We recover this result by a completely different way for modules of finite length (in order to deal with FCP extensions).
   
\begin{corollary}\label{6.24} An $R$-module $M$ of finite length is distributive if and only if $\llbracket M_P\rrbracket$ is a chain, for any $P\in\mathrm{Max}(R)$. 
\end{corollary}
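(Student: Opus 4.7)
The plan is to reduce the module statement to the ring-extension Proposition \ref{6.7} via the idealization construction $R(+)M$, using that the finite-length hypothesis matches exactly the FCP hypothesis on the associated extension. First I would record the lattice isomorphism
\[
\Phi\colon \llbracket M\rrbracket \longrightarrow [R,R(+)M],\qquad N\mapsto R\oplus N,
\]
checking that every $R$-subalgebra of $R(+)M$ containing $R$ must be of this form (projecting onto the second coordinate gives a submodule, and the algebra structure on $R\oplus N$ is forced by the idealization multiplication), and that $\Phi$ preserves meets ($R\oplus(N\cap N')$) and joins ($R\oplus(N+N')$). Consequently, $M$ is a distributive module if and only if the extension $R\subseteq R(+)M$ is a distributive extension, and $M$ of finite length translates to $R\subseteq R(+)M$ having FCP of the same length.

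Next I would observe that $R\subset R(+)M$ is always a subintegral extension: the ideal $0\oplus M$ is nilpotent of square zero, $R(+)M/(0\oplus M)\cong R$, so the spectrum map is bijective and every residue-field extension is trivial. Therefore Proposition \ref{6.7} applies and gives that $R\subset R(+)M$ is distributive if and only if it is arithmetic, i.e., $[R_P,(R(+)M)_P]$ is a chain for every $P\in\mathrm{Spec}(R)$.

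To translate this back to modules, I would use the compatibility $(R(+)M)_P\cong R_P(+)M_P$ (which is immediate from the definition of idealization and localization), whence the same lattice isomorphism $\Phi$ applied over $R_P$ yields $[R_P,R_P(+)M_P]\cong \llbracket M_P\rrbracket$. Combined with the previous step, $M$ is distributive if and only if $\llbracket M_P\rrbracket$ is a chain for every $P\in\mathrm{Spec}(R)$. Finally, a finite-length module has its support contained in $\mathrm{Max}(R)$, so for $P\in\mathrm{Spec}(R)\setminus\mathrm{Max}(R)$ one has $M_P=0$ and $\llbracket M_P\rrbracket$ is trivially a chain; hence the condition needs only be checked at maximal ideals, yielding the claimed equivalence.

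The only point requiring real care, and thus the main potential obstacle, is the verification that $\Phi$ is a lattice isomorphism (in particular that the compositum $T\cdot T'$ in $[R,R(+)M]$ corresponds to $N+N'$ rather than something larger); once that is nailed down, everything else is either a routine localization check or a direct appeal to Propositions \ref{6.7} and \ref{1.014}.
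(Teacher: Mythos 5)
Your proposal is correct and follows essentially the same route as the paper: pass to the idealization $R(+)M$ via the lattice isomorphism $N\mapsto R(+)N$, note the extension is subintegral with FCP, apply Proposition \ref{6.7} to reduce distributivity to the arithmetic property, and translate back using $(R(+)M)_P\cong R_P(+)M_P$. The only differences are cosmetic: you verify directly (correctly) what the paper cites from the literature, namely the form of the intermediate rings, the subintegrality of $R\subseteq R(+)M$, and the FCP property.
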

  
\begin{proof} 
Since $M$ is of finite length, \cite[Proposition 2.2]{Pic 8} shows that $R\subseteq R(+)M$ has FCP. According to \cite[Remark 2.9]{D1}, there is a bijection $\varphi:\llbracket M\rrbracket\to[R,R(+)M]$ defined by $\varphi(N)=R (+)N$ for $N\in\llbracket M\rrbracket$. Obviously, for any $N,N'\in\llbracket M\rrbracket$, we have $\varphi(N+N')=\varphi(N)\varphi(N')$ and $\varphi(N\cap N')=\varphi(N)\cap\varphi(N')$. Then, $\varphi$ is a lattice isomorphism. In particular, $M$ is a distributive module if and only if $R\subseteq R(+)M$  is a distributive ring extension. But, in \cite[Lemma 2.1]{Pic 8}, we proved that $R\subseteq R(+)M$ is a subintegral extension. Then, $R\subseteq R(+)M$ is distributive if and only if $R\subseteq R(+)M$ is arithmetic according to Proposition \ref{6.7}. Let $P\in\mathrm{Max}(R)$. Then $P(+)M$ is the only maximal ideal of $R(+)M$ lying over $P$. It follows that $(R(+)M)_P=(R(+)M)_ {(P(+)M)}=R_P(+)M_P$, the last equality coming from \cite[Corollary 25.5]{Hu}. Hence, $R\subseteq R(+)M$ is arithmetic if and only if $[R_P,R_P(+)M_P]$ is a chain, which is equivalent to $\llbracket M_P \rrbracket$ is a chain. To conclude, we get that $M$ is distributive if and only if $\llbracket M_P\rrbracket$ is a chain, for any $P\in\mathrm{Max}(R)$. 
\end{proof}

\begin{proposition} \label{decomp} A seminormal infra-integral FCP extension $R \subset S$ is distributive if and only if it is locally minimal.
\end{proposition}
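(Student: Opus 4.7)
The ``if'' direction is Corollary \ref{5.04}. For the converse, I plan to reduce to the local case via Proposition \ref{1.014}: distributivity, seminormality, infra-integrality, and FCP all pass to the localizations at primes of $R$, and being locally minimal over a local base just means being minimal or trivial. So it suffices to prove: whenever $(R, M)$ is local and $R \subset S$ is a nontrivial seminormal infra-integral FCP distributive extension, $R \subset S$ is a minimal extension.

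Suppose for contradiction that $\ell[R, S] \geq 2$ and pick $T \in [R, S]$ with $\ell[R, T] = 2$. By Proposition \ref{1.4} (and the stability of seminormality and infra-integrality under subextensions), $R \subset T$ retains all the hypotheses. On the one hand, Corollary \ref{6.103} gives $|[R, T]| \leq 4$. On the other hand, Proposition \ref{1.31}(2) says every minimal step in a maximal chain $R \subset R_1 \subset T$ is decomposed, and since each minimal decomposed extension enlarges the maximal spectrum by exactly one, $T$ has exactly three maximal ideals $P_1, P_2, P_3$, all lying over $M$; infra-integrality then yields canonical isomorphisms $\pi_i : T/P_i \xrightarrow{\sim} R/M$.

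For each pair $\{i, j\} \subseteq \{1, 2, 3\}$, set
\[
T_{ij} := \{\, x \in T : \pi_i(x) = \pi_j(x) \,\},
\]
which is an $R$-subalgebra of $T$. Pairwise comaximality of $P_1, P_2, P_3$ and the Chinese Remainder Theorem produce, for each pair $\{i, j\}$ with complement $\{k\}$, an element $x_{ij} \in T$ with $\pi_i(x_{ij}) = \pi_j(x_{ij}) = 1$ and $\pi_k(x_{ij}) = 0$. Then $x_{ij} \in T_{ij}$; moreover $x_{ij} \notin R$, since the localness of $R$ gives $P_l \cap R = M$ for every $l$, so $x_{ij} \in R$ would force simultaneously $x_{ij} \in M$ (from $\pi_k(x_{ij}) = 0$) and $x_{ij} - 1 \in M$ (from $\pi_i(x_{ij}) = 1$), which is absurd. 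Since $\pi_i(x_{ij}) = \pi_j(x_{ij}) \neq \pi_k(x_{ij})$, the element $x_{ij}$ distinguishes $T_{ij}$ from $T_{ik}$ and $T_{jk}$, so $T_{12}, T_{13}, T_{23}$ are pairwise distinct; similarly, an element $y \in T$ with $\pi_i(y) = 1, \pi_j(y) = 0$ shows $T_{ij} \subsetneq T$. Hence $\{R, T_{12}, T_{13}, T_{23}, T\}$ are five distinct members of $[R, T]$, contradicting $|[R, T]| \leq 4$.

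The only real obstacle is the CRT-based construction and the simultaneous verification that each $T_{ij}$ sits strictly between $R$ and $T$ and that the three $T_{ij}$ are distinct. The local hypothesis and infra-integrality are essential here: the former delivers $P_l \cap R = M$ (used to exclude $x_{ij}$ from $R$), while the latter supplies the canonical identifications $T/P_i \cong R/M$ needed to set up the equalizers.
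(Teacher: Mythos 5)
Your proof is correct, but the decisive step is argued by a genuinely different route from the paper's. Both arguments localize via Proposition \ref{1.014}, get the easy direction from Corollary \ref{5.04}, and use Proposition \ref{1.31}(2) to know that every minimal step is decomposed. The paper then fixes a minimal (decomposed) $T\in[R,S]$, invokes Lemma \ref{6.51} to see that $T$ is the unique atom, and, assuming $T\neq S$, produces a length-two subextension $R\subset V$ with $|[R,V]|=3$, contradicting the classification of length-two extensions in \cite[Theorem 6.1(7)]{Pic 6}, which forces $|[R,V]|=5$ in the seminormal infra-integral crucial case. You instead take any $T$ of height two (the existence of $T$ with $\ell[R,T]=2$ uses catenarity, Proposition \ref{1.0}, which deserves a word), note $|\mathrm{Max}(T)|=3$ with all residue fields canonically $R/M$, and exhibit the diamond explicitly: the three equalizers $T_{ij}$ of the residual projections, separated by CRT elements, give five members of $[R,T]$, against the bound $|[R,T]|\leq 4$ of Corollary \ref{6.103}. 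In effect you re-prove, in exactly the case needed, the count $|[R,V]|=5$ from \cite[Theorem 6.1]{Pic 6}; this makes your argument more self-contained (no appeal to Lemma \ref{6.51} or to the external classification), at the cost of a longer construction. The only small points to make explicit are the existence of a height-two element $T$ and the one-line verification that a minimal decomposed step raises $|\mathrm{Max}|$ by exactly one (localization away from the crucial ideal is an isomorphism, and integrality puts every maximal ideal over the base's maximal ideals); neither is a genuine gap.
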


\begin{proof} One implication is Proposition \ref{5.4}. Conversely, assume that $R\subset S$ is distributive. We can assume that $(R,M)$ is a local ring by Proposition \ref{1.014}. Then, it is enough to show that $R\subset S$ is minimal. Let $T\in[R,S]$ be such that $R\subset T$ is minimal decomposed. According to Lemma \ref{6.51} and Proposition \ref{1.31}, there does not exist any $U\in [R,S]\setminus\{T\}$ such that $R\subset U$ is minimal. If $T\neq S$, there exists some $V\in]T,S]$ such that $T\subset V$ is minimal decomposed, so that $|[R,V]|=3$ and $\ell[R,V]=2$ by \cite[Lemma 5.4]{DPP2}, with $R\subset V$ infra-integral seminormal according to Proposition \ref{1.31}. But \cite[Theorem 6.1 (7)]{Pic 6} shows that $|[R,V]|=5$, a contradiction. To conclude, $T=S$ and $R\subset S$ is minimal.
\end{proof}

As we did in earlier papers for some other properties of t-closed FCP extensions, the case of distributive t-closed FCP extensions can be reduced to the study of distributive finite field extensions. This is shown in the next result. Distributive finite field extensions are examined in  Section 8. 
 
\begin{proposition}\label{tclos} A t-closed FCP extension is distributive if and only if  its residual extensions  are distributive.
\end{proposition}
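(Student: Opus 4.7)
The plan is a threefold reduction: first to the integral case via Theorem \ref{6.5}, then to the local case via Proposition \ref{1.014}, and finally to a single field extension attached at the closed point.

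First, I would apply Theorem \ref{6.5}: $R\subseteq S$ is distributive if and only if $R\subseteq\overline R$ is distributive. The subextension $R\subseteq\overline R$ is itself t-closed, because if $b\in\overline R\subseteq S$ satisfies the defining identities $b^2-rb,\,b^3-rb^2\in R$ for some $r\in R$, then $b\in S$ satisfies them too, so $b\in R$ by t-closedness of $R\subseteq S$. The remaining tail $\overline R\subseteq S$ is an FCP quasi-Pr\"ufer extension, hence Pr\"ufer, and therefore each residual extension $\kappa_{\overline R}(Q\cap\overline R)\to\kappa_S(Q)$ is an isomorphism (a flat epimorphism makes $S_Q$ the localization of $\overline R$ at $Q\cap\overline R$). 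So the residual extensions of $R\subseteq S$ agree with those of $R\subseteq\overline R$, and I may assume $R\subseteq S$ is integral.

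Next, by Proposition \ref{1.014}, I may assume $(R,M)$ is local. The heart of the argument is to show that in this setting $S$ is local with maximal ideal $M$, and $M=(R:S)$. Pick any maximal chain $R=R_0\subset R_1\subset\cdots\subset R_n=S$ of minimal subextensions; by Proposition \ref{1.31}(4), each step is inert. An induction on $i$ should show that each $R_i$ is local with maximal ideal $M$: at the inert step $R_i\subset R_{i+1}$, the conductor $\mathcal C(R_i,R_{i+1})$ coincides with $M$, is maximal in $R_{i+1}$, and $MR_{i+1}=M$ (since $M$ is already an ideal of $R_{i+1}$ as a conductor), so $R_{i+1}/M$ is a field; integrality and lying-over then force $M$ to be the unique maximal ideal of $R_{i+1}$. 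In particular $S$ is local with $\mathrm{Max}(S)=\{M\}$, and $(R:S)=M$ because $M$ is an ideal of $S$ already maximal in $R$.

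Finally, with $M$ maximal in both $R$ and $S$, the quotient map induces an order isomorphism $[R,S]\to[R/M,\,S/M]$ onto the lattice of the field extension $R/M\subset S/M$, which is exactly the residual extension $\kappa_R(M)\to\kappa_S(M)$ (no localization is needed, as both rings are already local with maximal ideal $M$). At every other prime $P$ of $R$, $P\notin\mathrm{Supp}(S/R)=\{M\}$ gives $R_P=S_P$, so the residual extension at $P$ is trivially an isomorphism, hence distributive. Therefore $R\subseteq S$ is distributive if and only if $R/M\subset S/M$ is distributive, if and only if every residual extension of $R\subseteq S$ is distributive. The main technical obstacle is the inductive localness claim of the second paragraph, which relies essentially on the inertness of each minimal step together with the i-extension property of t-closed integral FCP extensions.
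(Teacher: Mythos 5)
Your core argument is the paper's argument. After localizing via Proposition \ref{1.014}, the paper simply quotes \cite[Lemma 3.17]{DPP3} for the fact that $M=(R:S)$ and $(S,M)$ is local, and then concludes through the lattice isomorphism $[R,S]\to[R/M,S/M]$; you reprove that structural fact by hand, by induction along a maximal chain, all of whose steps are inert by Proposition \ref{1.31}(4): the crucial ideal of each step is $M$, it remains maximal upstairs, and lying over forces every maximal ideal of $R_{i+1}$ to contain, hence equal, $M$. That induction is correct, as is your closing remark that the residual extensions away from $M$ are isomorphisms, so in the integral case your proof is complete and essentially identical to the paper's.

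The preliminary reduction to the integral case, however, has a genuine gap. From the Pr\"ufer tail $\overline R\subseteq S$ you only obtain that each residual extension of $R\subseteq S$ coincides with a residual extension of $R\subseteq\overline R$ at a prime of $\overline R$ lying under $\mathrm{Spec}(S)$; primes of $\overline R$ outside the image of $\mathrm{Spec}(S)\to\mathrm{Spec}(\overline R)$ carry residual extensions of $R\subseteq\overline R$ that $R\subseteq S$ does not see. So the two families do not ``agree'', and the implication (residual extensions of $R\subseteq S$ distributive) $\Rightarrow$ ($R\subseteq\overline R$ distributive) does not follow. Nor can it be repaired at that level of generality: take $L:=\mathbb{Q}(\sqrt 2,\sqrt 3)$, $V:=L[[t]]$, $R:=\mathbb{Q}+tL[[t]]$ and $S:=L((t))$. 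Then $R\subset S$ is a t-closed FIP extension with $\overline R=V$ and $[R,S]=[R,V]\cup\{S\}$; its unique residual extension, at the zero ideal of the field $S$, is an isomorphism (since $\mathrm{Frac}(R)=S$), yet $[R,V]\simeq[\mathbb{Q},L]$ is a diamond (compare Example \ref{6.232}(2)), so by Theorem \ref{6.5} the extension is not distributive, while all its residual extensions are. This shows that the statement must be read, as the paper's proof effectively does and as the placement in the section on integral IFCP extensions indicates, for \emph{integral} t-closed FCP extensions (the assertion ``$M=(R:S)$ and $S$ local'' after localization is an integral phenomenon). Under that reading your first step is vacuous ($\overline R=S$) and the remainder of your argument stands; as written for a general t-closed FCP extension, the ``residual extensions agree'' step is where the proof breaks.
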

\begin{proof}
Let $R\subseteq S$ be a t-closed FCP extension. By Proposition \ref{1.014}, $R\subseteq S$ is distributive if and only if $R_M\subseteq S_M$ is distributive for each $M\in\mathrm{MSupp}(S/R)$. Hence we can  assume that $(R,M)$ is a local ring. Then, $M=(R:S)$ and $(S,M)$ is local by \cite[Lemma 3.17]{DPP3}. Because of the lattice isomorphism $[R,S]\to [R/M,S/M]$ defined by $T\mapsto T/M$, we get that $R\subseteq S$ is distributive if and only if $R/M\subseteq S/M$ is distributive. 
\end{proof} 

\section{ Locally unbranched integral  distributive FCP extensions}

We recall that a ring extension $R\subset S$ is called {\it unbranched} if $\overline R$ is local; so that, if in addition the extension is integral, $R$ is a local ring. When $R\subset S$ is quasi-Pr\"ufer and unbranched, all rings in $[R,S]$ are local by \cite[Lemma 3.29]{Pic 11}. A {\it locally unbranched} extension is an extension $R\subset S$ such that $R_P\subset S_P$ is unbranched for each $P\in\mathrm{Spec}(R)$. A locally unbranched integral FCP extension is nothing but an integral $i$-extension (equivalently, a u-closed extension (Definition \ref{1.3})). 

\begin{theorem} \label{6.11} 
A locally unbranched integral FCP extension $R\subset S$ is distributive if and only if the following conditions are satisfied:
 \begin{enumerate}
\item $R\subseteq{}_S^tR$ is  arithmetic;

\item $[ {}_S^tR,S]$ is distributive;

\item  $R\subset S$ is pinched at ${}_S^tR$.
\end{enumerate}
\end{theorem}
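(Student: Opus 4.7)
The plan is to leverage the canonical decomposition $R\subseteq{}_S^tR\subseteq S$. Because the extension is a locally unbranched integral FCP extension, hence an $i$-extension, Corollary \ref{1.311} gives ${}_S^+R={}_S^tR$, so $R\subseteq{}_S^tR$ is subintegral while ${}_S^tR\subseteq S$ is t-closed by construction. The backward direction is then almost immediate: under (3), Corollary \ref{9.03} applied to the one-element chain $\{{}_S^tR\}$ reduces distributivity of $R\subset S$ to distributivity of its two sub-paths; (1) furnishes this for $R\subseteq{}_S^tR$ through Proposition \ref{5.4}, and (2) supplies it directly for $[{}_S^tR,S]$.

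For the forward direction, conditions (1) and (2) follow routinely. Proposition \ref{1.4} transfers distributivity from $R\subset S$ to both sub-paths. Since $R\subseteq{}_S^tR$ is subintegral and distributive, Proposition \ref{6.7} upgrades it to arithmetic, yielding (1); condition (2) needs no further argument. The substance of the theorem is thus condition (3): every $T\in[R,S]$ must be comparable with ${}_S^tR$.

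The main obstacle is (3), which I expect to dispatch as follows. By Proposition \ref{1.014} I reduce to the case where $(R,M)$ is local; then $\overline R$ is local and equals $S$ by integrality, and the injectivity of $\mathrm{Spec}(T)\to\mathrm{Spec}(R)$, inherited by every subextension, forces each $T\in[R,S]$ to be local. Assume for contradiction that some $T\in[R,S]$ is incomparable with ${}_S^tR$ and set $V:=T\cap{}_S^tR$. Two short facts then carry the argument: $V\subseteq T$ is t-closed, because any witness $b\in T$ with $r,b^2-rb,b^3-rb^2\in V\subseteq{}_S^tR$ would lie in ${}_S^tR$ by the t-closedness of ${}_S^tR\subseteq S$, hence in $T\cap{}_S^tR=V$; and $V\subseteq{}_S^tR$ is subintegral, inheriting the Spec bijection and residual isomorphisms from the subintegral $R\subseteq{}_S^tR$. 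Picking a minimal extension $V\subset V'$ inside $[V,{}_S^tR]$ (ramified by Proposition \ref{1.31}(1)) and a minimal extension $V\subset V''$ inside $[V,T]$ (inert by Proposition \ref{1.31}(4)), I observe that because $V$ is local both minimal extensions share the unique crucial maximal ideal of $V$. Proposition \ref{3.6}(2) then makes $V\subset V'V''$ non-catenarian. But $V\subset V'V''$ inherits distributivity from $R\subset S$ via Proposition \ref{1.4}, which forces catenarity by Proposition \ref{1.0} -- the desired contradiction.
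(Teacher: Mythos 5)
Your proposal is correct, and it coincides with the paper's proof except at the one step that carries the real content, namely the forward implication of (3); there the two arguments genuinely diverge. The paper disposes of (3) in one line: distributivity gives catenarity via Proposition \ref{1.0}, and then it invokes the external result \cite[Theorem 4.13]{Pic 12}, which says that a catenarian unbranched extension is pinched at its t-closure. You instead reprove the needed case from scratch: assuming $T$ incomparable with ${}_S^tR$, you set $V:=T\cap{}_S^tR$, observe that $V\subseteq T$ is t-closed and $V\subseteq{}_S^tR$ is subintegral (both checks are sound; the second uses that injectivity of the Spec maps and the residual isomorphisms pass to $V\subseteq{}_S^tR$), extract a ramified and an inert minimal extension of $V$ via Proposition \ref{1.31}, note that they share the crucial maximal ideal because $V$ is local, and get a non-catenarian subextension from Proposition \ref{3.6}(2), contradicting Propositions \ref{1.4} and \ref{1.0}. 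This is more self-contained --- in effect you re-derive the relevant special case of \cite[Theorem 4.13]{Pic 12}, and your contradiction only uses catenarity, which is exactly why it recovers that result --- at the cost of a slightly longer argument; the paper's route is shorter but leans on the companion paper. Two small points: the locality of every $T\in[R,S]$ in the local unbranched case follows from $S$ being local and $T\subseteq S$ integral (maximal ideals of $T$ are contractions of the unique maximal ideal of $S$), not really from injectivity of the Spec map as you phrase it --- the paper cites \cite[Lemma 3.29]{Pic 11} for this; and your treatment of a non-local base ring by appeal to Proposition \ref{1.014} is exactly the paper's own closing remark, so nothing is lost there relative to the paper. Your backward direction (Corollary \ref{9.03} plus Proposition \ref{5.4}) and the forward derivation of (1) and (2) (Propositions \ref{1.4} and \ref{6.7}) are the paper's argument verbatim.
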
 

\begin{proof} To begin with, we assume that $R$ is a local ring, so that $R \subset S$ is unbranched. Then ${}_S^+R={}_S^tR$ by Corollary \ref{1.311}.

Assume that $R\subset S$ is distributive. Then (1) is a consequence of Proposition \ref{6.7} and (2) is obvious. Moreover, in this case, $R\subset S$ is catenarian. Then, \cite[Theorem 4.13]{Pic 12} gives (3).

Conversely, assume that (1), (2) and (3) hold. Then, $R\subset S$ is distributive by  Propositions \ref{6.7} and \ref{9.03}.

When $R$ is not  local, apply the beginning of the proof  to $R_M\subset S_M$ for any $M\in\mathrm{MSupp}(S/R)$, Definition \ref{1.3} and Proposition \ref{1.014}.
\end{proof}

\begin{proposition} \label{6.6} Let $R\subset S$ be a distributive locally unbranched (equivalently u-closed) FCP-extension. If ${}_{\overline R}^tR\to\overline R$ is locally minimal, then $R\subset S$ is arithmetic. 
 \end{proposition}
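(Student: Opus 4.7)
The plan is to show $[R_P, S_P]$ is a chain for each $P \in \mathrm{Spec}(R)$, which is precisely the definition of arithmetic. Since $R \subseteq S$ is u-closed it is integral, so $\overline R = S$ and ${}_{\overline R}^t R = {}_S^t R =: T$. Applying Theorem \ref{6.11} to $R \subseteq S$ globally yields three conclusions: (a) $R \subseteq T$ is arithmetic; (b) $[T, S]$ is distributive; and (c) $R \subseteq S$ is pinched at $T$. The hypothesis that $T \subseteq S$ is locally minimal makes it arithmetic as well, since each localization $T_Q \subseteq S_Q$ has length at most one.

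Now fix $P \in \mathrm{Spec}(R)$. Local unbranchedness forces $S_P$ to be local; as every intermediate ring is integral over the local ring $R_P$ and contained in the local ring $S_P$, every such ring is itself local. Consequently there is a unique prime $Q$ of $T$ lying over $P$, so $T_P = T_Q$ and $S_P = S_Q$. The locally minimal hypothesis at $Q$ then gives that $[T_P, S_P]$ has at most two elements, hence is a chain, while the arithmetic of $R \subseteq T$ from (a) gives $[R_P, T_P]$ is a chain. To combine these, I would transfer the global pinching (c) to the localization, either by invoking the surjectivity of the localization map $[R, S] \to [R_P, S_P]$ for integral FCP extensions (lift any $U \in [R_P, S_P]$ to $U' \in [R, S]$ with $U'_P = U$; by (c), $U'$ is comparable to $T$, so $U$ is comparable to $T_P$), or by applying Theorem \ref{6.11} directly to $R_P \subseteq S_P$ and identifying its local t-closure ${}_{S_P}^t R_P$ with $T_P$. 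Once pinching is localized, $[R_P, S_P] = [R_P, T_P] \cup [T_P, S_P]$ is a union of two chains sharing $T_P$ as a common endpoint, hence itself a chain, so $R \subseteq S$ is arithmetic.

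The main obstacle is this last transfer step, equivalently the commutation of the t-closure with localization. The elementwise algebraic relations in Definition \ref{1.3} (namely $b^2-rb,\,b^3-rb^2\in R$ implying $b\in R$) are naturally stable under localization, so verifying ${}_{S_P}^t R_P = ({}_S^t R)_P$ ought to be routine for FCP extensions, but care is required because ${}_S^t R$ is characterized as a \emph{smallest} (and dually \emph{largest}) element of $[R,S]$ with a given property, and one must check that this extremal characterization survives localization. A secondary, purely notational point is the identification $T_P = T_Q$ and $S_P = S_Q$ for the unique prime $Q$ of $T$ above $P$, which rests on the fact that integral extensions over local rings with local integral closure yield local intermediate rings.
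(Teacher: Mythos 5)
Your opening reduction is where the proof breaks: ``since $R\subseteq S$ is u-closed it is integral, so $\overline R=S$'' is false. U-closedness is the elementwise condition $b^2-b,\,b^3-b^2\in R\Rightarrow b\in R$ and is satisfied by many non-integral extensions (every Pr\"ufer extension is u-closed, for instance, since each ring of a normal pair is integrally closed in $S$); the equivalence quoted in the paper is between \emph{integral} locally unbranched FCP extensions and u-closed ones, and the proposition does not assume integrality. That the statement is meant for possibly non-integral $R\subset S$ is visible in its hypothesis, which concerns ${}_{\overline R}^tR\to\overline R$ rather than ${}_S^tR\to S$. Consequently your argument only treats $[R,\overline R]$ and is silent on the Pr\"ufer part $\overline R\subseteq S$ and, more importantly, on intermediate rings of $[R,S]$ that are not comparable to $\overline R$. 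The paper's proof spends its final portion exactly there: after showing $[R,\overline R]=[R,{}_{\overline R}^tR]\cup\{\overline R\}$ is a chain, it rules out any $T'\in[R,S]\setminus([R,\overline R]\cup[\overline R,S])$ using \cite[Lemma 1.5]{Pic 6} (comparing minimal integral versus minimal Pr\"ufer subextensions), and then invokes \cite[Proposition 3.30]{Pic 11} to get that $[\overline R,S]$ is a chain, all of which your reduction skips and which cannot be recovered by relabeling, since $\overline R\subseteq S$ may have arbitrary finite length.

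For the integral part your route is genuinely different from the paper's and looks workable: you quote Theorem \ref{6.11} (which precedes this proposition, so no circularity), use the $\mathcal B$-extension/localization-surjectivity to transfer the pinching at the t-closure, and glue the two local chains; the commutation of t-closure with localization that you flag is indeed the point needing a citation. The paper instead argues directly at a fixed localization: Corollary \ref{1.311} gives ${}_{\overline R}^+R={}_{\overline R}^tR$, Proposition \ref{6.7} makes $[R,{}_{\overline R}^tR]$ a chain, and a ring incomparable to ${}_{\overline R}^tR$ is excluded by producing a minimal ramified and a minimal inert extension over a common base, contradicting catenarity via Propositions \ref{3.6} and \ref{1.0} -- no appeal to Theorem \ref{6.11} or to lifting of pinching. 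So even granting your localization lemma, the proposal as written proves only the special case $\overline R=S$ and leaves the actual statement unproved.
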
 

\begin{proof} By definition, $R\subset S$ is arithmetic means that $R_M\subset S_M$ is chained for each $M\in\mathrm{MSupp}(S/R)$. Then, it is enough to assume that $R$ (and then $\overline R$) are local rings,  and either ${}_{\overline R}^tR=\overline R$, or ${}_{\overline R}^tR\subset\overline R$ is minimal. Moreover, ${}_{\overline R}^+R={}_{\overline R}^tR$ according to Corollary \ref{1.311} because $\mathrm{Spec}(\overline R)\to \mathrm{Spec}( R)$ is bijective. 
By Proposition \ref{6.7}, we know that $[R,{}_{\overline R}^tR]$ is a chain. If ${}_{\overline R}^tR=\overline R$, then $[R,\overline R]$ is a chain. Assume that ${}_{\overline R}^tR\neq\overline R$. If ${}_{\overline R}^tR=R$, then $R\subset \overline R$ is minimal and then chained. If ${}_{\overline R}^tR\neq R,\overline R$, assume that there exists some $T\in[R,\overline R]\setminus([R,{}_{\overline R}^tR]\cup \{\overline R\})$. It follows that $R\subset T$ is not subintegral. Set $U:={}_T^tR\in[R,{}_{\overline R}^tR[$. Then $U\neq T$ and there exist $V\in[U,{}_{\overline R}^tR]$ and $W\in[U,T]$ such that $U\subset V$ is minimal ramified and $U\subset W$ is minimal inert, so that there exist two maximal chains from $U$ to $VW$ of different lengths by Proposition \ref{3.6}, which is absurd because a distributive extension is catenarian (see Proposition \ref{1.0}). Then, $[R,\overline R]=[R,{}_{\overline R}^tR]\cup\{\overline R\}$ is a chain. Using \cite[Lemma 1.5]{Pic 6}, a similar argumentation shows that there does not exist some $T'\in[R,S]\setminus([R,\overline R]\cup [\overline R,S])$. Then, $[R,S]$ is a chain since so is $[\overline R,S]$ by \cite[Proposition 3.30]{Pic 11}. 
\end{proof} 

\section{Branched integral FCP distributive extensions}

To get a general characterization of distributive extensions over a local ring, it remains to consider integral FCP extensions over a local ring which are not unbranched. Then, in case $R$ is a local ring and $R\subset S$ is not unbranched, we say that $R\subset S$ is {\it branched}. In particular, since $|\mathrm{Max}(R)|=1$, it follows that $|\mathrm{Max}(S)|\geq 2$. In this case, ${}_S^tR\neq{}_S^+R$ because of Proposition \ref{1.31}.

We recall (Definition \ref{1.3}) that for an infra-integral extension $R\subset S$, the u-closure ${}_S^uR$ of $R\subset S$ is the least $V\in[R,S]$ such that $V\subseteq S$ is subintegral.  

\begin{lemma} \label{6.91} Let $R\subset S$ be an infra-integral branched FCP extension
such that $[R,{}_S^+R]$ is a chain and $|\mathrm{Max}(S)|=2$. Then $R\subset S$ is pinched at ${}_S^uR\cap{}_ S^+R$ with $[R,S]=[R,{}_S^+R]\cup[{}_S^uR,S]$ and $[R,{}_S^+R]\cap[{}_S^u R,S]=\emptyset$.
\end{lemma}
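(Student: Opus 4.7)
Set $N := {}_S^+R$, $U := {}_S^uR$ and $C := U \cap N$. The approach is to combine the canonical decomposition with the max-ideal arithmetic of decomposed minimal extensions in order to determine the shape of every $T\in[R,S]$.

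Since $R \subset S$ is infra-integral, Remark~\ref{1.300} gives ${}_S^tR = S$, so $R \subseteq C$ and $U \subseteq S$ are subintegral, while $C \subseteq U$ and $N \subseteq S$ are seminormal and infra-integral. Because subintegrality preserves the number of maximal ideals and $R$ is local (the extension being branched), one has $|\mathrm{Max}(C)|=|\mathrm{Max}(N)|=1$ and $|\mathrm{Max}(U)|=|\mathrm{Max}(S)|=2$, whence $C\subsetneq U$ and $N\subsetneq S$.

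Now take $T\in[R,S]$. Integrality of $T\subseteq S$ makes the map $\mathrm{Max}(S)\to\mathrm{Max}(T)$ surjective, so $|\mathrm{Max}(T)|\in\{1,2\}$. If $|\mathrm{Max}(T)|=1$, then in the canonical decomposition $R\subseteq{}_T^+R\subseteq T$ the upper piece ${}_T^+R\subseteq T$ is seminormal and infra-integral, so all its minimal subextensions are decomposed by Proposition~\ref{1.31}; each such step increases the maximal-ideal count by exactly one, yet both endpoints are local, forcing ${}_T^+R=T$. Hence $R\subseteq T$ is subintegral, and the maximality of $N$ among such subalgebras gives $T\in[R,N]$. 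If instead $|\mathrm{Max}(T)|=2$, the analogous reasoning applied to $T\subseteq{}_S^+T\subseteq S$ forces ${}_S^+T=S$, so $T\subseteq S$ is subintegral, and the minimality clause of Definition~\ref{1.3} characterizing $U$ yields $T\in[U,S]$. This proves $[R,S]=[R,N]\cup[U,S]$.

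For the disjointness, any $T\in[R,N]\cap[U,S]$ would satisfy $U\subseteq T\subseteq N$, whence $U=U\cap N=C$, contradicting $C\subsetneq U$. For the pinching at $C$: every $T\in[U,S]$ contains $C$ because $C\subseteq U\subseteq T$, and every $T\in[R,N]$ is comparable with $C$ as both lie in the chain $[R,N]$; this is the only place the hypothesis on $[R,{}_S^+R]$ is used. The main obstacle will be the counting step, namely that a seminormal infra-integral FCP extension between rings with the same number of maximal ideals must be trivial; this hinges on Proposition~\ref{1.31} together with the fact that each decomposed minimal extension raises the maximal-ideal count by exactly one.
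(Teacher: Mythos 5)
Your proposal is correct and follows essentially the same route as the paper: classify each $T\in[R,S]$ by $|\mathrm{Max}(T)|\in\{1,2\}$, identify the two cases with $[R,{}_S^+R]$ and $[{}_S^uR,S]$ via sub\-integrality (the paper states these equivalences directly, while you justify them by counting decomposed steps, which is the implicit reason), and then use the chain hypothesis on $[R,{}_S^+R]$ to get the pinching at ${}_S^uR\cap{}_S^+R$. The only cosmetic quibble is that ${}_S^tR=S$ follows directly from the definition of the $t$-closure for an infra-integral extension rather than from Remark \ref{1.300}.
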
 
  
\begin{proof}  Let $T\in]R,S[$, so that $|\mathrm{Max}(T)|\leq 2$. It follows that $|\mathrm{Max}(T)|=2$ if and only if $T\subseteq S$ is subintegral if and only if $T\in[{}_S^uR,S]$. And $|\mathrm{Max}(T)|=1$ if and only if $R\subseteq T$ is subintegral, or equivalently,  $T\in[R,{}_S^+R]$. To conclude, $[R,S]=[R,{}_S^+R]\cup[{}_S^uR,S]$ and $ [R,{}_S^+R]\cap[{}_S^uR,S]=\emptyset$.
 
We claim that $R\subset S$ is pinched at ${}_S^+R\cap{}_S^uR$. Let $T\in[R,S]=[R,{}_S^+R]\cup[{}_S^uR,S]$. If $T\in[{}_S^uR,S]$, then $T\in[{}_S^+R\cap{}_S^uR,S]$. If $T\in[R,{}_S^+R]=[R,{}_S^+R\cap{}_S^uR]\cup[{}_S^+R\cap{}_S^uR,{}_S^+R]$ since $[R,{}_S^+R]$ is a chain, then either $T\in [R,{}_S^+R\cap{}_S^uR]$ or $T\in[{}_S^+R\cap{}_S^uR,{}_S^+R]\subseteq [{}_S^+R\cap{}_S^uR,S]$. In both cases $T\in[R,{}_S^+R\cap{}_S^uR]\cup[{}_S^+R\cap {}_S^uR,S]$.
\end{proof}

\begin{proposition} \label{6.10} An infra-integral branched FCP extension $R\subset S$
 such that ${}_S^+R\neq R$, is distributive if and only if the following conditions hold:
\begin{enumerate}
\item$[R,{}_S^+R]$ is a chain. 
\item$|\mathrm{Max}(S)|=2$.
\item The map $\varphi:[{}_S^uR\cap{}_S^+R,{}_S^+R]\to[{}_S^uR,S]$ defined by $T\mapsto ({}_S^uR)T$ is an order isomorphism.
\end{enumerate} 

If these conditions hold, then either $|\mathrm{MSupp}_{{}_S^uR}(S/{}_S^uR)|=1$ and $[{}_S^uR,S]$ is a chain or $[R,S]$ is a chain with ${}_S^uR=S$.
 \end{proposition}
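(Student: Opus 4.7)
The plan is to prove both implications by leveraging the canonical decomposition (Remark \ref{1.300}) together with the IFCP-stratum characterizations (Propositions \ref{6.7} and \ref{decomp}) and the pinching result Lemma \ref{6.91}. Throughout, the hypothesis ``branched'' will enter through $|\mathrm{Max}(S)|\geq 2$ and the infra-integrality via ${}_S^tR=S$.

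For the forward direction, assume $R\subset S$ is distributive. Condition (1) is immediate: $R\subseteq{}_S^+R$ is subintegral and distributive (Proposition \ref{1.4}), hence arithmetic by Proposition \ref{6.7}, and thus chained since $R$ is local. For (2), ${}_S^+R$ is local (bijective spectra of a subintegral extension) and ${}_S^+R\subsetneq S$ (otherwise $R\subset S$ would be subintegral, contradicting branchedness). The sub-extension ${}_S^+R\subsetneq S$ is seminormal infra-integral and distributive, hence locally minimal by Proposition \ref{decomp}; being integral over the local ring ${}_S^+R$ with nonempty support concentrated at the maximal ideal, it is in fact minimal, and by Proposition \ref{1.31} it is minimal decomposed, so $|\mathrm{Max}(S)|=2$. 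For (3), apply the classical interval-isomorphism in the distributive (hence modular) lattice $[R,S]$: the map $T\mapsto T\cdot{}_S^uR$ is an order isomorphism $[{}_S^+R\cap{}_S^uR,{}_S^+R]\to[{}_S^uR,{}_S^+R\cdot{}_S^uR]$, with inverse $U\mapsto U\cap{}_S^+R$; Remark \ref{1.300} together with infra-integrality gives ${}_S^+R\cdot{}_S^uR={}_S^tR=S$, yielding (3).

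For the converse, assume (1), (2), (3). Lemma \ref{6.91} applies and produces that $R\subset S$ is pinched at $A:={}_S^uR\cap{}_S^+R$ with $[R,S]=[R,{}_S^+R]\cup[{}_S^uR,S]$. By Corollary \ref{9.03} it suffices to show $R\subset A$ and $A\subset S$ are distributive. The first is a sub-chain of $[R,{}_S^+R]$, and so distributive by Proposition \ref{5.4}. For the second, write $[A,{}_S^+R]$ as $A=B_0\subsetneq B_1\subsetneq\cdots\subsetneq B_k={}_S^+R$, and set $C_i:=B_i\cdot{}_S^uR$; by (3) the chain $[{}_S^uR,S]$ is $C_0={}_S^uR\subsetneq C_1\subsetneq\cdots\subsetneq C_k=S$. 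Using the pinching $[A,S]=[A,{}_S^+R]\cup[{}_S^uR,S]$ and verifying the meet–join formulas $B_i\cap C_j=B_{\min(i,j)}$ and $B_i\cdot C_j=C_{\max(i,j)}$, we identify $[A,S]$ with the product of chains $\{0,\ldots,k\}\times\{0,1\}$, which is distributive. Hence so is $R\subset S$.

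For the final dichotomy: since ${}_S^uR\subseteq S$ is subintegral, the spectral map is bijective, so $|\mathrm{Max}({}_S^uR)|=|\mathrm{Max}(S)|=2$ unless ${}_S^uR=S$. In the degenerate case ${}_S^uR=S$, condition (3) forces $A={}_S^+R$, and combined with (1) and the minimality of ${}_S^+R\subset S$ (shown in the forward step above), $[R,S]$ is a chain. Otherwise $[{}_S^uR,S]$ is a nontrivial chain by (3); if two distinct maximal ideals of ${}_S^uR$ were both in $\mathrm{MSupp}_{{}_S^uR}(S/{}_S^uR)$, localizing would produce two incomparable minimal extensions above ${}_S^uR$, contradicting the chain structure. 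Thus $|\mathrm{MSupp}_{{}_S^uR}(S/{}_S^uR)|=1$. The main obstacle is verifying the product-lattice identification in the converse: one must confirm the meet and join formulas $B_i\cap C_j=B_{\min(i,j)}$ and $B_i\cdot C_j=C_{\max(i,j)}$ and that no elements of $[A,S]$ escape $\{B_0,\ldots,B_k\}\cup\{C_0,\ldots,C_k\}$—this is precisely where Lemma \ref{6.91} (closing the ambient set) and the isomorphism (3) (pinning the cross-relations) combine to force the lattice into the product-of-chains shape.
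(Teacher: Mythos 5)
Your proof is correct, and it diverges from the paper's argument at two points worth recording. For the forward direction you obtain condition (3) by invoking the Dedekind transposition principle for modular lattices ($[a\wedge b,a]\cong[b,a\vee b]$ via $x\mapsto x\vee b$), together with ${}_S^uR\,{}_S^+R={}_S^tR=S$ from Remark \ref{1.300} and infra-integrality; the paper instead proves surjectivity and injectivity of $\varphi$ by a direct distributivity computation (citing \cite[Proposition 6.12]{Pic 14} for $({}_S^+R)({}_S^uR)=S$), which is really the same argument unwound, so your packaging is slicker but not deeper. The genuine difference is in the converse: the paper verifies the two {\L}azarz--Siemie{\'n}czuk conditions of Proposition \ref{6.1} by a case analysis of which pairs $V,W$ in the pinched lattice can have minimal meets or joins, whereas you identify the whole interval $[{}_S^uR\cap{}_S^+R,S]$ (using Lemma \ref{6.91} to exclude stray elements and (3) to pin the cross-relations $B_i\cap C_j=B_{\min(i,j)}$, $B_iC_j=C_{\max(i,j)}$) with the product of a $k$-chain and a $2$-chain, then glue with Corollary \ref{9.03}; this buys a structural description of the lattice rather than a criterion check, at the price of actually verifying the meet formula, which does go through because $\varphi$ in (3) reflects order. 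Two small points to tidy: Corollary \ref{9.03} requires the pinching chain to lie in $]R,S[$, so in the degenerate case ${}_S^uR\cap{}_S^+R=R$ (which can occur, e.g.\ in the $B_2$ situation) you should just note that $[R,S]=[{}_S^uR\cap{}_S^+R,S]$ and your product-of-chains argument already concludes; and in the final dichotomy the step ``two maximal ideals in $\mathrm{MSupp}_{{}_S^uR}(S/{}_S^uR)$ produce two incomparable minimal extensions over ${}_S^uR$'' needs the existence of a minimal subextension with prescribed crucial maximal ideal, for which the paper cites \cite[Lemma 2.15]{Pic 10}; with that citation your argument matches the paper's.
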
 

\begin{proof} Assume that $R\subset S$ is distributive. Then, $R\subset{}_S^+R$ is chained by Proposition \ref{6.7}, ${}_S^+R$ is local, ${}_S^+R\subset S$ is minimal decomposed by Proposition \ref{decomp} and $|\mathrm{Max}(S)|=2$. Then (1) and (2) hold.
 
We prove (3). Assume that ${}_S^uR\neq S$. First, $\{({}_S^uR)T\mid T\in[{}_S^uR\cap{}_S^+R,{}_S^+R]\}\subseteq[{}_S^uR,S]$. Now, let $V\in[{}_S^uR,S]$ and set $T:=V\cap{}_S^+R$ so that $T\in[{}_S^uR\cap{}_S^+R,{}_S^+R]$. Since $R\subset S$ is distributive, we have $T({}_S^uR)=[V({}_S^uR)]\cap[({}_S^+R)({}_S^uR)]$. But $({}_S^+R)({}_S^uR) =S$ according to \cite[Proposition 6.12]{Pic 14} and $V({}_S^uR)=V$, giving $T ({}_S^uR)=V$, and $[{}_S^uR,S]=\{({}_S^uR)T\mid T\in[{}_S^uR\cap{}_S^+R,{}_S^+R]\}$.
 
Let $\varphi:[{}_S^uR\cap{}_S^+R,{}_S^+R]\to[{}_S^uR,S]$ defined by $T\mapsto({}_S^uR)T$. We have just proved that $\varphi$ is surjective. Let $T,T' \in[{}_S^uR\cap{}_S^+R,{}_S^+R]$ be such that $\varphi(T)=\varphi(T')$, that is $({}_S^uR)T=({}_S^uR)T'$. Since $R\subset S$ is distributive, we get that $[({}_S^uR)T]\cap{}_S^+R=[({}_S^uR)T']\cap{}_S^+R=[{}_S^uR\cap{}_S^+R][T\cap {}_S^+R]=[{}_S^uR\cap{}_S^+R][T'\cap{}_S^+R]=T=T'$ because ${}_S^uR\cap {}_S^+R\subseteq T,T'\subseteq{}_S^+R$. Then, $\varphi$ is injective, and so an order isomorphism.  

 Assume now that ${}_S^uR=S$. Then, $[{}_S^uR\cap{}_S^+R,{}_S^+R]=\{{}_S ^+R\}$ and $[{}_S^uR,S]=\{S\}$, so that $\varphi$ is obviously an order isomorphism.  

Conversely, let us show that $R\subset S$ is distributive when conditions (1), (2) and (3) hold. If ${}_S^uR=S$, Lemma \ref{6.91} gives that $[R,S]$ is a chain. Then $R\subset S$ is distributive by Proposition \ref{5.4} because arithmetic.
 
Assume now that ${}_S^uR\neq S$. According to Lemma \ref{6.91}, $[R,S]=[R,{}_S^+R]\cup[{}_S^uR,S]$ and $[R,{}_S^+R]\cap[{}_S^uR,S]=\emptyset$. We have the following commutative diagram $\mathcal D$ where the horizontal maps are subintegral and the vertical maps are minimal decomposed:
 $$\begin{matrix}
 {} & {}  &          {}_S^uR            & \to & ({}_S^uR)R_i & \to &       S       \\
 {} & {}  &          \uparrow           &  {} &    \uparrow     &  {}  & \uparrow \\
R & \to & {}_S^uR\cap{}_S^+R & \to &         R_i        & \to & {}_S^+R            
\end{matrix}$$
Set $[{}_S^uR\cap{}_S^+R,{}_S^+R]:=\{R_i\}_{i=0}^n$, with $R_i\subset R_{i+1}$ minimal ramified for each $i\in\{0,\ldots,n-1\}$ by (1)  since $R\subseteq{}_S^+R$ is subintegral (Proposition~\ref{1.31}).

 By (3), we have $[{}_S^uR,S]=\{({}_S^uR)R_i\}_{i=0}^n$, which is a chain. Moreover, $R_i\subset({}_S^uR)R_i$ is minimal decomposed for each $i\in\{0,\ldots,n\}$ because $|\mathrm{Max}(R_i)|=1,\ |\mathrm{Max}(({}_S^uR)R_i)|=2$ and there is no $T\in]R_i,({}_S^uR)R_i[$ such that $R_i\subset T$ is minimal ramified. Otherwise, such $T$ would be in $[{}_S^uR\cap{}_S^+R,{}_S^+R]$, a contradiction.  
 
In order to get the distributivity of $R\subset S$, we use Proposition \ref{6.1} and show that its conditions are fulfilled.
 Let $V,W\in[R,S],\ V\neq W$. Assume first that $V\cap W\subset V,W$ are both minimal. Looking at diagram $\mathcal D$, we observe that necessarily $ V\cap W\in[R_0,{}_S^+R[$ and one of $V,W$ is in $]R_0,{}_S^+R]$ while the other is in $[{}_S^uR,S]$. More precisely, $V\cap W=R_j$, for some $n>j\geq 0 $, with, for instance, $V=R_{j+1}$ and $W=({}_S^uR)R_j$. Then, $VW=R_{j+1}({}_S^uR)R_j=({}_S^uR)R_{j+1}$. Obviously, $[V\cap W,VW]=[V\cap W,V,W, VW]$, so that $|[V\cap W,VW]]=4$. Now, let $V,W\in[R,S],\ V\neq W$ be such that $V,W\subset VW$ are minimal. The same reasoning as above shows that we have necessarily $\{V,W\}=\{({}_S^uR)R_j,R_{j+1}\}$ for some $n>j\geq 0$, and the same proof leads to $|[V\cap W,VW]]=4$. To conclude, $R\subset S$ is distributive by Proposition~\ref{6.1}.

Since ${}_S^+R\neq S$, it follows that $|\mathrm{Max}({}_S^uR)|=2$. Then, we get that $0<|\mathrm{MSupp}_{{}_S^uR}(S/{}_S^uR)|\leq 2$ because ${}_S^uR\neq R$. 
 Set $\mathrm{Max}({}_S^uR)=:\{N,N'\}$ and assume that $|\mathrm{MSupp}_ {{}_S^uR}(S/{}_S^uR)|=2$. According to \cite[Lemma 2.15]{Pic 10}, there exist $V,V'\in[{}_S^uR,S]$ such that ${}_S^uR\subset V$ and ${}_S^uR\subset V'$ are minimal with $N:=({}_S^uR:V)$ and $N':=({}_S^uR:V')$, so that $V\neq V'$, a contradiction with the fact that $[{}_S^uR, S]$ is a chain by (1) and (3). If ${}_S^uR=R$, then $R\subset S$ is pinched at ${}_S^+R$ and $R\subset S$ is chained since ${}_S^+R\subset S$ is minimal (decomposed).
  Then, $|\mathrm{MSupp}_{{}_S^uR}(S/{}_S^uR)|= 1$.
 \end{proof}
 
We use \cite[Example 5.16]{Pic 13} to give an example of the situation of Proposition \ref{6.10} in the context of extensions of number field orders.
\begin{example}\label{13} Consider the quartic number field $K$ defined by the irreducible polynomial $X^4+22X+66$. Let $S$ be the ring of integers of $K$. We get in \cite[Example 5.16]{Pic 13} that $3S=P_1P_2^3$, where $P_1$ and $P_2$ are the maximal ideals of $S$ lying above $3\mathbb Z$. Set $R:=\mathbb Z+3S$. 
We also proved that $R\subset S$ is an infra-integral extension with $3S=(R:S)\in\mathrm{Max}(R)$, so that $\mathrm{MSupp}(S/R)=\{3S\}$ and $R\subset S$ is $3S$-crucial. We are going to show that $R\subset S$ is distributive. 

Using results of \cite[Example 5.16]{Pic 13}, we have the following commutative diagram, where ${}_S^+R=R+P_1P_2,\ U:=R+P_2^3,\ R_1:=R+ P_1P_2^2$ and $UR_1=R+P_2^2 $. The vertical maps are minimal decomposed extensions and the horizontal maps are minimal ramified extensions. In particular, $R\subset S$ is neither subintegral nor seminormal, so that $U={}_S^uR$ since $U\subset S$ is subintegral.
  
 \centerline{$\begin{matrix}
     U       & \to &    UR_1   & \to &     S         \\
\uparrow & {}  & \uparrow & {}  & \uparrow  \\
    R        & \to &    R_1     & \to & {}_S^+R   
    \end{matrix}$}
   It was proved in \cite[Example 5.16]{Pic 13} that $\ell[R,S]=3$. In order to show that $R\subset S$ is distributive, we are going to check the three conditions of Proposition \ref{6.10} for the extension $R_{3S}\subset S_{3S}$. We begin to show that $[R,S]=\{R,U, R_1,UR_1,{}_S^+R,S\}$. To this aim, it is enough to find all the $W,W'\in[R,S]$ such that $W\subset S$ and $R\subset W'$ are minimal. It was also proved in \cite[Example 5.16]{Pic 13} that the only $W\in[R,S]$ such that $W\subset S$ is minimal is either $UR_1$ or ${}_S^+R$, with $UR_1\subset S$ minimal ramified and ${}_S^+R\subset S$ minimal decomposed. We also got that $U$ is the only $W'\in[R,S]$ such that $R\subset W'$ is minimal decomposed. It remains to find $W''\in[R,S]$ such that $ R\subset W''$ is minimal ramified. We proved in \cite[Example 5.16]{Pic 13} that such a $W''$ satisfies $\ell[R,UW'']=2$, so that $UW''\in\{UR_1,{}_S^+R\}$. Moreover, $U\subset UW''$ is minimal ramified, as $UW''\subset S$, which leads to $UW''=UR_1$. We also have that $W''\subset UW''=UR_1$ is minimal decomposed, so that $W''={}_{UR_1}^+R=R_1$.
  
Then, $[R,S]=\{R,U,R_1,UR_1,{}_S^+R,S\}$. Localizing the extension at $M:= 3S=(R:S)\in\mathrm{Max}(R)$ and setting $R':=R_M,\ S':=S_M,\ U':=U_M=({}_ S^uR)_M={}_{S'}^uR',\ R_1':=(R_1)_M,\ U_1':=(UR_1)_M$ and $T':=({}_S^+R) _M={}_{S'}^+R'$, Proposition \ref{6.10} shows that $R'\subset S'$ is distributive. Indeed, $[R',T']$ is a chain, $|\mathrm{Max}(S')]=2$ and, since $[{}_{S'}^uR'\cap{}_{S'}^+R',{}_{S'}^+R']$ and $[{}_{S'}^uR',S']$ are both chains of length 2, (3) of Proposition \ref{6.10} holds. Then, $R\subset S$ is distributive by Proposition \ref{1.014} since $M$-crucial.
     \end{example} 

We are able to exhibit the Loewy series of a distributive infra-integral FCP 
extension $R\subset S$ in case $R\subset S$ is not chained.

\begin{corollary} \label{6.102} If $R\subset S$ is a distributive infra-integral branched FCP extension that is not chained, then $[R,S]=[R,{}_S^+R]\cup[{}_S^uR,S],\ [R,{}_S^+R]\cap[{}_S^uR,S]=\emptyset$ and $[{}_S^uR,S]=\{({}_S^uR)T\mid T\in[R,{}_S^+R]\}$. Moreover, $[R,{}_S^+R]=\{R_i\}_{i=0}^n$ is a chain and there exists some $k\in\{0,\ldots,n- 1\}$ such that $R_k:={}_S^uR\cap{}_S^+R$, so that $[{}_S^uR,S]=\{({}_S^uR) R_i\}_{i=k}^n$ is a chain. The Loewy series of $R\subset S$ is the chain $\{S_i\}_{i=0}^{n+1}$ defined by $S_i=R_i$ for $i\in\{0,\ldots,k\}$ and $S_i=({}_S^uR)R_i$ for $i\in\{k+1,\ldots,n\}$.
      \end{corollary}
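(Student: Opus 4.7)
The plan is to recognize that nearly everything in the statement is already encoded in Proposition \ref{6.10} and Lemma \ref{6.91}, so the proof is mostly a bookkeeping exercise: reduce to the hypotheses of Proposition \ref{6.10}, then track the two ``parallel'' chains it produces through a Loewy computation.

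First I would eliminate the degenerate case ${}_S^+R=R$. If that held, $R\subset S$ would be seminormal and infra-integral, hence locally minimal by Proposition \ref{decomp}. Since $R$ is local (the extension is branched), local minimality would force $R\subset S$ to be a single minimal step, contradicting the hypothesis that $R\subset S$ is not chained. Therefore ${}_S^+R\neq R$, and Proposition \ref{6.10} applies. From it I extract (i) $[R,{}_S^+R]$ is a (necessarily finite) chain, which I label $\{R_i\}_{i=0}^{n}$ with $R_0=R$, $R_n={}_S^+R$ and each $R_i\subset R_{i+1}$ minimal ramified (by Proposition \ref{1.31}); (ii) $|\mathrm{Max}(S)|=2$; and (iii) the order isomorphism $\varphi\colon[{}_S^uR\cap{}_S^+R,{}_S^+R]\to[{}_S^uR,S]$, $T\mapsto({}_S^uR)T$. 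Lemma \ref{6.91} then gives the disjoint decomposition $[R,S]=[R,{}_S^+R]\cup[{}_S^uR,S]$, which is the first displayed assertion, and $\varphi$ gives the third.

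Next I locate $R_k:={}_S^uR\cap{}_S^+R$ in the chain, which exists since the chain $[R,{}_S^+R]$ contains this element. The inequality $k\le n-1$ comes from the ``not chained'' hypothesis: if $k=n$ then ${}_S^+R\subseteq{}_S^uR$, whence $S={}_S^tR=({}_S^uR)({}_S^+R)={}_S^uR$ (using ${}_S^tR=S$ for an infra-integral extension and Remark \ref{1.300}); but Proposition \ref{6.10} then forces $[R,S]$ to be a chain, a contradiction. Applying $\varphi$ to the sub-chain $\{R_i\}_{i=k}^{n}$ yields $[{}_S^uR,S]=\{({}_S^uR)R_i\}_{i=k}^{n}$ as a chain, establishing the middle claims.

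For the Loewy series I work out the atoms level by level. Starting at $S_0=R$: the atoms of $[R,S]$ are the minimal extensions of $R$; by Lemma \ref{6.51}, at most one ramified and (in the infra-integral setting) at most one decomposed. When $k\ge 1$, the only atom is $R_1$ (any minimal decomposed over $R$ would lie in $[{}_S^uR,S]$, but every element there contains $R_k\supsetneq R$, so it cannot be minimal over $R$); hence $S_1=R_1$. When $k=0$, both $R_1$ and ${}_S^uR$ are atoms, with product $({}_S^uR)R_1$, giving $S_1=({}_S^uR)R_1$. Iterating: while $i<k$, the atoms of $[R_i,S]$ are exactly $R_{i+1}$, forcing $S_{i+1}=R_{i+1}$; at level $k$ the atoms of $[R_k,S]$ are $R_{k+1}$ and ${}_S^uR=({}_S^uR)R_k$, whose product is $({}_S^uR)R_{k+1}$; and for $i>k$ the chain $[({}_S^uR)R_i,S]$ has unique atom $({}_S^uR)R_{i+1}$. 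This yields exactly the description $S_i=R_i$ for $0\le i\le k$ and $S_i=({}_S^uR)R_i$ for $k+1\le i\le n$ (with $S_n=({}_S^uR){}_S^+R={}_S^tR=S$).

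The only step that requires any real care is the Loewy computation, because one must check at each stage that no unexpected atom appears — in particular, that ${}_S^uR$ is \emph{not} an atom over any $R_i$ with $i<k$ and that no further decomposed minimal extension sneaks into $[({}_S^uR)R_i,S]$ for $i\ge k$. Both are immediate from the disjoint decomposition of $[R,S]$ given by Lemma \ref{6.91} together with the chain structure of each piece, so the argument goes through cleanly.
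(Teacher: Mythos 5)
Your proposal is correct and follows essentially the same route as the paper: rule out ${}_S^+R=R$ via Proposition \ref{decomp}, invoke Proposition \ref{6.10} and Lemma \ref{6.91} for the disjoint decomposition, the chain structures and the isomorphism $T\mapsto({}_S^uR)T$, and then compute the Loewy series level by level, with the two atoms $R_{k+1}$ and ${}_S^uR$ appearing only at level $k$. Your extra checks (that $k\leq n-1$ via ${}_S^uR\neq S$, and that no unexpected atoms occur) only make explicit what the paper leaves implicit.
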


\begin{proof} Since $R\subset S$ is an FCP distributive infra-integral extension, $[R,{}_S^+R]=\{R_i\}_{i=0}^n$ is a chain.   In particular, ${}_S^+R\neq S$ because $R\subset S$ is branched.  Moreover, ${}_S^+R\neq R$. Otherwise, $R\subset S$ would be minimal decomposed by Proposition \ref{decomp}.
Since, $R\subset S$ is not chained and ${}_S^+R\neq R,S$, Proposition \ref{6.10} and Lemma \ref{6.91} say that ${}_S^uR\subset S$ is a subintegral chained extension with $[R, S]=[R,{}_S^+R]\cup[{}_S^uR,S],\ [R,{}_S^+R]\cap[{}_S^uR,S]=\emptyset$ and $[{}_S^u R, S]=\{({}_S^uR)T\mid T\in[R,{}_S^+R]\}$. Set $R_k:=U\cap{}_S^+R$ and let $\{S_i\}$ be the Loewy series of $R\subset S$. Since $[R,R_k]$ is a chain, we get that $S_i=R_i$ for $i\in\{0,\ldots,k\}$. Now, $[R_k,S]$ has two atoms ${}_S^ uR$ and $R_{k+1}$ so that $S_{k+1}=\mathcal S[S_k,S]=({}_S^uR)R_{k+1}$. At last, $S_i=({}_S^uR)R_i$ for $i\in\{k+1,\ldots,n\}$ since $[{}_S^uR,S]=\{({}_S^uR)R_i\}_{i=k}^n$  is a chain.
\end{proof}

Now, we look at the seminormal part.

\begin{proposition} \label{6.8} Let $R\subset S$ be a seminormal branched FCP extension. If $R\subset S$ is integral, then $R\subset S$ is distributive if and only if ${}_S^tR\subset S$ is distributive and $[R,S]=\{R\}\cup [{}_S^tR,S]$.
\end{proposition}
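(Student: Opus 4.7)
The plan is to reduce distributivity of $R\subset S$ to that of the t-closed piece ${}_S^tR\subset S$, exploiting the fact that seminormality collapses the canonical decomposition to $R={}_S^+R\subseteq{}_S^tR\subseteq S$, with $R\subseteq {}_S^tR$ seminormal and infra-integral. Before splitting into the two directions, I first rule out $R={}_S^tR$: otherwise $R\subset S$ is t-closed and integral, so by Proposition~\ref{1.31}(4) every minimal subextension along a maximal chain is inert; but Theorem~\ref{minimal}(a) makes the conductor of a minimal inert extension a maximal ideal of the target with residue a field, and integrality forces every maximal ideal of the target to contain this conductor, so inert minimal preserves the local property. Iterating along a maximal chain would make $S$ local, contradicting the branched hypothesis. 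So $R\neq {}_S^tR$ throughout.

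For the forward direction, Proposition~\ref{1.4} makes $R\subseteq {}_S^tR$ distributive, seminormal and infra-integral, hence locally minimal by Proposition~\ref{decomp}; since $R$ is local and the extension is nontrivial, $R\subset{}_S^tR$ is minimal decomposed. To prove $[R,S]=\{R\}\cup[{}_S^tR,S]$, I take $T\in\,]R,S]$ with ${}_S^tR\not\subseteq T$; minimality of $R\subset{}_S^tR$ then forces $T\cap {}_S^tR=R$. Pick any minimal extension $R\subset T'\subseteq T$. Because $R={}_S^+R$, no ramified minimal extension can start at $R$, so $R\subset T'$ is either decomposed or inert. The decomposed case collapses: $T'\subseteq {}_S^tR$, minimality gives $T'={}_S^tR\subseteq T$, contradiction. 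The inert case is the hard one: both $R\subset T'$ and $R\subset {}_S^tR$ are then minimal integral extensions sharing the unique crucial maximal ideal of $R$, one inert and one not, so Proposition~\ref{3.6}(2) says $R\subset T'\cdot {}_S^tR$ fails to be catenarian. This contradicts the fact that $R\subset S$ is catenarian by Proposition~\ref{1.0}, a property inherited by every subextension.

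For the converse, the assumed set equality combined with $R\neq{}_S^tR$ forces $[R,{}_S^tR]=\{R,{}_S^tR\}$, so $R\subset {}_S^tR$ is minimal, hence chained, hence distributive by Proposition~\ref{5.4}. The equality $[R,S]=\{R\}\cup[{}_S^tR,S]$ says precisely that $R\subset S$ is pinched at the one-element chain $\{{}_S^tR\}$, so Corollary~\ref{9.03} assembles distributivity of $R\subset {}_S^tR$ and of the assumed ${}_S^tR\subset S$ into distributivity of $R\subset S$. The main obstacle is the inert alternative in the forward direction; it is exactly the catenarity failure of Proposition~\ref{3.6}(2) that eliminates it, which is why the combination of seminormality (killing the ramified case) with distributivity (forcing catenarity) makes the argument go through.
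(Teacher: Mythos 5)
Your proof is correct and follows essentially the same route as the paper: in the forward direction you get $R\subset{}_S^tR$ minimal decomposed from Proposition \ref{decomp}, then eliminate any $T$ not containing ${}_S^tR$ by producing a minimal inert subextension (ramified being excluded by seminormality, decomposed by minimality of $R\subset{}_S^tR$) and invoking Proposition \ref{3.6}(2) against the catenarity supplied by Proposition \ref{1.0}; the converse via minimality, pinching and Corollary \ref{9.03} is the paper's argument verbatim. The only cosmetic difference is that the paper reaches the inert subextension by showing ${}_T^tR=R$ rather than by your case analysis on the type of a minimal subextension, which amounts to the same thing.
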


\begin{proof} Assume first that $R\subset S$ is distributive. Then  obviously, $[{}_S^tR,S]$ is distributive. Moreover, $R\neq {}_S^tR$ since $R\subset S$ is branched. As $R\subset{}_S^tR$ is distributive and also seminormal infra-integral, then minimal decomposed by Proposition \ref{decomp}. We claim that $R\subset S$ is pinched at ${}_S^tR$. Otherwise, there exists $T\in[R,S]\setminus([R,{}_S^tR]\cup[{}_S^tR,S])$. Since $R\subset T$ is not infra-integral, set $U:={}_T^tR\neq T$, because $T\not\in[R,{}_S^tR]$. Moreover, $U\neq{}_S^tR$ because $T\not\in[{}_S^tR,S]$. It follows that $U=R$ and there exists $T'\in[R,T]$ such that $R\subset T'$ is minimal inert. This implies that $R\subset{}_S^tRT'$ is not catenarian by Proposition \ref{3.6}, a contradiction with Proposition \ref{1.0}. Since $R\subset{}_S^tR$ is minimal, we get that $[R,S]=\{R\}\cup [{}_S^tR,S]$.

Conversely, assume that $[{}_S^tR,S]$ is distributive and $[R,S]=\{R\}\cup[{}_S ^tR,S]$. Then, $[R,S]=[R,{}_S^tR]\cup[{}_S^tR,S]$, with $R\subset{}_S^tR$ distributive because minimal and $[{}_S^tR,S]$ is distributive. Then $[R,S]$ is distributive by Corollary \ref{9.03}.
\end{proof}

Let $R\subset S$ be an FCP extension with $\mathrm{MSupp}(S/R)=\{ M_i\}_ {i=1}^n$. In \cite{Pic 10}, we say that $R\subset S$ is a $\mathcal B$-extension if the map $[R,S]\to\prod_{i=1}^n[R_{M_i},S_{M_i}]$ defined by $T\mapsto(T_ {M_i})_{i=1}^n$ is bijective. An integral FCP extension is a $\mathcal B$-extension \cite[Theorem 3.6]{DPP2}.
 
Let $U,V\in[R,S]$. Then, $V$ is called a {\it complement} of $U$ (in the sense of lattices) if $R=U\cap V$ and $S=UV$. If this complement is unique we denote it by $U^o$.
 
If $R\subset S$ is an extension and $T\in]R,S[$, we say that the extension {\it is split} at $T$, if $\mathrm{MSupp}(S/T)\cap\mathrm{MSupp}(T/R)=\emptyset$  
\cite[definition 4.2]{Pic 15}.

\begin{definition} \label{split2}Let $R\subseteq S$ be a ring extension and $X\subseteq\mathrm{MSupp}_R(S/R)$. We say that an element $T$ of $[R,S]$ is a {\em splitter} of the extension at $X$ if $X=\mathrm{MSupp}_R(T/R)$ and $\mathrm{MSupp}_R(S/T)=\mathrm{MSupp}_R(S/R)\setminus X$. Such a splitter $T$ splits the extension at $T$ and each element that splits an extension is a  splitter. In fact, a splitter at $X$ is unique (\cite[Lemma 4.1]{Pic 15}) and we denote it by $\sigma(X)$.
\end{definition}  
 
 The following Proposition recalls some results on splitters from \cite{Pic 15}.

\begin{proposition}\label{1.14} \cite[Theorems 4.6 and 4.8]{Pic 15} Let $R\subset S$ be an FCP extension. Then, $R\subset S$ is a $\mathcal B$-extension if and only if for any $X\subseteq\mathrm{MSupp}(S/R)$, the splitter of the extension at $X$ exists. 

Assume that $R\subset S$ is a $\mathcal B$-extension and let $X\subseteq\mathrm{MSupp}(S/R)$. Set $T:=\sigma(X)$. Then, $T$ has a unique complement $T^o$, which is the splitter at $\mathrm{MSupp}(S/R)\setminus X$. Moreover, $\mathrm{MSupp}(T/R)=\mathrm{MSupp}(S/T^o)$ and $\mathrm{MSupp}(S/T)=\mathrm{MSupp}(T^o/R)$.
\end{proposition}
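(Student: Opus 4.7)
The statement has two parts: the equivalence characterizing $\mathcal B$-extensions through existence of splitters, and the description of complements of splitters. Write $\mathrm{MSupp}(S/R) = \{M_1, \ldots, M_n\}$ and let $\Phi: [R,S] \to \prod_{i=1}^n [R_{M_i}, S_{M_i}]$, $T \mapsto (T_{M_i})_i$, denote the localization map, noting that $T_{M'} = R_{M'}$ for any $M' \notin \mathrm{MSupp}(S/R)$ since $S_{M'} = R_{M'}$ there.

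For the forward direction of the characterization, assume $\Phi$ is bijective. Given $X \subseteq \mathrm{MSupp}(S/R)$, I would form the extremal tuple $(U_i)_i$ with $U_i = S_{M_i}$ if $M_i \in X$ and $U_i = R_{M_i}$ otherwise; bijectivity supplies a unique $T \in [R,S]$ with $\Phi(T) = (U_i)$, and by construction $\mathrm{MSupp}(T/R) = X$ and $\mathrm{MSupp}(S/T) = \mathrm{MSupp}(S/R) \setminus X$, so $T = \sigma(X)$.

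The converse direction is the main obstacle. Assuming every splitter exists, I must show $\Phi$ is bijective. Injectivity is the easy half: for an FCP extension the quotient $S/T$ has finite length supported inside $\mathrm{MSupp}(S/R)$, hence is recovered from its localizations there, forcing $T = T'$ whenever $\Phi(T) = \Phi(T')$. For surjectivity, my plan is to induct on $n = |\mathrm{MSupp}(S/R)|$. When $n = 1$ the unique crucial ideal $M_1$ gives $T_{M'} = R_{M'}$ for every other prime, so $T$ is determined by $T_{M_1}$ and both directions of $\Phi$ reduce to the identity. For the inductive step, fix $i$ and consider $T_i := \sigma(\{M_i\})$ and $T_i^o := \sigma(\mathrm{MSupp}(S/R) \setminus \{M_i\})$; Part~2 below shows these are complementary using only these two splitters, so any target tuple splits into an ``$M_i$-part'' realizable inside $[R, T_i]$ and a ``rest'' realizable inside $[R, T_i^o]$. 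Both restricted extensions have strictly smaller $\mathrm{MSupp}$, inherit splitter existence from that of $R \subset S$, and so by induction realize arbitrary tuples; multiplying the two partial preimages in $[R,S]$ produces the desired $T$. The delicate point, and the reason this is the main obstacle, is checking that splitter existence in $R\subset S$ really does descend to splitter existence in the two smaller extensions $R\subset T_i$ and $R\subset T_i^o$.

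For the second part, assume $R \subset S$ is a $\mathcal B$-extension, set $T = \sigma(X)$, and let $T^o := \sigma(\mathrm{MSupp}(S/R) \setminus X)$, which exists by Part~1. To check $T^o$ complements $T$, localize at each $M_i$: exactly one of $T_{M_i}, (T^o)_{M_i}$ is $S_{M_i}$ and the other is $R_{M_i}$, so $(T \cap T^o)_{M_i} = R_{M_i}$ and $(TT^o)_{M_i} = S_{M_i}$; injectivity of $\Phi$ upgrades these local identities, together with the trivial ones outside $\mathrm{MSupp}(S/R)$, to $T \cap T^o = R$ and $TT^o = S$. For uniqueness, any complement $T'$ of $T$ satisfies $T_{M_i} \cap (T')_{M_i} = R_{M_i}$ and $T_{M_i} (T')_{M_i} = S_{M_i}$ for every $i$; when $T_{M_i} = S_{M_i}$ the first equation forces $(T')_{M_i} = R_{M_i}$, and when $T_{M_i} = R_{M_i}$ the second forces $(T')_{M_i} = S_{M_i}$, so $\Phi(T') = \Phi(T^o)$ and injectivity yields $T' = T^o$. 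The support equalities $\mathrm{MSupp}(T/R) = \mathrm{MSupp}(S/T^o) = X$ and $\mathrm{MSupp}(S/T) = \mathrm{MSupp}(T^o/R) = \mathrm{MSupp}(S/R) \setminus X$ then follow directly from the splitter definitions applied to $T = \sigma(X)$ and $T^o = \sigma(\mathrm{MSupp}(S/R) \setminus X)$.
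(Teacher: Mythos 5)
The paper itself gives no proof of this proposition: it is quoted from \cite[Theorems 4.6 and 4.8]{Pic 15}, so there is no internal argument to measure yours against, and your proposal has to stand on its own. Your Part 2 and the forward implication of Part 1 do stand: a splitter localizes to either $R_{M_i}$ or $S_{M_i}$ at each $M_i\in\mathrm{MSupp}(S/R)$, injectivity of $\Phi$ is automatic for any extension (an $R$-subalgebra of $S$ is determined by its localizations at all maximal ideals; the finite-length remark is unnecessary), and your complementation, uniqueness and support computations are correct.

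The converse implication of Part 1 is where there is a genuine gap, and you name it yourself: the whole induction rests on the claim that splitter existence for $R\subset S$ descends to the subextensions $R\subset T_i$ and $R\subset T_i^o$, you call this ``the delicate point'', and you never prove it. As written, the hardest direction of the equivalence is therefore not established. The gap is closable: for $Y\subseteq\mathrm{MSupp}(T_i^o/R)=\mathrm{MSupp}(S/R)\setminus\{M_i\}$, take $\sigma(Y)$, the splitter of $R\subset S$ at $Y$, which exists by hypothesis; comparing localizations at every maximal ideal ($\sigma(Y)_{M_i}=R_{M_i}=(T_i^o)_{M_i}$, $\sigma(Y)_{M_j}\subseteq S_{M_j}=(T_i^o)_{M_j}$ for $j\neq i$, and equality outside $\mathrm{MSupp}(S/R)$) gives $\sigma(Y)\subseteq T_i^o$, and the same local data show $\sigma(Y)$ is the splitter of $R\subset T_i^o$ at $Y$; the case of $R\subset T_i$ is trivial since $\mathrm{MSupp}(T_i/R)$ is a singleton. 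Two smaller weak spots: your base case $n=1$ asserts surjectivity with no argument (``both directions reduce to the identity'' is not a proof); one needs, for $U\in[R_{M_1},S_{M_1}]$, the pullback $T:=\{s\in S\mid s/1\in U\}$, which satisfies $T_{M_1}=U$. And in the inductive step you should make explicit that $(AB)_M=A_MB_M$ and that $A_{M_j}=R_{M_j}$ for $j\neq i$ (resp.\ $B_{M_i}=R_{M_i}$), so that the compositum of the two partial preimages realizes the prescribed tuple.
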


We may remark that Proposition \ref{1.14} is  trivial when  either $X= \mathrm{MSupp}(S/R)$, where $\sigma(\mathrm{MSupp}(S/R))=S$ or $X=\emptyset$ where $\sigma(\emptyset)=R$.

 Here is a first interesting application of splitters.

\begin{proposition}\label{1.104} Let $R\subset S$ be an FCP extension which splits at $T\in[R,S]$ and $X:=\mathrm{MSupp}_R(T/R)$. The following statements hold:
\begin{enumerate}
\item $T$ has a unique complement $T^o$, which is the splitter at 

\noindent $\mathrm{MSupp}(S/R)\setminus X$. 
\item $R\subset S$ is distributive if and only if $R\subset T$ and $R\subset T^o$ are distributive.
\end{enumerate}
\end{proposition}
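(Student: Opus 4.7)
The plan is to handle parts (1) and (2) in turn, exploiting the splitting at $T$ to localize every argument at the maximal ideals of $R$. The splitting hypothesis (Definition \ref{split2}) gives $\mathrm{MSupp}(T/R)=X$ and $\mathrm{MSupp}(S/T)=\mathrm{MSupp}(S/R)\setminus X=:Y$, so that localizing at $M\in\mathrm{MSupp}(S/R)$ yields $T_M=S_M$ when $M\in X$ and $T_M=R_M$ when $M\in Y$.

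For part (1), I would define $T^o$ as a maximal element of the non-empty subposet $\{V\in[R,S]\mid V\cap T=R\}$, which has a maximum by the FCP hypothesis. Localizing $T^o\cap T=R$ at $M\in X$ gives $(T^o)_M=(T^o)_M\cap S_M=R_M$, the desired value. At each $M\in Y$ the aim is to force $(T^o)_M=S_M$ by contradiction: any strict inclusion $(T^o)_M\subsetneq S_M$ would yield a minimal extension $(T^o)_M\subset W$ which I would lift, via the classification of minimal extensions in Theorems \ref{crucial} and \ref{minimal}, to a proper extension $T^o\subset W'\subseteq S$. The point is that because $W'/T^o$ is $M$-crucial with $M\in Y=\mathrm{MSupp}(S/T)$, the new element added to $T^o$ is transversal to $T$ and $W'\cap T=R$ persists, contradicting maximality. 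The splitter description $\mathrm{MSupp}(T^o/R)=Y$, $\mathrm{MSupp}(S/T^o)=X$ then follows from these localizations, and uniqueness is automatic: any complement $V$ of $T$ must have $V_M=R_M$ at $M\in X$ and $V_M=S_M$ at $M\in Y$, and an $R$-subalgebra of an FCP extension is determined by its localizations at the finite set $\mathrm{MSupp}(S/R)$.

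For part (2), one direction is Proposition \ref{1.4}. For the converse, I would exhibit a lattice isomorphism $\Phi\colon[R,S]\to[R,T]\times[R,T^o]$ by $\Phi(U)=(U\cap T,U\cap T^o)$, with inverse $\Psi(V,W)=VW$. Compatibility of $\Phi$ with intersections is immediate; compatibility with products is checked locally, where at each $M\in X$ the identity reduces to $U_M\cdot R_M=U_M$ (since $(T^o)_M=R_M$) and symmetrically at each $M\in Y$. The identities $\Phi\circ\Psi=\mathrm{id}$ and $\Psi\circ\Phi=\mathrm{id}$ likewise reduce to the local dichotomy at $X$ and $Y$. Since a product of two distributive lattices is distributive, the distributivity of $[R,T]$ and $[R,T^o]$ transfers to $[R,S]$. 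The main obstacle is the lifting argument in part (1): producing a global proper extension $T^o\subset W'\subseteq S$ from a local minimal extension $(T^o)_M\subset W$ whose crucial ideal sits over $M\in Y$, while simultaneously guaranteeing that $W'$ remains transversal to $T$. This requires a careful combination of the $M$-crucial structure of minimal extensions and the disjointness of the supports $X$ and $Y$ coming from the splitting hypothesis.
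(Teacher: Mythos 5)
Your part (2) is correct. The paper's own argument is shorter but rests on exactly the same observation: by Proposition \ref{1.014} distributivity is checked locally at $M\in\mathrm{MSupp}(S/R)$, and the splitting gives the dichotomy $T_M=S_M,\ (T^o)_M=R_M$ for $M\in X$ and $T_M=R_M,\ (T^o)_M=S_M$ for $M\in Y:=\mathrm{MSupp}(S/R)\setminus X$, so $R_M\subset S_M$ is distributive if and only if $R_M\subseteq T_M$ and $R_M\subseteq (T^o)_M$ are. Your lattice isomorphism $[R,S]\cong[R,T]\times[R,T^o]$ is a valid (indeed slightly stronger) way to package this, but its verification is precisely the same local dichotomy, so it does not buy anything beyond Proposition \ref{1.014} here; one direction is, as you say, Proposition \ref{1.4}.

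The genuine gap is in part (1). The paper does not reprove the existence and uniqueness of the complement: it simply quotes \cite[Theorem 4.8]{Pic 15} (the splitter at $X$ has a unique complement, namely the splitter at $Y$; compare Proposition \ref{1.14}). You instead construct $T^o$ by hand, and two points are left unjustified. First, FCP gives maximal elements of $\{V\in[R,S]\mid V\cap T=R\}$, not a maximum; asserting a maximum at the outset is begging the uniqueness question (though this becomes harmless once the rest is done, since every maximal element is then forced to be the splitter at $Y$). Second, and this is the real gap, the pivotal ``lifting'' step --- from $(T^o)_M\subsetneq S_M$ for some $M\in Y$ produce a proper $W'\in\,]T^o,S]$ with $W'\cap T=R$ --- is exactly what you leave as the ``main obstacle,'' so the proof of (1) is incomplete at its decisive point. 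It can be closed with tools already in the paper: since $M\in\mathrm{MSupp}_R(S/T^o)$, choose a minimal subextension $T^o\subset W'$ of $T^o\subseteq S$ whose crucial maximal ideal lies over $M$ (the device used, via \cite[Lemma 2.15]{Pic 10}, in the proof of Proposition \ref{6.10}). Then $\mathrm{Supp}_R(W'/T^o)=\{M\}$, so $(W')_{M'}=(T^o)_{M'}$ for every maximal ideal $M'\neq M$, whence $(W'\cap T)_{M'}=(T^o\cap T)_{M'}=R_{M'}$; and at $M$ itself $T_M=R_M$ because $M\in Y$, so $(W'\cap T)_M=R_M$. Hence $W'\cap T=R$ with $T^o\subsetneq W'$, contradicting maximality; this forces $(T^o)_M=S_M$ on $Y$, identifies $T^o$ with the splitter at $Y$, and yields uniqueness by the localization argument you indicate. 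So your plan for (1) is workable, but as written it stops short of proving its key step, whereas the paper disposes of (1) by citation.
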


\begin{proof} (1) is \cite[Theorem 4.8]{Pic 15}. Set $U:=T^o$ and $Y:=\mathrm {MSupp}_R(S/T)=\mathrm{MSupp}_R(S/R)\setminus X$. Then, $X=\mathrm {MSupp}(S/U)$ and $Y=\mathrm{MSupp}(U/R)$. It follows  that $\mathrm {MSupp}_R(S/R)=X\cup Y$ with $X\cap Y=\emptyset$.

(2) Let $M\in\mathrm{MSupp}_R(S/R)$. Then, either $M\in X$ or $M\in Y$, these conditions being mutually exclusive. If $M\in X\ (*)$, then $M\not\in Y$, so that $R_M=U_M$ and $T_M=S_M$. If $M\in Y\ (**)$, then $M\not\in X$, so that $R_M=T_M$ and $U_M=S _M$. According to Proposition \ref{1.014}, we have $R\subset S$ is distributive if and only if $R_M\subset S_M$ is distributive for each $M\in\mathrm{MSupp}_R(S/R)$. Let $M\in\mathrm{MSupp}_R(S/R)$. 

In case $(*),\ R_M\subset S_M$ is distributive if and only if $R_M\subset T_M$ is distributive and $R_M=U_M$, so that $R_M\subseteq U_M$ is distributive.

In case $(**),\ R_M\subset S_M$ is distributive if and only if $R_M\subset S_M$ is distributive and $R_M=T_M$, so that $R_M\subseteq T_M$ is distributive.

To conclude,  for each $M\in\mathrm{MSupp}_R(S/R)$, we have that $R_M\subset S_M$ is distributive if and only if $R_M\subseteq T_M$ and $R_M\subseteq U_M$ are distributive, which is equivalent to $R\subseteq T$ and  $R\subseteq U$ are distributive.
\end{proof}

 In the following, we consider arbitrary branched integral distributive FCP extensions $R\subset S$, so that $R$ is a local ring and $S$ does  not need to be a local ring. 
When $R\subset{}_S^tR$ is distributive, we use notation of Proposition \ref{6.10} applied to $R\subset{}_S^tR$. We start by assuming that $R\subset S$ is not pinched at ${}_S^tR$ and $R\subset{}_S^tR$ is not pinched at ${}_S^+R$. The cases where $R\subset S$ is pinched at ${}_S^tR$ or $R\subset{}_S ^tR $ is pinched at ${}_S^+R$ are considered in Theorem \ref{6.15} and Lemma \ref{6.122}.

\begin{proposition} \label{6.12} Let $R\subset S$ be an integral distributive branched FCP extension. We assume in addition that $R\subset S$ is not pinched at ${}_S^tR$ and $R\subset {}_S^tR$ is not pinched at ${}_S^+R$.  
  Setting $T:={}_S^tR$ and $U:={}_S^uR$,  the following properties hold:
  \begin{enumerate}
\item $|\mathrm{Max}(U)|=|\mathrm{Max}(T)|=|\mathrm{Max}(S)|=2$ and $|\mathrm{Supp}_U(T/U)|=1$. Moreover, $[U,T]=:\{U_i\}_{i=0}^n$ is a maximal chain and $U\subset S$ is not pinched at $T$.  
\item If $\mathrm{Max}(U)=\{M,M'\}$ with $\mathrm{Supp}_U(T/U)=:\{M\}$, let $V\in[U,S]$ be the splitter of $[U,S]$ at $\{M'\}$ and set $W:=VT$. 

Then, $[V,W]=\{V_i\}_{i=0}^n$ is a maximal chain such that $V_i=VU_i$ for each $i\in\{0, \ldots,n\}$. Moreover, $U\subset V$ is t-closed, $V={}_W^uU$, $|\mathrm{Supp}_U(S/U)|=2$ and $[U,S]=[U,W]\cup [T,S]$.
\item Let $X\in[R,S]\setminus([R,T]\cup[T,S])$. There exists a unique $i\in\{0,\ldots,n\}$ such that $X\cap T=U_i$ and $X\in[U_i,V_i]$. 
\item $[R,S]=[R,{}_S^+R]\cup[T,S]\cup(\cup_{i=0}^{n-1}[U_i,V_i])$ defines a partition of $[R,S]$.
\item  For each $i\in\{0,\ldots,n-1\}$, the map $\varphi_i:[U_i,V_i]\to[T,W]$ defined by $\varphi_i(X)=XT$ for any $X\in[U_i,V_i]$ is an order-isomorphism.
  \end{enumerate}
\end{proposition}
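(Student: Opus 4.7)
\medskip

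\noindent\textbf{Proof proposal.} The overall strategy is to peel off the three canonical pieces of the decomposition $R\subseteq {}_S^+R\subseteq T\subseteq S$ and use the two non-pinched hypotheses to force a ``grid'' structure on $[R,S]$. For (1), apply Proposition \ref{6.10} to the infra-integral subextension $R\subset T$: it is distributive (by Proposition \ref{1.4}) and, by hypothesis, not pinched at ${}_S^+R$, so in particular it is not chained. Proposition \ref{6.10} then yields $[R,{}_S^+R]$ a chain, $|\mathrm{Max}(T)|=2$ and the order isomorphism $\varphi$, while the trailing assertion of Proposition \ref{6.10} gives $|\mathrm{Supp}_U(T/U)|=1$ and $[U,T]=\{U_i\}_{i=0}^n$ a maximal chain. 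Since $U\subseteq T$ is subintegral and $T\subseteq S$ is t-closed (all minimal steps inert), both passages preserve $\mathrm{Max}$, so $|\mathrm{Max}(U)|=|\mathrm{Max}(T)|=|\mathrm{Max}(S)|=2$. Finally, if $X\in[R,S]$ witnesses that $R\subset S$ is not pinched at $T$ (so $X\not\subseteq T$, $T\not\subseteq X$), then $UX\in[U,S]$ is still not comparable with $T$, which proves that $U\subset S$ is not pinched at $T$.

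For (2), recall that the integral FCP extension $U\subset S$ is a $\mathcal B$-extension (by \cite[Theorem 3.6]{DPP2}), so by Proposition \ref{1.14} the splitter $V:=\sigma(\{M'\})$ exists and admits a complement $V^o$ with $\mathrm{MSupp}_U(V^o/U)=\{M\}$; since $\mathrm{MSupp}_U(T/U)=\{M\}$ as well, a support-localization argument identifies $V^o=T$ inside $[U,W]$, so $V\cap T=U$ and $VT=W$. I will exploit this by localizing at the two maximal ideals of $U$: at $M'$ the extension $U\subseteq T$ is trivial, so $V_{M'}\subseteq W_{M'}$ equals $V_{M'}$ (a chain of length $0$), while at $M$ we have $V_M=U_M$, hence $(VU_i)_M=(U_i)_M$ and $(VU_i)_{M'}=V_{M'}$, which produces a bijection $U_i\mapsto VU_i$ between $[U,T]$ and $[V,W]$. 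The chain of length $n$ $[V,W]=\{V_i\}_{i=0}^n$ follows, each step being minimal ramified (translated by base change from $[U,T]$). The t-closedness of $U\subset V$ follows by noting that $V$ is generated over $U$ by elements supported only at $M'$, so no ramified or decomposed minimal step can occur in $[U,V]$; Proposition \ref{1.31} combined with the preservation of $|\mathrm{Max}|$ along $U\subset V$ then forces $U\subset V$ to be an $i$-extension which, being t-closed, gives $V={}_W^uU$ via Proposition \ref{1.301} since $V\subset W$ is subintegral. The equality $|\mathrm{Supp}_U(S/U)|=2$ and the decomposition $[U,S]=[U,W]\cup[T,S]$ are then obtained by combining the split $V/T$ with distributivity: any $Y\in[U,S]$ incomparable with $T$ must coincide with $(Y\cap W)\in[V,W]$ by the already-established localization picture.

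For (3), (4), (5), let $X\in[R,S]\setminus([R,T]\cup[T,S])$. Set $Y:=X\cap T\in[R,T[$. By Corollary \ref{6.102} applied to $R\subset T$ (which is distributive infra-integral branched and, by our hypothesis, not chained), we have $[R,T]=[R,{}_S^+R]\cup[U,T]$. If $Y\in[R,{}_S^+R]$, then $Y$ sits in the ``local'' part above $R$, and a short case analysis using that ${}_S^+R\subset T$ decomposed together with distributivity (Proposition \ref{6.1}) forces $X$ to be comparable with $T$, a contradiction; hence $Y=U_i$ for a unique $i\in\{0,\dots,n-1\}$. Since $X$ lies above $U_i$ and below $W=VT$ (because $XT\in [T,S]=[T,W]\cup\ldots$, and $X\subseteq W$ by the part (2) decomposition applied to $UX\in[U,S]$), we conclude $X\in[U_i,V_i]$, giving (3). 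Part (4) then assembles these pieces into a disjoint union, disjointness being immediate from the support indices ($[U_i,V_i]$ is precisely the preimage of $U_i$ under $\cdot\cap T$). For (5), the map $\varphi_i(X)=XT$ from $[U_i,V_i]$ to $[T,W]$ is well-defined and order-preserving; injectivity follows from distributivity via $X=XT\cap V_i$ (Proposition \ref{6.1} applied to the pair $XT,V_i$, whose intersection is $X$ and whose product is $V_iT=VU_iT=VT=W$, ... actually since $XT\in[T,W]$ we use $X=XT\cap V_i$ with $V_i\cap T=U_i$), and surjectivity follows by taking $Z\in[T,W]$ and setting $X:=Z\cap V_i$, which lies in $[U_i,V_i]$ and satisfies $\varphi_i(X)=Z$ thanks again to distributivity.

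The main obstacle is step (2): showing simultaneously that $V\cap T=U$, that the product map produces a chain $\{VU_i\}$ of the required length, and that $[U,S]$ decomposes as $[U,W]\cup[T,S]$. All three depend on a careful interplay between the support-based splitter calculus of Proposition \ref{1.14}, the t-closed/subintegral decomposition of Remark \ref{1.300}, and the distributivity criterion of Proposition \ref{6.1}; the delicate point is that none of these can be applied in isolation, but together they pin down the two-dimensional product structure on $[U,S]$ that underlies the entire proposition.
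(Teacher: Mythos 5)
Your plan is the paper's own: apply Proposition \ref{6.10} to $R\subset T$, split $[U,S]$ at $\{M'\}$, read everything off the two localizations at $M$ and $M'$, and use the distributive law for the maps $\varphi_i$. But the steps that carry the real weight are asserted rather than proved, and two of them fail as written. First, in (1), to see that $U\subset S$ is not pinched at $T$ you take a witness $X$ incomparable with $T$ and claim $UX$ is still incomparable with $T$: that $UX\not\subseteq T$ is clear, but nothing you say excludes $T\subseteq UX$, and this must genuinely be ruled out, since $U\,{}_S^+R={}_S^tR$ (Remark \ref{1.300}), so any witness containing ${}_S^+R$ would give $T\subseteq UX$. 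The actual argument is a case analysis on $|\mathrm{Max}(X)|$: if $|\mathrm{Max}(X)|=2$ then $X\subseteq S$ is u-closed, hence $X\supseteq U$ and $UX=X$; if $|\mathrm{Max}(X)|=1$ one contradicts catenarity via Proposition \ref{3.6}(2) (a minimal inert and a minimal non-inert subextension over the same local ring $X\cap T$). Second, in (2) the splitter $V=\sigma(\{M'\})$ exists only once one knows $M'\in\mathrm{MSupp}_U(S/U)$, i.e. $|\mathrm{Supp}_U(S/U)|=2$; you postpone this equality and propose to deduce it afterwards ``from the split'', which is circular. It has to be proved first: if $U_{M'}=S_{M'}$, then $U_M\subset S_M$ is distributive and unbranched, hence pinched at $T_M$ by Theorem \ref{6.11}, contradicting the conclusion of (1). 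Your identification $V^o=T$ is also false in general (it would force $T_M=S_M$, i.e. $W=S$, and equality of $\mathrm{MSupp}$'s does not identify two algebras); what you actually need, $V_M=U_M$ and $V_{M'}=S_{M'}$, hence $V\cap T=U$, comes straight from the splitter property.

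Third, the decomposition $[U,S]=[U,W]\cup[T,S]$ is essentially not argued: the claim that any $Y\in[U,S]$ incomparable with $T$ equals $Y\cap W$ \emph{and lies in} $[V,W]$ is wrong (such a $Y$ need not contain $V$; only $Y\subseteq W$ is true), and ``the already-established localization picture'' concerns $V$ and $W$, not an arbitrary $Y$. One must show $\mathrm{MSupp}_U(Y/Y\cap T)=\{M'\}$, which again rests on the catenarity obstruction of Proposition \ref{3.6}(2), played off against the fact that $Y\cap T\subset T$ is subintegral supported at $M$ while $Y\cap T\subseteq Y$ is t-closed; this same mechanism (rather than Proposition \ref{6.1} and the decomposedness of ${}_S^+R\subset T$) is what excludes $X\cap T\in[R,{}_S^+R]$ in your sketch of (3). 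Also, your derivation of $V={}_W^uU$ from Proposition \ref{1.301} does not parse, since $U\subseteq W$ is not infra-integral; the fact to prove is that $V$ is the least element of $[U,W]$ that is subintegral in $W$ (this is how Theorem \ref{6.15} phrases it), and it follows from $V\subset W$ subintegral together with the localization at $M'$, where $U_{M'}=T_{M'}\subseteq S_{M'}$ is t-closed. Parts (4) and (5) are fine and coincide with the paper's computation (for (5), the identity $(XT)\cap V_i=(X\cap V_i)(T\cap V_i)=XU_i=X$ is just the distributive law, not Proposition \ref{6.1}).
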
  
 
\begin{proof} (1) Since $R\subset T$ is infra-integral, we have $U=T_R^+$ by \cite[Corollary 6.7]{Pic 14}. Applying Proposition \ref{6.10} to the extension $R\subset T$ 
 because $R\subset S$ is branched, we get that $[U,T]=:\{U_i\}_{i=0}^n$ is a maximal chain, $|\mathrm{Max}(T)|=2$ and $|\mathrm{Supp}_U(T/U)|=1$. It follows that $|\mathrm{Max}(U)|=|\mathrm {Max}(S)|=2$ because $U\subset T$ is subintegral and $T\subset S$ is t-closed (Proposition \ref{6.10}).

Assume that $U\subset S$ is pinched at $T$, so that $[U,S]=[U,T]\cup[T,S]$. Since $R\subset S$ is not pinched at $T$, there exists $X\in[R,S]\setminus([R,T]\cup[T,S])$, with $|\mathrm{Max}(X)|\leq 2$. If $|\mathrm{Max}(X)|=2$, then $X\subset S$ is u-closed and $X\in [U,S]=[U,T]\cup[T,S]\subseteq [R,T]\cup[T,S]$, a contradiction. If $|\mathrm{Max}(X)|=1$, set $Y:=X\cap T\subset T$, because $X\not\in[T,S]$, so that $|\mathrm{Max}(Y)|=1$. But $Y\subseteq X$ is t-closed and $Y\subset T$ is infra integral, with $Y$ a local ring, which leads to $Y=X\in[R,T]$ because $R\subset S$ is distributive, and then catenarian (Proposition \ref{1.0}). This is also absurd since $X\not\in [R,T]$. To conclude, $U\subset S$ is not pinched at $T$.

(2) Set $\mathrm{Max}(U)=:\{M,M'\}$ with $\mathrm{Supp}_U(T/U)=:\{M\}$. We claim that $\mathrm{Supp}_U(S/U)=\{M,M'\}$. Otherwise, $M'\not\in\mathrm{Supp}_U(S/U)$, so that $U_{M'}=S_{M'}$. Since $U\subset S$ is distributive, so is $U_M\subset S_M$, which is unbranched. Then, $U_M\subset S_M$ is pinched at $T_M$ according to Theorem \ref{6.11}, and so $U\subset S$ is pinched at $T$, a contradiction with (1) and we get $\mathrm{Supp}_U(S/U)=\{M,M'\}$. Thanks to Proposition \ref{1.14}, there is a unique $V\in[U,S]$ which is the splitter of $U\subset S$ at $\{M'\}$, then such that $\mathrm{Supp}_U(V/U)=\{M'\}\ (*)$ and $\mathrm{Supp}_U(S/V)=\{M\} \ (**)$.

Set $W:=VT\in[T,S]$ and $V_i=VU_i$ for each $i\in\{0,\ldots,n\}$. By $(*)$, we deduce that $V_M=U_M$, so that $W_M=T_M$ and $[V_M,W_M]=[U_M,T_M]=\{(U_i)_M\}_{i=0}^n=\{V_M(U_i)_M\}_{i=0}^n$. By $(**)$, we deduce that $V_{M'}=S_{M'}$, so that $W_{M'}=S_{M'}=V_{M'}$ and $[V_{M'},W_{M'}]=\{S_{M'}\}$. Then, $[V,W]=\{VU_i\}_{i=0}^n$. Moreover, the previous localizations show that $U\subset V$ is t-closed and $V\subset W$ is subintegral. 

 Let $X'\in[U,S]\setminus[T,S]$. Then, $Y':=X'\cap T\neq T$, so that $Y'\subset T$ is subintegral, and $Y'\subseteq X'$ is t-closed. Since $Y'\in[U,T]$, it follows that $\mathrm{MSupp}_U(T/Y')=\{M\}$, so that $\mathrm{MSupp}_U(X'/Y')=\{M'\}$. Then, $Y'_{M'}=T_{M'}$ and $Y'_M=X'_M$. Since $V_{M'}=S_{M'}=W_{M'}$, we get $X'_{M'}\in[U_{M'},W_{M'}]$ and $Y'_M=X'_M\in[U_M,T_M]\subseteq [U_M,W_M]$ giving $X'\in[U,W]$. Then, $[U,S]=[U,W]\cup[T,S]$.

(3) Let $X\in[R,S]\setminus([R,T]\cup[T,S])$ and set $Y:=X\cap T\in[R,T[$ because $X\not\in [T,S]$. Then, $Y\subseteq X$ is t-closed, so that $|\mathrm {Max}(Y)|=|\mathrm{Max}(X)|$. We reuse a part of the proof of (1) and (2). If $|\mathrm{Max}(Y)|=2$, then $Y\in[U,T]$ by (1), so that there exists a unique $i\in\{0,\ldots,n\}$ such that $Y=U_i$. If $\mathrm{Max}(Y)|=1$, then $Y\in[R,{}_ S^+R]$. But $Y\subseteq X$ is t-closed and $Y\subset T$ is infra- integral, with $Y$ a local ring, which leads to $Y=X\in[R,T]$, as in (1), a contradiction. Then, $U\subseteq U_i=Y\subseteq X$, for some $i\in\{0,\ldots,n\}$. In particular, $X_ M=Y_M=(U_i)_M\subseteq(V_i)_M$ and $X_{M'}= S_{M'}=V_{M'}=(V_i)_{M'}$ shows that $X\in[U_i,V_i]$. In particular, $U_i=V_i\cap T$ for each $i\in\{0,\ldots,n\}$.

(4) Thanks to (3), we get $[R,S]=[R,T]\cup[T,S]\cup(\cup_{i=0}^{n-1}[U_i,V_i])$, 
 defining a partition $[R,S]=[R,{}_S^+R]\cup[T,S]\cup(\cup_{i=0}^{n-1}[U_i,V_i])$.

 More precisely, we have the following commutative diagram:
  $$\begin{matrix}
 {} & {}  &            V            & \to &      V_i          & \to &       W      & \to & S \\
 {} & {}  &       \uparrow     &  {} & \uparrow       &  {}  & \uparrow &   {} & {} \\
 {} & {}  &           U            & \to &        U_i         & \to &       T       &   {} & {} \\
 {} & {}  &      \uparrow     & {} &       \uparrow    & {}  & \uparrow &   {} & {} \\
R & \to & U\cap{}_S^+R & \to & U_i\cap{}_S^+R & \to & {}_S^+R  &  {} & {}
\end{matrix}$$

(5) Let $i\in\{0,\ldots,n-1\}$. We begin to show that the map $\varphi_i$, which obviously preserves order, defined on $[U_i,V_i]$ by $\varphi_i(X)=XT$ for any $X\in[U_i,V_i]$ has its range in $[T,W]$. Since $X\subseteq V_i\subseteq W=V T$, we have $T\subseteq XT\subseteq V_iT\subseteq WT=W$, so that $\varphi_i(X)\in[T,W]$. Let $X,X'\in[U_i,V_i]$ be such that $\varphi_i(X)=\varphi_i (X')$. Then, $XT=X'T$. But, $R\subset S$ is distributive, which implies that $(X T)\cap V_i=(X\cap V_i)(T\cap V_i)=XU_i=X=(X'T)\cap V_i=X'$. Hence, $\varphi _i$ is injective. Now, let $Y\in[T,W]$. Then, $Y\cap V_i\in[U_i,V_i]$, and $(Y\cap V_i)T=(YT)\cap(V_iT)=Y\cap V_iT=Y\cap W=Y$, because $W=VT\subseteq V_iT\subseteq W$ leads to $V_iT=W$. This shows that $\varphi_i$ is surjective, hence an order isomorphism. 
\end{proof}

\begin{proposition} \label{6.121} Let $R\subset S$ be a branched integral FCP extension
 such that ${}_S^+R,{}_S^tR\neq R,S$ and such that $[R,{}_S^tR]$ is a chain. Then $R\subset S$ is distributive if and only if ${}_S^tR\subset S$ is distributive and $R\subset S$ is pinched at ${}_S^tR$. 
  \end{proposition}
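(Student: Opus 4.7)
The (if) direction is routine: since $[R,T]$ is a chain it is distributive (Proposition \ref{5.4}), and Corollary \ref{9.03} applied to the singleton pinching chain $\{T\}$ combines this with the assumed distributivity of $[T,S]$ to give the distributivity of $R\subset S$.

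For the (only if) direction, assume $R\subset S$ is distributive. The distributivity of ${}_S^tR\subset S$ is immediate by Proposition \ref{1.4} (every subextension of a distributive extension is distributive), so the work lies in showing that $R\subset S$ is pinched at $T:={}_S^tR$. I plan to argue by contradiction: suppose some $X\in[R,S]\setminus([R,T]\cup[T,S])$ exists and set $Y:=X\cap T$, so that $Y\subsetneq T$ and $Y\subsetneq X$. From the definition of t-closedness, using that $T\subseteq S$ is t-closed and $Y\subseteq T$, one checks at once that $Y\subseteq X$ is t-closed. Hence any minimal $Y\subset X'$ in $[Y,X]$ is inert by Proposition \ref{1.31}(4); on the other hand, since $R\subseteq T$ is infra-integral and $[R,T]$ is a chain, any minimal $Y\subset T'$ in $[Y,T]$ is ramified or decomposed, in particular not inert.

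The decisive step is to show $Y$ is local, after which Proposition \ref{3.6}(2) will yield the contradiction. First, $T\subseteq S$ is t-closed and hence an $i$-extension (Proposition \ref{1.31}), so $|\mathrm{Max}(T)|=|\mathrm{Max}(S)|\geq 2$; thus $R\subset T$ is a branched infra-integral FCP extension, and ${}_T^+R={}_S^+R\neq R$ (seminormalization is transitive and $T\subseteq S$ is seminormal, since t-closed). Proposition \ref{6.10} therefore applies to $R\subset T$, and the chain hypothesis implies its three conditions. I will next argue that the chain hypothesis forces ${}_T^uR=T$: if not, Lemma \ref{6.91} gives the disjoint decomposition $[R,T]=[R,{}_T^+R]\cup[{}_T^uR,T]$, and the chain structure forces ${}_T^+R\subsetneq{}_T^uR$, hence ${}_T^+R\cdot{}_T^uR={}_T^uR$; but Remark \ref{1.300} says ${}_T^+R\cdot{}_T^uR={}_T^tR=T$, contradicting ${}_T^uR\neq T$. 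With ${}_T^uR=T$, the characterization of u-closed as $i$-extension for integral FCP (cited in Definition \ref{1.3}) shows that no proper element of $[R,T)$ has maximal spectrum of size $2$; in particular $Y$ is local. Then $\mathcal{C}(Y,T')$ and $\mathcal{C}(Y,X')$ both equal the unique maximal ideal of $Y$, and Proposition \ref{3.6}(2) forces $Y\subset T'X'$ to be non-catenarian, contradicting Proposition \ref{1.0} applied to $R\subset S$ (catenarity passes to subextensions). The main obstacle is precisely the deduction ${}_T^uR=T$ from $[R,T]$ being a chain; this is what converts the chain hypothesis into the local-ness of $Y$ required to run Proposition \ref{3.6}(2).
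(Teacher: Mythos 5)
Your proof is correct in substance and follows the same overall skeleton as the paper's: the (if) direction via Corollary \ref{9.03}, and the (only if) direction by choosing $X\in[R,S]\setminus([R,{}_S^tR]\cup[{}_S^tR,S])$, setting $Y:=X\cap{}_S^tR$, producing a minimal inert and a minimal non-inert extension of the local ring $Y$ with the same crucial maximal ideal, and contradicting catenarity through Proposition \ref{3.6}(2) and Proposition \ref{1.0}. Where you genuinely differ is the key intermediate step that $Y$ is local. The paper gets this in two lines: ${}_S^+R\subseteq{}_S^tR$ is a distributive seminormal infra-integral extension over the local ring ${}_S^+R$, hence minimal by Proposition \ref{decomp}; combined with the chain hypothesis this forces $[R,{}_S^tR]=[R,{}_S^+R]\cup\{{}_S^tR\}$, so $Y\in[R,{}_S^+R]$ is local. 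You instead run the u-closure machinery: writing $T:={}_S^tR$, you get $|\mathrm{Max}(T)|=2$ via Proposition \ref{6.10}, use the product formula ${}_T^uR\,{}_T^+R={}_T^tR=T$ of Remark \ref{1.300} together with the chain hypothesis to force ${}_T^uR=T$, and then Lemma \ref{6.91} to conclude that every proper element of $[R,T[$ is local. This works, but it is a heavier detour that buys nothing extra here: it still rests on the distributivity of $R\subset T$ (through Proposition \ref{6.10}), which is exactly the ingredient the paper exploits more directly through Proposition \ref{decomp}.

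One sentence needs repair: ``the chain hypothesis implies its three conditions'' (of Proposition \ref{6.10}) is not correct as stated. The chain hypothesis gives condition (1), but conditions (2) and (3) --- in particular $|\mathrm{Max}(T)|=2$ --- do not follow from the chain alone; they follow from the distributivity of $R\subset T$, which you have by Proposition \ref{1.4} since you are in the only-if direction, combined with the only-if implication of Proposition \ref{6.10}. Since $|\mathrm{Max}(T)|=2$ is what legitimizes both your appeal to Lemma \ref{6.91} and the final inference ``not two maximal ideals, hence local'', you should make this justification explicit; once you do, the argument is complete.
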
 

\begin{proof} 
If $R\subset S$ is distributive, so is ${}_S^tR\subset S$.  Since $R$ is local, so is ${}_S^+R$, so that ${}_S^+R\subset {}_S^tR$ is minimal by Proposition \ref{decomp}, because ${}_S^+R\neq{}_S^tR$ since $S$ is not local. Then, $|\mathrm{Max}({}_S^tR)|=2$ and $[R,{}_S^tR]=[R,{}_S^+R]\cup\{{}_S^tR\}$. Assume that $[R, S]\neq[R,{}_S^tR]\cup[{}_S^tR,S]$ and let $T\in[R,S]\setminus([R,{}_S^tR]\cup[{}_S^tR,S])$. Set $T':={}_T^tR={}_S^tR\cap T\in[R,{}_S^tR[$. It follows that $T'\subset T$ is t-closed and $T'\subset{}_S^tR$ is infra-integral. But $T'$ is local since $T'\in[R,{}_S^+R]$, because $T'\neq{}_S^tR$, contradicting the distributivity of $R\subset S$, according to Proposition \ref{3.6}(2). Then, $[R, S]=[ R, {}_S^tR]\cup[ {}_S^tR,S]$. 

Conversely, assume that ${}_S^tR\subset S$ is distributive and $[R, S]=[ R,{}_S^tR]\cup[{}_S^tR,S]$. Since $R\subset{}_S^tR$ is chained, it is distributive. Then, $R\subset S$ is distributive by Corollary \ref{9.03}. 
\end{proof}

We are now in position to characterize branched integral distributive extensions $R\subset S$. We use the  notation of Propositions \ref{6.10} and \ref{6.12}. 
 
\begin{lemma}\label{6.122} Let $R\subset S$ be an integral distributive branched  FCP extension. If $R\subset{}_S^tR$ is pinched at ${}_S^+R$, then, $R\subset S$ is pinched at $ {}_S^tR$.  
 \end{lemma}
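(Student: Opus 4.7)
The plan is to argue by contradiction. Suppose there exists $T \in [R,S] \setminus ([R,{}_S^tR] \cup [{}_S^tR,S])$, and set $T' := T \cap {}_S^tR$. A routine verification (identical to the one used in the proof of Proposition~\ref{6.121}) shows $T' = {}_T^tR$: the inclusion $T \cap {}_S^tR \subseteq T$ is t-closed because ${}_S^tR \subseteq S$ is, and $R \subseteq T \cap {}_S^tR$ is infra-integral because $R \subseteq {}_S^tR$ is. Since $T \not\subseteq {}_S^tR$ we get $T' \subsetneq T$, and since $T \not\supseteq {}_S^tR$ we get $T' \subsetneq {}_S^tR$. Because $T' \subsetneq T$ is a proper t-closed extension, Proposition~\ref{1.31}(4) supplies a minimal \emph{inert} extension $T' \subset T_1 \subseteq T$.

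Next I would read off the structure of $R \subset {}_S^tR$ from the hypotheses. Since $R$ is local and $R \subseteq {}_S^+R$ is subintegral, ${}_S^+R$ is local; branchedness of $R \subset S$ gives $|\mathrm{Max}(S)| \geq 2$, so $R \subset S$ is not an $i$-extension and Corollary~\ref{1.311} yields ${}_S^+R \subsetneq {}_S^tR$. The extension ${}_S^+R \subset {}_S^tR$ is seminormal infra-integral, and distributive by Proposition~\ref{1.4}; since its source is local and the extension is integral, locally minimal collapses to minimal, so Proposition~\ref{decomp} makes ${}_S^+R \subset {}_S^tR$ a single minimal decomposed step. The pinch hypothesis becomes $[R,{}_S^tR] = [R,{}_S^+R] \cup \{{}_S^tR\}$, which combined with $T' \subsetneq {}_S^tR$ forces $T' \in [R,{}_S^+R]$; hence $R \subseteq T'$ is subintegral and $T'$ is local, with a unique maximal ideal $\mathfrak m$.

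To close the contradiction I would exhibit a non-inert minimal extension $T' \subset T_2$ inside ${}_S^tR$. If $T' = {}_S^+R$, take $T_2 := {}_S^tR$ (minimal decomposed, by the previous paragraph). If $T' \subsetneq {}_S^+R$, pick any minimal step $T' \subset T_2 \subseteq {}_S^+R$; Proposition~\ref{1.31}(1) makes it ramified since $T' \subseteq {}_S^+R$ is subintegral. In either subcase $T_1 \neq T_2$, and the crucial ideals of $T' \subset T_1$ and $T' \subset T_2$ both equal $\mathfrak m$ because $T'$ is local, so Proposition~\ref{3.6}(2) makes $T' \subset T_1 T_2$ non-catenarian. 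But $T' \subset T_1 T_2$ is a subextension of the distributive extension $R \subset S$, hence itself distributive by Proposition~\ref{1.4}, and therefore catenarian by Proposition~\ref{1.0} --- a contradiction.

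The main obstacle is the collapse of ${}_S^+R \subset {}_S^tR$ to a single minimal decomposed step: without this collapse, $T'$ could lie strictly inside the seminormal infra-integral piece, and the minimal decomposed extension out of $T'$ could have a crucial ideal distinct from that of $T_1$, so Proposition~\ref{3.6}(2) would not immediately apply. Pinning $T'$ into the subintegral, and hence local-source, part of the canonical decomposition is exactly what lets the inert--versus--non-inert Ferrand--Olivier clash produce the failure of catenarity.
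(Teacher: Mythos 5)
Your proof is correct, but it takes a different route from the paper's. The paper handles the degenerate cases separately (${}_S^+R=R$ via Proposition \ref{6.8}, ${}_S^tR=S$ trivially) and, in the main case $R\neq{}_S^+R\neq{}_S^tR\neq S$, uses Proposition \ref{6.10} to see that $R\subset{}_S^+R$ is chained and ${}_S^+R\subset{}_S^tR$ is minimal, so the pinching hypothesis makes $R\subset{}_S^tR$ chained, after which Proposition \ref{6.121} finishes. You instead inline, in a single contradiction argument, the mechanism that powers Propositions \ref{6.121} and \ref{6.8}: assuming some $T$ incomparable with ${}_S^tR$, you place $T'=T\cap{}_S^tR={}_T^tR$ in the local subintegral part $[R,{}_S^+R]$ (using Proposition \ref{decomp}, with ${}_S^+R$ local, to collapse ${}_S^+R\subset{}_S^tR$ to one minimal decomposed step), then clash a minimal inert step inside the t-closed extension $T'\subset T$ (Proposition \ref{1.31}(4)) with a ramified or decomposed step inside $[T',{}_S^tR]$; since $T'$ is local the two crucial ideals coincide, so Proposition \ref{3.6}(2) makes $T'\subset T_1T_2$ non-catenarian, contradicting Propositions \ref{1.4} and \ref{1.0}. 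The individual steps all check out (t-closedness of $T'\subseteq T$, infra-integrality of $R\subseteq T'$, locality of $T'$, existence and distinctness of $T_1,T_2$), and the degenerate cases are absorbed automatically since your contradiction hypothesis is vacuous when ${}_S^tR=S$ and your subcase $T'={}_S^+R$ covers $R={}_S^+R$. What each approach buys: the paper's version is shorter because it recycles already-proved results, while yours is self-contained, avoids the three-way case split, and makes explicit that the pinching hypothesis is exactly what confines $T'$ to the local part where the Ferrand--Olivier type clash kills catenarity.
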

\begin{proof} Since $R\subset S$ is branched, then ${}_S^+R\neq{}_S^tR$. Assume first that $R\neq{}_S^+R\neq{}_S^tR\neq S$. Since $R\subset S$ is distributive, $R\subset{}_S^+R$ is chained and ${}_S^+R\subset{}_S^tR$ is minimal by Proposition \ref{6.10}. It follows that $R\subset{}_S^tR$ is chained since pinched at ${}_S^+R$. Then, Proposition \ref{6.121} shows that $R\subset S$ is pinched at ${}_S^tR$.

If $R={}_S^+R$, then $R\subset S$ is pinched at ${}_S^tR$ by Proposition \ref{6.8}
 because $R\subset S$ is seminormal.

If ${}_S^tR=S$, then $R\subset S$ is obviously pinched at $ {}_S^tR$.
\end{proof}

\begin{theorem} \label{6.15} Let $R\subset S$ be an integral branched FCP extension.   
 Then, $R\subset S$ is distributive if and only if $|\mathrm{Max}(S)|=|\mathrm {Max}({}_S^uR)|=2,\ R\subset{}_S^tR$ and ${}_S^tR\subseteq S$ are distributive and one of the following conditions is satisfied, when setting $\mathrm{Max}({}_S^uR)=:\{M,M'\}$:
  
   \begin{enumerate}
  \item $R\subset S$ is pinched at ${}_S^tR$.
\item $R\subset S$ is not pinched at ${}_S^tR,\ |\mathrm{Supp}_{{}_S^uR}({}_S^tR/{}_S^uR)|=1$ and 

\noindent $|\mathrm{Supp}_{{}_S^uR}(S/{}_S^uR)|=2$.
 
Let $\mathrm{Supp}_{{}_S^uR}({}_S^tR/{}_S^uR)=:\{M\}$ and let $V\in[{}_S^uR,S]$ be the splitter of $[{}_S^uR,S]$ at $\{M'\}$. Set $W:=V{}_ S^tR$. Then $V$ is the co-subintegral closure of ${}_S^uR\subset W$ and $[V, W]=\{V_i\}_{i=0}^n$ is a chain (resp. $[{}_S^uR,{}_S^tR]$ is a chain $\{U_i\}_{i=0}^m$). Moreover, $[R,S]=[R,{}_S^+R]\cup[{}_S^tR,S]\cup(\cup_{i=0}^{n-1}[V_i\cap{}_S^tR,V_i])$.
 
If these conditions hold, $m=n$ and $U_i=V_i\cap{}_S^tR$ for each $i\in\{0,\ldots,n\}$.
 \end{enumerate}
\end{theorem}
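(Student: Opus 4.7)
The plan is to split the argument along the dichotomy appearing in the conclusion: either $R\subset S$ is pinched at ${}_S^tR$, or it is not. The proof then assembles already-established tools (Propositions \ref{6.10}, \ref{6.121}, \ref{6.12} and Lemma \ref{6.122}) and closes with a distributivity check via Proposition \ref{6.1}.

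For the necessity, assume $R\subset S$ is distributive. First I would read off the basic common conclusions: $R\subseteq {}_S^tR$ and ${}_S^tR\subseteq S$ are distributive by Proposition \ref{1.4}; applying Proposition \ref{6.10} to the infra-integral branched FCP extension $R\subseteq {}_S^tR$ (legal provided ${}_S^+R\ne R$; otherwise $R\subset S$ is seminormal and Proposition \ref{6.8} handles it directly, forcing condition (1)) gives $|\mathrm{Max}({}_S^tR)|=2$, and then $|\mathrm{Max}(S)|=2$ follows from the fact that ${}_S^tR\subseteq S$ is t-closed so maximal ideals correspond; similarly $|\mathrm{Max}({}_S^uR)|=2$ since ${}_S^uR\subseteq {}_S^tR$ is subintegral. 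Now either $R\subset S$ is pinched at ${}_S^tR$ (condition (1)), or it is not, in which case Lemma \ref{6.122} forces that $R\subset{}_S^tR$ is not pinched at ${}_S^+R$. I would then apply Proposition \ref{6.12} to this case: its statements (1)--(5) directly supply $|\mathrm{Supp}_{{}_S^uR}({}_S^tR/{}_S^uR)|=1$, $|\mathrm{Supp}_{{}_S^uR}(S/{}_S^uR)|=2$, the existence and uniqueness of the splitter $V$, the chain $[V,W]=\{V_i\}_{i=0}^n$, the chain $[{}_S^uR,{}_S^tR]=\{U_i\}_{i=0}^n$ (with $m=n$ and $U_i=V_i\cap{}_S^tR$ coming from part (3)), and the partition of $[R,S]$. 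The only item of (2) not immediately present in Proposition \ref{6.12} is that $V$ is the co-subintegral closure of ${}_S^uR\subseteq W$; this I would verify using Proposition \ref{1.301}: since ${}_S^uR\subseteq V$ is t-closed and $V\subseteq W$ is subintegral (both read off from the localizations at $M$ and $M'$ in the proof of Proposition \ref{6.12}(2)), $V$ is the largest element of $[{}_S^uR,W]$ over which $W$ is subintegral, and the dual minimality defining the co-subintegral closure follows from uniqueness of the splitter.

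For the sufficiency, I would treat the two cases separately. In case (1), $[R,S]=[R,{}_S^tR]\cup[{}_S^tR,S]$ with both pieces distributive by hypothesis, so Corollary \ref{9.03} (pinched extensions) immediately gives distributivity of $R\subset S$. In case (2), I would verify the two conditions of Proposition \ref{6.1} for every pair $T,U\in[R,S]$ using the explicit partition $[R,S]=[R,{}_S^+R]\cup[{}_S^tR,S]\cup\bigl(\bigcup_{i=0}^{n-1}[V_i\cap{}_S^tR,V_i]\bigr)$. Within each block, distributivity of the ambient piece (either $R\subseteq{}_S^tR$, or ${}_S^tR\subseteq S$, or the isomorphic copy of $[{}_S^tR,W]$ via Proposition \ref{6.12}(5)) gives $|[T\cap U,TU]|=4$; for pairs lying in two different blocks, the partition's geometry (together with the order-isomorphism $\varphi_i:[V_i\cap{}_S^tR,V_i]\to[{}_S^tR,W]$) forces $[T\cap U,TU]$ to be realized inside one block again, so the required cardinality $4$ holds.

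The main obstacle is the cross-block bookkeeping in the sufficiency proof of case (2). Specifically, when $T$ and $U$ come from different members of the partition one must show that $T\cap U$ and $TU$ still land in a block where distributivity is already known, and that no diamond or pentagon can be manufactured by bridging two blocks through $V_i$ or $U_i$. I would carry this out by localizing at $M$ and $M'$ (where the extensions $R_M\subseteq S_M$ and $R_{M'}\subseteq S_{M'}$ are the unbranched pieces controlled by Theorem \ref{6.11}) and pulling back via the $\mathcal B$-extension bijection of Proposition \ref{1.14}; once both localizations give $|[T\cap U,TU]|=4$, the bijection lifts this to $[R,S]$ and closes the argument.
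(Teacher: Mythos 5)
Your necessity argument and your sufficiency argument in case (1) follow the paper's own route (Lemma \ref{6.122} combined with Proposition \ref{6.12} in the non-pinched case, Corollary \ref{9.03} in the pinched case), and those parts are sound. The gap lies in the sufficiency of case (2), which is the heart of the theorem. First, to get distributivity inside the blocks $[V_i\cap{}_S^tR,V_i]$ you invoke the order-isomorphisms $\varphi_i:[V_i\cap{}_S^tR,V_i]\to[{}_S^tR,W]$ of Proposition \ref{6.12}(5); but Proposition \ref{6.12} is stated for an extension already assumed distributive, and the proof of its item (5) uses that distributivity (to compute $(XT)\cap V_i$), so in the converse direction this is circular unless you re-derive the isomorphism, or the distributivity of these blocks, directly from the hypotheses of (2).

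Second, and more seriously, your mechanism for the cross-block pairs cannot work as written: $M$ and $M'$ are maximal ideals of ${}_S^uR$, not of $R$. Since $R\subset S$ is branched, $R$ is local, so $\mathrm{MSupp}_R(S/R)$ is a singleton; the local-global principle of Proposition \ref{1.014} and the $\mathcal B$-extension bijection of Proposition \ref{1.14} therefore give no splitting of $[R,S]$ at all, and ``$R_M\subseteq S_M$ and $R_{M'}\subseteq S_{M'}$'' are not two unbranched pieces of $R\subset S$ controlled by Theorem \ref{6.11}. Localizing at $\{M,M'\}$ is only meaningful for the subextension ${}_S^uR\subseteq S$, and that is exactly how the paper proceeds: it first proves that ${}_S^uR\subseteq S$ is distributive (at $M$ one has $({}_S^uR)_M=V_M$, $W_M=({}_S^tR)_M$ and $({}_S^uR)_M\subseteq S_M$ pinched at $W_M$, so Corollary \ref{9.03} applies; at $M'$ one has $({}_S^uR)_{M'}=({}_S^tR)_{M'}$ and distributivity is inherited from ${}_S^tR\subseteq S$), and then it verifies the two conditions of Proposition \ref{6.1} for a mixed pair $T\in[R,{}_S^+R]$, $U\in[{}_S^uR,S]$ by a positional argument showing that in the minimal configurations the second element is forced into $[{}_S^uR,{}_S^tR]\subseteq[R,{}_S^tR]$, so that the whole configuration lives in the distributive lattice $[R,{}_S^tR]$. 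That argument, not a localization of $R\subset S$, is what closes the cross-block case, and it is missing from your plan; the final assertion $m=n$ and $U_i=V_i\cap{}_S^tR$ also needs a separate verification in the converse direction, which the paper carries out by comparing localizations at $M$ and $M'$.
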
  

\begin{proof} 
Assume first that $R\subset S$ is distributive, then  we have either (1) or $R\subset S$ is not pinched at ${}_S^tR$. In this second case,  condition (2) is  fulfilled by Proposition \ref{6.12} and Lemma \ref{6.122}. 

Conversely, assume that $|\mathrm{Max}(S)|=|\mathrm{Max}({}_S^uR)|=2,\ R\subset{}_S^t R$ and ${}_S^tR\subseteq S$ are distributive and one of conditions (1) or (2) is satisfied. 

If (1) is satisfied, then $R\subset S$ is distributive by Corollary \ref{9.03}.

Assume now that  (2) is satisfied. 

Set $X:={}_S^uR$ and $Y:={}_S^tR$. Since $R\subseteq Y$ distributive by assumption, $ R\subseteq{}_S^+R$ is distributive. Moreover, $[R,S]=[R,{}_S^+R]\cup[ Y,S]\cup(\cup_{i=0}^{n-1}[V_i\cap Y,V_i])$ shows that any $T\in[R,S]\setminus[R,{}_ S^+R]$ is in $[ Y,S]\cup (\cup_{i=0}^{n-1}[V_i\cap Y,V_i])$. But $[V_i\cap Y,V_i]\subseteq[X,S]$. It follows that $[R, S]=[R,{}_S^+R]\cup[X,S]$. In particular, $[X,S]=[Y,S]\cup(\cup_{i =0}^{n-1}[V_i\cap Y,V_i])\ (*)$.
 
Since $R\subset Y$ is distributive, $|\mathrm{Max}(S)|=
|\mathrm{Max}(X)|=|\mathrm{Max}(Y)|=2$. Set $\mathrm{Max}(X)=:\{M,M'\}$ such that $\mathrm{Supp}_X(Y/X)=\{M\}$.

Let $V$ be the splitter of $[X,S]$ at $\{M'\}$. Then, $\mathrm{MSupp}_X(S/V)=\{M\}$ and $\mathrm{MSupp}_X(V/X)=\{M'\}$. It follows that $V_M=X_M$ and $W_M=Y_M$, so that $X_M\subseteq W_M$ is subintegral because so is $V\subseteq W$. This implies that $(V_i\cap Y)_M=(V_i)_M$ for each $i\in\{0,\ldots, n\}$. Moreover, $V_{M'}=S_{M'}$ gives that $W_{M'}=S_{M'}$. Then, $ X\subseteq V$ is t-closed, because so is $X_{M'}=Y_{M'}\subseteq S_{M'}=V_ {M'}$. We also get that $V_i\cap Y\subseteq V_i$ is t-closed for each $i\in\{0 ,\ldots,n\}$. Then, $[X_M,S_M]=[Y_M,S_M]\cup\{(V_i)_M\}_{i=0}^n=[W_M,S_M]\cup[V_M,W_M]$ by $(*)$ which shows that $X_M\subseteq S_M$ is distributive because so are $X_M=V_M\subseteq W_M=Y_M$ and $ W_M=Y_ M\subseteq S_M$, with $V_M\subseteq S_M$ pinched at $W_M$. As $V_{M'}=S_{M'}$, it follows that $X_{M'}=Y_{M'}$, so that we get $[X_{M'}, S_{M'}]=[Y_{M'},S_{M'}]$ which leads to $X_{M'}\subseteq S_{M'}$ also distributive since so is $Y\subseteq S$. To conclude, $X\subset S$ is distributive. Moreover, $[X,Y]=\{Y\cap V_i\}_{i=0}^n$ is a chain, with $X=Y\cap V$ because $[X_M,Y_M]=[V_M,W_M]=\{(V_i)_M\}_{i=0}^n=\{(Y\cap V_i)_M\}_{i=0}^n$ and $[X_{M'},Y_{M'}]=\{X_{M'}\}=\{Y_{M'}\cap V_{M'}\}=\{Y_{M'}\cap (V_i)_{M'}\}_{i=0}^n$.
  Conversely, if $[X,Y]$ is a chain, $[X_M,Y_M]=[V_M,W_M]$ and $V_{M'}=W_{M'}$ shows that $[V,W]$ is a chain. In particular, $[R,{}_S^+R]=\{R_j\}_{j=0}^m$ is a chain by Proposition \ref{6.7} since $R\subset Y$ is distributive and there exists some $k\in\{0,\ldots,m\}$ such that ${}_S^+R\cap V=R_k$. We have the following commutative diagram:
 
 \centerline{$\begin{matrix}
{} &  {}  &               V                 & \to &         W      & \to & S \\
{} &  {}  &          \uparrow          &  {} &  \uparrow  &  {} & {} \\
{} &  {}  &              X                  & \to &        Y       &  {} &  {} \\
{} &  {}  &         \uparrow           &  {} &   \uparrow &  {} &  {} \\
R & \to & R_k=X\cap {}_S^+R & \to &  {}_S^+R  &  {} &  {}
    \end{matrix}$}
    
We are going to prove that $R\subset S$ is distributive by considering the fact that $R\subseteq{}_S^+R$ and $X\subset S$ are distributive and checking the two conditions of  Proposition~\ref{6.1}: 

{\bf (A)} If $T\cap U\subset U$ and $T\cap U\subset T$ are minimal, then $|[T\cap U,TU]|=4$.

{\bf (B)} If $T\subset TU$ and $ U\subset TU$ are minimal, then $|[T\cap U,TU]|=4$.

So, let $T,U\in[R,S],\ T\neq U$. We make a discussion considering the different positions of $T$ and $U$ in $[R,S]$ in relation to $[R, {}_S^+R]$ and $[X, S]$.

If $T,U$ are both in either $[R,{}_S^+R]$ or $[X,S]$, then {\bf (A)} and {\bf (B)} hold by Proposition~\ref{6.1}. 

Assume now, for instance, that $T\in[R,{}_S^+R]$ and $U\in[X,S]$. Then, there exists some $j\in \{0,\ldots ,m\}$ such that $T=R_j$. 

{\bf (A)} If $T\cap U\subset U$ and $T\cap U\subset T$ are minimal, then $j>0$ and $T\cap U=R_{j-1}\in[R,{}_S^+R]$, so that the only possibility for $U$ is $U\in[X,Y]\subseteq[R,Y]$. Then, $T,U\in[R,Y]$ which is distributive, so that $|[T\cap U,TU]|=4$.

{\bf (B)} If $T\subset TU$ and $ U\subset TU$ are minimal, the only possibility for $TU$ is that $TU\in[X,Y]$ since $U\not\in[R,{}_S^+R]$. Then, $TU=V_i\cap Y$ for some $i\in\{0,\ldots,n\}$, so that $U\in[X,Y]\subseteq[R,Y]$. As for {\bf (A)}, $T,U\in[R,Y]$ which is distributive, so that $|[T\cap U,TU]|=4$.

The proof is similar if $U\in [R, {}_S^+R]$ and $T\in[X,S]$.

To conclude, {\bf (A)} and {\bf (B)} hold in any case, and then $R\subset S$ is distributive.

 Assume all that these conditions hold. Since $[X,Y]=\{U_i\}_{i=0}^ m$ and $[V,W]=\{V_i\}_{i=0}^n$, we get $[X_M,Y_M]=\{(U_i)_M\}_{i=0}^m=[V_ M,W_M]=\{(V_i)_M\}_{i=0}^n=\{(V_i\cap Y)_M\}_{i=0}^n$. But $X_{M'}=Y_{M'}$ and $V_{M'}=W_{M'}$ show that $m=n=|[X,Y]|=|[V,W]|$. Then, $U_i=V_i\cap Y$ for each $i\in\{0,\ldots,n\}$.
\end{proof}

We can sum up the main results of Theorem \ref{6.15} that do not appear in its statement but appeared        in its proof. 
\begin{corollary}\label{6.153} Let $R\subset S$ be an FCP integral distributive branched extension that is not pinched at ${}_S^tR$. The following statements hold:
\begin{enumerate}
\item $|\mathrm{Max}({}_S^tR)|=2$. Set $\mathrm{MSupp}_{{}_S^uR}({}_S^tR/{}_S^uR)=\{M\}$ and let $\mathrm{Max}({}_S^tR)$

\noindent$=\{N,N'\}$ with $N$ lying above $M$ in ${}_S^tR$. 

\item Assume that $|\mathrm{MSupp}_{{}_S^tR}(S/{}_S^tR)|=2$. Let $W'$ be the splitter of ${}_S^tR\subset S$ at $\{N'\}$ and let $V$ be the co-subintegral closure of $R\subset W'$. Then, ${}_S^uR=V\cap {}_S^tR$. 
\end{enumerate} 
\end{corollary}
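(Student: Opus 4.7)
The plan is to reduce the corollary to Theorem~\ref{6.15}(2), where a splitter (there written $V$) already plays a central role, and to identify the object $V$ defined in the corollary with it. To avoid a notation clash I will write $V_0$ for the splitter of $[{}_S^uR,S]$ at $\{M'\}$ supplied by Theorem~\ref{6.15}(2), and set $W_0:=V_0\cdot{}_S^tR$. Part~(1) is then short: since $R\subset S$ is a distributive branched FCP extension not pinched at ${}_S^tR$, condition~(2) of Theorem~\ref{6.15} applies and gives $|\mathrm{Max}({}_S^uR)|=2$; the subintegrality of ${}_S^uR\subseteq{}_S^tR$ (Remark~\ref{1.300}) makes $\mathrm{Spec}({}_S^tR)\to\mathrm{Spec}({}_S^uR)$ a bijection, so $|\mathrm{Max}({}_S^tR)|=2$ and the labeling $\mathrm{Max}({}_S^tR)=\{N,N'\}$ with $N$ above $M$ (hence $N'$ above $M'$) is purely a convention.

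For part~(2), my first step is to show $W_0=W'$. The splitter property of $V_0$ gives $\mathrm{MSupp}_{{}_S^uR}(V_0/{}_S^uR)=\{M'\}$ and $\mathrm{MSupp}_{{}_S^uR}(S/V_0)=\{M\}$; combined with the subintegrality of ${}_S^uR\subseteq{}_S^tR$ (which identifies localizations at $M$ and $N$, and at $M'$ and $N'$, on any ${}_S^tR$-module), localizing $W_0=V_0\cdot{}_S^tR$ yields $(W_0)_N=({}_S^tR)_N$ and $(W_0)_{N'}=S_{N'}$, whence $\mathrm{MSupp}_{{}_S^tR}(W_0/{}_S^tR)=\{N'\}$ and $\mathrm{MSupp}_{{}_S^tR}(S/W_0)=\{N\}$ (this is where the hypothesis $|\mathrm{MSupp}_{{}_S^tR}(S/{}_S^tR)|=2$ enters). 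Thus $W_0$ is a splitter of ${}_S^tR\subseteq S$ at $\{N'\}$, and uniqueness of splitters in the $\mathcal B$-extension ${}_S^tR\subseteq S$ (Proposition~\ref{1.14}) forces $W_0=W'$.

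Next I will show $V=V_0$. One direction is immediate: $V_0\in[R,W']$ and $V_0\subseteq W'$ is subintegral by Theorem~\ref{6.15}(2), so minimality of the co-subintegral closure gives $V\subseteq V_0$. For the reverse inclusion I claim every $T\in[R,W']$ with $T\subseteq W'$ subintegral satisfies $T\supseteq{}_S^uR$: a subintegral extension is an $i$-extension and therefore u-closed (Proposition~\ref{1.31} and Definition~\ref{1.3}), so $T\supseteq{}_{W'}^uR$; applying Proposition~\ref{1.301} to both $R\subseteq W'$ and $R\subseteq S$, and using that ${}_{W'}^tR={}_S^tR$ (since ${}_S^tR\subseteq W'$ is t-closed as a subextension of ${}_S^tR\subseteq S$), yields ${}_{W'}^uR=({}_{W'}^tR)^+_R=({}_S^tR)^+_R={}_S^uR$. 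Hence $T\in[{}_S^uR,W']$, and minimality of $V_0$ inside this interval forces $T\supseteq V_0$, whence $V\supseteq V_0$.

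With $V=V_0$, the conclusion follows at once from Theorem~\ref{6.15}(2): the chains $[V_0,W']=\{V_i\}_{i=0}^n$ and $[{}_S^uR,{}_S^tR]=\{U_i\}_{i=0}^n$ satisfy $U_i=V_i\cap{}_S^tR$ for all $i$, and setting $i=0$ yields $V\cap{}_S^tR=V_0\cap{}_S^tR=U_0={}_S^uR$. The main obstacle I expect is the u-closure identity ${}_{W'}^uR={}_S^uR$, which requires a careful iteration of Proposition~\ref{1.301} together with the identification ${}_{W'}^tR={}_S^tR$ via the observation that t-closedness descends to subextensions from the top.
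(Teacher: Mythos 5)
Your proposal is correct and follows essentially the same route as the paper: both reduce to Theorem \ref{6.15}(2), identify the ring $W=V_0\,{}_S^tR$ built from the splitter $V_0$ of $[{}_S^uR,S]$ at $\{M'\}$ with the splitter $W'$ of ${}_S^tR\subset S$ at $\{N'\}$ (the paper by matching localizations at $M,M'$, you by checking that $W$ itself has the splitter property at $\{N'\}$ and invoking uniqueness from Proposition \ref{1.14}), and then read off ${}_S^uR=V\cap{}_S^tR$ from the theorem. Your additional argument that the co-subintegral closure of $R\subset W'$ coincides with the theorem's splitter $V_0$ (via ${}_{W'}^tR={}_S^tR$, Proposition \ref{1.301}, and subintegral $\Rightarrow$ u-closed) makes explicit an identification that the paper's proof leaves implicit.
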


\begin{proof} (1) comes from Proposition \ref{6.10} since $R\subset {}_S^tR$ is distributive.

(2) Assume that $|\mathrm{MSupp}_{{}_S^tR}(S/{}_S^tR)|=2$, so that $|\mathrm{MSupp}_{{}_S^uR}(S/{}_S^uR)|=2$. Let $W'$ be the splitter of ${}_S^ tR\subset S$ at $\{N'\}$. Then, $\mathrm{MSupp}_{{}_S^tR}(S/W')=\{N\}$ and $\mathrm{MSupp}_{{}_S^tR}(W'/{}_S^tR)=\{N'\}$. Set $M':=N'\cap{}_S^uR$. It follows that $\mathrm{MSupp}_{{}_S^uR}(S/W')=\{M\}$ and $\mathrm{MSupp}_ {{}_S^uR}(W'/{}_S^tR)=\{M'\}$. Then, $W'_M=({}_S^tR)_M$ and $W'_{M'}=S_ {M'}$. Recall that in the proof of Theorem \ref{6.15}, we get that $W_M=({}_S^tR)_M$ and $W_{M'}=S_{M'}$, which leads to $W'=W$.

 Finally, ${}_S^uR=V\cap{}_S^tR$ was proved in Theorem \ref{6.15}.
    \end{proof}
   
\begin{remark} \label{6.154} Let $R\subset S$ be an FCP integral distributive branched extension such that $R\neq{}_S^+R\neq{}_S^ tR\neq S$ and such that ${}_S^tR\subset S$ is chained. Then, $|\mathrm{MSupp}_{{}_S^tR}(S/{}_S^tR)|=|\mathrm {MSupp}_{{}_S^uR}(S/{}_S^tR)|=1$ because of Proposition \ref{1.14}. We keep notation of Theorem \ref{6.15}, with $\mathrm{MSupp}_{{}_S^uR}(S/{}_S^uR):=\{M,M'\}$ and $\mathrm{MSupp}_{{}_S^uR}({}_S^tR/{}_S^uR):=\{M\}$. Let $N$ (resp. $N' $) be the maximal ideal of ${}_S^tR$ lying above $M$ (resp. $M'$). If $\mathrm{MSupp}_{{}_S^tR}(S/{}_S^tR)=\{N\}$, then, $\mathrm{MSupp}_{{}_S ^uR} S/{}_S^uR)=\{M\}$, and Theorem \ref{6.15} shows that $R\subset S$ is pinched at ${}_S^tR$. If $\mathrm{MSupp}_{{}_S^tR}(S/{}_S^tR)=\{N'\}$, then, $\mathrm{MSupp}_{{}_S^uR}(S/{}_S^tR)=\{M'\}$, and an application of Proposition \ref{1.14} shows that the splitter of $[{}_S^u R,S]$ at $\{M'\}$ is the co-subintegral closure of $R\subset S$. Moreover, $R\subset S$ is not pinched at ${}_S^tR$.
\end{remark}

 \begin{proposition} \label{6.152} Let $R\subset S$ be an FCP integral distributive branched extension such that ${}_S^uR\neq{}_S^tR$. Then $R\subset S$ is pinched at ${}_S^tR$ if and only if $|\mathrm{MSupp}_{{}_S^uR}(S/{}_S^uR)|=1$.
\end{proposition}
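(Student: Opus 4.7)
The plan is to reduce both implications to the dichotomy provided by Theorem \ref{6.15}: since $R\subset S$ is an integral distributive branched FCP extension, exactly one of the mutually exclusive conditions (1) and (2) of that theorem holds, and in any case $|\mathrm{Max}({}_S^uR)|=2$. I write $\mathrm{Max}({}_S^uR)=:\{M,M'\}$ and note that, because ${}_S^uR\subseteq S$ is integral, the support of $S/{}_S^uR$ lies in $\mathrm{Max}({}_S^uR)$.

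The converse is immediate: if $|\mathrm{MSupp}_{{}_S^uR}(S/{}_S^uR)|=1$, then condition (2) of Theorem \ref{6.15}, which demands cardinality $2$, fails, so condition (1) must hold and $R\subset S$ is pinched at ${}_S^tR$.

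For the direct implication, I would argue by contradiction. Since ${}_S^uR\subseteq{}_S^tR$ is subintegral (Remark \ref{1.300}) and ${}_S^uR\neq{}_S^tR$, some $M\in\mathrm{MSupp}_{{}_S^uR}({}_S^tR/{}_S^uR)\subseteq\mathrm{MSupp}_{{}_S^uR}(S/{}_S^uR)$ exists, and then $({}_S^tR)_{M'}=({}_S^uR)_{M'}$. I would assume toward a contradiction that $M'$ also lies in $\mathrm{MSupp}_{{}_S^uR}(S/{}_S^uR)$, so that $({}_S^uR)_{M'}\subsetneq S_{M'}$. Invoking Proposition \ref{1.14} applied to the integral FCP, hence $\mathcal{B}$-, extension ${}_S^uR\subset S$, I would then produce the splitter $V\in[{}_S^uR,S]$ of this extension at $\{M'\}$, characterized by $V_M=({}_S^uR)_M$ and $V_{M'}=S_{M'}$.

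The main step is then to compare $V$ with ${}_S^tR$ pointwise and derive a contradiction with the pinching assumption: at $M$ one has $V_M=({}_S^uR)_M\subsetneq({}_S^tR)_M$, so ${}_S^tR\not\subseteq V$; at $M'$ one has $V_{M'}=S_{M'}\supsetneq({}_S^uR)_{M'}=({}_S^tR)_{M'}$, so $V\not\subseteq{}_S^tR$. Thus $V$ and ${}_S^tR$ would be incomparable in $[R,S]$, contradicting the hypothesis that $R\subset S$ is pinched at ${}_S^tR$, and forcing $\mathrm{MSupp}_{{}_S^uR}(S/{}_S^uR)=\{M\}$. The delicate ingredient is precisely producing a subextension whose localizations mismatch ${}_S^tR$ at \emph{both} maximal ideals of ${}_S^uR$: its existence rests on the $\mathcal{B}$-extension structure provided by Proposition \ref{1.14}, and its incomparability with ${}_S^tR$ on the subintegrality of ${}_S^uR\subseteq{}_S^tR$ together with the defining localizations of the splitter.
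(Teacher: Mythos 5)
Your converse direction is fine and is in substance the paper's argument (the paper quotes Proposition \ref{6.12}(2), you quote condition (2) of Theorem \ref{6.15}, which is the same ingredient), and your strategy for the direct implication -- produce the splitter $V$ of ${}_S^uR\subset S$ at $\{M'\}$ and contradict pinchedness by showing $V$ is incomparable with ${}_S^tR$ -- is exactly the paper's. But there is a genuine gap at the pivot of that argument: the assertion ``and then $({}_S^tR)_{M'}=({}_S^uR)_{M'}$''. Subintegrality of ${}_S^uR\subseteq{}_S^tR$ together with ${}_S^uR\neq{}_S^tR$ only gives that $\mathrm{MSupp}_{{}_S^uR}({}_S^tR/{}_S^uR)$ is \emph{nonempty}; it does not rule out that this support is $\{M,M'\}$, in which case $({}_S^tR)_{M'}\neq({}_S^uR)_{M'}$ and your chain of strict inclusions at $M'$ collapses. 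The equality you need is exactly the statement $|\mathrm{MSupp}_{{}_S^uR}({}_S^tR/{}_S^uR)|=1$, which the paper obtains at the outset by applying Proposition \ref{6.10} to the distributive infra-integral branched extension $R\subset{}_S^tR$ (legitimate because $R\neq{}_S^+R$: otherwise $R\subset S$ would be seminormal and ${}_S^uR={}_S^tR$, contradicting your hypothesis). Your proof never invokes this, and the fact is not free.

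The gap matters for the logic, not just the bookkeeping: without $({}_S^tR)_{M'}\subsetneq S_{M'}$ the splitter $V$ need not be incomparable with ${}_S^tR$ -- if $({}_S^tR)_{M'}=S_{M'}$ then $V_{M'}=S_{M'}\subseteq({}_S^tR)_{M'}$ and $V_M=({}_S^uR)_M\subseteq({}_S^tR)_M$ give $V\subseteq{}_S^tR$, so pinchedness yields no contradiction. One can partially repair this without Proposition \ref{6.10} by playing the splitters at $\{M\}$ and $\{M'\}$ against each other (comparability of both with ${}_S^tR$ forces either ${}_S^tR={}_S^uR$, or $M\notin\mathrm{MSupp}$, or $M'\notin\mathrm{MSupp}$, or ${}_S^tR=S$), but the residual case ${}_S^tR=S$ -- where pinching at ${}_S^tR$ is vacuous -- cannot be handled by any incomparability trick and genuinely requires the structural input (the paper settles it separately, again via the final statement of Proposition \ref{6.10} with ${}_S^uR\neq S$). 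So as written the direct implication is incomplete; citing Proposition \ref{6.10} for the singleton support (and disposing of the degenerate cases $R={}_S^+R$ and ${}_S^tR=S$ as the paper does) is what is missing.
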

\begin{proof} Since $R\subset S$ is an FCP integral distributive branched extension over the local ring $R$,   
$|\mathrm{Max}(S)|=|\mathrm{Max}({}_S^tR)|=|\mathrm{Max}({}_S^uR)|=2$. Moreover, ${}_S^uR\neq{}_S^tR$, so that $|\mathrm{MSupp}_{{}_S^uR}({}_S^tR/{}_S^uR)|=1$ according to Proposition \ref{6.10} and $R\subset {}_S^tR$ is not chained.   

Assume first that $R\neq{}_S^+R$ and ${}_S^tR\neq S$. In particular, $R\subset{}_S^tR$ is not pinched at ${}_S^+R$ since $R\subset{}_S^tR$ is not chained. Set $\mathrm {MSupp} _{{}_S^uR}({}_S^tR/{}_S^uR)=:\{M\}$ and let $M'$ be the other maximal ideal of ${}_S^uR$. 
 
If $R\subset S$ is pinched at ${}_S^tR$,  then $|\mathrm{MSupp}_{{}_S^uR}(S/{}_S^uR)|\neq 2$. Otherwise, let $V$ be the splitter of ${}_S^uR\subset S$ at $\{M'\}$. Since $V_M=({}_S^uR)_M,\ V_{M'}=S_{M'}$ and $({}_S^uR)_{M'}=({}_S^tR)_{M'}$, it follows that ${}_S^uR\subset V$ is t-closed, so that $V\not\in[R,{}_S^tR]\cup[{}_S^tR,S]$, a contradiction. Then, $|\mathrm{MSupp}_{{}_S^uR}(S/{}_S^uR)|\leq 1$. As $S\neq{}_S^uR$, it follows that $|\mathrm{MSupp}_{{}_S^uR}(S/{}_S^uR)|= 1$. 

Conversely, assume that $|\mathrm{MSupp}_{{}_S^uR}(S/{}_S^uR)|=1$ and that $R\subset S$ is not pinched at ${}_S^tR$. Proposition \ref{6.12} (2) shows that $|\mathrm{MSupp}_{{}_S^uR}(S/{}_S^uR)|=2$, a contradiction. Then, $R\subset S$ is  pinched at ${}_S^tR$.
 
If $R={}_S^+R$, then $R\subset S$ is seminormal, so that ${}_S^uR={}_S^tR$, a contradiction.
  
If ${}_S^tR= S$, then $R\subset S$ is infra-integral, and Proposition \ref{6.10} says that $|\mathrm{MSupp}_{{}_S^uR}(S/{}_S^uR)|= 1$ with obviously $R\subset S$ is pinched at ${}_S^tR=S$. 
 \end{proof}
 
We illustrate Theorem \ref{6.15} by an example where we choose
  the simplest case where all the assumptions of the Theorem hold.

\begin{example} \label{6.151}  Let $k\subset K$ be a  minimal field extension and $k\subset k[x]$ be a minimal ramified extension, where $x^2=0$ (\cite[Lemme 1.2]{FO}). Set $S:=k[x]\times K$. We have the following commutative diagram where appear all the elements of $[k,S]$ and all the maps are minimal extensions.  The previous reference shows also that $k\subset k\times k$ is minimal decomposed.
$$\begin{matrix}
 V:=k\times K  & \to &    k[x]\times K=S            \\
   \uparrow     & {}  &             \uparrow             \\
 U:=k\times k & \to & k[x]\times k=W:={}_S^tk  \\
   \uparrow    & {}  &           \uparrow                 \\
        k           & \to &  k+(kx\times 0)={}_S^+k             
\end{matrix}$$
According to \cite[Proposition III.4]{DMPP}, $U\subset W$ and $V\subset S$ are minimal ramified, while $U\subset V$ and $W\subset S$ are minimal inert. Since $k\subset W$
 is infra-integral, it follows that $W={}_S^tk$. Then, \cite[Proposition 2.8]{Pic 9} shows that ${}_S^+k={}_W^+k=k+(kx\times 0)$ because $k\subseteq k$ and $k\subset k[x]$ are both subintegral. Indeed, $k x\times k$ and $k[x]\times 0$ are the maximal ideals of $k[x]\times k$, and their intersection is $kx\times 0$. Since $\ell[k,W]=2$, with $k\subset W$  infra-integral, we get that $k\subset k+(kx\times 0)$ is minimal ramified and $ k+(kx\times 0)\subset W$ is minimal decomposed. The assumptions of Theorem \ref{6.15} are satisfied  so that $k\subset S$ is a distributive extension. In particular, $V$ is the subintegral closure of $k\subset S$ and $U$ is the u-closure of $k\subset S$.
\end{example}

In the following Proposition, we consider an i-extension which is locally unbranched. As  the split property is used, we write it in this section.

\begin{proposition} \label{6.14} Let $R\subset S$ be an FCP integral $i$-extension such that ${}_S^tR\neq S,R,\ |\mathrm{MSupp}(S/R)|=2$ and $R\subset S$ is split at  ${}_S^tR$. The following statements hold:   
\begin{enumerate}
\item The co-subintegral closure $S_R^+$ exists and $R\subset S_R^+$ is t-closed.
\item If ${}_S^tR\subset S$ is distributive, so is $R\subset S_R^+$.
\end{enumerate}
\end{proposition}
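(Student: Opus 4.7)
The plan is to identify the co-subintegral closure $S_R^+$ explicitly as a splitter of $R\subset S$, and then to verify both statements by localizing at the two maximal ideals in $\mathrm{MSupp}(S/R)$.

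First I would exploit the $i$-extension hypothesis via Corollary \ref{1.311}, which gives ${}_S^+R={}_S^tR$; in particular $R\subseteq{}_S^tR$ is subintegral while ${}_S^tR\subseteq S$ is t-closed. Since $\mathrm{MSupp}(S/R)=\mathrm{MSupp}({}_S^tR/R)\cup\mathrm{MSupp}(S/{}_S^tR)$, the split hypothesis together with $|\mathrm{MSupp}(S/R)|=2$ and ${}_S^tR\neq R,S$ forces each piece to be a singleton, say $\mathrm{MSupp}({}_S^tR/R)=\{M_1\}$ and $\mathrm{MSupp}(S/{}_S^tR)=\{M_2\}$. As $R\subset S$ is an integral FCP extension, it is a $\mathcal B$-extension, so Proposition \ref{1.14} furnishes the splitter $V:=\sigma(\{M_2\})\in[R,S]$, determined locally by $V_{M_1}=R_{M_1}$ and $V_{M_2}=S_{M_2}$.

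The central claim is $V=S_R^+$. The key local identifications coming from the split are $({}_S^tR)_{M_1}=S_{M_1}$ and $({}_S^tR)_{M_2}=R_{M_2}$. Thus $V\subseteq S$ is locally subintegral at $M_1$ (where it coincides with the localization of the subintegral extension $R\subseteq{}_S^tR$) and trivial at $M_2$, hence subintegral globally. For minimality, given any $T\in[R,S]$ with $T\subseteq S$ subintegral, localizing at $M_2$ produces a subintegral subextension $T_{M_2}\subseteq S_{M_2}$ of the t-closed $R_{M_2}=({}_S^tR)_{M_2}\subset S_{M_2}$; by Proposition \ref{1.31}, any nontrivial minimal subextension would have to be simultaneously ramified and inert, impossible, so $T_{M_2}=S_{M_2}=V_{M_2}$, while at $M_1$ trivially $V_{M_1}=R_{M_1}\subseteq T_{M_1}$, giving $V\subseteq T$.

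Once $S_R^+=V$ is granted, (1) follows because $R\subseteq V$ is trivial at $M_1$ and coincides at $M_2$ with the t-closed $({}_S^tR)_{M_2}\subseteq S_{M_2}$, so is locally (hence globally) t-closed. For (2), if ${}_S^tR\subseteq S$ is distributive, then Proposition \ref{1.014} supplies distributivity of $({}_S^tR)_{M_2}\subseteq S_{M_2}$, which equals $R_{M_2}\subseteq V_{M_2}$, while $R_{M_1}\subseteq V_{M_1}$ is trivial; a final appeal to Proposition \ref{1.014} delivers distributivity of $R\subseteq S_R^+$. The main obstacle is the minimality step for $S_R^+=V$, which crucially uses the incompatibility of subintegral and nontrivial t-closed behavior at $M_2$; everything afterward is routine local bookkeeping.
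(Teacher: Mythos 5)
Your proof is correct, and part (2) is essentially the paper's own argument: localize at the two maximal ideals, note $(S_R^+)_{M_1}=R_{M_1}$ and $(S_R^+)_{M_2}=S_{M_2}$ while $({}_S^tR)_{M_1}=S_{M_1}$ and $({}_S^tR)_{M_2}=R_{M_2}$, and apply Proposition \ref{1.014} twice. Where you genuinely diverge is part (1): the paper simply invokes an external result (Proposition 6.25 of the reference on closures and co-closures) to get that $S_R^+$ exists and equals the lattice complement $({}_S^tR)^o$, and then reads off t-closedness from $S_R^+\cap{}_S^tR=R$; you instead build the candidate explicitly as the splitter $\sigma(\{M_2\})$ furnished by Proposition \ref{1.14} (legitimate, since an integral FCP extension is a $\mathcal B$-extension) and verify by hand that it is the least $T$ with $T\subseteq S$ subintegral, using Corollary \ref{1.311} to see that $R\subseteq{}_S^tR$ is subintegral and the ramified/inert dichotomy of Proposition \ref{1.31} to force $T_{M_2}=S_{M_2}$. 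Your route is more self-contained (it replaces the citation by an argument entirely internal to the tools recalled in this paper) at the cost of a longer verification, and it leans on the standard facts that subintegrality and t-closedness are local and that $V_M\subseteq T_M$ for all maximal ideals implies $V\subseteq T$ -- all routine in this setting and consistent with the paper's practice, so there is no gap; you also obtain t-closedness of $R\subset S_R^+$ by localization rather than from the complement identity, which is a harmless variation.
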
 
\begin{proof} Since $|\mathrm{MSupp}(S/R)|=2$ with $\mathrm{MSupp}_R({}_S^tR/R)\cap\mathrm{MSupp}_R(S/{}_S^tR)$
 
\noindent $=\emptyset$, set $\mathrm{MSupp}(S/R)=\{M,M'\}$ with $\mathrm {MSupp}({}_S^tR/R)=\{M\}$ and $\mathrm{MSupp}(S/{}_S^tR)=\{M'\}$ because ${}_S^tR\neq S,R$. 
 Moreover, $({}_S^tR)_M=S_M\ (*)$ and $({}_S^tR)_{M'}=R_{M'}\ (**)$.
 
(1) Since $R\subset S$ is split at ${}_S^tR$, the co-subintegral closure $S_R^+$ exists according to \cite[Proposition 6.25]{Pic 14} and $S_R^+=({}_S^tR)^o$, which means that 
 $(S_R^+)({}_S^tR)=S$ and $S_R^+\cap{}_S^tR=R$, so that $R\subset S_R^+$ is t-closed. 
  
(2) Using $(*)$ and $(**)$, and localizing at $M$ and $M'$, we get $(S_R^+)_M\cap ({}_S^tR)_M=R_M=(S_R^+)_M$, so that $R_M\subseteq (S_R^+)_M$ is distributive, and $(S_R^+)_{M'}({}_S^tR)_{M'}=S_{M'}=(S_R^+)_{M'}$. 
  
Assume that ${}_S^tR\subset S$ is distributive. Then so is $({}_S^tR)_{M'}\subset S_{M'}$ by Proposition \ref{1.014}, and so is $R_{M'}\subset(S_R^+)_ {M'}$. Using again Proposition \ref{1.014}, we get that $R\subset S_R^+$ is distributive. 
  \end{proof}
  
\section{Extensions whose   fibers have at most 2 elements}

Our interest in extensions described by the title of the section is justified by the following result.

\begin{proposition} \label{7.0} Let $R\subset S$ be an integral distributive FCP extension. Then, for each $M\in\mathrm{Spec}(R)$, the fiber at $M$ of $R\subset S$ has at most two elements.
  \end{proposition}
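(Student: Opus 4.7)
The plan is to reduce to the local base case via Proposition \ref{1.014}, and then to invoke the structural results from the preceding sections to bound $|\mathrm{Max}(S)|$.

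First I would fix $M\in\mathrm{Spec}(R)$ and translate the statement into a statement about $\mathrm{Max}(S_M)$. The fiber of $R\subset S$ at $M$ is in canonical bijection with $\mathrm{Max}(S_M)$: since $R\subset S$ is integral so is $R_M\subset S_M$, and by going-up the primes of $S_M$ lying above $MR_M$ are precisely the maximal ideals of $S_M$. If $M\notin\mathrm{MSupp}(S/R)$ then $R_M=S_M$ and the fiber is a singleton, so nothing needs to be shown. Otherwise, by Proposition \ref{1.014}, the extension $R_M\subset S_M$ is again integral, distributive and FCP, so the problem reduces to the following local assertion: if $(R,M)$ is local and $R\subset S$ is an integral distributive FCP extension, then $|\mathrm{Max}(S)|\leq 2$.

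For this local version I would split into the unbranched and branched cases. If $R\subset S$ is unbranched, then $\overline R$ is local; since $R\subset S$ is integral, $\overline R=S$, hence $S$ is local and $|\mathrm{Max}(S)|=1$. If $R\subset S$ is branched, the very first necessary condition for distributivity recorded in Theorem \ref{6.15} reads $|\mathrm{Max}(S)|=|\mathrm{Max}({}_S^uR)|=2$; so $|\mathrm{Max}(S)|=2$ in this case. Combining both cases yields $|\mathrm{Max}(S_M)|\leq 2$, whence the fiber at $M$ has at most two elements.

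There is no genuine obstacle here: the bound $|\mathrm{Max}(S)|\leq 2$ is already built into the characterization of branched integral distributive FCP extensions proved earlier, and the remaining bookkeeping consists only of the fiber--to--$\mathrm{Max}(S_M)$ dictionary together with the localization principle of Proposition \ref{1.014}; the minor point to keep in mind is merely that integrality, distributivity and FCP all pass to the localization $R_M\subset S_M$.
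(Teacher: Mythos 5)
Your proof is correct and follows essentially the same route as the paper: reduce to the local case via Proposition \ref{1.014}, identify the fiber at $M$ with $\mathrm{Max}(S_M)$, and then conclude $|\mathrm{Max}(S_M)|=1$ in the unbranched case and $|\mathrm{Max}(S_M)|=2$ in the branched case via Theorem \ref{6.15}. No gaps.
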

\begin{proof} Let $M\in\mathrm{Spec}(R)$. If $M\not\in\mathrm{Supp}(S/R)$, then $R_M=S_M$ and the fiber at $M$ of $R\subset S$ has exactly one  element. Since $R\subset S$ is  an integral  FCP extension, we have $\mathrm {Supp}(S/R)=\mathrm{MSupp}(S/R)$. Now, let $M\in\mathrm{MSupp}(S/R)$. Then, the cardinality of the fiber at $M$ of $R\subset S$ is $|\mathrm{Max}(S_ M)|$. If $R_M\subset S_M$ is unbranched, then $|\mathrm{Max}(S_M)|=1$. If $R_M\subset S_M$ is branched, then $|\mathrm{Max}(S_M)|=2$ by Theorem \ref{6.15}.
\end{proof} 
  
We sum up results on fibers gotten in the previous sections for an integral  extension. First, we give a kind of converse of Proposition \ref{7.0} that is satisfied in the case of   integral distributive FCP extension.
 
\begin{proposition} \label{7.01} Let $R\subset S$ be an integral extension such that for each $P\in\mathrm{Spec}(R)$, the fiber at $P$ of $R\subset S$ has at most two elements. Then, there do not exist $T,U,V\in[R,S]$ such that $T\subset U$ and $U\subset V$ are minimal decomposed, with $|\mathrm {MSupp}_T(V/T)|=1$. 
  \end{proposition}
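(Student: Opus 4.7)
The plan is to argue by contradiction: assume such $T \subset U \subset V$ exist and produce a prime of $R$ whose fiber in $S$ has at least three elements. First I would pin down the crucial ideals. By Lemma~\ref{1.9} applied to the chain $T \subset U \subset V$, we have $\mathrm{MSupp}_T(V/T) = \{\mathcal{C}(T,U),\, \mathcal{C}(U,V) \cap T\}$. The hypothesis $|\mathrm{MSupp}_T(V/T)|=1$ forces these two ideals to coincide, so setting $M := \mathcal{C}(T,U) \in \mathrm{Max}(T)$ and $M' := \mathcal{C}(U,V) \in \mathrm{Max}(U)$, we obtain $M' \cap T = M$.

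Next I would unpack the decomposed structure via Theorem~\ref{minimal}(b). Applied to $T \subset U$, it yields distinct $M_1, M_2 \in \mathrm{Max}(U)$ with $M_1 \cap M_2 = M$ and $T/M \cong U/M_i$; these are the only maximal ideals of $U$ lying over $M$, so $M' \in \{M_1, M_2\}$ and we may assume $M' = M_1$. Applied to $U \subset V$, it yields distinct $N_1, N_2 \in \mathrm{Max}(V)$ with $N_1 \cap N_2 = M_1$, both contracting to $M_1$ in $U$ and hence to $M$ in $T$. Then, since $U \subseteq V$ is integral, lying over produces $N_3 \in \mathrm{Max}(V)$ with $N_3 \cap U = M_2$. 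Because $M_1 \neq M_2$, we have $N_3 \notin \{N_1, N_2\}$, so $N_1, N_2, N_3$ are three distinct primes of $V$ all contracting to $P := M \cap R$ in $R$.

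Finally, another application of lying over to the integral extension $V \subseteq S$ furnishes distinct $Q_1, Q_2, Q_3 \in \mathrm{Spec}(S)$ with $Q_i \cap V = N_i$; these all lie over $P$, contradicting the hypothesis that the fiber at $P$ has at most two elements. I do not expect any serious obstacle: the argument is essentially bookkeeping with Theorem~\ref{minimal}(b) and lying over, and the only subtle point is verifying that the three primes $N_1, N_2, N_3$ of $V$ remain distinct after lifting to $S$, which is immediate since they already have distinct contractions to $V$.
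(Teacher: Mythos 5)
Your proof is correct and follows essentially the same route as the paper's: identify via Lemma \ref{1.9} that the two crucial (conductor) ideals contract to the same maximal ideal of $T$, use the decomposed structure from Theorem \ref{minimal}(b) twice to produce three distinct maximal ideals of $V$ over $P$, and lift them to $S$ by lying over to contradict the fiber bound. The only cosmetic difference is that the paper obtains the third prime over $M_2$ by noting $U_{M_2}=V_{M_2}$ rather than quoting lying over directly, which changes nothing of substance.
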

\begin{proof} Let $M\in\mathrm{MSupp}_T(V/T)$ and set $P:=M\cap R$. 

Since $|\mathrm{MSupp}_T(V/T)|=1$, we get that $M=(T:U)$ is the only element of $\mathrm{MSupp}_T(V/T)$ and there exist $M_1,M_2\in \mathrm {Max}(U)$ such that $M=M_1\cap M_2$ with $M_1\neq M_2$. Moreover, $M= T\cap(U:V)$ according to Lemma \ref {1.9}. Since $U\subset V$ is minimal decomposed, we deduce that $(U:V)=M_i$ for some $i\in\{1,2\}$ so that we may assume that $(U:V)=M_1$. In the same way, there exist $M_{11},M_{12} \in\mathrm{Max}(V)$ such that $M_1=M_{11}\cap M_{12}$ with $M_{11}\neq M _{12}$. As $(U:V)=M_1\neq M_2$, we get that $U_{M_2}=V_{M_2}$ and there exists $M_{22}\in\mathrm{Max}(V)$ lying above $M_2$. Of course, $M_{22} \neq M_{11},M_{12}$, and these three distinct maximal ideals of $V$ are lying above $M$, and then also above $P$. They are all in the fiber of $P$ in $V$, and there exists at least three maximal ideals of $S$ in the fiber of $P$, a contradiction with the assumption. 
\end{proof}    

\begin{corollary} \label{7.02} Let $R\subset S$ be an integral FCP extension such that for each $P\in\mathrm{Spec}(R)$, the fiber at $P$ of $R\subset S$ has at most two elements. Set $U:={}_S^uR$ and $V:={}_U^+R$. Then $V\subseteq U$ is locally minimal. For the same reason, ${}_S^+R \subseteq {}_S^tR$  is locally minimal.
  \end{corollary}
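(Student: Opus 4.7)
The plan is to reduce both claims to contradictions with Proposition~\ref{7.01} after localization. Begin with $V\subseteq U$. By the definition of $V={}_U^+R$, this subextension is seminormal. I would then check it is also infra-integral: Proposition~\ref{1.301} gives $U={}_S^uR=({}_S^tR)_R^+$, hence $U\subseteq{}_S^tR$; since $R\subseteq{}_S^tR$ is infra-integral by definition of the $t$-closure, so is $R\subseteq U$, and infra-integrality passes to its subextension $V\subseteq U$. By Proposition~\ref{1.31}(2), every minimal subextension of $V\subseteq U$ is then minimal decomposed.

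Now argue by contradiction. Suppose $V\subseteq U$ is not locally minimal, so that $\ell[V_M,U_M]\geq 2$ for some $M\in\mathrm{MSupp}(U/V)$. Seminormality, infra-integrality, and the ``fiber of cardinality at most two'' hypothesis all descend from $R\subset S$ to $R_M\subset S_M$. Picking a maximal chain in $[V_M,U_M]$ of length at least two produces $T_1,T_2$ with $V_M\subset T_1\subset T_2\subseteq U_M$ and both consecutive steps minimal decomposed (Proposition~\ref{1.31}(2) applied locally). To invoke Proposition~\ref{7.01} against the triple $(V_M,T_1,T_2)$ inside $R_M\subset S_M$, I still need $|\mathrm{MSupp}_{V_M}(T_2/V_M)|=1$. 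This is the decisive observation: $R\subseteq V$ is subintegral (the other characterisation of $V={}_U^+R$), hence so is $R_M\subseteq V_M$, and therefore $\mathrm{Spec}(V_M)\to\mathrm{Spec}(R_M)$ is bijective; since $R_M$ is local, so is $V_M$. Thus $\mathrm{MSupp}_{V_M}(T_2/V_M)$ is a nonempty subset of the singleton $\mathrm{Max}(V_M)$, and Proposition~\ref{7.01} delivers the contradiction.

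The parallel statement for ${}_S^+R\subseteq{}_S^tR$ follows by the same scheme: this extension is seminormal (as a subextension of the seminormal ${}_S^+R\subseteq S$), infra-integral (lying inside the infra-integral $R\subseteq{}_S^tR$), and after localization $({}_S^+R)_M$ is local because $R\subseteq{}_S^+R$ is subintegral. I anticipate no real obstacle beyond correctly identifying the relevant seminormal infra-integral subextension and exploiting subintegrality of $R\subseteq V$ (respectively $R\subseteq{}_S^+R$) to guarantee locality of the base after localization; once these two ingredients are in place, Proposition~\ref{7.01} is applied as a black box.
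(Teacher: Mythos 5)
Your proposal is correct and follows essentially the same route as the paper: identify $V\subseteq U$ (resp. ${}_S^+R\subseteq{}_S^tR$) as seminormal and infra-integral so that maximal chains consist of minimal decomposed steps (Proposition \ref{1.31}), use subintegrality of $R\subseteq V$ (resp. $R\subseteq{}_S^+R$) to make the localized base local, and then exclude a chain of two consecutive decomposed steps by Proposition \ref{7.01}. The only cosmetic difference is that the paper bounds the fibers of $V\subseteq U$ itself and applies Proposition \ref{7.01} to the localized extension $V_Q\subseteq U_Q$, whereas you apply it to the ambient $R_M\subset S_M$ with the triple $(V_M,T_1,T_2)$; both applications are legitimate.
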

\begin{proof} Set $T:={}_S^tR$. According to Definition \ref{1.3} and Proposition \ref{1.301}, $U\subseteq S$ is an i-extension and $U\subseteq T$. Moreover, $V\subseteq U$ is seminormal infra-integral, so that any maximal chain is composed of minimal decomposed extensions. But for each $P\in\mathrm{Spec}(R)$, the fiber at $P$ of $R\subseteq U$ has at most two elements and $R\subseteq U$ is infra-integral. Let $P\in\mathrm{Spec}(R)$. The fiber at $P$ of $R\subseteq V$ has exactly one element $Q\in\mathrm {Spec}(V)$ and the fiber at $Q$ of $V\subseteq U$ has at most two elements since $U\subseteq S$ is an i-extension. Using Proposition \ref{7.01} and localizing $V\subseteq U$ at $P$, we get that $V_Q\subseteq U_Q$  is either an equality or a minimal decomposed extension. To conclude, $V\subseteq U$ is locally minimal. In fact, there cannot be some $V_1\neq V_2\in[V_Q,U_Q]$ such that $V_Q\subset V_i$ are both minimal decomposed for $i=1,2$, by \cite[Proposition 7.6]{DPPS}, because in this case, there would be some $U_i\neq U_Q$ in $[V_Q,U_Q]$ such that $V_i\subset U_i$ would be minimal decomposed, a contradiction with Proposition \ref{7.01}. 

The last result is obvious.
\end{proof}   

Let $I$ be an ideal of a ring $R$. We say that $I$ is {\it locally maximal} if, for each $P\in\mathrm{Spec}(R)$, either $I_P=R_P$ or $I_P=PR_P$, that is $I_P$ is the maximal ideal of $R_P$. 

\begin{proposition} \label{7.06} Let $R\subset S$ be a seminormal infra-integral extension such that $(R:S)$ is locally maximal and, for each $P\in\mathrm{Spec}(R)$, the fiber at $P$ of $R\subset S$ has at most two elements.  Then $R\subset S$ is a distributive 
 and locally minimal decomposed extension.

Moreover, $R\subset S$ has FCP (and then FIP) if and only if $\mathrm {MSupp}(S/R)$ has finitely many elements.
  \end{proposition}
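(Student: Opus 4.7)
The plan is to reduce to the local case by Proposition~\ref{1.014} and prove that at each $M\in\mathrm{MSupp}(S/R)$ the localization $R_M\subset S_M$ is minimal decomposed; Corollary~\ref{5.04} then delivers distributivity. After localization, the hypothesis that $(R:S)$ is locally maximal becomes $(R_M:S_M)=MR_M$, and integrality forces the fiber at $M$ to equal $|\mathrm{Max}(S_M)|\in\{1,2\}$, with residue fields at every maximal ideal of $S_M$ equal to $\kappa_R(M)$ by infra-integrality.

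The key step is to use seminormality to rule out both the possibility of fiber one, and the possibility that the intersection of the maximal ideals of $S_M$ strictly exceeds $MR_M$. Setting $\overline{S_M}:=S_M/MR_M$, I would show that any $\bar a$ in the Jacobson radical $J$ of $\overline{S_M}$ is nilpotent: $\bar a$ is integral over $\kappa_R(M)$ and maps to $0$ in every residue $S_M/N=\kappa_R(M)$, so its minimal polynomial must be a power $x^n$. Pick a nonzero $\bar a$ with $n\geq 2$ minimal and a lift $a\in S_M$; then $(a^{n-1})^2,(a^{n-1})^3\in MR_M\subseteq R_M$, so seminormality of $R_M\subseteq S_M$ (inherited from $R\subseteq S$ under localization) yields $a^{n-1}\in R_M$. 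But $a^{n-1}$ lies in every maximal ideal of $S_M$, each of which meets $R_M$ in $MR_M$ by lying over, so $a^{n-1}\in MR_M$, giving $\bar a^{n-1}=0$ and contradicting minimality of $n$. Hence $J=0$, so $\overline{S_M}$ is reduced with all residues $\kappa_R(M)$. Since $R_M\neq S_M$, necessarily $\overline{S_M}\cong\kappa_R(M)\times\kappa_R(M)$ with $MR_M=M_1\cap M_2$, and Theorem~\ref{minimal}(b) identifies $R_M\subset S_M$ as minimal decomposed.

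For the second part, the forward direction is the remark following Lemma~\ref{1.9}. For the converse, suppose $\mathrm{MSupp}(S/R)=\{M_1,\ldots,M_n\}$ is finite and set $I:=(R:S)$. The first part shows that $R/I$ is a zero-dimensional reduced semilocal ring isomorphic to $\prod_{j=1}^n\kappa_R(M_j)$, and similarly $S/I\cong\prod_{j=1}^n(\kappa_R(M_j)\times\kappa_R(M_j))$. Since $I$ is a common ideal of $R$ and $S$, the standard order-isomorphism $[R,S]\cong[R/I,S/I]$ combined with the evident finiteness of the lattice of subalgebras of a finite product of fields yields FIP, hence FCP. The main obstacle lies in the seminormality argument of the second paragraph, where one must first show that the minimal polynomial over $\kappa_R(M)$ of any element of $J$ is a power of $x$, and then iteratively eliminate such nilpotents via seminormality; the remaining steps reduce to routine localization and lattice bookkeeping.
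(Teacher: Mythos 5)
Your first part is essentially the paper's argument: localize at $M\in\mathrm{MSupp}(S/R)$, show the conductor $MR_M$ is an intersection of two maximal ideals of $S_M$ with residue maps isomorphisms (infra-integrality), invoke Theorem~\ref{minimal}(b) to get a minimal decomposed extension, and conclude distributivity from Corollary~\ref{5.04} and Proposition~\ref{1.014}. The only real difference is that where the paper quotes \cite[Lemma 4.8]{DPP2} (the conductor of a seminormal extension is a radical ideal of $S$), you reprove this by hand: your nilpotent-elimination argument in $S_M/MR_M$ (if $\bar a^n=0$, $n\ge 2$, then $b:=a^{n-1}$ satisfies $b^2,b^3\in MR_M\subseteq R_M$, so seminormality puts $b$ in $R_M$ and hence in $MR_M$) is correct, since $MR_M$ is an ideal of $S_M$ and every maximal ideal of $S_M$ contracts to $MR_M$. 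Your passage from ``$(R:S)$ locally maximal'' to $(R_M:S_M)=MR_M$ glosses the distinction between $(R:S)_M$ and $(R_M:S_M)$ (the paper handles it via \cite[Lemma 4.7]{Pic 0}), but the conclusion is right for $M\in\mathrm{MSupp}(S/R)$ because $(R:S)_M=MR_M\subseteq(R_M:S_M)\subsetneq R_M$.

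The converse of the FCP statement is where you genuinely depart from the paper (which simply cites \cite[Proposition 3.7]{DPP2} since the extension is locally minimal with finite $\mathrm{MSupp}$), and this is where your write-up has a gap. You assert that ``the first part shows'' $R/I\cong\prod_{j=1}^n\kappa_R(M_j)$ with $I:=(R:S)$ and the product taken over $\mathrm{MSupp}(S/R)$. The first part only gives the local structure at the $M_j$; to identify $R/I$ you need $V(I)=\mathrm{MSupp}(S/R)$, i.e.\ $I=M_1\cap\cdots\cap M_n$, and this is not automatic: $S$ is not assumed module-finite over $R$, so $\mathrm{Supp}(S/R)$ could a priori be strictly smaller than $V(\mathrm{Ann}_R(S/R))$, and the locally maximal hypothesis by itself allows maximal ideals $M\supseteq I$ with $R_M=S_M$. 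The statement is nevertheless true and can be repaired: since $\mathrm{Supp}_R(S/R)=\{M_1,\ldots,M_n\}$ consists of maximal ideals and each $(S/R)_{M_j}\cong\kappa_R(M_j)$ has finite length, $S/R\cong\bigoplus_j(S/R)_{M_j}$, whence $I=\mathrm{Ann}_R(S/R)=\bigcap_j M_j$; then CRT gives $R/I\cong\prod_j\kappa_R(M_j)$, $S/I\cong\prod_j\kappa_R(M_j)^2$, and the idempotents of $R/I$ (or Proposition~\ref{5.13}) split $[R/I,S/I]$ into a product of $n$ two-element lattices, so $|[R,S]|=|[R/I,S/I]|=2^n$ and FIP, hence FCP, follows. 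With that step supplied, your converse is a valid, more explicit alternative to the paper's citation; without it, the claimed description of $R/I$ is unjustified.
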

\begin{proof} Assume first that $(R,M)$ is a local ring, so that $(R:S)=M$. Since $R\subset S$ is seminormal, then $M$ is a radical ideal of $S$ \cite[Lemma 4.8]{DPP2}, and necessarily an intersection of two maximal ideals $M_1,M_2$ of $S$. Otherwise, $M\in\mathrm{Max}(S)$ leads to a contradiction, because in this case, $R/M=S/M$ since $R\subset S$ is infra-integral. Then, using again infra-integrality, we get $R/M\cong S/M_i$ for $i=1,2$, which gives $S/M\cong S/(M_1\cap M_2)\cong S/M_1\times S/M_2\cong(R/M) ^2$. Then, $R\subset S$ is a minimal decomposed extension according to Theorem \ref{minimal}, and then distributive by Corollary \ref{5.04}. 

Now, we show that if $R\subset S$ is a seminormal infra-integral extension such that $(R:S)$ is locally maximal and, for each $P\in\mathrm{Spec}(R)$, the fiber at $P$ of $R\subset S$ has at most two elements, these properties are also satisfied for any extension 
 $R_M\subseteq S_M$, where $M\in\mathrm{Max} (R)$. 

So, let $M\in\mathrm{Max}(R)$ and set $I:=(R_M:S_M)$. First, $R_M\subseteq S_M$ is a seminormal infra-integral extension by \cite[Corollary 2.10]{S} and \cite[Proposition 1.16]{Pic 1} with $(R:S)_M=MR_M$ if $R_M\neq S_M$, which is equivalent to $(R:S)\subseteq M$. If $R_M=S_M$, then $R_M\subseteq S_ M$ is trivially distributive and $M\not\in\mathrm{MSupp}(S/R)$. Assume now that $R_M\neq S_M$, so that $M\in\mathrm{MSupp}(S/R)$. Hence, the fiber at $MR_M$ of $R_M\subset S_M$ has at most two elements. But, $(R:S)_M\subseteq(R_M:S_M)=I$ shows, using \cite[Lemma 4.7]{Pic 0}, that either $(R: S)_M=I\in\mathrm{Max}(R_M)$ or $(R_ M:S_M)=I=R_M$. In this last case, $R_M=S_M$, a contradiction. In the first case, it remains to show that for each $P\in\mathrm{Spec}(R_ M)$, the fiber at $P$ of $R_M\subset S_M$ has at most two elements. Let $P\in\mathrm {Spec}(R_M)$. There exists $Q\in\mathrm{Spec}(R),\ Q\subseteq M$ such that $P=QR_M$. Using \cite[Lemme 1, page 35]{Bki A1}, there is a bijection between the prime ideals of $S$ lying over $Q$ and the prime ideals of $S_M$ lying over $P$. According to the assumption, for each $ P\in\mathrm{Spec}(R_M)$, the fiber at $P$ of $R_M\subset S_M$ has at most two elements. Then, the first part of the proof shows that $R_M\subset S_M$ is a distributive minimal extension, so that $R\subset S$ is a distributive locally minimal extension.

Assume that $R\subset S$ has FCP (and then FIP because distributive). Then, $\mathrm {MSupp}(S/R)$ has finitely many elements by Lemma \ref{1.9}. Conversely, assume that $\mathrm{MSupp}(S/R)$ has finitely many elements. Then, $R\subset S$ has FCP according \cite[Proposition 3.7]{DPP2} since each $R_M\subset S_M$ is minimal for each $\mathrm {MSupp}(S/R)$. 
\end{proof}    

 \begin{definition} \label{7.03} \cite[Definitions 1.1 and 1.2 and Proposition 1.6]{Pic 0} A ring extension $R\subset S$ is {\it u-elementary} if there exists some $s\in S$ such that $S=R[s]$ with $s^2-s,s^3-s^2\in R$ and $ R\subset S$ is {\it u-integral} if there exists a directed set $\{S_i\}\subseteq[R, S]$ such that $S=\cup S_i$ and $R\subset S_i$ is the composite of finitely many u-elementary extensions for each $i$. Moreover, the u-closure ${}_S^uR$ of $R\subset S$ is the greatest element $T$ of $[R,S]$ such that $R\subseteq T$ is u-integral. 
\end{definition}

\begin{proposition} \label{7.05} Let $R\subset S$ be a {\it u-elementary} extension. For each $P\in\mathrm{Spec}(R)$, the fiber at $P$ of $R\subset S$ has at most two elements.
\end{proposition}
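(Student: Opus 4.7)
The plan is to reduce the statement to a routine dimension count on the fiber algebra $S\otimes_R \kappa(P)$. Write $S=R[s]$ with $a:=s^{2}-s\in R$; the relation $s^{2}=s+a$ forces every power $s^{n}$ to lie in $R+Rs$, so $S$ is generated as an $R$-module by $\{1,s\}$. Consequently the base change $S\otimes_R\kappa(P)$ is spanned over $\kappa(P)$ by $1$ and the image $\bar s$ of $s$, so it is a commutative $\kappa(P)$-algebra of vector space dimension at most $2$.

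Next I would identify the fiber of $R\subset S$ at $P$ with $\mathrm{Spec}(S\otimes_R\kappa(P))$, and note that $\bar s$ satisfies $\bar s^{2}-\bar s-\bar a=0$ in $\kappa(P)[X]$, so the minimal polynomial of $\bar s$ over $\kappa(P)$ divides the quadratic $X^{2}-X-\bar a$. Hence $S\otimes_R\kappa(P)\cong \kappa(P)[X]/(g(X))$ for some $g$ of degree $\leq 2$. A standard factorization/CRT argument then shows that such an algebra has at most two prime ideals: either $g$ is irreducible (one maximal ideal, a field extension), $g$ splits into two distinct linear factors (two maximal ideals, a product of two fields), or $g$ is a square of a linear factor (one prime, a local Artinian ring). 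In each case the fiber has at most two elements.

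Finally, I would note that the hypothesis $s^{3}-s^{2}\in R$ is not actually needed for this cardinality bound (it only yields the auxiliary relation $as\in R$, useful elsewhere in the paper), so the argument really only exploits $s^{2}-s\in R$. There is no serious obstacle: the proof is just the observation that a u-elementary extension is monogenic and quadratic, plus the elementary fact that a $2$-dimensional commutative algebra over a field has at most two primes. I would therefore write the proof as a short paragraph rather than a multi-step argument.
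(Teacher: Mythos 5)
Your proof is correct, but it takes a genuinely different route from the paper. The paper's proof is essentially a citation: it invokes two results on u-elementary (anodal) extensions from the first author's paper on anodality, namely that $S/I\cong (R/I)^2$ for the conductor $I:=(R:S)$, and the remark that the fiber over $P$ has exactly one element when $I\not\subseteq P$ and exactly two when $I\subseteq P$. You instead argue directly on the fiber algebra: from $s^2-s=a\in R$ you get $S=R+Rs$, hence $S\otimes_R\kappa(P)\cong\kappa(P)[X]/(g)$ with $\deg g\leq 2$, which has at most two primes. This is sound (the kernel of $\kappa(P)[X]\to\kappa(P)[\bar s]$ is principal and contains $X^2-X-\bar a$), and your observation that $s^3-s^2\in R$, i.e. $as\in R$, is not needed is accurate: your argument proves the bound for any monogenic extension generated by a quadratic element. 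What your route buys is a short, self-contained and more general proof of the stated upper bound; what the paper's route buys is the sharper dichotomy (exactly one versus exactly two primes according to whether $(R:S)\subseteq P$), which is reused in the discussion following the proposition (Remark \ref{7.04}). One cosmetic point: your case analysis of $g$ only lists the degree-two possibilities; you should note that $\deg g\leq 1$ (or the zero fiber) gives at most one prime, which is immediate.
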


\begin{proof}  Let $P\in\mathrm{Spec}(R)$ and $I:=(R:S)$. Then $S/I\cong (R/I)^2$ by \cite[Theorem 2.20]{Pic 0} since $R\subset S$ is a u-elementary extension. According to \cite[Remarks 2.34 (2)]{Pic 0}, we have the following: If $I\not\subseteq P$, then there is only one element of $\mathrm{Spec}(S)$ lying over $P$, and if $I\subseteq P$, there are exactly two elements of $\mathrm{Spec}(S)$ lying over $P$. Then, for each $P\in\mathrm{Spec}(R)$, the fiber at $P$ of $R\subset S$ has at most two elements.
\end{proof}

\begin{remark} \label{7.04} (1) We may remark that a u-elementary extension is a generalization of a minimal decomposed extension. Let $R\subset S$ be a  u-elementary FCP extension. Propositions \ref{7.05} and \ref{1.31} show that there does exist $T,U\in[R,S]$ such that $T\subset U$ is minimal decomposed. But in the following examples, we see that a maximal chain composing a u-elementary FCP extension may contain a minimal extension which is not decomposed, that is ramified. In fact, because of the isomorphism $S/I\cong (R/I)^2$ where  $I:=(R:S)$, $R\subset S$ is infra-integral.

(2) In \cite[Examples 2.32]{Pic 0}, the first author gives the following example. There exists an u-integral extension $R\subset R^2$ where $R$ is a non reduced ring such that $(R:R^2)=0$ is not semi-prime and $R^2$ is an $R$-algebra generated by an idempotent ($(1,0)$ for instance). Then, $R\subset R^2$ is infra-integral not seminormal. If $(R,M)$ is a local ring, the fiber at $M$ of $R\subset R^2$ has exactly two elements: $M\times R$ and $R\times M$.

In \cite[Remark 5.11 (2)]{Pic 14}, we set $R:=(\mathbb Z/2\mathbb Z)[T]/(T^2)= (\mathbb Z/2\mathbb Z)[t]$, where $t$ is the class of $T$ in $R$, and $S:=R\times R[x]$, with $R[x]:=R[X]/(X^2,tX)$ and $x$ is the class of $X$ in $R[X]/(X ^2,tX)$. Then, $R$ is an Artinian local non reduced ring with maximal ideal $M: =Rt$. We prove that ${}_S^uR=R^2$, with $R\subset{}_{R^2}^+R$ and ${}_S^ uR\subset S$ minimal ramified and ${}_{R^2}^+R\subset{}_S^uR$ minimal decomposed. The fiber at $M$ of $R\subset S$ has also exactly two elements.
\end{remark}

\section{Combinatorics for the  integral case}
In this section we give the values of $\ell[R,S]$ and $|[R,S]|$. Thanks to \cite[Theorem 3.6]{DPP2} and \cite[Proposition 4.6]{DPP3}, we have the equations $|[R,S]|=\prod_{M\in\mathrm{MSupp}(S/R)}|[R_M,S_M]|$ and $\ell[R,S]=\sum_{M\in\mathrm {MSupp}(S/R)}\ell[R_M,S_M]$. 
  
  \noindent Therefore, it is enough to get the values when $ R$ is a local ring, in particular because we have the description of $[R,S]$ for a local ring $R$.  

\begin{proposition} \label{7.1} Let $R\subset S$ be an integral distributive  FCP extension over the local ring $R$. Then, 
$$\ell[R,S]=\ell[R,{}_S^+R]+\ell[{}_S^tR,S]+|\mathrm{Max}(S)|-1.$$
  \end{proposition}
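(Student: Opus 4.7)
The plan is to use catenarity plus a careful length count along the canonical decomposition $R \subseteq {}_S^+R \subseteq {}_S^tR \subseteq S$. Since $R\subset S$ is distributive FCP, it is catenarian by Proposition~\ref{1.0}, so every maximal chain from $R$ to $S$ has length exactly $\ell[R,S]$. Thus it suffices to exhibit one maximal chain passing through both ${}_S^+R$ and ${}_S^tR$ and sum the lengths of its three pieces; the pieces in $[R,{}_S^+R]$ and $[{}_S^tR,S]$ contribute $\ell[R,{}_S^+R]$ and $\ell[{}_S^tR,S]$ respectively (again by catenarity of these subextensions), so the entire computation reduces to identifying the middle piece $\ell[{}_S^+R,{}_S^tR]$.

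The key step is to show $\ell[{}_S^+R,{}_S^tR]=|\mathrm{Max}(S)|-1$. I will split into the two possible cases determined by Theorem~\ref{6.15} (together with Proposition~\ref{1.014} when $R$ is not local, but here $R$ is local). If $R\subset S$ is unbranched, then $|\mathrm{Max}(S)|=1$ and Corollary~\ref{1.311} gives ${}_S^+R={}_S^tR$, so both sides equal $0$. If $R\subset S$ is branched, then $|\mathrm{Max}(S)|=2$ by Theorem~\ref{6.15}, and I need $\ell[{}_S^+R,{}_S^tR]=1$. For this, note that ${}_S^+R\subseteq{}_S^tR$ is seminormal infra-integral (a segment of the canonical decomposition), is distributive (subextension of a distributive extension by Proposition~\ref{1.4}), and has ${}_S^+R$ local (since $R\subset{}_S^+R$ is subintegral, hence bijective on spectra). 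By Proposition~\ref{decomp}, ${}_S^+R\subseteq{}_S^tR$ is then locally minimal, hence minimal since ${}_S^+R$ is local. Because $R\subset S$ is branched we have ${}_S^+R\neq{}_S^tR$ (else $|\mathrm{Max}({}_S^tR)|=1$, and since ${}_S^tR\subseteq S$ is t-closed the spec map is bijective by Proposition~\ref{1.31}(4), contradicting $|\mathrm{Max}(S)|=2$), so $\ell[{}_S^+R,{}_S^tR]=1$ as required.

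Having established $\ell[{}_S^+R,{}_S^tR]=|\mathrm{Max}(S)|-1$, I build a maximal chain by concatenating a maximal chain of $[R,{}_S^+R]$, the (at most one-step) minimal or trivial extension ${}_S^+R\subseteq{}_S^tR$, and a maximal chain of $[{}_S^tR,S]$; each consecutive inclusion is minimal, so the result is a maximal chain in $[R,S]$ of total length $\ell[R,{}_S^+R]+(|\mathrm{Max}(S)|-1)+\ell[{}_S^tR,S]$. By catenarity this equals $\ell[R,S]$, finishing the proof.

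I do not expect a serious obstacle here: the one delicate point is verifying $\ell[{}_S^+R,{}_S^tR]=|\mathrm{Max}(S)|-1$ in the branched case, which hinges on the combination of Proposition~\ref{decomp} with the fact that $|\mathrm{Max}({}_S^+R)|=1$ and $|\mathrm{Max}({}_S^tR)|=|\mathrm{Max}(S)|$ (the latter from Proposition~\ref{1.31}(4) applied to the t-closed piece). Everything else is bookkeeping enabled by catenarity.
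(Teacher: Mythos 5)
Your proof is correct and follows essentially the same route as the paper: catenarity (Proposition \ref{1.0}) gives $\ell[R,S]=\ell[R,{}_S^+R]+\ell[{}_S^+R,{}_S^tR]+\ell[{}_S^tR,S]$, and Proposition \ref{decomp} shows the middle segment has length $0$ or $1$ according as $|\mathrm{Max}(S)|$ is $1$ or $2$. The only cosmetic difference is that you run the case split on branched versus unbranched (quoting Theorem \ref{6.15} and Corollary \ref{1.311}), whereas the paper splits on whether ${}_S^+R={}_S^tR$ and then reads off $|\mathrm{Max}(S)|$ from Proposition \ref{1.31} and Theorem \ref{minimal}; the content is the same.
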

\begin{proof} Since $R\subset S$ is a distributive FCP extension, $R\subset S$ is catenarian by Proposition \ref{1.0}. Using the canonical decomposition, we get $\ell[R,S]=\ell[R,{}_S^+R]+\ell[{}_S^+R,{}_S^tR]+\ell[{}_S^tR,S]$. Moreover, ${}_S^+R\subseteq{}_S^t R$ being distributive, we have either ${}_S^+R={}_S^tR\ (*)$ or ${}_S^+R\subset{}_S^tR\ (**)$ is minimal by Proposition \ref{decomp}; that is, either $\ell[{}_S^+R,{}_S^tR]=0$ or $\ell[{}_S^+R,{}_S^tR]=1$. In case $(*),\ |\mathrm{Max}(S)|=1$ by Proposition \ref{1.31}, and in case $(**),\ |\mathrm{Max}(S)|=2$ by  Theorem \ref{minimal}, which gives the result.
\end{proof}    
  
\begin{theorem} \label{7.2} Let $R\subset S$ be an integral distributive FCP extension over the local ring $R$. Then, $|\mathrm{Max}({}_S^uR)|\leq 2$ and $|\mathrm{Supp}_{{}_S^uR}({}_S^tR/{}_S^uR)|\leq 1$.
 
 Consider the following cases:
  \begin{enumerate}
\item $|\mathrm{Supp}_{{}_S^uR}({}_S^tR/{}_S^uR)|=0$.
Then, $|[R,S]|=\ell[R,{}_S^+R]+ |[{}_S^tR,S]|+\lambda$, with $\lambda=0$ when ${}_S^+R=T$ and $\lambda=1$ when ${}_S^+R\neq T$.

\item $|\mathrm{Supp}_{{}_S^uR}({}_S^tR/{}_S^uR)|=1$ and $|\mathrm{Max} ({} _S^uR)|= 2$, in which case we set $\mathrm{Max}({}_S^uR)=:\{M,M'\}$ where $\mathrm{Supp}_{{}_ S^uR}({}_S^tR/{}_S^uR)=:\{M\}$ and denote by $V$ the splitter of ${}_S^uR\subseteq S$ at $\{M'\}$. 
  
 \hskip -1,5cm (a) If $|\mathrm{Supp}_{{}_S^uR}(S/{}_S^uR)|=2$, 
 then, 
 
\centerline{$|[R,S]|=\ell[R,{}_S^+R]+|[{}_S^tR,S]|+\ell[{}_S^uR,{}_S^tR]\cdot|[{}_S^uR,V]|+1\ (*).$}
  
\hskip -1,5cm (b) If $|\mathrm{Supp}_{{}_S^uR}(S/{}_S^uR)|=1$,  then, 

\centerline{ $|[R,S]|=\ell[R,{}_S^+R]+|[{}_S^tR,S]|+\ell[{}_S^uR,{}_S^tR]+1.$}
\item $|\mathrm{Supp}_{{}_S^uR}({}_S^tR/{}_S^uR)|=|\mathrm{Max}({}_S^uR)|=1$, then, 

\centerline{$|[R,S]|=\ell[R,{}_S^+R]+|[{}_S^tR,S]|.$}
  \end{enumerate}
  \end{theorem}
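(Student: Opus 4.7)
The plan is to establish the two cardinality bounds first, then to compute $|[R,S]|$ in each of the three cases by partitioning the lattice using structural results from Sections~5 and~6. For $|\mathrm{Max}({}_S^uR)|\le 2$, split on whether $R\subset S$ is unbranched or branched: in the first case $S$ is local and all rings of $[R,S]$ are local by \cite[Lemma 3.29]{Pic 11}, giving $|\mathrm{Max}({}_S^uR)|=1$; in the second case Theorem~\ref{6.15} gives $|\mathrm{Max}({}_S^uR)|=2$. For $|\mathrm{Supp}_{{}_S^uR}({}_S^tR/{}_S^uR)|\le 1$, apply Proposition~\ref{6.10} to the infra-integral sub-extension $R\subseteq{}_S^tR$, which is distributive by Proposition~\ref{1.4}: either ${}_S^uR={}_S^tR$ and the support is empty, or $R\subseteq{}_S^tR$ is branched and the final clause of Proposition~\ref{6.10} gives $|\mathrm{MSupp}_{{}_S^uR}({}_S^tR/{}_S^uR)|=1$; the remaining unbranched alternative reduces to ${}_S^uR=R$ local, for which the support has cardinality at most $1$.

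In Case~(1), with ${}_S^uR={}_S^tR$, both the unbranched subcase (via Theorem~\ref{6.11}) and the branched subcase (where condition~(2) of Theorem~\ref{6.15} fails because the support is $0$, forcing condition~(1)) yield that $R\subset S$ is pinched at ${}_S^tR$, so $[R,S]=[R,{}_S^tR]\cup[{}_S^tR,S]$ with intersection $\{{}_S^tR\}$. By Proposition~\ref{6.7}, $[R,{}_S^+R]$ is a chain of length $\ell[R,{}_S^+R]$; if ${}_S^+R={}_S^tR$ then $[R,{}_S^tR]=[R,{}_S^+R]$ and the count gives $\lambda=0$, whereas if ${}_S^+R\neq{}_S^tR$ then Proposition~\ref{decomp} forces ${}_S^+R\subset{}_S^tR$ minimal decomposed and Corollary~\ref{6.102} yields $[R,{}_S^tR]=[R,{}_S^+R]\cup\{{}_S^tR\}$, giving $\lambda=1$.

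In Case~(2), sub-case~(a) applies Theorem~\ref{6.15}(2) together with the disjoint partition of Proposition~\ref{6.12}(4): $[R,S]=[R,{}_S^+R]\cup[{}_S^tR,S]\cup\bigl(\bigcup_{i=0}^{n-1}[V_i\cap{}_S^tR,V_i]\bigr)$, where $n=\ell[{}_S^uR,{}_S^tR]$ and each $[V_i\cap{}_S^tR,V_i]=[U_i,V_i]$ is order-isomorphic to $[U_0,V_0]=[{}_S^uR,V]$ via the maps $\varphi_i$ of Proposition~\ref{6.12}(5); summing and using $|[R,{}_S^+R]|=\ell[R,{}_S^+R]+1$ produces the claimed formula (the trailing $+1$ comes from this last identity). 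Sub-case~(b) invokes Proposition~\ref{6.152} to obtain that $R\subset S$ is pinched at ${}_S^tR$; Corollary~\ref{6.102} then expresses $[R,{}_S^tR]$ as the disjoint union of two chains $[R,{}_S^+R]\cup[{}_S^uR,{}_S^tR]$ of lengths $\ell[R,{}_S^+R]$ and $\ell[{}_S^uR,{}_S^tR]$, and gluing at the common point ${}_S^tR$ yields the formula.

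Case~(3), with $|\mathrm{Max}({}_S^uR)|=1$, corresponds to the unbranched situation: a seminormal infra-integral extension between local rings admits no minimal subextension (those are decomposed by Proposition~\ref{decomp}), so ${}_S^+R={}_S^tR$ and ${}_S^uR=R$, and Theorem~\ref{6.11} delivers a chain $[R,{}_S^tR]$ of length $\ell[R,{}_S^+R]$ together with $R\subset S$ pinched at ${}_S^tR$; the stated formula $\ell[R,{}_S^+R]+|[{}_S^tR,S]|$ follows immediately. The main technical hurdle will be the bookkeeping in Case~(2a): one must verify that the partition from Proposition~\ref{6.12}(4) is genuinely disjoint and that $|[U_i,V_i]|$ is constant in $i$, both of which rest on the order-isomorphisms $\varphi_i$ and the splitter machinery of Proposition~\ref{1.14}.
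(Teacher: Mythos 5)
Your proposal reproduces the paper's own strategy almost step for step: the preliminary bounds via u-closedness and Theorem \ref{6.15}, then a case analysis resting on Theorem \ref{6.11}, Propositions \ref{6.7}, \ref{decomp}, \ref{6.10}, \ref{6.12}, \ref{6.152}, Lemma \ref{6.122} and Corollary \ref{6.102}, with the same partitions and the same counts; the formulas you obtain agree with the statement in every case, and your Case (2a) bookkeeping (constancy of $|[U_i,V_i]|$ via the $\varphi_i$, the extra $+1$ from $|[R,{}_S^+R]|=\ell[R,{}_S^+R]+1$) matches the paper's.

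One step fails as written, though. In Case (1) with ${}_S^+R\neq{}_S^tR$ you invoke Corollary \ref{6.102} to get $[R,{}_S^tR]=[R,{}_S^+R]\cup\{{}_S^tR\}$, but that corollary assumes the extension is \emph{not} chained, and here $R\subset{}_S^tR$ is in fact chained: since ${}_S^uR={}_S^tR$, the final clause of Proposition \ref{6.10} (applied to $R\subset{}_S^tR$ when ${}_S^+R\neq R$; when ${}_S^+R=R$ the extension is already minimal decomposed by Proposition \ref{decomp}) forces $[R,{}_S^tR]$ to be a chain, and the desired description then follows from the minimality of ${}_S^+R\subset{}_S^tR$. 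So the conclusion survives, but the citation must be replaced. Two further hypotheses are used silently: in Case (2b), Corollary \ref{6.102} needs $R\subset{}_S^tR$ not chained, which holds because ${}_S^uR\,{}_S^+R={}_S^tR$ (Remark \ref{1.300}) while ${}_S^uR\neq{}_S^tR$ and ${}_S^+R\neq{}_S^tR$, so the two are incomparable; and in Case (2a), Proposition \ref{6.12} requires that $R\subset S$ be not pinched at ${}_S^tR$ and that $R\subset{}_S^tR$ be not pinched at ${}_S^+R$, which follow respectively from Proposition \ref{6.152} (since $|\mathrm{MSupp}_{{}_S^uR}(S/{}_S^uR)|=2$) and Lemma \ref{6.122}. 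With these repairs your argument is exactly the paper's proof.
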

 
\begin{proof} 
Set $T:={}_S^tR$ and $U:={}_S^uR$. Since $R\subset S$ is distributive, so is ${}_S^+R\subseteq T$, which implies that $|\mathrm{Max}(S)|=|\mathrm{Max}(T)|\leq 2$ since $T\subseteq S$ is t-closed, ${}_S^+R$ is a local ring, and either ${}_S^+R\subset T$ is minimal decomposed, according to Proposition \ref{decomp}, or ${}_S^+R=T$. As $U\subseteq S$ is u-closed, then $|\mathrm{Max}(U)|\leq 2$.

We are going to consider all the situations seen in the previous sections, relative to $\mathrm{Max}(U),\ \mathrm{MSupp}_{U}(S/U)$ and $\mathrm{MSupp}_{U}(T/U)$. 
 
 If $R\subset S$ is unbranched, then $|\mathrm{Max}(S)|=1$, so that $|\mathrm{MSupp}_{U}(T/U)|$
 
\noindent $\leq 1$. Assume that $R\subset S$ is branched. If $R\subset S$ is not pinched at $T$, then $|\mathrm{MSupp}_{U}(T/U)|= 1$ according to Theorem \ref{6.15}. If $R\subset S$ is pinched at $T$, then $U\subseteq T$ is subintegral and we still have $|\mathrm{MSupp}_{U}(T/U)|$
 
 \noindent $\leq 1$ by Lemma \ref{6.51}.
 
Assume first that ${}_S^+R=R$ so that $R\subset S$ is seminormal and $[R,S] =\{R\}\cup[T,S]$ by Proposition \ref{6.8}, giving $|[R,S]|= 1+|[T,S]|=\ell[R,{}_S^+R]+|[T,S]|+1$. Moreover, $T=U$, so that $|\mathrm{MSupp}_U(T/U)|=0$. 

Assume now that ${}_S^+R=T$, then $R\subset T$ is subintegral and chained with $R\subset S$ unbranched. It follows that $R\subset S$ is pinched at $T$ by Theorem \ref{6.11}, giving $|[R,S]|=|[R,T]|+|[T,S]|-1=\ell[R,T]+|[T,S]|=\ell[R,{}_S^+R]+|[T,S]|$.
 
 At last, assume  that ${}_S^+R\neq R,T$.

(1) Assume that $|\mathrm{MSupp}_U(T/U)|=0$, so that $U=T$. 
 It follows that $R\subset T$ is pinched at ${}_S^+R$ by Proposition \ref{6.10} and then $R\subset S$ is pinched at $T$ by Lemma \ref{6.122}. Hence, $[R,S] =[R,{}_S^+R]\cup[T,S]$ because ${}_S^+R\subset T$ is minimal decomposed. But, $R\subseteq{}_S^+R$ is chained, which yields $|[R,S]|=\ell[R,{}_S^+R]+|[T,S]|+1$. 

(2) Assume that $|\mathrm{Max}(U)|=2$ and $|\mathrm{MSupp}_U(T/U)|=1$, so that $U\neq T$. Then, $0<|\mathrm{MSupp}_U(S/U)|\leq 2$. Set $\mathrm{Max}(U)=:\{M,M'\}$ and $\mathrm{MSupp}_U(T/U)=:\{M\}$. 
  In particular, $R\subset S$ is branched and $R\subset T$ is not chained.
 
(a) Assume that $|\mathrm{MSupp}_U(S/U)|=2$. Then, ${}_S^+R\neq T$ and $U\neq S$. 

Assume first that $R\subset S$ is not pinched at $T$ and $R\subset T$ is not pinched at ${}_S^+R$. Then, $R\neq{}_S^+R$ and $T\neq S$. We are in the hypothesis of Proposition \ref{6.12}, use its notation and the following diagram $\mathcal D$ of its proof, and recall some of its results: 

  \centerline{$\begin{matrix}
 {} & {}  &            V            & \to &      V_i          & \to &       W      & \to & S \\
 {} & {}  &       \uparrow     &  {} & \uparrow       &  {}  & \uparrow &   {} & {} \\
 {} & {}  &           U            & \to &        U_i         & \to &       T       &   {} & {} \\
 {} & {}  &      \uparrow     & {} &       \uparrow    & {}  & \uparrow &   {} & {} \\
R & \to & U\cap{}_S^+R & \to & U_i\cap{}_S^+R & \to & {}_S^+R  &  {} & {}
\end{matrix}$}
 Let $V$ be the splitter of $U\subset S$ at $\{M'\}$. It follows that
 $[U,T]=:\{U_i\}_{i=0}^n$ is a maximal chain. Set $W:=VT$. Then, $[V,W]=\{V_i\}_{i=0}^n$ is a maximal chain such that $V_i=VU_i$ for each $i\in\{0,\ldots,n\}$. Since $[R,S]=[R,{}_S^+R]\cup[T,S]\cup(\cup_{i=0}^{n-1}[U_i,V_i])$ defines a partition of $[R,S]$, we get that $|[R,S]|=|[R,{}_S^+R]|+|[T,S]|+\sum_{i=0}^{n-1}|[U_i,V_i]|$. Since $R\subset{}_S^+R$ is subintegral distributive, Proposition \ref{6.7} asserts that $R\subset{}_S^+R$ is chained, giving $|[R,{}_S^+R]|=\ell [R,{}_S^+R]+1$. Now for each $i\in\{0,\ldots,n-1\}$, there is an order-isomorphism $[U_i,V_i]\to[T,W]$. In particular, for each $i\in\{0,\ldots,n-1\}$, we have $|[U_i,V_i]|=|[U_0,V_0] |=|[U,V]|$, so that $\sum_{i=0}^{n-1}|[U_i,V_i]|=n|[U,V]|$, with $n=\ell[U,T]$. To sum up, we have $|[R,S]|=\ell[R,{}_S^+R]+1+|[T,S]|+\ell[U,T]\cdot |[U,V]|$, and $(*)$ holds.
  
Assume now that $R\subset T$ is pinched at ${}_S^+R$. Then, Lemma \ref{6.122} shows that $R\subset S$ is pinched at $T$ since $R\subset S$ is branched. But, in this case, Proposition \ref{6.152} shows that $|\mathrm{MSupp}_U(S/U)|=1$, 
  a contradiction since $|\mathrm{MSupp}_U(S/U)|=2$. Then, this situation cannot occur. 

(b) $|\mathrm{MSupp}_U(S/U)|=1$. Since $U\neq T$, Proposition \ref{6.152} shows that $R\subset S$ is pinched at $T$ and $[R,S]=[R,T]\cup[T,S]$, leading to $|[R,S]|=|[R,T]|+|[T,S]|-1$ because $[R,T]\cap[T,S]=\{T\}$. To conclude, $|[R,S]|=\ell[R,{}_S^+R]+|[T,S]|+\ell[U,T]+1$, since $|[R,T]|=\ell[R,{}_S^+R]+\ell[U,T]+2$ by Corollary \ref{6.102} ($[R,T]$ is not chained).

At last $|\mathrm{MSupp}_U(S/U)|\neq 0$,  otherwise  this would  imply $U=S=T$, a contradiction with $|\mathrm{MSupp}_U(T/U)|=1$.

(3) Assume that $|\mathrm{MSupp}_U(T/U)|=|\mathrm{Max}(U)|=1$. Since $U\subseteq S$ is u-closed, it follows that $|\mathrm{Max}(S)|=1$, so that $R\subset S$ is unbranched and ${}_S^+R=T$, a contradiction with the assumption. 

If we consider the beginning of the discussion, we remark that when ${}_S^+R =R$, then $|[R,S]|=\ell[R,{}_S^+R]+|[T,S]|+1$ with $|\mathrm{MSupp}_U(T/U)|=0$, so that the equality of (1) still holds.

When ${}_S^+R=T$, then $|[R,S]|=\ell[R,{}_S^+R]+|[T,S]|$. As $R\subset S$ is unbranched, it follows that $U=R\neq S$. If $T\neq R$, then  $|\mathrm {MSupp}_U(T/U)|=1$ with $|\mathrm{Max}(U)|=1$ and the equality of (3) still holds. If $T=R$, then $R\subset S$ is t-closed and $|\mathrm{MSupp}_U(T/U)|=0$, so that the equality of (1) still holds because ${}_S^+R=T$. 
\end{proof}

  \begin{remark} \label{7.3} We may consider that the equation;

\centerline{$|[R,S]|=\ell[R,{}_S^+R]+|[{}_S^tR,S]|+\ell[{}_S^uR,{}_S^tR]\cdot|[{}_S^uR,V]|+1\ (*)$}
\noindent gotten in Theorem \ref{7.2} (2)(a) also holds if we set $V={}_S^uR$ in cases (1) when ${}_S^+R\neq T$ and (2)(b). When considering these different cases, we see that the diagram $\mathcal D$ is still valid. In case (1) with ${}_S^+R=T$ and in case (3), we have ${}_S^+R=T$, so that $U=R$ and $(*)$  cannot apply here.
\end{remark}
 
\section{Distributive finite field extensions}

According to  Proposition \ref{tclos}, the study of the distributivity of a t-closed FCP extension can be reduced to the study of the distributivity of a finite field extension.

We begin with  the following lemmata. 

\begin{lemma} \label{6.16} Let $k\subset K\subset L$ be a tower of two minimal field extensions, where $k\subset K$ is radicial and $K\subset L$ is separable. Then $k\subseteq L$ is distributive if and only if $|[k,L]|=4$.

If these conditions hold, then $[k,L]=\{k,K,K',L\}$ where $K'$ is the separable closure of $k\subseteq L$.
\end{lemma}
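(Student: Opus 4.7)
The plan is to invoke Corollary \ref{6.103}, which for any length-$2$ extension $R\subset S$ gives the equivalence: $R\subseteq S$ is distributive if and only if $|[R,S]|\le 4$. Since $\ell[k,L]=2$, it suffices to prove the lower bound $|[k,L]|\ge 4$ in our setup; combined with the corollary, this yields the desired equivalence ``distributive $\iff |[k,L]|=4$''. The four-element description $[k,L]=\{k,K,K',L\}$ then follows at once, provided I can exhibit $k,K,K',L$ as four pairwise distinct elements of $[k,L]$.

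To produce the fourth intermediate field, let $K'$ denote the separable closure of $k$ in $L$, so that $k\subseteq K'\subseteq L$, the extension $k\subseteq K'$ is separable, and $K'\subseteq L$ is radicial. I will verify that the four fields $k,K,K',L$ are pairwise distinct by a short separable/inseparable degree computation in the tower $k\subset K\subset L$. By multiplicativity,
\[
[L:k]_s=[L:K]_s\,[K:k]_s, \qquad [L:k]_i=[L:K]_i\,[K:k]_i.
\]
Since $k\subset K$ is radicial and nontrivial, $[K:k]_s=1$ and $[K:k]_i=[K:k]\ge 2$; since $K\subset L$ is separable and nontrivial, $[L:K]_s=[L:K]\ge 2$ and $[L:K]_i=1$. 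Therefore $[L:k]_s\ge 2$ and $[L:k]_i\ge 2$. The first inequality gives $[K':k]=[L:k]_s\ge 2$, so $K'\neq k$; the second says $L/k$ is not separable, so $K'\neq L$. Finally $K\neq K'$, because $k\subseteq K'$ is separable while $k\subset K$ is nontrivially radicial, so any field equal to both would be both separable and purely inseparable over $k$, hence would coincide with $k$, contradicting $K\neq k$.

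Combining, $\{k,K,K',L\}\subseteq [k,L]$ consists of four distinct fields, so $|[k,L]|\ge 4$. By Corollary \ref{6.103}, $k\subseteq L$ is distributive iff $|[k,L]|\le 4$, and together with the lower bound this gives distributive iff $|[k,L]|=4$. When equality holds, the four fields already exhibited exhaust $[k,L]$, so $[k,L]=\{k,K,K',L\}$.

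I do not anticipate a serious obstacle: the argument is a small piece of degree bookkeeping in a length-$2$ tower. The only point requiring a little care is the direction of the tower (radicial first, separable second, which is the reverse of the usual canonical order); but the multiplicativity of $[\,\cdot\,]_s$ and $[\,\cdot\,]_i$ holds in either order, so the separable closure $K'$ of $k$ inside $L$ is a well-defined element of $[k,L]$ of separable degree $[L:k]_s\ge 2$, which is all that the proof uses.
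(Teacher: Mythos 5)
Your lower-bound computation is fine and coincides with the first paragraph of the paper's proof: the separable closure $K'$ of $k$ in $L$ gives four pairwise distinct elements $k,K,K',L$ of $[k,L]$, hence $|[k,L]|\geq 4$. The genuine gap is the unsupported assertion that $\ell[k,L]=2$, on which your entire use of Corollary \ref{6.103} rests. The hypothesis only supplies one maximal chain $k\subset K\subset L$ of length $2$; since $\ell[k,L]$ is the supremum of the lengths of \emph{all} chains, this gives $\ell[k,L]\geq 2$ and nothing more. FCP extensions, including finite field extensions, need not be catenarian, so a maximal chain of length $2$ does not bound the length, and the very reason the lemma is stated as an equivalence, and the paper's proof passes through catenarity and the classification of length-two field extensions, is that the length is not known in advance. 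As written, Corollary \ref{6.103} is simply not available, so neither implication is actually established; your closing remark even identifies the wrong delicate point (the order of the tower is harmless, the length claim is not).

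The gap is repairable with tools already in the paper, but the repair is exactly the content you skipped and is how the paper argues. For the forward direction, distributivity gives catenarity by Proposition \ref{1.0}, and catenarity together with the maximal chain $k\subset K\subset L$ forces $\ell[k,L]=2$; only then does your appeal to Corollary \ref{6.103} (or, as in the paper, to \cite[Theorem 6.1 (8)]{Pic 6}, the extension being neither radicial, nor separable, nor exceptional) give $|[k,L]|=4$. For the converse, from $|[k,L]|=4$ and your four distinct elements you get $[k,L]=\{k,K,K',L\}$ with $K$ and $K'$ incomparable (one is separable, the other nontrivially radicial over $k$), so every chain has length at most $2$, hence $\ell[k,L]=2$ and Corollary \ref{6.103} applies; alternatively one verifies the two conditions of Proposition \ref{6.1} directly on this four-element lattice, which is what the paper does. (It is in fact true that $\ell[k,L]=2$ holds unconditionally in this situation, e.g.\ via linear disjointness of the separable and purely inseparable parts, but no such argument appears in your proposal, and it is not the route the paper takes.)
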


\begin{proof} Clearly, $k\subset L$ is a finite-dimensional field extension. Since $k\subset K$ is minimal radicial, its degree is $p:=\mathrm{c}(k)$. Then, we also have $p=[L:K']$, with $K'\neq k,K,L$, so that $K'\subset L$ is minimal and $|[k,L]|\geq 4$. 

 If $k\subseteq L$ is distributive, it is also catenarian, so that $\ell[k,L]=2$, and \cite[Theorem 6.1 (8)]{Pic 6} shows that $|[k,L]|=4$ because $k\subseteq L$ is neither radicial, nor separable, nor exceptional. Proposition \ref{6.1} gives the converse.
\end{proof}

\begin{lemma} \label{6.17}  Let $k\subset L$ be a finite field extension, $T$  its separable closure and $U$ its radicial closure and assume that $U\subset L$ is separable. Then:

 \begin{enumerate}
\item There exists an order isomorphism $\varphi:[k,T]\to[U,L]$ defined by $\varphi(V):=UV$. 

\item  $k\subset T$ is distributive if and only if $U\subset L$  is distributive.
\end{enumerate}
\end{lemma}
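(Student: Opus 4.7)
The plan is to establish (1) first, from which (2) follows immediately by transfer of lattice-theoretic properties. The key structural fact is that, since $k\subseteq U$ is radicial and $k\subseteq T$ is separable, $U$ and $T$ are linearly disjoint over $k$; this is a classical field-theoretic lemma. Consequently $[UT:k]=[U:k][T:k]$, and $UT=L$ because $UT\subseteq L$ is simultaneously radicial (as a subextension of $T\subseteq L$) and separable (as a subextension of $U\subseteq L$), hence trivial. So $[L:U]=[T:k]$, and for each $V\in[k,T]$ the linear disjointness of $U$ and $V$ over $k$ also yields $[UV:U]=[V:k]$.

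The map $\varphi(V):=UV$ is patently order-preserving. To prove injectivity I would show $(UV)\cap T=V$: setting $W:=(UV)\cap T$, one has $V\subseteq W\subseteq T$, so $UV\subseteq UW\subseteq U(UV)=UV$, giving $UV=UW$ and hence $[W:k]=[UW:U]=[UV:U]=[V:k]$, whence $W=V$. For surjectivity, given $W\in[U,L]$, set $V:=W\cap T\in[k,T]$; the inclusion $UV\subseteq W$ is immediate. The cleanest route to the reverse inclusion is Galois descent: let $\widetilde T$ be a Galois closure of $k\subseteq T$; since $U/k$ is radicial and $\widetilde T/k$ is separable, they remain linearly disjoint, so $U\widetilde T/U$ is Galois with $\mathrm{Gal}(U\widetilde T/U)\cong\mathrm{Gal}(\widetilde T/k)$, and under this isomorphism $\mathrm{Gal}(U\widetilde T/UT)$ corresponds to $\mathrm{Gal}(\widetilde T/T)$. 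The Galois correspondence then identifies $[U,L]=[U,UT]$ with $[k,T]$ via $V\mapsto UV$, forcing $W=UV$.

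For (2), an order isomorphism of complete lattices preserves joins and meets (these being characterized order-theoretically), hence is a lattice isomorphism; since distributivity is a lattice-theoretic property, $k\subseteq T$ is distributive if and only if $U\subseteq L$ is distributive.

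The main obstacle is the surjectivity in (1): the injectivity and order-preservation are routine degree manipulations, but the surjectivity genuinely needs descent through the purely inseparable base change $k\to U$, equivalently the identification $L\cong U\otimes_k T$ arising from linear disjointness.
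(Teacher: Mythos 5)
Your proof is correct, but it follows a genuinely different and heavier route than the paper's. The paper establishes (1) by writing down the inverse explicitly, $\psi:[U,L]\to[k,T]$, $\psi(V):=V\cap T$, and checking that both composites are identities using only the qualitative fact that an algebraic extension which is simultaneously separable and radicial is trivial: for $V\in[U,L]$, if $W$ is the separable closure of $k$ in $V$ (so $W=V\cap T$), then $UW\subseteq V$ is radicial because $W\subseteq V$ is, and separable because $U\subseteq V$ is (this is where the hypothesis that $U\subset L$ is separable enters), hence $V=UW$; symmetrically, for $W\in[k,T]$, the extension $W\subseteq UW\cap T$ is separable (it sits inside $T$) and radicial (because $k\subset U$ is), hence $W=UW\cap T$. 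No degree counts, no linear disjointness and no Galois closure are needed; in particular the surjectivity you single out as the main obstacle is exactly the first computation above and requires no descent. Your route --- linear disjointness of $U$ and $T$ plus degree counting for injectivity, and the translation theorem through a Galois closure $\widetilde T$ for surjectivity --- is sound (the one step you leave implicit is that the subgroup of $\mathrm{Gal}(U\widetilde T/U)$ corresponding to $\mathrm{Gal}(\widetilde T/V)$ has fixed field exactly $UV$, a standard but nontrivial verification), and it yields the extra information $[L:U]=[T:k]$, i.e. $L\simeq U\otimes_k T$; the paper's argument buys brevity and stays entirely inside $[k,L]$. For (2), your remark that an order isomorphism of lattices automatically preserves meets and joins cleanly replaces the paper's direct verification that $\varphi$ and its inverse commute with products and intersections.
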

\begin{proof} Define $\varphi:[k,T]\to[U,L]$  by $\varphi(V):=UV$ and  
$\psi:[U,L]\to[k,T]$ by $\psi(V)=V\cap T$. 

(1) Let $V\in[U,L]$ and let $W\in[k,V]$ be the separable closure of $k\subset V$. Then, $W=T\cap V=\psi(V)$. Now, $W\subseteq UW\subseteq V$, with $W\subseteq V$ radicial implies that $UW\subseteq V$ is also radicial. But $U\subseteq UW\subseteq V$, with $U\subseteq V$ separable gives that $UW\subseteq V$ is also separable, so that $V=UW=\varphi(W)=\varphi(\psi(V))$; then, $\varphi\circ\psi$ is the identity on $[U,L]$. 

Now, if $W\in[k,T]$, then  $W\subseteq UW\cap T\subseteq T$ implies that $W\subseteq UW\cap T$ is separable. Since $k\subset U$ is radicial, so is $W\subseteq UW$, and also $W\subseteq UW\cap T$. Then, $W=UW\cap T$, and $\psi\circ\varphi$ is  therefore the identity on $[k,T]$. Since $\varphi$ and $\psi$  preserve order,  $\varphi$ is  an order isomorphism. 

(2) Let $V,V'\in[k,T]$ and  $W,W'\in[U,L]$. Obviously, $\varphi(VV')=\varphi(V)\varphi(V')$, and $\psi(W\cap W')=\psi(W)\cap \psi( W')$. Assume that $W=\varphi(V)$ and $W'=\varphi (V')$, so that $V=\psi(W)$ and $V'=\psi(W')$. Then, $\varphi(V\cap V')=\varphi[\psi(W)\cap \psi( W')]=\varphi[\psi(W\cap  W')]=W\cap W'=\varphi(V)\cap \varphi(V')$. A similar proof shows that $\psi(WW')=\psi(W)\psi(W')$. It follows that  $k\subset T$ is distributive if and only if $U\subset L$  is distributive.
\end{proof}

\begin{proposition} \label{6.18} Let $k\subset L$ be an FIP field extension whose separable closure $T$ is such that $k\subset T$ is distributive. Then $|[U\cap V,UV]|=4$ for any $U,V\in[k,L],\ U\neq V$ such that $U\cap V\subset U,V$ are minimal. 
\end{proposition}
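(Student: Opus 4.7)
The plan is to argue by case analysis on the two minimal extensions $W \subset U$ and $W \subset V$, where $W := U \cap V$; each is either minimal separable or minimal radicial, and the hypothesis that $k \subset T$ is distributive will be invoked only in the all-separable case.

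First, I would rule out the possibility that both $W \subset U$ and $W \subset V$ are minimal radicial. Writing $U = W(\alpha)$ and $V = W(\beta)$ with $\alpha^p, \beta^p \in W$ where $p := \mathrm{c}(k)$, one has $\beta \notin U$ (otherwise $V \subseteq U$ forces $W = U \cap V = V$), so $[UV : W] = p^2$. Since $W$ admits a proper radicial extension it is imperfect, hence infinite, and the fields $W(\alpha + \lambda \beta)$ for varying $\lambda \in W$ supply infinitely many pairwise distinct elements of $[W, UV]$, contradicting the FIP hypothesis. For the mixed case, say $W \subset U$ minimal separable and $W \subset V$ minimal radicial, I would argue directly, without using the distributivity of $k \subset T$: the base-change $UV/U$ is radicial and $UV/V$ is separable, so the separable closure of $W$ in $UV$ equals $U$ and the radicial closure equals $V$. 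Consequently, for any $X \in [W, UV]$, the separable closure of $W$ in $X$ is $X \cap U \in \{W, U\}$ and the radicial closure is $X \cap V \in \{W, V\}$, by minimality. Inspecting the four combinations, using the canonical separable/radicial factorization of $W \subset X$, forces $X \in \{W, U, V, UV\}$, so $|[W, UV]| = 4$.

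The main case is when both $W \subset U$ and $W \subset V$ are minimal separable, so that $U, V, UV \subseteq WT$. Setting $W_0 := W \cap T$, and using that $W/W_0$ is radicial while $T/W_0$ is separable, I would establish the order isomorphism
\[
\varphi \colon [W, WT] \longrightarrow [W_0, T], \qquad X \longmapsto X \cap T,
\]
with inverse $Y \mapsto WY$. Because $\varphi$ is an order isomorphism it preserves intersection and compositum, so $\varphi(UV) = U_0 V_0$ with $U_0 := U \cap T$ and $V_0 := V \cap T$, and $U_0 \cap V_0 = W_0$; moreover $\varphi$ transports the minimality of $W \subset U, V$ and the inequality $U \ne V$ to the corresponding statements for $W_0 \subset U_0, V_0$ in $[k, T]$. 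Proposition~\ref{6.1} applied to the distributive FCP extension $k \subset T$ then yields $|[W_0, U_0 V_0]| = 4$, and transport along $\varphi^{-1}$ gives $|[W, UV]| = 4$.

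The main obstacle is establishing the order isomorphism $\varphi$. I would deduce it from natural irrationalities: letting $N$ be the Galois closure of $T/W_0$, the intersection $W \cap N$ is simultaneously radicial (as a subfield of $W$) and separable (as a subfield of $N$) over $W_0$, hence equals $W_0$; restriction therefore provides an isomorphism $\mathrm{Gal}(WN/W) \cong \mathrm{Gal}(N/W_0)$, and the Galois correspondence gives a bijection $[W, WN] \leftrightarrow [W_0, N]$ via $X \mapsto X \cap N$. A direct verification using linear disjointness of $W$ and $N$ over $W_0$ shows $WT \cap N = T$, so this bijection restricts to the required $[W, WT] \cong [W_0, T]$, with $X \cap N = X \cap T$ on intermediate fields of $WT$ over $W$.
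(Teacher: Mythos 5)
Your proposal is correct, and it genuinely differs from the paper's proof in two of the three cases. The paper disposes of the radicial--radicial case by quoting that a radicial FIP extension is chained (\cite[Lemma 4.1]{Pic 10}), and of the mixed case by first transferring catenarity from $k\subset T$ to $k\subset L$ (\cite[Proposition 4.18]{Pic 12}) to get $\ell[W,UV]=2$ and then invoking the classification of length-$2$ field extensions \cite[Theorem 6.1 (8)(c)]{Pic 6}; your treatment of these two cases is self-contained (the family $W(\alpha+\lambda\beta)$ reproves exactly the instance of chainedness that is needed, and your mixed-case analysis does not use the distributivity of $k\subset T$ at all), which buys independence from those external results. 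In the separable--separable case the two arguments are essentially the same mechanism: your order isomorphism $[W,WT]\to[W_0,T]$, $X\mapsto X\cap T$, is precisely Lemma \ref{6.17} applied to $W_0\subset WT$ (here $W$ is the radicial closure of $W_0$ in $WT$ and $WT/W$ is separable, and the short direct verification in the proof of Lemma \ref{6.17} already yields it), so the detour through the Galois closure $N$ and natural irrationalities is more machinery than necessary; the paper instead applies Lemma \ref{6.17} to $W'\subset UV$ with $W'=W\cap T$, which also spares it your preliminary claim $UV\subseteq WT$ (true, but it needs the small compositum argument that the separable closure of $W$ in $L$ is exactly $WT$).

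One point to tighten in your mixed case: the appeal to a ``canonical separable/radicial factorization of $W\subset X$'' is not by itself a licence, since in general the radicial closure of an extension need not split it -- that is exactly the exceptional phenomenon. What saves the argument is the specific configuration: the separable (resp.\ radicial) closure of $W$ in $UV$ is $U$ (resp.\ $V$), and $U\subset UV$ is radicial of prime degree $p$. Then $X\cap U=W$ forces $X/W$ radicial, hence $X\subseteq V$ and $X\in\{W,V\}$, while $X\cap U=U$ forces $X\in\{U,UV\}$ because $[UV:U]=p$ is prime; combining with the value of $X\cap V\in\{W,V\}$ gives $X\in\{W,U,V,UV\}$ in all four combinations. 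With this one-line supplement your case analysis is airtight, and the whole proof goes through.
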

\begin{proof} Since $k\subset T$ is distributive, it is catenarian, and so is $[k,L]$ by \cite[Proposition 4.18]{Pic 12}. 

Set $W:=U\cap V$ and  $W':=W\cap T$. We discuss along   the different types of  minimal extensions $W\subset U$ and $W\subset V$. 

(1) Assume that $W\subset U$ and $W\subset V$ are both radicial.  Then, $W'\subseteq W\subset U,V$ are radicial, and so is $W'\subset UV$. But, by \cite[Lemma 4.1]{Pic 10}, a radicial FIP extension is chained, so that $U=V$, a contradiction.

(2) Assume that $W\subset U$ is radicial and $W\subset V$ is separable, and set $p:=\mathrm{c}(k)$. Then, there is some $x\in U$ such that $U=W[x]$, with $x^p\in W$. It follows that $UV=V[x]$, with $x^p\in V$, so that $V\subset UV$ is minimal radicial. Since $k\subset L$ is catenarian, we get that $\ell[W,UV]=2$. Then, \cite[Theorem 6.1 (8) (c)]{Pic 6} shows that $|[U\cap V,UV]|=4$ because $W\subset UV$ is neither radicial, nor separable, nor exceptional.

(3) Assume that $W\subset U$ and $W\subset V$ are both separable, then so is $W\subset UV$ by \cite[Proposition 8, A.V. 40]{Bki A}. Let  $W''$ be the separable closure of $W'\subseteq UV$. We have the following commutative  diagram
 (only useful   extensions appear):
$$\begin{matrix}
k  & \to & W' &       \to    & W  &      \to      & UV & \to           & S \\
{} & {}  &  {}  & \searrow & {}   & \nearrow &  {}  & \nearrow & {}  \\
{} & {}  &  {}  &       {}      & W'' &      \to      &  T  & {}             & {}
\end{matrix}$$
where $W'\subseteq W$ is radicial as $W''\subseteq UV$, while $W\subset UV$ and $W'\subseteq W''$ are separable, so that $W$ is the radicial closure of $W'\subset UV$. Moreover, $[W',W'']\subseteq [k,T]$ implies that $W'\subseteq W''$ is distributive. Then, an application of Lemma \ref{6.17} to the extension $W'\subset UV$ shows that $W\subset UV$ is distributive, whence, $|[U\cap V,UV]|=4$ by Proposition ~\ref{6.1}.
\end{proof}
\begin{lemma} \label{6.181}  Let $k\subset L$ be an FIP distributive  field extension and $U,V\in[k,L],\ U\neq V$. If $U, V\subset UV$ are minimal separable extensions,  then $U\cap V\subset UV$ is  separable. 
\end{lemma}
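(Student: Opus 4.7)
The plan is to combine Proposition \ref{6.1} with a short case analysis on the types of the minimal extensions $W \subset U$ and $W \subset V$, where $W := U \cap V$. The key observation is that a minimal field extension in positive characteristic is either separable of prime degree or purely inseparable of degree $p=\mathrm{c}(k)$, and the second alternative is incompatible with $V \subset UV$ being separable.

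First, I would invoke condition (2) of Proposition \ref{6.1} applied to the configuration $U, V \subset UV$ (both minimal, $U \neq V$): since $k \subset L$ is distributive and FCP, this yields $|[W, UV]| = 4$, so $[W,UV] = \{W, U, V, UV\}$. In particular, no field strictly lies between $W$ and $U$, hence $W \subset U$ is itself a minimal extension; symmetrically for $W \subset V$.

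Next, I would argue that $W \subset U$ cannot be radicial. Suppose toward a contradiction that it is; then one can write $U = W[\alpha]$ with $\alpha^p \in W \subseteq V$, and consequently $UV = V[\alpha]$ where $\alpha$ is a root of $x^p - \alpha^p \in V[x]$. Therefore $V \subseteq UV$ is either trivial or purely inseparable. The first possibility forces $\alpha \in V$, whence $U = W[\alpha] \subseteq V$, and minimality of $W \subset V$ then gives $U = V$, against hypothesis; the second directly contradicts the assumption that $V \subset UV$ is minimal separable. Hence $W \subset U$ is separable, and combining with the separable extension $U \subset UV$ via transitivity of separability (Bourbaki \cite{Bki A}, A.V) concludes that $W \subset UV$ is separable.

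I do not anticipate a real obstacle: the whole argument reduces to applying Proposition \ref{6.1} correctly and noting that a purely inseparable minimal step would propagate an inseparable element into the separable extension $V \subset UV$. The only point deserving care is choosing the right clause of Proposition \ref{6.1} — here clause (2), since $U$ and $V$ are the elements covered by $UV$, not elements covering $W$ (even though, a posteriori, clause (1) also applies after one verifies $W \subset U, V$ are minimal).
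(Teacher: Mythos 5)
Your proposal is correct and follows essentially the same route as the paper: both apply Proposition \ref{6.1} to get $|[U\cap V,UV]|=4$, deduce that $U\cap V\subset U$ and $U\cap V\subset V$ are minimal, and then rule out a radicial step. The only (harmless) difference is the mechanism for exclusion — the paper notes that a radicial step would force the separable closure of $U\cap V\subset UV$ to be a fifth element of $[U\cap V,UV]$, while you argue directly that it would make $V\subset UV$ purely inseparable; your parenthetical claim that a minimal separable extension has prime degree is inaccurate but never used.
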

\begin{proof} According to Proposition \ref{6.1}, we have $|[U\cap V,UV]|=4$, so that $U\cap V\subset U,V$ are minimal field extension. They are both separable, because  if one of them is radicial, then $|[U\cap V,UV]|>4$, since  $[U\cap V,UV]$ does  contain the separable closure of the extension, distinct from $U\cap V,U,V,UV$. It follows that $U\cap V\subset UV$ is  separable.
\end{proof}

\begin{proposition} \label{6.19} Let $k\subset L$ be an FIP field extension whose separable closure $T$ is such that $k\subset T$ is distributive. Assume that $U\cap V\subset UV$ is separable whenever $U,V\subset UV$ are minimal separable for $U,V\in[k,L],\ U\neq V$. Then $|[U\cap V,UV]|=4$ for any $U,V\in[k,L],\ U\neq V$ such that $U, V\subset UV$ are minimal. 
\end{proposition}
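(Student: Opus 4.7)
The plan is to verify condition~(2) of Proposition~\ref{6.1}: for distinct $U, V \in [k, L]$ with $U, V \subset UV$ both minimal, show that $|[U \cap V, UV]| = 4$. Set $W := U \cap V$ and $p := \mathrm{c}(k)$. Since a minimal field extension is either radicial of degree $p$ or separable of prime degree, I split into three cases according to the types of $U \subset UV$ and $V \subset UV$, using throughout the catenarity of $k \subset L$ that follows from the distributivity of $k \subset T$ via \cite[Proposition 4.18]{Pic 12}.

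If both $U \subset UV$ and $V \subset UV$ are radicial, then writing $UV = U[\alpha] = V[\beta]$ with $\alpha^p \in U$ and $\beta^p \in V$, the Frobenius shows that $x^p \in U \cap V = W$ for every $x \in UV$; hence $W \subset UV$ is a radicial FIP extension, which is chained by \cite[Lemma 4.1]{Pic 10}. But then $U$ and $V$ both play the role of immediate predecessor of $UV$ in this chain, forcing $U = V$, a contradiction.

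For the mixed case (by symmetry, say $U \subset UV$ radicial and $V \subset UV$ separable), let $T^W$ denote the separable closure of $W$ in $UV$. Since $UV/U$ is radicial, $T^W \subseteq U$; and since $V \cap T^W \subseteq V \cap U = W$, the extension $V/W$ is radicial. The crux is to prove $T^W = U$: if $x \in U \setminus T^W$ satisfies $x^p \in T^W$, then $x \notin V$ (else $x \in W \subseteq T^W$), so by minimality $V[x] = UV$; but $x^p \in T^W \cap V = W \subseteq V$ would force $V[x]/V$ to be radicial, contradicting the separability of $V \subset UV$. Thus $U/W$ is separable, so $U$ and $V$ are linearly disjoint over $W$; combined with the prime degrees $[UV:U] = p$ and $[UV:V] = q$, this yields $[U:W] = q$ and $[V:W] = p$, making $W \subset U$ and $W \subset V$ both minimal, whence Proposition~\ref{6.18} delivers $|[W, UV]| = 4$.

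When both $U \subset UV$ and $V \subset UV$ are separable, the extra hypothesis of the proposition gives that $W \subset UV$ is separable. Let $W' := T \cap W$ be the separable closure of $k$ in $W$, so $W/W'$ is radicial and $W' \subseteq T$. The separable closure of $W'$ in $UV$ equals $T \cap UV =: T_{UV}$ (by transitivity of separability), and the radicial closure of $W'$ in $UV$ equals $W$: any element of $UV$ radicial over $W'$ is radicial over $W$, and by the separability of $UV/W$ it is also separable over $W$, hence lies in $W$. Because $W \subset UV$ is separable, Lemma~\ref{6.17} applies to $W' \subset UV$ and produces an order isomorphism $[W', T_{UV}] \to [W, UV]$, $X \mapsto WX$. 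Since $[W', T_{UV}] \subseteq [k, T]$ is distributive (Proposition~\ref{1.4}), so is $[W, UV]$, and in any distributive lattice with $U, V$ covering $UV$ and meeting at $W$ the diamond isomorphism forces $[W, UV] = \{W, U, V, UV\}$, so $|[W, UV]| = 4$. The main obstacle is the mixed case, specifically the identification $T^W = U$: one must leverage the separability of $V \subset UV$ to rule out any hidden radicial step of $U$ over its separable part.
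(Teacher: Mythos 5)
Your cases (1) and (3) are correct and essentially reproduce the paper's argument (your Frobenius computation in (1) harmlessly replaces the citation of \cite[Proposition 3.8]{DPPS}, and (3) is exactly the paper's route through Lemma \ref{6.17}). The genuine gap is in the mixed case, at the crux step $T^W=U$. Having chosen $x\in U\setminus T^W$ with $x^p\in T^W$ and deduced $V[x]=UV$, you write ``$x^p\in T^W\cap V=W$''; but nothing places $x^p$ in $V$ --- you only know $x^p\in T^W\subseteq U$ --- so the promised contradiction with the separability of $V\subset UV$ never materializes. Moreover, the claim you are trying to prove there is not a consequence of the mixed-case configuration alone, so no purely field-theoretic local argument of this kind can succeed: with $p=2$, take $W:=\mathbb F_2(s,t)$, $\gamma_1^2=s$, $\gamma_2^2=t$, $\theta$ a root of the separable polynomial $X^2+X+s$, and set $V:=W(\gamma_1,\gamma_2)$, $U:=W(\theta,\gamma_1+\theta\gamma_2)$, $UV=W(\theta,\gamma_1,\gamma_2)$. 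Then $U\cap V=W$, $U\subset UV$ is minimal radicial, $V\subset UV$ is minimal separable, yet the separable closure of $W$ in $U$ is $W(\theta)\subsetneq U$, so $U$ is not separable over $W$ (and indeed $x:=\gamma_1+\theta\gamma_2$ satisfies $x^2\in T^W$ but $x^2\notin V$, which is exactly where your argument breaks). This configuration is of course excluded under the hypotheses of the proposition --- $V/W$ is not chained, so $W\subset UV$ is not FIP --- which shows that the ambient hypotheses (FIP, and catenarity coming from the distributivity of $k\subset T$) must actually be used at this point, whereas your mixed case never invokes them.

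The paper closes precisely this hole with catenarity and the length-two classification: since $W\subset V$ is radicial (your correct observation), it is chained, so one may pick $T'\in[W,V]$ with $T'\subset V$ minimal radicial; catenarity of $k\subset L$ (distributivity of $k\subset T$ plus \cite[Proposition 4.18]{Pic 12}) gives $\ell[T',UV]=2$, and \cite[Theorem 6.1 (8)(c)]{Pic 6} gives $|[T',UV]|=4$, so $[T',UV]=\{T',V,W'',UV\}$ with $W''$ the separable closure of $T'$ in $UV$ and $W''\subset UV$ minimal radicial; as in your case (1), two minimal radicial subextensions below $UV$ must coincide, so $W''=U$, whence $T'\subseteq U\cap V=W$, i.e.\ $T'=W$ and $|[W,UV]|=4$ directly. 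If you repair your mixed case along these lines, the remainder of your reduction (linear disjointness, prime degrees, appeal to Proposition \ref{6.18}) becomes superfluous, since the count $|[W,UV]|=4$ is already obtained.
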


\begin{proof} Set $W:=U\cap V$ and  $W':=W\cap T$, which is the separable closure of $k\subseteq W$. Arguing as in Proposition \ref{6.18}, we discuss along the types of the minimal extensions $U\subset UV$ and $V\subset UV$. 

(1) Assume that $U\subset UV$ and $V\subset UV$ are both radicial.  According to \cite[Proposition 3.8]{DPPS}, $W\subset U$ is radicial, so that $W\subset UV$  is radicial.
  But, by \cite[Lemma 4.1]{Pic 10}, a radicial FIP extension is chained, so that $U=V$, a contradiction.

(2) Assume that $U\subset UV$ is radicial and $V\subset UV$ is separable. Using again \cite[Proposition 3.8]{DPPS}, we see that $W\subset V$ is radicial. Let $T'\in[W,V]$ be such that $T'\subset V$ is minimal radicial. Since $k\subset T$ is distributive, it is catenarian, and so is $k\subset L$ by \cite[Proposition 4.18]{Pic 12}. In particular, $\ell[T',UV]=2$, and an application of \cite[Theorem 6.1 (8) (c)]{Pic 6} shows that $|[T',UV]|=4$. Now, let $W''$ be the separable closure of $T'\subset UV$, so that $W'' \subset UV$ is minimal radicial because $\ell[T',UV]=2$ and $[T',UV]=\{T',V,W'',UV\}$. It follows that $U,W''\subset UV$ are both minimal radicial. Reasoning as in (1), we get that necessarily $W''=U$. Then, $W\subseteq T'\subseteq U\cap V=W$ leads to $T'=W$ and $|[U\cap V,UV]|=4$.

(3) Assume that $U\subset UV$  and $V\subset UV$ are both  separable. It follows from the assumption that $W\subset UV$ is separable. Moreover, $W'\subseteq W$ is radicial because  $W'$ is the separable closure of $k\subseteq W$, so that,  $W\subset UV$ being separable, $W$ is the radicial closure of $W'\subset UV$.  Let $V'$ be the separable closure of $W'\subseteq V$, so that $W'\subseteq V'$ is separable and $V'\subseteq V$ is radicial. But, $V\subset UV$ is separable, then $V$ is the radicial closure of $V'\subset UV$. Let $V''$ be the separable closure of $V'\subseteq UV$, so that $V'\subseteq V''$ is separable, and so is $W'\subseteq V''$, while $V''\subseteq UV$ is radicial. Then, $V''$ is also the separable closure of $W'\subseteq UV$. We have the following commutative diagram (only the maps  involving $V$ are needed):
$$\begin{matrix}
k   & \to & W' & \to         & W & \to          & V &  \to          & UV \\
{} & {}   & {}  & \searrow & {}  & \nearrow & {} & \nearrow & {}  \\
{} & {}  & {}  & {}            & V' & \to           & V'' & {}            & {} 
\end{matrix}$$
Since $W'\subseteq V''$ is separable, we get that $[W',V'']\subseteq[k,T]$. Then, $W' \subseteq V''$ is distributive because so is $k\subset T$. But, an application of Lemma  \ref{6.17} to the extension $W'\subset UV$ shows that $W\subset UV$ is distributive. Finally, Proposition \ref{6.1} shows that $|[U\cap V,UV]|=4$.
\end{proof}

\begin{theorem} \label{6.20} Let $k\subset L$ be an FIP field extension with separable closure $T$. Then, $k\subset L$ is distributive if and only if $k\subset T$ is distributive and $U\cap V\subset UV$ is separable for any $U,V\in[k,L],\ U\neq V$ such that $U,V\subset UV$ are minimal separable extensions.  
\end{theorem}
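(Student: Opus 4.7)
The statement is set up so that the heavy lifting has already been done in Propositions \ref{6.18}, \ref{6.19}, and Lemma \ref{6.181}, and the characterization of distributivity via the two ``length-$2$, four-element'' conditions is provided by Proposition \ref{6.1}. My plan is therefore to assemble these ingredients rather than to prove anything new from scratch.

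For the forward direction, I assume $k\subset L$ is distributive. Then every subextension is distributive by Proposition \ref{1.4}, so in particular $k\subset T$ is distributive. Next, if $U,V\in[k,L]$ with $U\neq V$ and $U,V\subset UV$ are both minimal separable, Lemma \ref{6.181} applies directly and yields that $U\cap V\subset UV$ is separable. This gives both required conditions.

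For the reverse direction, I assume that $k\subset T$ is distributive and that $U\cap V\subset UV$ is separable whenever $U,V\subset UV$ are minimal separable with $U\neq V$. I verify the two criteria of Proposition \ref{6.1}: (i) if $U\cap V\subset U$ and $U\cap V\subset V$ are minimal, then $|[U\cap V,UV]|=4$; (ii) if $U\subset UV$ and $V\subset UV$ are minimal, then $|[U\cap V,UV]|=4$. Criterion (i) is exactly the conclusion of Proposition \ref{6.18}, which only needs that $k\subset T$ is distributive. Criterion (ii) is exactly the conclusion of Proposition \ref{6.19}, whose hypotheses are precisely the two assumptions we are making. Invoking Proposition \ref{6.1}, we conclude that $k\subset L$ is distributive.

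The whole argument is essentially a bookkeeping step, since the type-by-type case analysis (radicial vs.\ separable at each end of a minimal extension, and separable closures inside the relevant sublattices) was already carried out in Propositions \ref{6.18} and \ref{6.19}. The main point worth emphasizing in the write-up is that the separability condition on $U\cap V\subset UV$ in the ``both separable at the top'' case is the minimal extra data needed beyond distributivity of $k\subset T$, because it is precisely what Proposition \ref{6.19} requires and what Lemma \ref{6.181} extracts from the full distributivity of $k\subset L$; there is no additional obstacle beyond citing these results.
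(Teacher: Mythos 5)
Your proposal is correct and follows the paper's proof exactly: the forward direction is Lemma \ref{6.181} (plus the observation that subextensions of a distributive extension are distributive), and the converse is obtained by checking the two conditions of Proposition \ref{6.1} via Propositions \ref{6.18} and \ref{6.19}. Nothing is missing.
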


\begin{proof} One implication is obvious according to Lemma \ref{6.181}. For the converse, use Propositions \ref{6.1}, \ref{6.18} and \ref{6.19}.
\end{proof}

\begin{remark} \label{6.201} The last condition of the previous Theorem holds when $U, V\subset UV$ are minimal separable extensions such that $U\cap V\subset U$ and $U\cap V\subset V$ are linearly disjoint (see \cite[Proposition 10, A.V. 41]{Bki A}).
\end{remark}

To end, we are going to characterize distributive radicial extensions and distributive separable extensions. 

\begin{proposition} \label{6.21}  Let $k\subset L$ be an FCP radicial  field extension. The following conditions are equivalent:
 \begin{enumerate}

\item $k\subset L$ is  chained.

\item $k\subset L$ is distributive.

\item $k\subset L$ has FIP. 
 \end{enumerate}
\end{proposition}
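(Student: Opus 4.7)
The plan is to prove the cyclic chain of implications $(1)\Rightarrow(2)\Rightarrow(3)\Rightarrow(1)$, using results already established in the paper together with one external fact about radicial FIP extensions.

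First, $(1)\Rightarrow(2)$ is immediate: if $k\subset L$ is chained, then $[k,L]$ is a chain, so for every $P\in\mathrm{Spec}(k)$ (which, $k$ being a field, reduces to the zero ideal) the localized lattice is also a chain. Thus $k\subset L$ is arithmetic in the sense of Definition \ref{4.2}, and hence distributive by Proposition \ref{5.4}.

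Next, $(2)\Rightarrow(3)$ follows from Proposition \ref{1.0}: a distributive lattice of finite length is catenarian and has FIP. Since $k\subset L$ is assumed FCP, $\ell[k,L]$ is finite, so distributivity forces FIP.

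The only nontrivial implication is $(3)\Rightarrow(1)$. Here I would invoke \cite[Lemma 4.1]{Pic 10}, which states that a radicial FIP field extension is automatically chained; this is the same fact that has been used twice in the proofs of Propositions \ref{6.18} and \ref{6.19}. The underlying reason is that in characteristic $p$, any radicial extension is generated by elements whose $p^n$-th powers lie in the base, so the lattice of intermediate fields of a finite radicial FIP extension collapses onto a chain of successive $p$-th power extractions; a diamond or any non-chain configuration would produce infinitely many intermediate fields. The only obstacle, then, is this last implication, but it is entirely settled by the cited lemma, so nothing new needs to be proved. Combining the three implications gives the equivalence.
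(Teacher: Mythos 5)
Your proposal is correct, and it shares its two key ingredients with the paper's proof: $(1)\Rightarrow(2)$ via arithmetic and Proposition \ref{5.4}, and the non-trivial fact that an FIP radicial extension is chained, quoted from \cite[Lemma 4.1]{Pic 10}, which is exactly what the paper uses for $(3)\Rightarrow(1)$. Where you differ is in how the implication from distributivity back to chainedness is closed. The paper proves $(2)\Rightarrow(1)$ directly: assuming $k\subset L$ distributive but not chained, it picks $K_1\subset K_2,K_3$ minimal with $K_2\neq K_3$, gets $|[K_1,K_2K_3]|=4$ from Proposition \ref{6.1}, and contradicts the classification of length-$2$ radicial extensions \cite[Theorem 6.1 (8)(a)]{Pic 6}. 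You instead route through $(2)\Rightarrow(3)$ using Proposition \ref{1.0} (a distributive lattice of finite length has FIP; finiteness of $\ell[k,L]$ for an FCP extension being guaranteed by the fact recalled in the introduction from \cite[Theorem 4.11]{DPP3}), and then reuse the same \cite[Lemma 4.1]{Pic 10} for $(3)\Rightarrow(1)$. Your organization is more economical -- it makes the paper's separate $(2)\Rightarrow(1)$ argument unnecessary and avoids invoking Proposition \ref{6.1} and the length-$2$ classification altogether -- at the cost of leaning entirely on the external lemma for the crucial step, whereas the paper's direct argument exhibits concretely how distributivity fails (a would-be $B_2$-configuration inside a radicial extension). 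Your informal gloss on why the lemma holds is only a sketch, but since you cite the lemma, as the paper does, nothing is missing.
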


\begin{proof} (1) $\Rightarrow$ (2) by Proposition \ref{5.4}.

(1) $\Rightarrow$ (3) since $k\subset L$ has  FCP.

(3) $\Rightarrow$ (1) by \cite[Lemma 4.1]{Pic 10}.

(2) $\Rightarrow$ (1) Assume that $k\subset L$ is distributive and not chained. There exist $K_1,K_2,K_3\in[k,L],\ K_2\neq K_3$, such that $K_1\subset K_i$ is minimal for $i=1,2$. By Proposition \ref{6.1}, we get that $|[K_1,K_2K_3]|=4 $, with $\ell[K_1,K_2K_3]|=2$, a contradiction with \cite[Theorem 6.1 (8)(a)]{Pic 6} since $K_1\subset K_2K_3$ is radicial. 
\end{proof}

In \cite{Pic 14}, we introduce the following notation. If $\Pi$ is a property of field extensions, we say that a ring extension is a $\kappa$-$\Pi$-extension if all its residual field extensions verify $\Pi$.

\begin{corollary} \label{6.202} Let $R\subset S$ be an FIP $\kappa$-radicial extension. Then $R\subset S$ is $\kappa$-distributive. If, moreover, $R\subset S$ is t-closed, then $R\subset S$ is distributive.
\end{corollary}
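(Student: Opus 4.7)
The plan is to reduce both claims to a combination of Proposition \ref{6.21} (characterizing distributive radicial FCP field extensions) and Proposition \ref{tclos} (characterizing distributivity of t-closed FCP extensions via residual field extensions). The arguments are short since the heavy lifting has been done in those two earlier results.

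First, for the $\kappa$-distributivity claim I would take an arbitrary residual field extension $\kappa_R(P)\hookrightarrow\kappa_S(Q)$ with $Q\in\mathrm{Spec}(S)$ and $P=Q\cap R$. By hypothesis it is radicial, and because $R\subset S$ has FIP, this residual extension inherits FIP (FIP is preserved under the localization-then-quotient construction that produces $\kappa_R(P)\to\kappa_S(Q)$: localizing $[R,S]$ at $R\setminus P$ and then reducing mod the corresponding minimal primes sends the finite lattice $[R,S]$ onto a lattice containing $[\kappa_R(P),\kappa_S(Q)]$ as a subposet, which is therefore finite). In particular this residual extension is an FCP radicial field extension with FIP, so Proposition \ref{6.21} makes it chained, and a chain is trivially distributive by Proposition \ref{5.4}. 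Since every residual extension is distributive, $R\subset S$ is $\kappa$-distributive by definition.

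For the second claim, assume in addition that $R\subset S$ is t-closed. Since FIP forces FCP, Proposition \ref{tclos} applies and tells us that $R\subset S$ is distributive if and only if each of its residual field extensions is distributive. The first part of the corollary has just established exactly this, so $R\subset S$ is distributive.

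The only non-automatic step is the assertion that FIP descends from $R\subseteq S$ to each residual field extension; I expect this to be the main (if mild) obstacle. It is however a standard transfer of FIP under the composite of a localization and a quotient, both of which are known to preserve FIP, so no new idea is required. Everything else is a direct appeal to the cited Propositions.
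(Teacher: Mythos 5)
Your proof is correct and follows essentially the same route as the paper's, which simply asserts that each residual extension is an FIP radicial field extension, invokes Proposition \ref{6.21} (cited in the paper as \ref{6.22}, apparently a misprint) to get distributivity, and finishes the t-closed case with Proposition \ref{tclos}. The only cosmetic difference is your justification that FIP descends to the residual extensions: the cleaner argument is to embed $[\kappa_R(P),\kappa_S(Q)]$ into $[R_P,S_P]$ by taking preimages under $S_P\to S_P/QS_P$ (rather than mapping $[R,S]$ onto a larger lattice), but this is exactly the standard transfer you anticipate and the paper itself takes it for granted.
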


\begin{proof} Let $Q\in\mathrm{Spec}(S)$. Then, $\kappa_R(Q\cap R)\to\kappa_S(Q)$ is an FIP radicial extension. According to Proposition \ref{6.22}, we get that $\kappa_R(Q\cap R)\to\kappa_S(Q)$ is a distributive extension, so that $R\subset S$ is $\kappa$-distributive extension. The last result comes from Proposition \ref{tclos}. 
  \end{proof}
  
We recall that a ring extension $R\subseteq S$ is called {\it radicial} if the ring morphism $R\to S$ is universally an $i$-morphism, or equivalently, an $i$-morphism which is $\kappa$-radicial \cite[Definition 3.7.2, p. 248]{EGA} and \cite[Proposition 1]{KU}. 

\begin{corollary} \label{6.203} Let $R\subset S$ be a radicial integral FCP  extension. Then $R\subset S$ is distributive if and only if $R\subseteq{}_S^tR$ is  arithmetic and $R\subset S$ is pinched at ${}_S^tR$.
\end{corollary}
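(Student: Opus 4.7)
The plan is to invoke Theorem~\ref{6.11} after establishing that $R\subseteq S$ is locally unbranched, and then to handle the distributivity of the t-closed part ${}_S^tR\subseteq S$ via Corollary~\ref{6.202}. Since $R\subseteq S$ is radicial, it is in particular an $i$-extension, and by the equivalence recorded at the beginning of Section~5, an integral FCP $i$-extension is u-closed, equivalently locally unbranched.

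Theorem~\ref{6.11} then gives that $R\subseteq S$ is distributive if and only if $R\subseteq{}_S^tR$ is arithmetic, $[{}_S^tR,S]$ is distributive, and $R\subseteq S$ is pinched at ${}_S^tR$. The forward direction of the corollary is immediate from this equivalence. For the converse, assuming the two stated conditions, the task reduces to showing $[{}_S^tR,S]$ is distributive. The subextension ${}_S^tR\subseteq S$ is t-closed by the definition of the t-closure, and it inherits the $\kappa$-radicial property from $R\subseteq S$, since each of its residual field extensions is a subextension of the corresponding purely inseparable residual extension of $R\subseteq S$. Corollary~\ref{6.202} will then deliver the desired distributivity of ${}_S^tR\subseteq S$, provided the extension is FIP.

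The main obstacle is thus to verify FIP of ${}_S^tR\subseteq S$. By Proposition~\ref{tclos} applied locally at each $M\in\mathrm{MSupp}_{{}_S^tR}(S/{}_S^tR)$ (together with the lattice isomorphism $[({}_S^tR)_M,S_M]\to[({}_S^tR)_M/M,S_M/M]$ used in that proof), this reduces to FIP of each residual field extension, which are radicial FCP field extensions; by Proposition~\ref{6.21}, for such extensions FIP, chained, and distributive all coincide. The delicate point is to exploit the arithmetic plus pinched hypotheses together with the radicial structure to force each residual extension of ${}_S^tR\subseteq S$ to be chained. Once this is achieved, Corollary~\ref{9.03} applied to the pinched decomposition $[R,S]=[R,{}_S^tR]\cup[{}_S^tR,S]$, together with Proposition~\ref{5.4} for the arithmetic part, completes the proof.
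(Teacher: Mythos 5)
Your setup is sound and matches the paper's: radicial implies $i$-extension, hence (for an integral FCP extension) u-closed, i.e.\ locally unbranched, so Theorem \ref{6.11} applies, and the whole question reduces to the distributivity of ${}_S^tR\subseteq S$. But at exactly that point your proposal stops being a proof: you write that ``the delicate point is to exploit the arithmetic plus pinched hypotheses together with the radicial structure to force each residual extension of ${}_S^tR\subseteq S$ to be chained'' and then say ``once this is achieved\dots''\ --- you never achieve it. That step is the entire content of the corollary beyond Theorem \ref{6.11}, so leaving it as a declared obstacle is a genuine gap, not a routine verification.

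Moreover, the route you sketch for filling the gap cannot work as stated. The hypotheses ``$R\subseteq{}_S^tR$ arithmetic'' and ``$R\subset S$ pinched at ${}_S^tR$'' impose no constraint whatsoever on the internal structure of the interval $[{}_S^tR,S]$; for instance, when ${}_S^tR=R$ (already the case for a purely inseparable field extension such as $\mathbb{F}_p(x^p,y^p)\subset\mathbb{F}_p(x,y)$) both conditions are vacuous, while the residual extension is a radicial FCP field extension that need not be chained or FIP. So chainedness of the residual extensions of ${}_S^tR\subseteq S$ cannot be extracted from those two hypotheses; it has to come from radiciality itself. That is how the paper argues: radicial $\Rightarrow$ $\kappa$-radicial, hence $\kappa$-distributive by Corollary \ref{6.202}, and then Proposition \ref{tclos} converts $\kappa$-distributivity into distributivity of the t-closed extension ${}_S^tR\subseteq S$, after which Theorem \ref{6.11} finishes both directions at once. (Your observation that Corollary \ref{6.202} is stated with an FIP hypothesis is a fair one --- in the forward direction FIP follows from distributivity via Proposition \ref{1.0}, and the paper applies \ref{6.202} without further comment --- but flagging that subtlety does not substitute for supplying the missing argument.)
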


\begin{proof} Since $R\subset S$ is radicial, it is $\kappa$-radicial, and then $\kappa$-distributive by Corollary \ref{6.202}, so that ${}_S^tR\subseteq S$ is distributive according to Proposition \ref{tclos}. Moreover, $R\to S$ is an i-morphism, that is locally unbranched. Then, it is enough to apply Theorem \ref{6.11} to get the result.  
  \end{proof}

For a field $k$, we denote by $k_u[X]$ the set of monic polynomials of $k[X]$. Let $L:=k[x]$ be a finite separable (whence FIP) field extension of $k$ and $ f(X)\in k_u[X]$ be {\it the} minimal polynomial of $x$ over $k$. For any $K\in[k,L]$, we denote by $f_K(X)\in K_u[X]$ the minimal polynomial of $x$ over $K$. We set $\mathcal D:=\{f_K\mid K\in[k,L]\}$. The elements of $\mathcal D$ have been characterized in \cite[Proposition 4.13]{Pic 10}. By \cite[Corollary 4.9]{Pic 10}, there is a reversing order bijection $[k,L]\to \mathcal D$ defined by $K\mapsto f_K$, which gives that $\mathcal D$ is an ordered set for the divisibility and a lattice. In particular, $\sup(f_K,f_{K'})=f_{K\cap K'}$ and $f_{KK'}=\inf(f_K,f_{K'})$ for $K,K'\in[k,L]$.

\begin{proposition}\label{6.22} A finite separable field extension  $k\subset L=k[x]$  is distributive if and only if   $\mathcal D$ is a distributive lattice.
  \end{proposition}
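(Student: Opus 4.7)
The plan is to observe that the map $\varphi \colon [k,L] \to \mathcal D$ defined by $K \mapsto f_K$ has already been established as an order-reversing bijection in \cite[Corollary 4.9]{Pic 10}, and the rules $\sup(f_K,f_{K'}) = f_{K\cap K'}$ and $\inf(f_K,f_{K'}) = f_{KK'}$ recalled just above the statement show that $\varphi$ is in fact a lattice anti-isomorphism: it sends meets in $[k,L]$ (intersections) to joins in $\mathcal D$ (lcm for divisibility) and joins in $[k,L]$ (composita) to meets in $\mathcal D$ (gcd). The strategy is then to exploit this anti-isomorphism together with the self-duality of distributivity in lattices, already invoked in the paper via \cite[Exercise 5, page 33]{NO}: in any lattice, the identity $a \wedge (b \vee c) = (a \wedge b) \vee (a \wedge c)$ is equivalent to $a \vee (b \wedge c) = (a \vee b) \wedge (a \vee c)$.

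Concretely, assume first that $k \subset L$ is distributive. For arbitrary $A,B,C \in [k,L]$, the identity $A \cap (BC) = (A \cap B)(A \cap C)$ holds. Applying $\varphi$ to both sides and using the two rules above yields
$$\sup\bigl(f_A,\inf(f_B,f_C)\bigr) = f_{A \cap (BC)} = f_{(A \cap B)(A \cap C)} = \inf\bigl(\sup(f_A,f_B),\sup(f_A,f_C)\bigr).$$
Since every element of $\mathcal D$ has the form $f_K$ for some $K \in [k,L]$, this is the join-over-meet distributive law in $\mathcal D$, so $\mathcal D$ is distributive. Conversely, if $\mathcal D$ is distributive, then reading the same chain of equalities backwards through $\varphi^{-1}$ shows that the corresponding distributive identity holds in $[k,L]$, so $k \subset L$ is distributive.

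There is no substantive obstacle: the result is essentially a reformulation of the lattice anti-isomorphism $[k,L] \cong \mathcal D^{\mathrm{op}}$ already contained in the paper, combined with the standard self-duality of distributivity. The only mild subtlety is noting that the distributive identity one obtains on the $\mathcal D$ side is \emph{a priori} the dual of the one assumed, but this is harmless thanks to the equivalence of the two distributive laws in any lattice.
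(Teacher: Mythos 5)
Your proof is correct and follows the same route as the paper, whose entire proof is the single observation that the bijection $\varphi(K)=f_K$ gives the result; you have merely spelled out the details (anti-isomorphism of lattices plus self-duality of distributivity) that the paper leaves implicit.
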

  
\begin{proof} The bijection $\varphi:[k,L]\to\mathcal D$ defined by $\varphi(K)=f_K$ gives the result.
\end{proof}
In case of Galois extension, the result is simpler. 

\begin{proposition}\label{6.23} Let $k\subset L=k[x]$ be a finite Galois extension. Then $k\subset L$ is distributive if and only if   $k\subset L$ is cyclic.
  \end{proposition}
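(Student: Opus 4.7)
The plan is to reduce the statement to a well-known group-theoretic result via the Galois correspondence. Since $k\subset L$ is a finite Galois extension, the Fundamental Theorem of Galois Theory provides an order-reversing bijection between $[k,L]$ and the subgroup lattice $\mathcal{L}(G)$ of the Galois group $G:=\mathrm{Gal}(L/k)$, sending $K\in[k,L]$ to $\mathrm{Gal}(L/K)$. Under this antitone bijection, intersections in $[k,L]$ correspond to products (joins) in $\mathcal{L}(G)$ and composita in $[k,L]$ correspond to intersections of subgroups. Hence $[k,L]$ is a distributive lattice if and only if $\mathcal{L}(G)$ is distributive, since a lattice is distributive precisely when its dual is.

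I would then invoke Ore's classical theorem: a finite group $G$ has distributive subgroup lattice if and only if $G$ is cyclic (see for instance \cite[Theorem 1.2.4]{Schmidt}, which states that a group has distributive subgroup lattice iff it is locally cyclic, the finite case giving cyclicity). Combined with the translation above, this yields: $k\subset L$ is distributive $\Longleftrightarrow$ $\mathcal{L}(G)$ is distributive $\Longleftrightarrow$ $G$ is cyclic $\Longleftrightarrow$ $k\subset L$ is a cyclic extension.

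The only obstacle is the citation of Ore's theorem; the argument itself is essentially a one-line translation through the Galois correspondence, so no substantial new work is needed beyond noting that distributivity is a self-dual property of lattices. Should one wish to avoid citing Ore, the forward direction follows because a cyclic group has a totally ordered lattice of subgroups (indexed by divisors of $|G|$, each divisor giving a unique subgroup), whence $[k,L]$ is a chain and thus distributive by Proposition \ref{5.4}; the converse is the nontrivial direction for which Ore's theorem is essentially unavoidable.
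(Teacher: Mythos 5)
Your proof is correct and is essentially the paper's own argument: the paper likewise passes through the order-reversing Galois correspondence and quotes the fact that a finite group has distributive subgroup lattice if and only if it is cyclic (citing Roman's book rather than Schmidt/Ore, but it is the same result).
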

  
\begin{proof} Using the Fundamental Theorem of Galois Theory, there is a reversing order isomorphism between $[k,L]$ and the set $\mathcal G$ of subgroups of the Galois group $G$ of the extension. Then, $k\subset L$ is distributive if and only if $\mathcal G$ is distributive if and only if $G$ is cyclic (see \cite[Example 4.2]{R}) if and only if $k\subset L$ is cyclic.
\end{proof}

As we did for finite Boolean separable extensions in \cite[Theorem 4.19]{Pic 10}, using the normal closure allows to give a result for distributivity.  

\begin{proposition}\label{6.231} Let $k\subset L$ be a finite separable extension with normal closure $N$. If $k\subset N$ is a cyclic extension, then $k\subset L$  is distributive.
\end{proposition}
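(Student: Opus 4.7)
The plan is to reduce the statement to Proposition \ref{6.23} applied to $k\subseteq N$, together with the fact that distributivity passes to subextensions (Proposition \ref{1.4}).

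First, I would observe that since $k\subset L$ is a finite separable extension, its normal closure $N$ is a finite Galois extension of $k$; in particular, $k\subseteq N$ is an FCP (indeed FIP) field extension, and $L\in[k,N]$. By hypothesis, the Galois group of $k\subseteq N$ is cyclic, so Proposition \ref{6.23} applies and gives that $k\subseteq N$ is distributive.

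Next, since $k\subseteq N$ is an FCP extension which is distributive, Proposition \ref{1.4} asserts that every subextension $T\subseteq U$ of $k\subseteq N$ is distributive. Taking $T:=k$ and $U:=L$, and noting that $L\in[k,N]$, we conclude that $k\subseteq L$ is distributive.

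There is no real obstacle here; the statement is essentially a direct corollary of Proposition \ref{6.23} combined with Proposition \ref{1.4}, using that the normal closure of a finite separable extension is Galois. The only point to verify carefully is that $L$ indeed belongs to $[k,N]$, which follows from the very definition of the normal closure $N$ of $k\subseteq L$ inside an algebraic closure of $k$.
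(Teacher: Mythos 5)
Your proof is correct and is essentially identical to the paper's: apply Proposition \ref{6.23} to the cyclic Galois extension $k\subset N$ to get distributivity of $[k,N]$, then pass to the subextension $k\subseteq L$ via Proposition \ref{1.4}. Nothing further is needed.
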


\begin{proof} In view of Proposition \ref{6.23}, if $k\subset N$ is a cyclic extension, then $k\subset N$  is distributive, and so is  $k\subset L$ by Proposition \ref{1.4}.
\end{proof}

\begin{example}\label{6.232} (1) A cyclic extension with a square free degree is distributive, because Boolean \cite[Theorem 4.19]{Pic 10}. 

(2) Set $k:=\mathbb{Q}$ and $L:=k[x]$, where $x:=\sqrt 3+\sqrt 2$. Then, $k\subset L$ is a Galois extension of degree 4. In \cite[Example 5.17(2)]{Pic 6}, we show that $[k,L]=\{k,k_ 2,k_3,k_6,L\}$, where $k_i:=k[\sqrt i],\ i=2,3,6$ and the following commutative diagram holds with $|[k,L]|=5$:
$$\begin{matrix}
   {}  &        {}      & L             &       {}       & {}     \\
   {}  & \nearrow & \uparrow  & \nwarrow & {}     \\
k_2 &       {}       & k_3         &      {}        & k_6 \\
  {}   & \nwarrow & \uparrow & \nearrow & {}     \\
  {}   &      {}        & k             & {}             & {} 
\end{matrix}$$
Then, $k\subset L$ is not distributive, because it is a diamond \cite[Theorem 1, page 59]{G} or because of Proposition \ref{6.1}.
\end{example}

\section{Applications  of $\Delta$-extensions to distributive  extensions}

We know (Proposition \ref{1.0}) that a distributive extension is catenarian. More precisely, a distributive extension $R\subseteq S$ is obviously {\it modular}; that is, $T_1\cap(T_2T_3)=T_2(T_1\cap T_3)$ for each $T_1,T_2,T_3\in[R,S]$ such that $T_2\subseteq T_1$. In an earlier paper \cite{Pic 13}, we studied $\Delta$-extensions. A ring extension $R\subset S$ is a {\it $\Delta$-extension} if $T+U\in[R,S]$ for any $T,U\in[R,S]$. In particular, a $\Delta$-extension is modular \cite[Proposition 3.14]{Pic 13}   
 and then catenarian. 
We end this paper by showing the link between $\Delta$-extensions and distributive extensions.

\begin{proposition}\label{6.25}   A distributive FCP extension $R\subset S$ is a $\Delta$-extension if and only if $ {}_{\overline R}^tR\subset \overline R$ is arithmetic.
\end{proposition}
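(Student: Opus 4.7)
The proof hinges on analyzing the canonical decomposition $R \subseteq {}_S^+R \subseteq T \subseteq \overline R \subseteq S$, where $T := {}_{\overline R}^tR$ coincides with ${}_S^tR$ (the defining equations $b^2 - rb, b^3 - rb^2 \in R$ force $b$ to be integral over $R$). For a distributive FCP extension, three of the four pieces are automatically arithmetic: $R \subseteq {}_S^+R$ by Proposition \ref{6.7}, ${}_S^+R \subseteq T$ by Proposition \ref{decomp} (a locally minimal extension is arithmetic), and $\overline R \subseteq S$ by the proof of Theorem \ref{5.41}. Moreover, any arithmetic extension is a $\Delta$-extension, because locally $[R_P, S_P]$ is a chain, so $T_P + U_P = \max(T_P, U_P) = T_P U_P$ for $T, U \in [R, S]$, and both product and sum commute with localization. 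Therefore, once $T \subseteq \overline R$ is also arithmetic, all four canonical pieces are individually $\Delta$-extensions; the content of the proposition lies in transferring this between pieces.

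For the ``only if'' direction, suppose $R \subseteq S$ is a $\Delta$-extension. The subextension $T \subseteq \overline R$ is itself distributive (Proposition \ref{1.4}), $t$-closed, and a $\Delta$-extension: any $U_1, U_2 \in [T, \overline R]$ satisfy $U_1 + U_2 \in [R, S]$ by hypothesis and $T \subseteq U_1 + U_2 \subseteq \overline R$, so $U_1 + U_2 \in [T, \overline R]$. To show this extension is arithmetic, I localize at $M \in \mathrm{MSupp}_T(\overline R/T)$ and use (as in the proof of Proposition \ref{tclos}) the lattice isomorphism $[T_M, \overline R_M] \cong [T_M/N, \overline R_M/N]$ where $N := (T_M : \overline R_M)$, which reduces the problem to a finite field extension $k \subseteq L$ that is distributive and $\Delta$ (the $\Delta$-property transfers to the quotient since $(A/I) + (B/I) = (A+B)/I$ when $I \subseteq A, B$). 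Suppose for contradiction that $k \subseteq L$ is not chained and pick $A, B \in [k, L]$ with $A \cap B \subsetneq A, B$ both minimal. By Proposition \ref{6.1}, $[A \cap B, AB] = \{A \cap B, A, B, AB\}$; the $\Delta$-hypothesis forces $A + B \in [A \cap B, AB]$, and since $A, B \subsetneq A + B$, we get $A + B = AB$. Setting $k' := A \cap B$ and $a := [A:k']$, $b := [B:k']$, the dimension formula yields $[AB : k'] = a + b - 1$, while $[AB:k']$ is a common multiple of $a, b$, so $\mathrm{lcm}(a,b) \mid a + b - 1$. Assuming $a \le b$ and $a, b \ge 2$, this forces $a \mid b$, hence $\mathrm{lcm}(a,b) = b$, then $b \mid a - 1$, impossible since $0 < a - 1 < b$. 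Hence $k \subseteq L$ is chained.

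For the ``if'' direction, assume $T \subseteq \overline R$ is arithmetic, so all four canonical pieces are arithmetic and hence $\Delta$. To conclude, I would reduce to the local case via Proposition \ref{1.014}, since the equality $U_1 U_2 = U_1 + U_2$ is local. In the unbranched case (Theorem \ref{6.11}), the extension $R_M \subseteq \overline R_M$ is pinched at $T_M$ with $[R_M, T_M]$ and $[T_M, \overline R_M]$ both chains, and $[\overline R_M, S_M]$ is a chain by the Pr\"ufer property, giving $[R_M, S_M]$ a chain so $\Delta$ holds trivially. In the branched case (Theorem \ref{6.15}), the lattice $[R_M, \overline R_M]$ decomposes as $[R_M, {}_S^+R_M] \cup [T_M, \overline R_M] \cup \bigcup_i [V_i \cap T_M, V_i]$, with the ``top row'' $[V, W]$ of the diagram of Theorem \ref{6.15} being a chain thanks to the arithmetic hypothesis on $T \subseteq \overline R$; I would then verify $U_1 + U_2 = U_1 U_2$ by case analysis on the position of $U_1, U_2$ among the three pieces, using distributivity to reduce cross-piece identities $(U_1 U_2) \cap X = (U_1 \cap X)(U_2 \cap X)$ (for $X \in \{{}_S^+R_M, T_M\}$) to within-piece ones, where the arithmetic property gives the equality directly. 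The main obstacle is this final combinatorial verification in the branched case: the diagonal elements $V_i$ are not in the canonical chain, so the $\Delta$-property is a genuine new condition beyond arithmetic pieces, and checking sums across the three pieces requires careful use of the order-isomorphisms $\varphi_i$ of Proposition \ref{6.12} together with the structure theorem of Theorem \ref{6.15}.
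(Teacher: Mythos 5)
Your ``only if'' half is essentially correct, and is in fact a genuine re-proof of material the paper simply imports: restriction of the $\Delta$-property to $[{}_{\overline R}^tR,\overline R]$, descent to a localization (this uses that an integral FCP extension is a $\mathcal B$-extension, so every element of $[T_M,\overline R_M]$ is the localization of a global element -- say so), passage modulo the conductor, and then the pleasant dimension count $\operatorname{lcm}(a,b)\mid a+b-1$ showing that a distributive $\Delta$ field extension is chained. You should also justify the choice of $A,B$ (two atoms of $[k,L]$ over a common element exist because the finite lattice is not a chain), but these are minor.

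The ``if'' direction, however, is where the substance of the proposition lies, and you have not proved it; you explicitly stop at ``the main obstacle is this final combinatorial verification.'' Concretely: (i) your case analysis is confined to $[R_M,\overline R_M]$, but the $\Delta$-property must be verified on all of $[R_M,S_M]$; when $\overline R_M\neq S_M$ and the extension is branched there is no reason for every intermediate ring to be comparable to $\overline R_M$, so the Pr\"ufer part cannot simply be appended (even your unbranched sketch silently uses that $[R_M,S_M]$ is pinched at $\overline R_M$, which needs the quasi-Pr\"ufer/unbranched facts recalled in Remark \ref{9.71}); (ii) lattice distributivity governs products and intersections, not sums, so reducing cross-piece identities of the form $(U_1U_2)\cap X=(U_1\cap X)(U_2\cap X)$ does not by itself yield the ring-theoretic equality $U_1+U_2=U_1U_2$ across the pieces $[R,{}_S^+R]$, $[{}_S^tR,S]$ and the ``diagonal'' intervals $[V_i\cap{}_S^tR,V_i]$ -- that additivity is precisely what has to be proved. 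The paper avoids all of this: since a distributive lattice is modular, Proposition \ref{6.25} follows at once from \cite[Corollary 4.30]{Pic 13}, which characterizes modular FCP $\Delta$-extensions by the arithmetic condition on ${}_{\overline R}^tR\subseteq\overline R$; the transfer between the canonical pieces that you are attempting is carried out there (see also \cite[Theorem 4.11]{Pic 13}). So either invoke that corollary or supply the full branched and non-integral verification; as written, the equivalence is only half proved.
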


\begin{proof} Obvious by \cite[Corollary 4.30]{Pic 13} since a distributive extension is modular.
\end{proof}

\begin{corollary}\label{6.26}  A distributive infra-integral extension  is a $\Delta$-extension.
\end{corollary}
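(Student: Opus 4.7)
The plan is to reduce directly to Proposition~\ref{6.25}, working in the ambient FCP setting of this section. Given $R \subset S$ distributive and infra-integral, I first extract two identifications forced by the hypotheses. Since infra-integrality entails integrality, every element of $S$ lies in the integral closure of $R$ in $S$, so $\overline{R} = S$. Next, I invoke Definition~\ref{1.3}: ${}_S^t R$ is characterized as the \emph{greatest} element $B' \in [R,S]$ such that $R \subseteq B'$ is infra-integral. Since the entire extension $R \subseteq S$ is assumed infra-integral, the maximal such $B'$ is $S$ itself, whence ${}_S^t R = S$.

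Combining these two identifications gives ${}_{\overline R}^t R = {}_S^t R = S = \overline R$, so the subextension ${}_{\overline R}^t R \subseteq \overline R$ is the trivial equality $S \subseteq S$. This is vacuously arithmetic in the sense of Definition~\ref{4.2}, since for every $P \in \mathrm{Spec}(S)$ the lattice $[S_P, S_P]$ is a singleton, hence a chain. Proposition~\ref{6.25} then applies and yields that $R \subset S$ is a $\Delta$-extension. The only subtlety to check is that the ``greatest infra-integral $B'$'' description in Definition~\ref{1.3} is indeed the one that collapses under the hypothesis, making the arithmetic condition trivial; beyond this, there is no real obstacle.
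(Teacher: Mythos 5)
Your argument is correct, but it takes a different route from the paper. The paper's proof is a one-line citation: since a distributive extension is modular, it invokes \cite[Theorem 4.18]{Pic 13}, which handles modular infra-integral extensions directly and yields the $\Delta$-property. You instead reduce to Proposition \ref{6.25} of this paper: infra-integrality gives integrality, hence $\overline R=S$, and by the characterization of the $t$-closure in Definition \ref{1.3} as the greatest $B'\in[R,S]$ with $R\subseteq B'$ infra-integral, you get ${}_S^tR=S$, so ${}_{\overline R}^tR\subseteq\overline R$ degenerates to an equality and is trivially arithmetic; Proposition \ref{6.25} then closes the argument. The two derivations are close in spirit (Proposition \ref{6.25} itself rests on \cite[Corollary 4.30]{Pic 13}, and both exploit that distributive implies modular, explicitly in the paper and implicitly through Proposition \ref{6.25} in your version), but yours has the merit of staying inside the results already stated in this paper rather than importing a further external theorem. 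One small caveat: Proposition \ref{6.25} is stated for FCP extensions, and the corollary as written does not repeat the FCP hypothesis, so your proof covers the intended ambient FCP setting (which is what the section is about) but, taken literally, proves a formally narrower statement than the bare wording of the corollary; the paper's direct appeal to \cite[Theorem 4.18]{Pic 13} sidesteps that dependence only to the extent that the cited theorem does.
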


\begin{proof}  Obvious by \cite[Theorem 4.18]{Pic 13} since a distributive extension is modular.
\end{proof}

\begin{proposition}\label{6.27} Let $R\subset S$ be an FCP u-closed   extension. The following conditions are equivalent:
\begin{enumerate}
\item $R\subset S$ is a distributive $\Delta$-extension;

\item $R\subset S$ is an arithmetic $\Delta$-extension;

\item $R\subset S$ is a distributive arithmetic extension;
\end{enumerate}
\end{proposition}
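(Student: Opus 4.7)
The plan is to establish $(2)\Leftrightarrow(3)$ by routine observations and to work harder for $(1)\Rightarrow(3)$, which is where the u-closed hypothesis enters. The implication $(2)\Rightarrow(3)$ follows from Proposition~\ref{5.4}, since arithmetic extensions are distributive. For $(3)\Rightarrow(2)$, note that arithmeticity passes to subextensions (a chain in $[R_P,S_P]$ restricts to a chain in $[T_P,U_P]$), so in particular ${}_{\overline R}^t R\subseteq\overline R$ is arithmetic; combined with distributivity and FCP, Proposition~\ref{6.25} then supplies the $\Delta$-property.

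For the substantial direction $(1)\Rightarrow(3)$, assume $R\subseteq S$ is a distributive FCP $\Delta$-extension that is u-closed. Proposition~\ref{6.25} first gives that ${}_{\overline R}^t R\subseteq\overline R$ is arithmetic. The u-closedness of $R\subseteq S$ is inherited by the subextension $R\subseteq\overline R$, so by \cite[Proposition 5.2]{Pic 14} the integral FCP extension $R\subseteq\overline R$ is an i-extension, hence locally unbranched. By Proposition~\ref{1.014} we may localize and assume $(R,M)$ is local; the i-extension hypothesis then forces $\overline R$ to be local, and the same argument applied to any $T\in[R,\overline R]$ shows that every such $T$ is local. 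Theorem~\ref{6.5} gives that $R\subseteq\overline R$ is distributive, so Theorem~\ref{6.11} applies to the locally unbranched integral FCP extension $R\subseteq\overline R$: it yields that $R\subseteq {}_{\overline R}^t R$ is arithmetic and that $R\subseteq\overline R$ is pinched at ${}_{\overline R}^t R$. Splicing the two local chains $[R,{}_{\overline R}^t R]$ and $[{}_{\overline R}^t R,\overline R]$ through this pinching makes $[R,\overline R]$ a chain.

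To finish, observe that $\overline R\subseteq S$ is Pr\"ufer (being the Pr\"ufer tail of a quasi-Pr\"ufer FCP extension), and the proof of Theorem~\ref{5.41} shows that its localization at $M$ is chained. Since every element of $[R,\overline R]$ is local, $R\subseteq S$ is almost unbranched in the sense of Remark~\ref{9.71}, so \cite[Corollary 2.6]{Pic 13} forces $R\subseteq S$ to be pinched at $\overline R$. Splicing the chains $[R,\overline R]$ and $[\overline R,S]$ at $\overline R$ then turns $[R,S]$ into a chain, which means $R\subseteq S$ is arithmetic, and (3) follows. The central obstacle is transferring the arithmeticity of the piece ${}_{\overline R}^t R\subseteq\overline R$ supplied by Proposition~\ref{6.25} to the whole extension; u-closedness is exactly what makes the integral closure part locally unbranched and the entire extension almost unbranched, which allows the canonical decomposition and the Pr\"ufer tail to splice together into a global chain without any diamond obstruction.
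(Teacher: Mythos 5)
Your proposal is correct, but it follows a genuinely different route from the paper. The paper disposes of the statement in two lines by outsourcing the substance: the equivalence $(1)\Leftrightarrow(3)$ together with $(3)\Rightarrow(2)$ is quoted from \cite[Proposition 5.4]{Pic 13}, and $(2)\Rightarrow(1)$ is just Proposition \ref{5.4} (arithmetic $\Rightarrow$ distributive). You instead reconstruct the hard implication $(1)\Rightarrow(3)$ inside the present paper: Proposition \ref{6.25} gives that ${}_{\overline R}^tR\subseteq\overline R$ is arithmetic, u-closedness makes $R\subseteq\overline R$ an $i$-extension (hence, after localizing, every ring in $[R_M,\overline{R_M}]$ is local), Theorem \ref{6.11} (or Propositions \ref{6.7} and \ref{9.03}-type pinching) makes $[R_M,\overline{R_M}]$ a chain, and the Pr\"ufer tail $\overline R\subseteq S$ together with the almost-unbranched pinching of Remark \ref{9.71} (\cite[Corollary 2.6]{Pic 13}) splices everything into a chain locally. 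What your approach buys is a self-contained proof that exposes exactly where u-closedness is used; what the paper's approach buys is brevity, since \cite[Proposition 5.4]{Pic 13} already packages this equivalence. One point you should tighten: in $(3)\Rightarrow(2)$ you assert that arithmeticity passes to the subextension ${}_{\overline R}^tR\subseteq\overline R$ because chains in $[R_P,S_P]$ restrict; as stated this only treats localizations at primes of $R$, whereas arithmeticity of the subextension concerns primes of ${}_{\overline R}^tR$. The gap is easily filled, either by noting that for any multiplicative set the localization map on the lattice is surjective (take inverse images in $S_P$), so chained extensions stay chained under further localization, or, more in the spirit of your argument, by using u-closedness again: $({}_{\overline R}^tR)_P$ is local for $P\in\mathrm{Max}(R)$, so the localizations at maximal ideals of ${}_{\overline R}^tR$ coincide with those at maximal ideals of $R$. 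With that sentence added, your proof is complete.
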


\begin{proof} (1) $\Leftrightarrow$ (3) $\Rightarrow$ (2) is \cite [Proposition 5.4]{Pic 13}. At last, (2) $\Rightarrow$ (1) by Proposition \ref{5.4}.
\end{proof}

\begin{proposition}\label{6.28} Let $R\subset S$ be an FCP integral seminormal $\Delta$-extension, where $R$ is a local ring. Then, $R\subset S$ is distributive if and only if $ [R,S]=\{R\}\cup[{}_S^tR,S]$.
\end{proposition}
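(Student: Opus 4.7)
The plan is to handle the two implications separately, with essentially all of the real work lying in the converse direction. For the forward direction, if $R\subset S$ is branched, Proposition~\ref{6.8} directly yields $[R,S]=\{R\}\cup[{}_S^tR,S]$. If instead $R\subset S$ is unbranched, then $S=\overline R$ is local, and going-up for integral extensions forces ${}_S^tR$ to be local as well. Since $R\subset S$ is seminormal, $R\subseteq{}_S^tR$ is seminormal infra-integral, so by Proposition~\ref{1.31} any nontrivial maximal chain in $[R,{}_S^tR]$ would consist of decomposed minimal steps, the first of which would introduce a second maximal ideal in ${}_S^tR$, contradicting locality. Hence $R={}_S^tR$ and the equality holds trivially.

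For the converse, write $R^t:={}_S^tR$. The hypothesis forces $[R,R^t]=\{R,R^t\}$, so either $R=R^t$ (the t-closed case) or $R\subset R^t$ is minimal, hence minimal decomposed by Proposition~\ref{1.31}. In the second case, $R\subset S$ is pinched at $R^t$, and $R\subset R^t$ is distributive because minimal, so by Corollary~\ref{9.03} it suffices to prove distributivity of $R^t\subset S$. The extension $R^t\subset S$ is t-closed, FCP, and inherits the $\Delta$-property, since for any $T,U\in[R^t,S]\subseteq[R,S]$, $T+U\in[R,S]$ contains $R^t$ and hence belongs to $[R^t,S]$. For each $N\in\mathrm{Max}(R^t)$ the localization $(R^t)_N\subset S_N$ is again t-closed over a local base and, using the factorization of the lattice of an integral FCP extension into local pieces, inherits the $\Delta$-property because $T\mapsto T_N$ sends sums to sums. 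By Proposition~\ref{1.014}, distributivity of $R^t\subset S$ thus reduces to the t-closed case over a local ring.

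The main obstacle is then to show that an integral t-closed $\Delta$ FCP extension $R\subset S$ over a local ring $R$ is distributive. By the argument in the proof of Proposition~\ref{tclos}, $S$ is local with $M:=(R:S)\in\mathrm{Max}(R)$, and the canonical lattice isomorphism $[R,S]\to[R/M,S/M]$ preserves both $\cap$ and $+$; consequently $R/M\subset S/M$ is a finite $\Delta$ field extension. The heart of the proof is the claim that such an extension is chained. I plan to prove it by a dimension count: if incomparable intermediate fields $T,U$ with $V:=T\cap U$ existed, then $T+U$ being a $k$-subring would force $T+U=TU$, so with $a:=[T:k]$, $b:=[U:k]$ and $c:=[V:k]$ (where $a,b>c$) one would have $[TU:k]=a+b-c$ while the tower formula forces $a\mid[TU:k]$ and $b\mid[TU:k]$; this yields $a\mid b-c$ and $b\mid a-c$, whence $b\geq a+c$ and $a\geq b+c$, and summing gives the contradiction $c\leq 0$. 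Once the field-theoretic claim is established, $[R,S]$ is a chain, hence arithmetic, hence distributive by Proposition~\ref{5.4}, completing the argument.
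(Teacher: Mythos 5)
Your argument is correct, but it takes a genuinely different route from the paper's, whose proof is essentially two lines: it quotes \cite[Theorem 4.11]{Pic 13}, which says that for an integral $\Delta$-extension $R\subset{}_S^tR$ is again a $\Delta$-extension and ${}_S^tR\subseteq S$ is \emph{arithmetic}, hence distributive by Proposition \ref{5.4}, and then concludes at once by Proposition \ref{6.8} (distributive $\Leftrightarrow$ ${}_S^tR\subset S$ distributive and $[R,S]=\{R\}\cup[{}_S^tR,S]$). You instead avoid the citation to the $\Delta$-extension paper altogether: for the forward direction you use Proposition \ref{6.8} in the branched case and treat the unbranched case separately (correctly observing that locality of ${}_S^tR$ kills any decomposed step, so $R={}_S^tR$ and the equality is trivial — this also quietly fixes the fact that Proposition \ref{6.8} is stated only for branched extensions); for the converse you pinch at ${}_S^tR$, apply Corollary \ref{9.03}, check that the $\Delta$-property descends to $[{}_S^tR,S]$ and to its localizations (via the $\mathcal B$-extension bijection of \cite[Theorem 3.6]{DPP2}, since localization commutes with finite sums), pass to the residue field extension through the conductor as in Proposition \ref{tclos}, and prove directly that a finite $\Delta$ field extension is chained by the dimension-and-divisibility count $[TU:k]=a+b-c$, $a\mid b-c$, $b\mid a-c$. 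That field lemma is exactly the local heart of the arithmeticity statement the paper imports from \cite{Pic 13}, so in effect you reprove the needed special case of that theorem. The trade-off: the paper's proof is shorter because it leans on prior work, while yours is self-contained within this paper's toolkit, at the cost of redoing the localization bookkeeping and the (pleasantly elementary) field-theoretic step.
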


\begin{proof} Since $R\subset S$ is an integral $\Delta$-extension,  \cite[Theorem 4.11]{Pic 13} asserts that $R\subset {}_S^tR$ is a $\Delta$-extension and ${}_S^tR\subseteq S$ is arithmetic, so that ${}_S^tR\subseteq S$ is distributive by Proposition \ref{5.4}. Then, $R\subset S$ is distributive if and only if $[R,S]=\{R\}\cup[{}_S^tR,S]$ by Proposition \ref{6.8}.
\end{proof}

We recall that an element $t$ of a ring extension $S$ is said {\it quadratic} over $R$ if $t$ is a zero of a quadratic monic polynomial of $R[X]$.

\begin{proposition} \label{11.12} An extension $R\subset R[t]$, where $t$ is a quadratic element, and such that $R$ is a SPIR, is an FIP distributive extension.
\end{proposition}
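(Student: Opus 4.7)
The plan is to show directly that the poset $[R,R[t]]$ is a finite chain; this will yield FIP immediately, and since any chained extension is arithmetic, distributivity will follow from Proposition \ref{5.4}.

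First I would dispose of the trivial case $t\in R$ (where $R[t]=R$) and assume $t\notin R$. Let $\pi$ generate the maximal ideal $M$ of $R$, with $\pi^n=0$ and $\pi^{n-1}\neq 0$. Since $t$ is quadratic, say $t^2=\alpha t+\beta$ with $\alpha,\beta\in R$, one has $R[t]=R+Rt$ as an $R$-module, so the extension is module-finite, integral, and $R[t]$ is Artinian; in particular FCP holds automatically. The key invariant is the ideal $J:=\{s\in R\mid st\in R\}$, which is a proper ideal of $R$ (proper because $t\notin R$); since $R$ is a SPIR, $J=(\pi^k)$ for some $1\leq k\leq n$.

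Next I would describe the intermediate $R$-submodules. The $R$-module $R[t]/R$ is cyclic, generated by the class of $t$, with annihilator exactly $J$, so $R[t]/R\cong R/J$ as $R$-modules. Since $R/J$ is itself a SPIR, its submodules form the chain $(\pi^i)/(\pi^k)$ for $0\leq i\leq k$, and by correspondence the $R$-submodules of $R[t]$ containing $R$ form the chain
\[
R=R+(\pi^k)\,t\subset R+(\pi^{k-1})\,t\subset\cdots\subset R+(\pi)\,t\subset R+Rt=R[t].
\]
Setting $T_i:=R+(\pi^i)\,t$, I would then verify that each $T_i$ is actually a subring: using $t^2=\alpha t+\beta$, one computes $(r_1+s_1t)(r_2+s_2t)=(r_1r_2+\beta s_1s_2)+(r_1s_2+r_2s_1+\alpha s_1s_2)\,t$, and when $s_1,s_2\in(\pi^i)$, the new $t$-coefficient again lies in $(\pi^i)$ since it is an ideal. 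Thus $[R,R[t]]=\{T_0,T_1,\ldots,T_k\}$ is a finite chain, so $R\subset R[t]$ has FIP; being chained it is arithmetic, and hence distributive by Proposition \ref{5.4}.

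The main subtlety I expect is that the representation $r+st$ of elements of $R[t]$ need not be unique (one need not have $R[t]=R\oplus Rt$), which could obscure the enumeration of intermediate submodules. The cleanest way around this is to work with the cyclic quotient $R[t]/R\cong R/J$ and use the correspondence between its submodules and ideals of $R$ containing $J$; the hypothesis that $R$ is a SPIR (not merely local) is precisely what forces those ideals to form a chain, and hence what delivers the chained (in particular arithmetic) conclusion.
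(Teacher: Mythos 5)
Your proof is correct, and its overall strategy coincides with the paper's: reduce to showing $R\subset R[t]$ is a chained (FIP) extension and then conclude distributivity via Proposition \ref{5.4}. The difference is in how chainedness is obtained: the paper simply cites \cite[Proposition 5.11]{Pic 13} for the fact that $R\subset R[t]$ is an FIP chained extension, whereas you give a self-contained elementary argument, identifying the cyclic $R$-module $R[t]/R\cong R/J$ with $J=(\pi^k)$ the conductor-type ideal $\{s\in R\mid st\in R\}$, using the SPIR hypothesis to see that its submodules form a chain, and checking via the quadratic relation that each $T_i=R+(\pi^i)t$ is in fact a subring, so that $[R,R[t]]=\{T_0,\dots,T_k\}$. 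This buys independence from the external reference at the cost of redoing (in essence) the computation behind \cite[Proposition 5.11]{Pic 13}; your handling of the non-uniqueness of the representation $r+st$ through the quotient module is exactly the right device. One cosmetic remark: the step ``chained implies arithmetic'' deserves a word --- it is harmless here because a SPIR has a single prime ideal (its nilpotent maximal ideal), so localizing changes nothing; alternatively you could bypass Proposition \ref{5.4} entirely, since a totally ordered lattice is trivially distributive.
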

\begin{proof} In \cite[Proposition 5.11]{Pic 13}, we proved that $R\subset R[t]$ is an FIP chained extension, then a distributive extension by Proposition \ref{5.4}. 
\end{proof}

\end{document}